\documentclass[11pt]{article}
\usepackage{amsfonts}

\usepackage{graphics}
\usepackage{indentfirst}
\usepackage{cite}
\usepackage{latexsym}
\usepackage{amsmath,amsthm}
\usepackage{amssymb}
\usepackage{amscd}
\usepackage{mathrsfs}

\usepackage{tocloft}

\usepackage{color}

\newtheorem{theorem}{Theorem}[section]
\newtheorem{remark}{Remark}[section]

\newtheorem{definition}{Definition}[section]
\newtheorem{lemma}[theorem]{Lemma}

\newtheorem{proposition}[theorem]{Proposition}

\newcommand{\n}{\rho}

\renewcommand{\div}{ {\rm div }  }

\newcommand{\na}{\nabla }

\newcommand{\bt}{\begin{theorem}}
\newcommand{\bl}{\begin{lemma}}
\newcommand{\el}{\end{lemma}}
\newcommand{\et}{\end{theorem}}
\newcommand{\ga}{\gamma}

\newcommand{\OM}{\Omega}

\newcommand{\de}{\delta}

\newcommand{\la}{\label}
\newcommand{\si}{\sigma}
\newcommand{\ka}{\kappa}
\newcommand{\om}{\Omega}
\newcommand{\ol}{\overline}

\newcommand{\bn}{\begin{eqnarray}}
\newcommand{\en}{\end{eqnarray}}
\newcommand{\bnn}{\begin{eqnarray*}}
\newcommand{\enn}{\end{eqnarray*}}

\newcommand{\bnnn}{\begin{eqnarray*}}
\newcommand{\ennn}{\end{eqnarray*}}
\newcommand{\ben}{\begin{enumerate}}
\newcommand{\een}{\end{enumerate}}

\newcommand{\ba}{\begin{aligned}}
\newcommand{\ea}{\end{aligned}}
\newcommand{\be}{\begin{equation}}
\newcommand{\ee}{\end{equation}}

\def\p{\partial}
\def\norm[#1]#2{\|#2\|_{#1}}

\def\lam{\lambda}
\def\ep{\varepsilon}

\makeatletter      
\@addtoreset{equation}{section}
\makeatother

\title{Finite-Time Vanishing of Vacuum and Large-Time Behavior of the Multi-Dimensional Degenerate Compressible Navier-Stokes Equations with Large Spherically Symmetric Initial Data}

\date{}

\author{$\text{Qinghao L{\small EI}}^{a,b}, \text{Zhilei L{\small{IANG}}}^{c}\thanks{Email addresses:  leiqinghao22@mails.ucas.ac.cn (Q. H. Lei), liangzl@swufe.edu.cn (Z. L. Liang) }$\\
a. School of Mathematical Sciences,\\ University of Chinese Academy of Sciences,
Beijing 100049, P. R. China;\\
b. Institute of Applied Mathematics,\\ Academy of Mathematics and Systems Science, \\
Chinese Academy of Sciences, Beijing 100190, P. R. China;\\
c. School of  Mathematics, \\
Southwestern University of Finance and Economics, Chengdu 611130, P. R. China}

\begin{document}
\maketitle

\begin{abstract}
In this paper, we study the two- and three-dimensional barotropic compressible Navier-Stokes equations with degenerate density-dependent viscosity coefficients in the whole space or in a ball for arbitrarily large spherically symmetric initial data.
For initial data allowing vacuum, we establish the global existence of weak solutions and derive uniform-in-time a priori estimates.
As a consequence, we prove that the vacuum state of weak solutions will vanish in finite time.
The key ingredients include a treatment of the pressure terms based on separate estimates in the low- and high-density regions, the Bresch-Desjardins entropy estimates, weighted radial estimates, and a coupled control of the velocity and effective velocity.
In the three-dimensional case, this approach also allows us to treat the endpoint case of the adiabatic exponent.
For sufficiently regular initial data with strictly positive density, we establish the global existence and large-time behavior of classical solutions. \\
\par\textbf{Keywords:} Compressible Navier-Stokes equations; Degenerate viscosities; Vanishing vacuum; Large-time behavior; Large initial data; Spherical symmetry
\par\textbf{2020 Mathematics Subject Classification:} 35Q30, 35B40, 76N10.
\end{abstract}


\section{Introduction and main results}
We consider the multi-dimensional barotropic compressible
Navier-Stokes equations with density-dependent viscosity coefficients, which read as follows:
\be\ba\la{ns}
\begin{cases}
\rho_t + \div(\rho \mathbf{u}) = 0, \\
(\n \mathbf{u})_t + \div(\n \mathbf{u} \otimes \mathbf{u}) - \div( \mu(\n) \mathbb{D}\mathbf{u}) - \na( \lambda(\n) \div \mathbf{u} ) + \na P = 0,
\end{cases}
\ea\ee
where $t \ge 0$ is time, $x \in \OM \subset \mathbb{R}^N (N=2,3)$ is the spatial coordinate,
$\n=\n(x,t)$ and $\mathbf{u}(x,t)=(u^1(x,t),\dots,u^N(x,t))$ represent the 
density and velocity of the compressible flow, respectively.
The pressure $P$ is given by
\be\la{i1}
P=a \rho^\ga,
\ee
with constants $a>0$ and $\ga > 1$.
Without loss of generality, we assume that $a=1$.
The viscosity coefficients $\mu(\n)$ and $\lam(\n)$ satisfy the physical restrictions
\be\la{i2}
\mu(\n) \ge 0, \quad \mu(\n) + N \lam(\n) \ge 0,
\ee
and the deformation tensor $\mathbb{D}\mathbf{u}$ is defined by
\be\nonumber
\mathbb{D}\mathbf{u} = \frac{1}{2} ( \na \mathbf{u} + \na \mathbf{u}^T ).
\ee
The system is supplemented with the spherically symmetric initial data
\be\la{cz}
\n(x,0)=\n_0(x) = \n_0(r), \quad \n \mathbf{u}(x,0)=\mathbf{m}_0(x) = m_0(r) \frac{x}{r}, \quad x\in \OM,
\ee
where $r=|x|$.

In this paper, we investigate two types of boundary conditions:

(1) Cauchy problem: $\OM = \mathbb{R}^N$ and $(\n,\mathbf{u})$ satisfies the far-field behavior
\be\la{qkjbjtj}\ba
(\n,\mathbf{u})(x,t) \to (1,0) \ \text{ as } |x| \to \infty, \  t>0.
\ea\ee

(2) Bounded domain with boundary condition:
\be\la{yjybjtj}
\OM = B_R \triangleq \{ x \in \mathbb{R}^N \mid |x| < R \}, \quad \n \mathbf{u} = 0 \ \text{ on } \ \p \OM.
\ee

There is an extensive literature concerning the global existence of solutions to (\ref{ns}) when both viscosity coefficients $\mu(\n)$ and $\lambda(\n)$ are constants.
The one-dimensional problem has been studied extensively; see \cite{H4,Ka,KN,KS,S1,S2} and the references therein.
In the multi-dimensional setting, the local existence and uniqueness of classical solutions were established in \cite{N,S} under the assumption that the initial density is strictly positive.
For strong solutions, further results were obtained in \cite{CCK,CK,CK2,SS,LLL}, where the initial density is allowed to vanish.
The first global classical solutions were established by Matsumura-Nishida \cite{MN1} for initial data sufficiently close to a non-vacuum equilibrium in some Sobolev space $H^s$.
Later, Hoff \cite{H1,H2,H3} studied the problem with discontinuous initial data and introduced a new type of a priori estimates on the material derivative $\dot{\mathbf{u}}$.
For large initial data, a major breakthrough in the global existence theory of weak solutions was due to Lions \cite{L2}, who proved the global existence of finite-energy weak solutions for the pressure $P = a\n^\ga \ (a>0,\ga>1)$, under the assumption that $\ga$ is suitably large; for example, $\ga \ge \frac{9}{5}$ in the three-dimensional case.
This result was subsequently improved by Feireisl-Novotn\'y-Petzeltov\'a \cite{FNP} to $\ga>\frac{3}{2}$, and by Jiang-Zhang \cite{JZ} to $\ga>1$ in the spherically symmetric setting.
For initial data containing vacuum, Huang-Li-Xin \cite{HLX2} established the global existence and uniqueness of classical solutions to the three-dimensional Cauchy problem for smooth initial data with small energy but possibly large oscillations.
Subsequently, Li-Xin \cite{LX2} further derived some a priori decay rates for both the pressure and the spatial gradient of velocity, provided that the initial total energy is sufficiently small, and proved the global existence and large-time asymptotic behavior of strong and classical solutions to the Cauchy problem in two and three dimensions with far-field vacuum.
Recently, Cai-Li \cite{CL} established the global existence and exponential decay of classical solutions in three-dimensional bounded domains subject to slip boundary conditions, assuming that the initial total energy is sufficiently small.

It is worth noting that the case where the viscosity coefficients depend on the density and degenerate at vacuum has received extensive attention in recent years.
Indeed, as pointed out by Liu-Xin-Yang \cite{LXY}, in the derivation of the compressible Navier-Stokes equations from the Boltzmann equation through the Chapman-Enskog expansion (see \cite{CC}), the viscosity coefficients depend on the temperature, which, in the barotropic setting, leads to a dependence on the density.
When $\mu$ is a positive constant and $\lambda(\rho) = b \n^\beta$ with $b>0$ and $\beta>3$, Vaigant-Kazhikhov \cite{VK} first showed that the two-dimensional system (\ref{ns}) has a unique global strong solution for large initial data with density away from vacuum in rectangular domains subject to slip boundary conditions.
Subsequently, for periodic domains, Jiu-Wang-Xin \cite{JWX1} generalized this result by removing the assumption that the initial density is away from vacuum.
More recently, in two-dimensional periodic domains and in the whole space, Huang-Li \cite{HL2,HL3} (see also \cite{JWX2}) relaxed the restriction $\beta>3$ to $\beta>\frac{4}{3}$.
This problem was further investigated by Fan-Li-Li \cite{FLL} in general two-dimensional bounded simply connected domains with Navier-slip boundary conditions, where the global existence of strong and weak solutions was established under the condition $\beta>\frac{4}{3}$.
In addition, Huang-Su-Yan-Yu \cite{HSYY} proved the global existence of radially symmetric strong solutions in two-dimensional balls under the assumption $\beta>1$.

For density-dependent viscosity coefficients satisfying (\ref{i2}) and
\be\la{bde}
\lambda(\rho) = \mu'(\rho) \rho - \mu(\rho),
\ee
a remarkable framework was developed by Bresch-Desjardins \cite{BD1,BD2,BDL}, which provides an additional estimate for the density gradient.
Subsequently, Mellet-Vasseur \cite{MV} investigated the stability of (\ref{ns}) by deriving a new a priori estimate for smooth approximate solutions.
In the one-dimensional case, Li-Li-Xin \cite{LLX} established the global existence of weak solutions to (\ref{ns}) with $\mu(\n) = \n^\alpha (\alpha>\frac{1}{2})$ and $\lam(\n)=0$, and showed that the vacuum state of weak solutions must vanish within finite time.
Later, Jiu-Xin \cite{JX} generalized the result of \cite{LLX} to the one-dimensional Cauchy problem.
For spherically symmetric initial data, Guo-Jiu-Xin \cite{GJX} proved the global existence of weak solutions for large initial data allowing vacuum.
Furthermore, Guo-Li-Xin \cite{GLX} extended this result to the free boundary problem and studied the Lagrangian structure and dynamics.
For general initial data, Li-Xin \cite{LX1} and Vasseur-Yu \cite{VY} independently proved the global existence of weak solutions for large initial data with vacuum, while Bresch-Vasseur-Yu \cite{BVY} further extended these results to more general viscosity coefficients.
In the exterior domain of a ball in $\mathbb{R}^{N} \ (N=2,3)$, Cao-Li-Zhu \cite{CLZ} established the global existence and uniqueness of spherically symmetric classical solutions for large spherically symmetric initial data with far-field vacuum.
More recently, for spherically symmetric initial data, Zhang \cite{Z} first proved the global existence and uniqueness of classical solutions in a ball in $\mathbb{R}^{N}$ $(N=2,3)$.
Later, Huang-Meng-Zhang \cite{HMZ} generalized this result and obtained the large-time asymptotic behavior of the solutions.
For the Cauchy problem in two and three dimensions, Chen-Zhang-Zhu \cite{CZZ1,CZZ2} established the global existence and uniqueness of classical solutions for large spherically symmetric initial data with either vacuum or non-vacuum far-field density.

The main aim of the present paper is to investigate the global existence and finite-time vacuum vanishing of spherically symmetric weak solutions, as well as the global existence and large-time behavior of spherically symmetric classical solutions to system (\ref{ns})--(\ref{cz}) subject to the boundary conditions (\ref{qkjbjtj}) or (\ref{yjybjtj}) with large initial data.
More precisely, we consider viscosity coefficients of the form
\be\la{nxxs}
\mu(\n) = \n^\alpha, \quad \lambda(\n) = (\alpha-1) \n^\alpha,
\ee
which satisfy the BD entropy structure (\ref{bde}).

We seek spherically symmetric solutions of the form:
\be\la{qdc}
\n(x,t) = \n(r,t), \quad \mathbf{u}(x,t) = u(r,t) \frac{x}{r},
\ee
so that system (\ref{ns}) is transformed into
\be\la{nsqdc}\ba
\begin{cases}
\n_t + (\n u)_r + \frac{(N-1)}{r} \n u = 0, \\
\n(u_t + u u_r) + (\n^\ga)_r - \alpha \left( r^{-(N-1)} \n^\alpha (r^{N-1}u)_r \right)_r + \frac{N-1}{r}(\n^\alpha)_r u = 0.
\end{cases}
\ea\ee

Before stating the main results, we first explain the notation
and conventions used throughout this paper.

For any $1\leq s \leq \infty$ and a positive integer $l$, we define the standard Lebesgue and Sobolev spaces as follows:
\be\ba\nonumber
L^s =L^s(\OM),\quad W^{l,s} =W^{l,s}(\OM),\quad H^l = W^{l,2}.
\ea\ee
For the Cauchy problem, we define the potential energy density by
\be\la{pe}\ba
K(\n) \triangleq \n \int_{1}^{\n} \frac{P(s)-1}{s^2} ds.
\ea\ee
It is easy to verify that, for every $\hat{\n}>0$, there exists a positive constant $C=C(\ga,\hat{\n})$ such that, for all $0 \le \n \le \hat{\n}$,
\be\la{qkjsn}\ba
\frac{1}{C} (\n - 1)^2 \le K(\n)
\le C (\n - 1)^2.
\ea\ee

For later use, we introduce several auxiliary functions.
\begin{definition}
For $p \in (2,\infty)$, define
\be\la{cs1}
f_{1}(p) \triangleq \frac{ p^2 - 2p \sqrt{p-1} }{(p-2)^2}, \quad f_{2}(p) \triangleq \frac{ p^2 + 2p \sqrt{p-1} }{(p-2)^2}.
\ee
A direct calculation shows that $f_{1}(p)$ is strictly increasing on $(2,\infty)$, whereas $f_{2}(p)$ is strictly decreasing on $(2,\infty)$.

For $z \in (\frac{1}{2},1) \cup (1,\infty)$, define
\be\la{cs2}\ba
\varphi(z) \triangleq \frac{ 2z^2 + 2z\sqrt{2z-1} }{(z-1)^2}, \quad \varphi(1) = \infty.
\ea\ee
Then $\varphi(z)|_{(\frac{1}{2},1)}$ is the inverse function of $f_{1}(p)$, whereas $\varphi(z)|_{(1,\infty)}$ is the inverse function of $f_{2}(p)$.
Moreover, $\varphi(z)$ is strictly increasing on $(\frac{1}{2},1)$ and strictly decreasing on $(1,\infty)$, and
\be\nonumber
\lim_{z \to \frac{1}{2}^+} \varphi(z) = 2, \quad \lim_{z \to 1} \varphi(z) = \infty, \quad \lim_{z \to \infty} \varphi(z) = 2.
\ee

For $p \in (2,\infty)$, set
\be\la{cs3}
g_{1}(p) \triangleq \frac{p^2-p+1-\sqrt{2p^3-p^2-2p+1} }{(p-2)^2}, \quad g_{2}(p) \triangleq \frac{p^2-p+1+\sqrt{2p^3-p^2-2p+1} }{(p-2)^2}.
\ee
A direct computation shows that $g_{1}(p)$ is strictly increasing on $(2,\infty)$, whereas $g_{2}(p)$ is strictly decreasing on $(2,\infty)$.

For $z \in (\frac{2}{3},1) \cup (1,\infty)$, define
\be\la{cs4}\ba
\psi(z) \triangleq \frac{ z (2z-1) + \sqrt{ z(2z-1)(3z-2) } }{(z-1)^2}, \quad \psi(1) = \infty.
\ea\ee
Then $\psi(z)|_{(\frac{2}{3},1)}$ is the inverse function of $g_{1}(p)$, whereas $\psi(z)|_{(1,\infty)}$ is the inverse function of $g_{2}(p)$.
Furthermore, $\psi(z)$ is strictly increasing on $(\frac{2}{3},1)$ and strictly decreasing on $(1,\infty)$, and
\be\nonumber
\lim_{z \to \frac{2}{3}^+} \psi(z) = 2, \quad \lim_{z \to 1} \psi(z) = \infty, \quad \lim_{z \to \infty} \psi(z) = 2.
\ee
For notational convenience, we define
\be\la{pna}\ba
\mathcal{P}_{N}(\alpha) \triangleq
\begin{cases}
\varphi(\alpha) \quad \textnormal{ if } N=2, \\
\psi(\alpha) \quad \textnormal{ if } N=3.
\end{cases}
\ea\ee
We adopt the convention that $\mathcal{P}_N(1)=\infty$.
\end{definition}

Throughout the paper, we assume that the initial data $(\n_0,\mathbf{m}_0)$ satisfy
\be\la{czre}\ba
\begin{cases}
0 \le \n_0 \in L^\infty(\om), \quad \n_0 \not\equiv 0, \quad \n_0 - 1 \in L^2(\om), \quad \na \n_0^{\alpha-\frac{1}{2}} \in L^2(\om), \\
\n_0^{-1}|\mathbf{m}_0|^2 \in L^1(\om), \quad \mathbf{m}_0 = 0 \text{ a.e. on } \om_0,
\end{cases}
\ea\ee
where
\be\nonumber\ba
\om_0 \triangleq \{x \in \om \mid \n_0(x)=0 \},
\ea\ee
denotes the vacuum set of $\n_0$.
We also use the convention that $\n_0^{-1}|\mathbf{m}_0|^2 = 0$ a.e. on $\om_0$.

We now give the definition of weak solutions to (\ref{ns})--(\ref{nxxs}).
\begin{definition}
For $N=2,3$, let $\om = \mathbb{R}^N$ or $\om = B_R$.
$(\n,\mathbf{u})$ is said to be a weak solution to \eqref{ns}-- \eqref{nxxs} if
\be\la{rjdf1}\ba
\begin{cases}
\n - 1 \in L^\infty(0,T;L^2(\om) \cap L^\infty(\om) ), \ \n \in C(\ol{\om} \times (0,T)), \\
\na \n^{\alpha-\frac{1}{2}}, \sqrt{\n} \mathbf{u} \in L^\infty(0,T;L^2(\om)), \\
\na \n^{ \frac{\ga+\alpha-1}{2} } \in L^2(\om \times (0,T)), \\
\n^\alpha \na \mathbf{u} \in L^2(0,T;W^{-1,1}(B_R)) \quad \textnormal{ if } \om = B_R, \\
\n^\alpha \na \mathbf{u} \in L^2(0,T;W_{ \textnormal{loc} }^{-1,1}(\mathbb{R}^N)) \quad \textnormal{ if } \om = \mathbb{R}^N, \\
\end{cases}
\ea\ee
with $(\sqrt{\n}, \sqrt{\n} \mathbf{u})$ satisfying
\be\la{rjdf2}\ba
\begin{cases}
\n_t + \div(\sqrt{\n} \sqrt{\n} \mathbf{u}) = 0, \text{ in } \mathcal{D}'(\om \times (0,T)), \\
\n(x,t=0) = \n_0(x),
\end{cases}
\ea\ee
and if the following equality holds for every smooth test function $\phi(x,t) = (\phi(x,t)^1,\dots,\phi(x,t)^N)$ with compact support such that $\phi(x,T) = 0$:
\be\la{rjdf3}\ba
& \int_{\om} \mathbf{m}_0 \cdot \phi(x,0) dx
+ \int_0^T \int_{\om} \left( \sqrt{\n} \sqrt{\n} \mathbf{u} \cdot \phi_t + (\sqrt{\n} \mathbf{u} \otimes \sqrt{\n} \mathbf{u}) : \na \phi + \n^\ga \div \phi \right) dx dt \\
& - \left< \n^\alpha \na \mathbf{u}, \na \phi \right> - (\alpha-1) \left< \n^\alpha \div \mathbf{u}, \div \phi \right> = 0,
\ea\ee
where
\be\la{rjdf4}\ba
\left< \n^\alpha \na \mathbf{u}, \na \phi \right> \triangleq
- \int_0^T \int_{\om} \left( \n^{\alpha-\frac{1}{2}} \sqrt{\n} \mathbf{u} \cdot \Delta \phi + \frac{2\alpha}{2\alpha-1} \na \n^{\alpha-\frac{1}{2}} \cdot \na \phi \cdot \sqrt{\n} \mathbf{u} \right) dx dt,
\ea\ee
\be\la{rjdf5}\ba
\left< \n^\alpha \div \mathbf{u}, \div \phi \right> \triangleq
- \int_0^T \int_{\om} \left( \n^{\alpha-\frac{1}{2}} \sqrt{\n} \mathbf{u} \cdot \na \div \phi + \frac{2\alpha}{2\alpha-1} \na \n^{\alpha-\frac{1}{2}} \cdot \sqrt{\n} \mathbf{u} \cdot \div \phi \right) dx dt.
\ea\ee
\end{definition}

Our first result concerns the global existence of weak solutions.

\begin{theorem}\la{th1}
Let $N=2$ or $3$, and let $\om=\mathbb{R}^N$ or $\om = B_R$.
Suppose that the initial data $(\n_0,\mathbf{m}_0)$ are spherically symmetric and satisfy \eqref{czre} and
\be\la{th12}\ba
\na \n_0^{\alpha-1+\frac{1}{s_1}} \in L^{s_1}(\om), \quad \n_0^{-p_1+1}|\mathbf{m}_0|^{p_1} \in L^1(\om),
\ea\ee
for some $s_1$ and $p_1$ satisfying
\be\la{th13}\ba
N < s_1 \le p_1 < \mathcal{P}_{N}(\alpha), \quad \alpha-1+\frac{1}{s_1}>0,
\ea\ee
where we use the convention that $\n_0^{-{p_1}+1}|\mathbf{m}_0|^{p_1} = 0$ a.e. on $\om_0$.
Assume that $\alpha$ and $\ga$ satisfy
\be\la{th110}\ba
\ga > \frac{\alpha+1}{2} - \frac{1}{N},
\ea\ee
and
\be\la{th11}\ba
\begin{cases}
\alpha > \frac{1}{2}, \quad 1<\ga<\infty & \textnormal{ if } N=2, \\
7-2\sqrt{10} < \alpha < 7+2\sqrt{10}, \quad 1 < \ga \le 6\alpha-3+\eta_0 & \textnormal{ if } N=3,
\end{cases}
\ea\ee
for some constant $\eta_0 \in (0,1]$ depending only on $s_1$, $\alpha$, $\mu$, and the initial data.
Then the problem \eqref{ns}, \eqref{i1}, \eqref{cz}, \eqref{nxxs} with the boundary condition \eqref{qkjbjtj} or \eqref{yjybjtj} has a global spherically symmetric weak solution.
\end{theorem}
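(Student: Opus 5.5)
The plan is to construct smooth approximate solutions, derive uniform-in-approximation a priori estimates (uniform in time as well), and then pass to the limit using the Mellet--Vasseur/Guo--Jiu--Xin compactness framework for degenerate viscosities. For the approximation, we mollify and truncate the data: take $\rho_{0}^{\epsilon}\ge\epsilon$, smooth, spherically symmetric, with $\rho_0^\epsilon\to\rho_0$ in the norms of \eqref{czre} and \eqref{th12}. We pose the problem on annuli $\{\epsilon<r<R\}$ (for $\Omega=\mathbb{R}^N$, on $\{\epsilon<r<1/\epsilon\}$ with far field $\rho\to1$ imposed at the outer boundary), with $u=0$ on the boundary. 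This cuts out the origin singularity of the radial system \eqref{nsqdc}. In Lagrangian mass coordinates the system is one-dimensional with nondegenerate viscosity $\rho^\alpha\ge c_\epsilon>0$ on finite time intervals, so global smooth solutions exist by classical one-dimensional theory once positive upper and lower bounds of $\rho$ are available.

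The estimates are carried out in the following order.
\begin{enumerate}
\item The basic energy identity gives $\sqrt\rho u\in L^\infty L^2$ and $K(\rho)$ (resp.\ $\rho^\gamma$) in $L^\infty L^1$.
\item The BD entropy, via the effective velocity $v=u+\frac{\alpha}{\alpha-1/2}\,\nabla\rho^{\alpha-1/2}/\sqrt\rho$ written radially, gives $\nabla\rho^{\alpha-1/2}\in L^\infty L^2$ and $\nabla\rho^{(\gamma+\alpha-1)/2}\in L^2L^2$.
\item Away from the origin, the radial weight $r^{N-1}$ turns the $H^1$-type bound on $\rho^{\alpha-1/2}$ into an $L^\infty$ bound for $|x|\ge r_0$. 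The key step is an upper bound of $\rho$ near the origin.
\end{enumerate}
For that upper bound, we derive an $L^{p}$ estimate for $\rho^{1/p}u$, i.e.\ $\int\rho|u|^{p}$, together with an $L^{s}$ estimate for $\nabla\rho^{\alpha-1+1/s}$. The two are coupled through $v$, which satisfies a transport-damping equation $\rho(v_t+uv_r)+\frac{\gamma}{\alpha}\rho^{\gamma-\alpha+1}(v-u)=0$ (up to constants). Testing the momentum equation by $|u|^{p-2}u$, the viscous term produces a quadratic form in $(u_r,u/r)$ weighted by $\rho^\alpha|u|^{p-2}$. Its positivity, after absorbing the cross terms with $\nabla\rho^\alpha\, u|u|^{p-2}u/r$ handled through $v$, is precisely the algebraic condition $p<\mathcal P_N(\alpha)$; the functions $f_i,g_i$ are the roots of that discriminant for $N=2,3$. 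The restriction $7-2\sqrt{10}<\alpha<7+2\sqrt{10}$ for $N=3$ is the condition that some $p>3$ exists. For the pressure term we split the integral into the regions $\{\rho\le2\}$ and $\{\rho>2\}$. In the high-density part we use the BD dissipation $\nabla\rho^{(\gamma+\alpha-1)/2}\in L^2L^2$ together with \eqref{th110}. In 3D this also uses $\gamma\le6\alpha-3+\eta_0$, allowing the endpoint by exploiting the extra integrability from the $L^{s_1}$ gradient bound, with $\eta_0$ taken small. Once $\nabla\rho^{\alpha-1+1/s}\in L^\infty L^{s}$ with $s>N$, Morrey/radial Sobolev gives $\|\rho\|_{L^\infty}\le C$ uniformly in $\epsilon$ and $t$, with $\rho$ H\"older continuous, so $\rho\in C(\overline\Omega\times(0,T))$.

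Passing to the limit follows Mellet--Vasseur. The Mellet--Vasseur $\rho|u|^{2+\delta}$ bound, here supplied by the $p_1>2$ estimate, gives compactness of $\sqrt\rho u$ in $L^2_{loc}$. The bounds on $\nabla\rho^{\alpha-1/2}$ and $\rho_t$ give strong convergence of $\rho$ by Aubin--Lions. The diffusion terms converge in the weak form \eqref{rjdf4}--\eqref{rjdf5} since $\rho^{\alpha-1/2}\to$ strongly and $\nabla\rho^{\alpha-1/2}\rightharpoonup$ weakly while $\sqrt\rho u\to$ strongly. The pressure converges by the uniform $L^\infty$ bound, and the Lebesgue-space bounds transfer by lower semicontinuity. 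The origin cutoff is harmless because the estimates are uniform and test functions see only $L^1_{loc}$ quantities. I expect the main obstacle to be the coupled $L^p$ estimate of $u$ and $L^s$ estimate of $\nabla\rho^{\alpha-1+1/s}$: one must close the quadratic form at the sharp threshold $\mathcal P_N(\alpha)$ while controlling the pressure cross term near the origin, particularly at the 3D endpoint for $\gamma$.
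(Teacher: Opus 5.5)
There is a genuine gap in the construction of the approximate solutions. You say they are global ``once positive upper and lower bounds of $\rho$ are available'', but you only derive the upper bound. With the pure viscosity $\rho^\alpha$, your estimates give no lower bound. The Lagrangian argument bounds $\int|\partial_y\rho^{-1}|\,dy=\int\rho^{-2}|\partial_r\rho|\,dr$ by $C\|\rho^{-1}\|_{L^\infty}^{\alpha+1/p}\,\|r^{(N-1)/p}\partial_r\rho^{\alpha-1+1/p}\|_{L^p}$. This can be absorbed only if $\alpha+1/p<1$, which is impossible for $\alpha\ge1$, and $\alpha>1$ is a large part of the range in \eqref{th11}. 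For $\alpha\le1$, where $\theta=\alpha$, the paper itself relies on the lower-bound argument of Chen--Zhang--Zhu.

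The paper's remedy is to change the approximate system. It takes $\mu_\varepsilon(\rho)=\rho^\alpha+\varepsilon\rho^\theta$ and $\lambda_\varepsilon(\rho)=(\alpha-1)\rho^\alpha+\varepsilon(\theta-1)\rho^\theta$, with $\theta\le1$ chosen in \eqref{rgylt1} so that $\mathcal{P}_N(\theta)\ge\mathcal{P}_N(\alpha)$. This choice does three things:
\begin{itemize}
\item it preserves the BD structure;
\item it keeps the viscous quadratic form of the extra term positive for every $p<\mathcal{P}_N(\alpha)$, so all uniform estimates survive;
\item it yields $\varepsilon$-dependent bounds on $\partial_r\rho^{\theta-1+1/p}$ with $\theta+1/p<1$, which close the lower bound (Section 5).
\end{itemize}
The data must then satisfy $\rho_{0,\varepsilon}\ge\varepsilon^{1/(\alpha-\theta)}$. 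This keeps the extra piece $\varepsilon^{s_2}|\partial_r\rho_{0,\varepsilon}^{\theta-1+1/s_2}|^{s_2}$ of the initial effective velocity bounded uniformly in $\varepsilon$. Without such a device, or a new lower-bound argument, your approximate solutions, and hence the whole scheme, are not available on the full parameter range.

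Several smaller steps also fail as written.
\begin{itemize}
\item The inner cutoff is not harmless in the limit. Testing on $\{|x|>\varepsilon\}$ with $\phi\in C_c^\infty(\mathbb{R}^N\times[0,T))$ leaves boundary terms on $|x|=\varepsilon$, notably $\alpha\int_0^T\int_{\partial B_\varepsilon}\rho^\alpha\,\partial_r u\,(\phi\cdot\mathbf{n})\,dS\,dt$. No uniform estimate controls this term. The paper uses $u=0$ there, so the continuity equation gives $\alpha\rho^\alpha\partial_r u=-\partial_t\rho^\alpha$ on $\partial B_\varepsilon$, and after integrating in time the term is $O(\varepsilon^{N-1})$.
\item With $u=0$ at $r=1/\varepsilon$ you cannot also impose $\rho=1$ there, since $\rho$ is transported with no inflow.
\item You place \eqref{th110} in the wrong region: it is used for low density, not high density. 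It makes $\frac{s_2}{2}(2\gamma-\alpha-1)+1\ge0$ for $s_2$ close to $N$. That part of the pressure term is then bounded by the measure of $\{\rho\le c_1/2\}$, which the BD dissipation controls.
\item The 3D high-density part is not closed by an a priori $L^{s_1}$ gradient bound; that bound is what is being proved. It is closed by the nonlinear Gr\"onwall inequality of Lemma \ref{gnti}, in which $\|\rho\|_{L^\infty}^{\sigma}$ is interpolated against $Y(t)^{1+b}$ with $b\propto(\gamma-6\alpha+3)_+$.
\end{itemize}
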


\begin{remark}\la{rk1}
It is worth noting that, in Theorem \ref{th1}, when $\alpha \ge 1$, the condition
\be\nonumber\ba
\alpha-1+\frac{1}{s_1}>0,
\ea\ee
in \eqref{th13} is automatically satisfied.
When $\alpha<1$, the condition \eqref{th13} is equivalent to
\be\nonumber\ba
N < s_1 \le p_1 < \mathcal{P}_{N}(\alpha), \quad s_1 < \frac{1}{1-\alpha}.
\ea\ee
Such exponents $s_1$ and $p_1$ exist under the assumption \eqref{th11}.
Indeed, $\alpha > \frac{N-1}{N}$ implies
\be\nonumber\ba
\frac{1}{1-\alpha} > N,
\ea\ee
while the admissible range of $\alpha$ guarantees $\mathcal{P}_{N}(\alpha) > N$.
\end{remark}

The following result shows that the vacuum state of weak solutions in Theorem \ref{th1} will vanish in finite time.

\begin{theorem}\la{th3}
Let $N=2$ or $3$, and let $\om=\mathbb{R}^N$ or $\om = B_R$.
Suppose that the initial data $(\n_0,\mathbf{m}_0)$ are spherically symmetric and satisfy \eqref{czre} and
\be\la{th33}\ba
\na \n_0^{\alpha-1+\frac{1}{s_1}} \in L^{s_1}(\om), \quad \n_0^{-p_1+1}|\mathbf{m}_0|^{p_1} \in L^1(\om),
\ea\ee
for some $s_1$ and $p_1$ satisfying
\be\la{th34}\ba
N < s_1 \le p_1 < \mathcal{P}_{N}(\alpha), \quad \alpha-1+\frac{1}{s_1}>0,
\ea\ee
where we use the convention that $\n_0^{-p_1+1}|\mathbf{m}_0|^{p_1} = 0$ a.e. on $\om_0$.
Assume that $\alpha$ and $\gamma$ satisfy \eqref{th110} and \eqref{th11}.
Then there exist positive constants $T_0$ and $\n_{-}$ depending on the initial data such that the global spherically symmetric weak solution obtained in Theorem \ref{th1} satisfies
\be\la{th36}\ba
\inf_{x \in \ol{\om}} \n(x,t) \ge \n_{-}>0, \quad \textnormal{ for } t \in [T_0,\infty).
\ea\ee
\end{theorem}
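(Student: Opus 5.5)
We follow the strategy of Li--Li--Xin \cite{LLX}: the uniform-in-time a priori estimates from the proof of Theorem \ref{th1} are combined with an energy-dissipation argument. We work first with the smooth approximate solutions (strictly positive density, regularized data) used to build the weak solution. For these we derive bounds independent of the approximation parameter and of time, and then pass to the limit using the uniform convergence $\n^\ep\to\n$ on compact sets, which is available since $\n\in C(\ol\om\times(0,T))$. The uniform estimates we will rely on are the energy inequality, the BD entropy bound on $\sup_t\|\na\n^{\alpha-\frac12}\|_{L^2}$, and the time-integrated dissipation $\int_0^\infty\!\!\int|\na\n^{\frac{\ga+\alpha-1}{2}}|^2\,dx\,dt\le C$. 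To these we add the weighted $L^{s_1}$ estimate $\sup_t\|\na\n^{\alpha-1+\frac1{s_1}}\|_{L^{s_1}}\le C$, obtained from the coupled control of the velocity and the effective velocity $\mathbf{u}+\na\varphi(\n)$ in $L^{p_1}$, with $p_1<\mathcal P_N(\alpha)$. This last bound is the one that allows us to see the density pointwise, because $s_1>N$.

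\emph{Step 1: uniform H\"older continuity.} Set $\theta=\alpha-1+\frac1{s_1}>0$. Since $s_1>N$, Morrey's inequality gives
\[
|\n^\theta(x,t)-\n^\theta(y,t)|\le C|x-y|^{1-N/s_1},
\]
with $C$ independent of $t$. In the Cauchy case we also have $\n-1\in L^2$ and $\n\in L^\infty$ uniformly in time.

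\emph{Step 2: decay of $\n$ to its equilibrium.} We show that some quantity measuring the distance of $\n$ from its equilibrium becomes small for large time. In the whole-space case this is $\|\n^\theta-1\|_{L^\infty}$, and in the ball case it is $\|\n^\theta-\bar\n^\theta\|_{L^\infty}$ with $\bar\n$ the conserved mean density. We use $\int_0^\infty\|\na\n^{\frac{\ga+\alpha-1}{2}}\|_{L^2}^2\,dt<\infty$ together with a bound on the time derivative,
\[
\Big|\frac{d}{dt}\|\na\n^{\frac{\ga+\alpha-1}{2}}\|_{L^2}^2\Big|\le C,
\]
or, if that is unavailable, an equicontinuity in time of the form $\|\n(t)-\n(s)\|_{H^{-1}}\le C|t-s|^{1/2}$ obtained from the mass equation. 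Either route gives $\|\na\n^{\frac{\ga+\alpha-1}{2}}(t)\|_{L^2}\to0$ along all large times, or at least averaged over every unit time interval. We then interpolate between this $L^2$ gradient smallness, the uniform H\"older bound from Step 1, and the $L^2$ control of $\n-1$ (respectively of $\n-\bar\n$, via Poincar\'e in $B_R$). A Gagliardo--Nirenberg type interpolation, namely that an $L^2$-small function with bounded H\"older norm is $L^\infty$-small, yields the desired smallness for $t\ge T_0$.

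\emph{Step 3: conclusion.} Once $\|\n^\theta-1\|_{L^\infty}\le\frac12$ (or the analogous bound with $\bar\n^\theta$), we get $\n\ge \n_->0$ with $\n_-=2^{-1/\theta}$, or $\n_-=(\bar\n^\theta/2)^{1/\theta}$ in the ball, for all $t\ge T_0$. These bounds hold uniformly for the approximate solutions and transfer to the weak solution, giving \eqref{th36}. The main obstacle is Step 2, specifically obtaining smallness at \emph{every} large time rather than only on a set of large measure. If a direct time-derivative bound is not available, we would use the uniform-in-time H\"older continuity of $\n$ in $t$, which follows from the mass equation, the $L^\infty$ bound on $\n$, and $\sqrt\n\mathbf u\in L^\infty L^2$. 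This time regularity lets us extend smallness from a dense set of times to all $t\ge T_0$.
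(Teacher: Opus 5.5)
Your overall strategy can be made to work, but it takes a different route from the paper at the decisive step, and Step~2 is not correct as written.

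\textbf{The paper's route.} The paper first passes the uniform estimates to the weak solution by lower semicontinuity. Note that Lemma~4.1 gives $\sup_t\|\nabla\rho^{\alpha-1+1/s_2}\|_{L^{s_2}}$ only for some $s_2\in(N,\min\{s_1,4\}]$, not for $s_1$ itself; this is harmless, since any exponent above $N$ suffices. It then fixes $b>\max\{1,\alpha,\frac{\gamma+\alpha-1}{2}\}$ and works with the scalar $f(t)=\|\rho^b-1\|_{L^4}^4$:
\begin{itemize}
\item Gagliardo--Nirenberg and the dissipation bound give $f\in L^1(0,\infty)$.
\item The mass equation gives $|f'|\le C(\|\nabla\rho^b\|_{L^2}+\|\nabla\rho^{b-1/2}\|_{L^2})\|\sqrt\rho\mathbf{u}\|_{L^2}\le C$.
\item Lemma~2.3 then gives $f(t)\to0$.
\item Gagliardo--Nirenberg with the $L^{s_2}$ gradient bound upgrades this to $\|\rho^b-1\|_{L^\infty}\to0$.
\end{itemize}
The key choice is to differentiate a zeroth-order functional, which costs only one derivative of $\rho$. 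Your first option, a bound on $\frac{d}{dt}\|\nabla\rho^{(\gamma+\alpha-1)/2}\|_{L^2}^2$, would require $\nabla\operatorname{div}(\rho\mathbf{u})$, i.e.\ second derivatives of $\mathbf{u}$. These are not controlled uniformly in time in this degenerate setting, so drop that option.

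\textbf{Your second option.} The $H^{-1}$ bound does not give $\|\nabla\rho^{(\gamma+\alpha-1)/2}(t)\|_{L^2}\to0$ at every large time, as Step~2 asserts. What does work is what your last paragraph hints at, made precise as follows.
\begin{itemize}
\item Test the continuity equation against $h^{-N}\phi((x-\cdot)/h)$ and use the uniform spatial H\"older bound. This bound holds for $\rho$ itself, since $\rho^\theta$ is H\"older and $\rho$ is bounded. You get
\[
\|\rho(t)-\rho(s)\|_{L^\infty}\le C\big(h^{\beta}+|t-s|\,h^{-1-N/2}\big)\le C|t-s|^{\kappa}
\]
for some $\kappa>0$.
\item Fix $\eta>0$ and let $E_T=\{t\ge T:\|\nabla\rho^{(\gamma+\alpha-1)/2}(t)\|_{L^2}^2>\eta\}$. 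Its measure is at most $\eta^{-1}\int_T^\infty\|\nabla\rho^{(\gamma+\alpha-1)/2}\|_{L^2}^2\,dt$, which tends to $0$.
\item Hence every $t\ge T+\delta$ lies within $\delta$ of a time outside $E_T$.
\item At such a time $\|\rho-1\|_{L^\infty}$ is small. In 3D this follows from Sobolev. In 2D, use $\|g\|_{L^4}^2\le C\|g\|_{L^2}\|\nabla g\|_{L^2}$ with $g=\rho^{(\gamma+\alpha-1)/2}-1$ bounded in $L^2$; this gives $L^4$-smallness, not $L^2$-smallness. Then apply the H\"older bound.
\end{itemize}
So your ``dense set'' should be a set whose complement in $[T,\infty)$ has small measure. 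Compared with the paper, this route uses only the weak continuity equation against fixed test functions, and never differentiates a nonlinear functional along a weak solution. The price is the extra space--time interpolation.

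\textbf{Uniformity in $\varepsilon$.} Neither route gives a decay rate, so $T_0$ cannot be made uniform in $\varepsilon$. The bound $\int_0^\infty\|\nabla\rho_\varepsilon^{(\gamma+\alpha-1)/2}\|_{L^2}^2\,dt\le C$ does not make the tails $\int_T^\infty$ small uniformly in $\varepsilon$. Your Step~3 claim that the lower bounds ``hold uniformly for the approximate solutions'' is therefore unjustified. As the paper does, transfer the a~priori estimates to the weak solution and run the decay argument on $\rho$ itself, where the tail does vanish.
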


The following result establishes the global existence and large-time behavior of classical solutions.

\begin{theorem}\la{th4}
Let $N=2$ or $3$, and let $\om=\mathbb{R}^N$ or $\om = B_R$.
Suppose that the initial data $(\n_0,\mathbf{u}_0)$ are spherically symmetric and satisfy
\be\la{th42}\ba
0 < \underline{\n_0} \le \n_0 \le \hat{\n_0}, \quad
\n_0 - 1 \in H^3(\om), \quad \mathbf{u}_0 \in H^3(\om),
\ea\ee
where $\underline{\n_0}$ and $\hat{\n_0}$ are positive constants.
In the case $\OM = B_R$, we further assume that $\mathbf{u}_0 \in H_0^1(B_R)$.
Assume that $\alpha$ and $\ga$ satisfy
\be\la{th41}\ba
\begin{cases}
\frac{1}{2} < \alpha \le 1, \quad 1 < \ga <\infty & \textnormal{ if } N=2, \\
7-2\sqrt{10} < \alpha \le 1, \quad 1< \ga \le 6\alpha-3+\eta_1 & \textnormal{ if } N=3,
\end{cases}
\ea\ee
for some constant $\eta_1 \in (0,1]$ depending only on $\alpha$, $\mu$, and the initial data.
Then the problem \eqref{ns}, \eqref{i1}, \eqref{cz}, \eqref{nxxs} with the boundary condition \eqref{qkjbjtj} or \eqref{yjybjtj} has a unique global spherically symmetric classical solution $(\n,\mathbf{u})$ in $\om \times (0,\infty)$ satisfying
\be\la{th43}\ba
C^{-1} \le \n(x,t) \le C, \quad \textnormal{ for all } (x,t)\in \om \times[0,\infty),
\ea\ee
and for any $0 < T <\infty$,
\be\la{th44}\ba
\begin{cases}
\n - 1 \in C([0,T];H^3(\om)), \n_t \in C([0,T];H^2(\om)), \\
\mathbf{u} \in C([0,T];H^3(\om)) \cap L^2(0,T;H^4(\om)), \\
\mathbf{u}_t \in C([0,T];H^1(\om)) \cap L^2(0,T;H^2(\om)), \mathbf{u}_{tt} \in L^2(\om \times (0,T)),
\\
\sqrt{t} \na^4 \mathbf{u}, \sqrt{t} \na^2 \mathbf{u}_{t}, \sqrt{t} \mathbf{u}_{tt} \in L^\infty(0,T;L^2(\om)), \sqrt{t} \na \mathbf{u}_{tt} \in L^2(\om \times (0,T)),
\end{cases}
\ea\ee
where $C>0$ is a constant depending on $\alpha$, $\ga$, $\underline{\n_0}$, $\| \n_0 - 1 \|_{H^3(\om)}$, and $\| \mathbf{u}_0 \|_{H^2(\om)}$ but is independent of $T$.

Moreover, there exists $q \in (N,4]$ such that the following large-time behavior holds.
If $\om=\mathbb{R}^N$, then for any $s \in (2,\infty)$ and any $q_0 \in (2,q)$,
\be\la{th45}\ba
\lim_{t \to \infty} \left( \| \n(\cdot,t) - 1 \|_{L^s(\mathbb{R}^N)} \right.
& + \| \na \n(\cdot,t) \|_{L^2(\mathbb{R}^N) \cap L^{q_0}(\mathbb{R}^N)} + \| \n_t(\cdot,t) \|_{L^2(\mathbb{R}^N)}  \\
& \left. + \| \na \mathbf{u}(\cdot,t) \|_{H^1(\mathbb{R}^N)} + \| \mathbf{u}_t(\cdot,t) \|_{L^2(\mathbb{R}^N)} \right) = 0.
\ea\ee
If $\om=B_R$, there exist positive constants $C$ and $\hat{\de}$ independent of $t$ such that for any $t\ge 0$,
\be\la{th46}\ba
\| \na \n(\cdot,t) \|^2_{L^2(B_R)} + \| \n_t(\cdot,t) \|^2_{L^2(B_R)} + \| \mathbf{u}(\cdot,t) \|^2_{H^2(B_R)} + \| \mathbf{u}_t(\cdot,t) \|^2_{L^2(B_R)}
\le C e^{- \hat{\de} t},
\ea\ee
and for any $q_0 \in (2,q)$,
\be\la{th46a}\ba
\| \n(\cdot,t) - \ol{\n_0} \|_{W^{1,q_0}(B_R)} \le C e^{- \tilde{\de} t},
\ea\ee
where $\tilde{\de}>0$ depends on $q_0$ but is independent of $t$, and $\ol{\n_0}$ denotes
\be\nonumber\ba
\ol{\n_0} = \frac{1}{|B_R|} \int_{B_R} \n_0(x) dx.
\ea\ee
\end{theorem}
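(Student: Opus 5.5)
We plan to combine a standard local existence theory with uniform-in-time a priori estimates that close under the structural conditions \eqref{th41}. For initial data as in \eqref{th42} with strictly positive density, local existence and uniqueness of a classical solution follows from known results (e.g.\ the theory of \cite{N,S} adapted to density-dependent viscosities, as in \cite{Z,HMZ,CZZ1}). Global existence then reduces to proving, on any interval $[0,T]$, upper and lower bounds on $\n$ and $H^3$-type bounds with constants independent of $T$, and continuing the solution.

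\textbf{Step 1.} We first derive the basic energy estimate, using $K(\n)$ in \eqref{pe} for $\om=\mathbb{R}^N$. We then derive the BD entropy estimate from \eqref{bde}, which controls $\na\n^{\alpha-\frac12}\in L^\infty L^2$ and $\na\n^{\frac{\ga+\alpha-1}{2}}\in L^2L^2$ uniformly in time.

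\textbf{Step 2 (main obstacle).} Next we prove the uniform upper bound on $\n$. We would introduce the effective velocity $\mathbf{v}=\mathbf{u}+\na\varphi(\n)$ with $\varphi'(\n)=\alpha\n^{\alpha-2}$. In radial form $v$ satisfies a damped transport-type equation with damping coming from the pressure. We then derive the $L^p$ estimates $\int\n|u|^p$ and $\int\n|v|^p$ for $p$ close to but below $\mathcal{P}_N(\alpha)$. The range $p<\mathcal{P}_N(\alpha)$ is exactly where the cross terms are absorbed: the quadratic-form conditions define $f_i,g_i$, equivalently $\varphi,\psi$. The pressure is handled by splitting into the regions $\n\le2$ and $\n>2$. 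The restriction $\ga\le 6\alpha-3+\eta_1$ in 3D enters through the high-density pressure term. With weighted radial estimates (e.g.\ $r^{N-1}$ weights away from the origin, and Sobolev embedding $W^{1,s}\hookrightarrow L^\infty$ with $s>N$ near the origin), we obtain $\na\n^{\alpha-1+\frac1s}\in L^\infty L^s$ and hence $\n\le C$. Closing this coupled $u$--$v$ control uniformly in $t$ is where we expect the main difficulty.

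\textbf{Step 3.} For the lower bound, the restriction $\alpha\le1$ is used. With $\n$ bounded above and $\na\n^{\alpha-1+\frac1s}\in L^s$, we bound $\n^{-1}$ via $\na\n^{\alpha-1+\frac1s}$ when $\alpha-1+\frac1s<0$, or via a direct estimate of $\na\log\n$ when $\alpha=1$. Together with the positive initial lower bound, this gives \eqref{th43} uniformly in $T$. The density is then uniformly parabolic-nondegenerate, and standard higher-order estimates give \eqref{th44}. These are Hoff-type $\dot{\mathbf u}$ estimates, followed by $H^2,H^3$ energy estimates for $\mathbf u$ and $\n-1$, with $t$-weights to reach $\na^4\mathbf u$ and $\mathbf u_{tt}$. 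Uniqueness holds since the solution is classical with positive density.

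\textbf{Step 4 (large time).} From the energy and BD estimates we get
\[
\int_0^\infty\big(\|\na \mathbf u\|_{L^2}^2+\|\na\n\|_{L^2}^2\big)dt<\infty .
\]
Combined with bounds on the time derivatives of these quantities from the higher estimates, this gives decay of $\|\na\n\|_{L^2}$, $\|\na\mathbf u\|_{H^1}$, $\|\mathbf u_t\|_{L^2}$ and $\|\n_t\|_{L^2}$ in $\mathbb{R}^N$. We then interpolate with the uniform $W^{1,q}$ bound on $\n$ from Step 2, with $q\in(N,4]$, to get $L^s$ decay for $s>2$ and $L^{q_0}$ decay of $\na\n$ for $q_0<q$. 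This yields \eqref{th45}. In $B_R$, Poincaré's inequality, valid since $\mathbf u=0$ on the boundary and $\n-\ol{\n_0}$ has zero mean, upgrades these to a differential inequality $E'+\hat\de E\le0$ for a suitable Lyapunov functional $E$. This gives \eqref{th46}, and interpolation gives \eqref{th46a}.
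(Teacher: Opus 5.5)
Your Steps 1, 2 and 4 follow the paper's route: energy and BD estimates, coupled weighted $L^s$ control of $u$ and the effective velocity with the pressure split by density level and closed by a nonlinear Gr\"onwall inequality, then time-integrability plus Lipschitz-in-time bounds for the decay. One correction: for the upper bound you need $s\in(N,4]$ with $\alpha-1+\frac1s>0$, not $s$ near $\mathcal{P}_N(\alpha)$.

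\textbf{Gap in Step 3 (time-uniform lower bound).} Your argument needs a time-uniform bound on $\nabla\rho^{\alpha-1+\frac1s}$ in $L^s$ with $s>N$ and $\alpha-1+\frac1s<0$, i.e.\ $s>\frac{1}{1-\alpha}$. The $u$--$w$ estimates, however, are only available for $s<\mathcal{P}_N(\alpha)$, and this window can be empty inside the range \eqref{th41}:
\begin{itemize}
\item For $N=3$: since $g_1(3)=7-2\sqrt{10}$, one has $\mathcal{P}_3(7-2\sqrt{10})=3$, while $\frac{1}{1-\alpha}=\frac{1}{2\sqrt{10}-6}\approx 3.08$ there. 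So for $\alpha$ slightly above $7-2\sqrt{10}$, every admissible $s$ gives a positive exponent.
\item At $\alpha=1$: the exponent is $\frac1s>0$ for every $s$. The weighted quantities $\int\rho|u|^s$ and $\int\rho|w|^s$ control $\rho\,|\partial_r\log\rho|^s=s^s|\partial_r\rho^{1/s}|^s$, not $\partial_r\log\rho$ itself. Nothing in your plan produces the asserted estimate on $\nabla\log\rho$.
\item Where the window is nonempty: $s>\frac{1}{1-\alpha}$ typically exceeds $4$. That is beyond the range where the time-uniform pressure estimate closes, since it uses $(1+|u|^2)^{\frac{s-2}{2}}-1\le\min\{|u|^2,|u|^{s-2}\}$. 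This step would need a separate argument.
\end{itemize}

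\textbf{How the paper avoids this.} It never uses a negative exponent. It gets the uniform lower bound from the large-time behaviour, exactly as in the vacuum-vanishing proof of Theorem \ref{th3}.
\begin{itemize}
\item For $b$ large, $\int_0^\infty\|\rho^b-1\|_{L^4}^4\,dt\le C$, by Gagliardo--Nirenberg and the BD dissipation $\int_0^\infty\|\nabla\rho^{\frac{\gamma+\alpha-1}{2}}\|_{L^2}^2\,dt\le C$.
\item The map $t\mapsto\|\rho^b-1\|_{L^4}^4$ is uniformly Lipschitz by the continuity equation, so it tends to zero by Lemma \ref{zkxsyl}.
\item Interpolating with the uniform bound on $\|\nabla\rho^b\|_{L^s}$, $s\in(N,4]$, gives $\|\rho^b-1\|_{L^\infty}\to0$. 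That bound only needs the positive-exponent estimate already used for $\rho\le C$.
\item Hence $\rho\ge\rho_->0$ for $t\ge T_0$. On $[0,T_0]$, the finite-time positive lower bound of the global classical solution suffices.
\end{itemize}
This covers the whole range \eqref{th41}, including $\alpha=1$ and $\alpha$ near $7-2\sqrt{10}$. You should replace Step 3 with this argument.
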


\begin{remark}\la{rk2}
We clarify the meaning of the boundary condition \eqref{yjybjtj} in the bounded-domain case.
Since $\mathbf{u}(x,t) = u(r,t) \frac{x}{r}$, the continuity equation can be written in the weak form
\be\la{rk21}\ba
\int_0^R \n \tilde{\varphi} r^{N-1} dr |_{t_1}^{t_2} = \int_{t_1}^{t_2} \int_0^R (\n \tilde{\varphi}_t + \n u \tilde{\varphi}_r ) r^{N-1} dr dt,
\ea\ee
for any $\tilde{\varphi} \in C^1( [0,R] \times [t_1,t_2] )$.
By approximation, \eqref{rk21} also holds for Lipschitz continuous test functions.
Let $\tilde{\varphi}_1(t)$ and $\tilde{\varphi}_2(r)$ be Lipschitz continuous functions satisfying $\tilde{\varphi}_1(t) \equiv 1$ on $[t_1,t_2]$ and
\be\nonumber\ba
\tilde{\varphi}_2(r) =
\begin{cases}
1 \quad & r \in [0,R-\de], \\
1 - \frac{1}{\de}(r - (R-\de)) & r \in [R-\de,R].
\end{cases}
\ea\ee
Choosing $\tilde{\varphi}(r,t) = \tilde{\varphi}_1(t) \tilde{\varphi}_2(r)$ in \eqref{rk21} and using conservation of mass, we obtain
\be\nonumber\ba
\frac{1}{\de} \left| \int_{t_1}^{t_2} \int_{R-\de}^R \n u r^{N-1} dr dt \right|
& \le \left| \int_{R-\de}^R \n(r,t_2) (\tilde{\varphi}_2(r) - 1) r^{N-1} dr \right. \\
& \qquad \qquad - \left. \int_{R-\de}^R \n(r,t_1) (\tilde{\varphi}_2(r) - 1) r^{N-1} dr \right| \to 0,
\ea\ee
as $\de \to 0$.
This shows that $(\n u)(R,t)=0$ in the sense of trace.
\end{remark}

\begin{remark}\la{rk3}
For the one-dimensional problem \eqref{ns} with $\mu(\n) = \n^\alpha$, $\alpha>\frac{1}{2}$, and $\lam(\n)=0$, Li-Li-Xin \cite{LLX} and Jiu-Xin \cite{JX} established the global existence of weak solutions and proved that the vacuum state of such solutions will vanish in finite time in bounded domains and in the whole space, respectively.

Subsequently, Huang-Meng-Zhang \cite{HMZ} proved that the vacuum state of spherically symmetric weak solutions vanishes in finite time in two- and three-dimensional balls under the assumptions
\be\nonumber\ba
& N=2, \quad 1 \le \alpha < 7.46, \quad \ga>1, \quad \ga \ge 2\alpha-1; \\
& N=3, \quad 1 \le \alpha < 5.81, \quad \ga>1, \quad 2\alpha-1 \le \ga < 3\alpha-1.
\ea\ee
Therefore, Theorem \ref{th3} generalizes and extends the results in \cite{HMZ,LLX,JX}.
\end{remark}

\begin{remark}\la{rk4}
In \cite{HMZ}, the authors established the large-time behavior of classical solutions in two- and three-dimensional balls under the conditions
\be\nonumber\ba
& N=2, \quad 0.54 \le \alpha \le 1, \quad \ga>1; \\
& N=3, \quad 0.689 \le \alpha \le 1, \quad 1 < \ga < 3\alpha-1.
\ea\ee
Observe that
\be\nonumber\ba
7-2\sqrt{10} \approx 0.6754 < 0.689,
\ea\ee
and that, for $\alpha>\frac{2}{3}$,
\be\nonumber\ba
3\alpha-1<6\alpha-3.
\ea\ee
Consequently, Theorem \ref{th4} generalizes and extends the large-time behavior result in \cite{HMZ}.
\end{remark}

We now make some comments on the analysis of this paper.
For brevity, we focus on the whole-space case and indicate only the necessary modifications for the problem in the ball $B_R$.
We first discuss the global existence of weak solutions and finite-time vanishing of vacuum.
The proof is based on the construction of a family of global smooth approximate solutions and the derivation of global a priori estimates.
To this end, for any $0<\ep<1$, we consider the approximate system:
\be\ba\nonumber
\begin{cases}
(\rho_\ep)_t + \div(\rho_\ep \mathbf{u}_\ep) = 0, \\
(\n_\ep \mathbf{u}_\ep)_t + \div(\n_\ep \mathbf{u}_\ep \otimes \mathbf{u}_\ep) - \div( \mu_\ep(\n_\ep) \mathbb{D}\mathbf{u}_\ep) - \na( \lambda_\ep(\n_\ep) \div \mathbf{u}_\ep ) + \na \n_\ep^\ga = 0,
\end{cases}
\ea\ee
in the exterior domain
\be\nonumber\ba
\om_\ep \triangleq \{ x \in \mathbb{R}^N \mid \ep<|x|<\infty \}.
\ea\ee
The viscosity coefficients are given by
\be\nonumber\ba
\mu_\ep(\n_\ep) = \n_\ep^\alpha + \ep \n_\ep^\theta, \quad \lam_\ep(\n_\ep) = (\alpha-1) \n_\ep^\alpha + \ep (\theta-1) \n_\ep^\theta,
\ea\ee
where $\theta$ is defined in (\ref{rgylt1}) and satisfies
\be\nonumber\ba
\frac{N-1}{N} < \theta \le 1, \quad \mathcal{P}_{N}(\alpha) \le \mathcal{P}_{N}(\theta).
\ea\ee
For smooth spherically symmetric initial data with density away from vacuum, the local existence and uniqueness of spherically symmetric classical solutions to the approximate system can be established by standard arguments similar to those in \cite{CZZ1,CZZ2,JR,LPZ,ZZ}.
To extend the local classical solution globally in time and prove that the vacuum state of weak solutions will vanish in finite time, we need to derive global a priori estimates.
A central step is to obtain the upper and lower bounds for the density.
Using the fact that $\alpha,\theta>\frac{N-1}{N}$, we first derive the standard energy estimates and BD entropy estimates.
Together with Sobolev inequalities, these estimates yield higher integrability of the density.
However, the approximate radial profiles $(\n_\ep,u_\ep)$ are defined only on $(\ep,\infty)$.
A direct application of Sobolev inequalities on this interval may therefore produce constants depending on $\ep$.
To avoid this difficulty, we extend the approximate density to $(0,\infty)$ by reflection across $r=\ep$.
Then, by a direct calculation, we obtain for any $2 \le p <\infty$,
\be\la{ca1}\ba
\sup_{0\le t \le T} \int_\ep^\infty |\n_\ep^{\alpha-\frac{1}{2}}-1|^p r dr \le C(p) \quad \text{ if } N=2,
\ea\ee
and
\be\la{ca2}\ba
\sup_{0\le t \le T} \int_\ep^\infty \left( |\n_\ep^{\alpha-\frac{1}{2}}-1|^2 + |\n_\ep^{\alpha-\frac{1}{2}}-1|^6 \right) r^2 dr \le C \quad \text{ if } N=3.
\ea\ee
These estimates, combined with the Gagliardo-Nirenberg inequality and the one-dimensional Sobolev inequality, imply that there exist positive constants $R_0 \ge 2$, $c_1$, and $c_2$ independent of $\ep$ and $T$ such that
\be\la{ca3}\ba
0<c_1 \le \n_\ep(r,t) \le c_2 \quad \textnormal{ for all } (r,t) \in [R_0,\infty) \times (0,T).
\ea\ee
Thus, the density is uniformly bounded from above and away from zero outside a fixed neighborhood of the origin.
Using (\ref{ca3}), we further obtain for any $2 \le p \le 6$,
\be\la{ca30}\ba
\left( \int_\ep^{R_0} \chi_{(\n_\ep \le \frac{c_1}{2})} r^{N-1} dr \right)^{\frac{2}{p}}
\le C \int_\ep^{R_0} |\p_r \n_\ep^{\frac{\ga+\alpha-1}{2}}|^{2} r^{N-1} dr.
\ea\ee
These estimates play a crucial role in the subsequent analysis.
In addition, by combining the BD entropy estimates, the one-dimensional Sobolev inequality, the reflected extension of the approximate density, and the Hardy-type inequality in Lemma \ref{hdi}, we can derive the following estimates:
\be\la{ca4}\ba
\sup_{0 \le t \le T} \| \n_\ep r^{\xi} \|_{L^\infty(\ep,R_0)} \le C(\xi) \quad \text{ for any } \xi>0, \quad \text{ if } N=2,
\ea\ee
\be\la{ca6}\ba
\sup_{0 \le t \le T} \| \n_\ep^{2\alpha-1} r \|_{L^\infty(\ep,R_0)} \le C, \quad \text{ if } N=3,
\ea\ee
and for any $t \in [0,T]$,
\be\la{ca61}\ba
\int_\ep^{R_0} \chi_{(\n_\ep > 2c_2)} \n_\ep^{\ga+\alpha-1}(r,t) dr
\le C \int_\ep^{R_0} |\p_r \n_\ep^{\frac{\ga+\alpha-1}{2}}(r,t)|^2 r^2 dr.
\ea\ee
We next estimate the density in the interior region $(\ep,R_0)$.
The key point is to obtain an $L^\infty(0,T;L^{p}(\ep,R_0))$ estimate, with $p>N$, for
\be\nonumber\ba
r^{\frac{N-1}{p}} \p_r \n_\ep^{\alpha-1+\frac{1}{p}}.
\ea\ee
This estimate follows from the corresponding estimates for $(\n_\ep r^{N-1})^{\frac{1}{p}} u_\ep$ and $(\n_\ep r^{N-1})^{\frac{1}{p}} w_\ep$, where $w_\ep$ is the effective velocity defined in (\ref{bjyxsd}).
Let $s_1$ and $p_1$ be the initial-data exponents in Theorem \ref{th1}.
Under the assumptions of Theorem \ref{th1}, we may choose $s_2>N$ sufficiently close to $N$ such that
\be\la{ca60}\ba
N < s_2 \le \min\left\{s_1,4\right\}, \quad \frac{1}{s_2} \ge \frac{\alpha+1}{2} - \ga.
\ea\ee
Multiplying $(\ref{bjnsqdc})_2$ by $u_\ep (1+|u_\ep|^2)^{\frac{s_2-2}{2}} r^{N-1}$, integrating by parts over $(\ep,\infty)$, and using Young's inequality and the fact that $N<s_2 \le s_1<\mathcal{P}_{N}(\alpha)\le\mathcal{P}_{N}(\theta)$, we obtain
\be\la{ca8}\ba
& \frac{d}{dt} \int_\ep^\infty \frac{1}{s_2} \n_\ep \left( (1+|u_\ep|^2)^{\frac{s_2}{2}}-1 \right) r^{N-1} dr + \mathcal{D}_{\ep,s_2}(t) \\
& \le - \int_\ep^\infty \p_r (\n_\ep^\ga) u_\ep (1+|u_\ep|^2)^{\frac{s_2-2}{2}} r^{N-1} dr,
\ea\ee
where $\mathcal{D}_{\ep,s_2}(t)$ denotes the corresponding nonnegative viscous dissipation.

In addition, the potential energy $K(\n_\ep)$ satisfies
\be\la{ca9}\ba
\frac{d}{dt} \int_\ep^\infty K(\n_\ep) r^{N-1} dr = \int_\ep^\infty \p_r(\n_\ep^\ga) u_\ep r^{N-1} dr.
\ea\ee
Adding (\ref{ca9}) to (\ref{ca8}) yields
\be\la{ca10}\ba
& \frac{d}{dt} \int_\ep^\infty \left[ \frac{1}{s_2} \n_\ep \left( (1+|u_\ep|^2)^{\frac{s_2}{2}}-1 \right) + K(\n_\ep) \right] r^{N-1} dr + \mathcal{D}_{\ep,s_2}(t) \\
& \le - \int_\ep^\infty \p_r (\n_\ep^\ga) \left[ u_\ep (1+|u_\ep|^2)^{\frac{s_2-2}{2}} - u_\ep \right] r^{N-1} dr.
\ea\ee
We next estimate the pressure term in (\ref{ca10}).
We need to handle the singularities at the origin and at infinity simultaneously.
Choose a smooth cutoff function $\eta(r) \in C^\infty([0,\infty))$ satisfying
\be\nonumber\ba
0 \le \eta(r) \le 1, \quad
\eta(r) =
\begin{cases}
0 & \text{ if } 0 \le r \le R_0, \\
1 & \text{ if } r \ge 2 R_0.
\end{cases}
\ea\ee
Then
\be\la{ca11}\ba
& - \int_\ep^\infty \p_r (\n_\ep^\ga) \left[ u_\ep (1+|u_\ep|^2)^{\frac{s_2-2}{2}} - u_\ep \right] r^{N-1} dr \\
& = - \int_\ep^\infty \p_r \left(\n_\ep^\ga - \eta(r)\right) \left[ u_\ep (1+|u_\ep|^2)^{\frac{s_2-2}{2}} - u_\ep \right] r^{N-1} dr \\
& \quad - \int_\ep^\infty \p_r \left( \eta(r) \right) \left[ u_\ep (1+|u_\ep|^2)^{\frac{s_2-2}{2}} - u_\ep \right] r^{N-1} dr.
\ea\ee
A direct calculation shows that, for $N < s_2 \le 4$,
\be\la{ca12}\ba
(1+|u_\ep|^2)^{\frac{s_2-2}{2}} - 1 \le \min\{ |u_\ep|^2,|u_\ep|^{s_2-2}\}.
\ea\ee
After integrating by parts in (\ref{ca11}), we estimate the resulting terms using (\ref{ca3})--(\ref{ca61}), (\ref{ca12}), and Young's inequality.
Substituting the resulting estimate into (\ref{ca10}) and integrating over $(0,T)$, we obtain
\be\la{ca120}\ba
& \sup_{0 \le t \le T} \int_\ep^\infty \n_\ep |u_\ep|^{s_2} r^{N-1} dr
+ \int_0^T \mathcal{D}_{\ep,s_2}(t) dt \\
& \le C + C \int_0^T \left( \int_\ep^\infty \left( \n_\ep |u_\ep|^{s_2} + \n_\ep |w_\ep|^{s_2} \right) r^{N-1} dr \right)^{1+\tilde{b}} \left( \int_\ep^{R_0} |\p_r \n_\ep^{\frac{\ga+\alpha-1}{2}}|^{2} r^{N-1} dr \right) dt,
\ea\ee
for some $\tilde{b} \ge 0$.

On the other hand, multiplying $(\ref{bjnsqdc})_2$ and (\ref{bjyxsdfc1}) by $|u_\ep|^{s_2-2} u_\ep r^{N-1}$ and $|w_\ep|^{s_2-2} w_\ep r^{N-1}$, respectively, integrating by parts over $(\ep,\infty)$, and subtracting the momentum identity from the effective-velocity identity, we arrive at
\be\la{ca13}\ba
& \frac{1}{s_2} \frac{d}{dt} \int_\ep^\infty \left( \n_\ep |w_\ep|^{s_2} - \n_\ep |u_\ep|^{s_2} \right) r^{N-1} dr \\
& = - \int_\ep^\infty \frac{\ga \n_\ep^{\ga}}{ \alpha \n_\ep^{\alpha-1} + \ep \theta \n_\ep^{\theta-1} } (w_\ep - u_\ep) \left( w_\ep |w_\ep|^{s_2-2} - u_\ep |u_\ep|^{s_2-2} \right) r^{N-1} dr + \tilde{\mathcal{D}}_{\ep,s_2}(t),
\ea\ee
where $\tilde{\mathcal{D}}_{\ep,s_2}(t)$ consists of viscous terms that are controlled by the dissipation term $\mathcal{D}_{\ep,s_2}(t)$.
Since the function $f(z) = z |z|^{s_2-2}$ is strictly increasing and $\n_\ep>0$, we have
\be\la{ca14}\ba
- \int_\ep^\infty \frac{\ga \n_\ep^{\ga}}{ \alpha \n_\ep^{\alpha-1} + \ep \theta \n_\ep^{\theta-1} } (w_\ep - u_\ep) \left( w_\ep |w_\ep|^{s_2-2} - u_\ep |u_\ep|^{s_2-2} \right) r^{N-1} dr \le 0.
\ea\ee
Integrating (\ref{ca13}) over $(0,T)$ and using (\ref{ca14}), we combine the resulting estimate with (\ref{ca120}).
Applying a Gr\"onwall-type inequality together with the BD entropy estimates and using the definition of $w_\ep$, we can get the time-uniform $L^\infty(0,T;L^{s_2}(\ep,\infty))$ bound for $r^{\frac{N-1}{s_2}} \p_r \n_\ep^{\alpha-1+\frac{1}{s_2}}$, provided that $\tilde{b}$ is suitably small.
This, together with the fundamental theorem of calculus, gives the desired time-uniform upper bound for the approximate density:
\be\la{ca7}
\sup_{0 \le t \le T} \| \n_\ep \|_{L^\infty(\ep,\infty)} \le C.
\ee
Moreover, using (\ref{ca3}), (\ref{ca7}), the fact that $\theta \le 1$, and the Lagrangian coordinates, and adapting the arguments in \cite{CZZ1,CZZ2,HMZ,Lei}, we can obtain a strictly positive lower bound for $\n_\ep$.
Once these upper and lower bounds are available, the standard higher-order estimates can be derived by adapting the arguments in \cite{CZZ1,CZZ2,HMZ,Lei}.
Consequently, the local classical solution $(\n_\ep,\mathbf{u}_\ep)$ can be extended globally in time.
Using the $\ep$-uniform estimates and adapting the compactness arguments in \cite{GJX,LX1,MV}, we pass to the limit as $\ep \to 0$ and obtain the global existence of weak solutions.
Furthermore, combining the time-uniform estimates with the standard arguments, we prove that the vacuum state of weak solutions will vanish in finite time.

In addition, for initial data away from vacuum, the same method also yields time-uniform upper and lower bounds for the density and, in turn, the large-time behavior of the corresponding classical solution.

The rest of this paper is organized as follows:
In Section 2, we recall some fundamental inequalities and known facts.
In Section 3, we introduce the approximate system and derive the basic a priori estimates.
In Section 4, we derive a time-uniform upper bound for the density.
In Section 5, we derive an $\ep$-dependent positive lower bound for the density of the approximate solutions.
In Section 6, we prove the global existence of smooth solutions to the approximate system and establish the compactness results needed for passing to the limit.
In Section 7, we present the proofs of Theorems \ref{th1} and \ref{th3}.
In Section 8, we prove Theorem \ref{th4}.
Finally, in Section 9, we give a detailed construction of the smooth approximate initial data.

We treat only the Cauchy problem in the main text.
The necessary modifications for the bounded domain case are sketched in Sections 7 and 8.

\section{Preliminaries}
In this section, we collect several known results and elementary inequalities that will be used frequently in subsequent analysis.

We first recall the following Gagliardo-Nirenberg inequalities (see \cite{NI}).
\begin{lemma}\la{gn1}
Let $p,q,r\in [1,\infty]$, and let $j$ and $m$ be integers satisfying $0\le j < m$.
Assume that
\be\nonumber\ba
\frac{1}{p} = \frac{j}{N} + a \left( \frac{1}{r} - \frac{m}{N} \right) + \frac{1-a}{q},
\ea\ee
where $a \in [\frac{j}{m},1]$.
If $m-j-\frac{N}{r}$ is a non-negative integer, we assume in addition that $a<1$.
If $j=0$, $mr<N$, and $q=\infty$, we further assume that either $f$ tends to zero at infinity or that $f \in L^k$ with $1 \le k <\infty$.
Then for $f \in L^q(\mathbb{R}^N) \cap D^{m,r}(\mathbb{R}^N)$, there exists a positive constant $C$ depending only on
$p$, $q$, $r$, $N$, $j$, $m$, and $a$ such that
\be\ba\la{gn11}
\| \na^j f \|_{L^p(\mathbb{R}^N)} \le C \| f \|^{1-a}_{L^q(\mathbb{R}^N)} \| \na^m f\|^{a}_{L^r(\mathbb{R}^N)}.
\ea\ee
\end{lemma}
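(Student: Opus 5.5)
This is the classical Gagliardo--Nirenberg inequality (Nirenberg's 1959 theorem). The paper cites it rather than proves it, so I outline the standard route.

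\emph{Step 1: reduction to $C_c^\infty$.} I first prove \eqref{gn11} for $f\in C_c^\infty(\mathbb{R}^N)$. The general case $f\in L^q\cap D^{m,r}$ then follows by mollification and truncation. The extra hypothesis when $j=0$, $mr<N$, $q=\infty$ (decay at infinity, or $f\in L^k$) is needed exactly here: it excludes adding nonzero constants, which leave the right-hand side unchanged. Scaling $f(\lambda x)$ shows the exponent relation is necessary, and it lets me normalize $\|f\|_{L^q}=\|\na^m f\|_{L^r}=1$ whenever this is convenient.

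\emph{Step 2: the two endpoint cases.}
\begin{itemize}
\item For $a=1$ the inequality is the Sobolev embedding $\|\na^j f\|_{L^p}\le C\|\na^m f\|_{L^r}$ with $\frac1p=\frac jN+\frac1r-\frac mN$. I prove it in two parts. For $r=1$, I apply the one-dimensional fundamental theorem of calculus in each coordinate and combine with the Loomis--Whitney/H\"older product argument. For $r>1$, I apply the $r=1$ case to $|f|^\gamma$, or alternatively use the Riesz potential with Hardy--Littlewood--Sobolev.
\item The case $a=j/m$ is the interpolation of intermediate derivatives, $\|\na^j f\|_{L^p}\le C\|f\|_{L^q}^{1-j/m}\|\na^m f\|_{L^r}^{j/m}$. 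I obtain it by induction on $m$ from the basic case $j=1$, $m=2$, namely $\|\na f\|_{L^p}^2\le C\|f\|_{L^q}\|\na^2 f\|_{L^r}$ with $\frac2p=\frac1q+\frac1r$. That basic case comes from writing $\int |\p_i f|^p=-\int f\,\p_i(\p_i f|\p_i f|^{p-2})$ and applying H\"older. Where this integration by parts degenerates (the extreme exponents), I instead prove it one-dimensionally via a Landau-type estimate on intervals and then integrate over the remaining variables.
\end{itemize}

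\emph{Step 3: intermediate $a\in(j/m,1)$.} I interpolate between the two endpoints. First use Step 2 to bound $\|\na^j f\|$ in a suitable $L^{p_1}$ by $\|f\|_{L^q}^{1-j/m}\|\na^m f\|_{L^r}^{j/m}$. Then bound $\|\na^j f\|_{L^{p_2}}$ by Sobolev embedding from $\na^m f$. Finally apply H\"older's inequality $\|g\|_{L^p}\le\|g\|_{L^{p_1}}^{1-\theta}\|g\|_{L^{p_2}}^{\theta}$ and choose $\theta$ so that the exponents match the stated relation. The scaling identity guarantees this bookkeeping is consistent.

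\emph{Main obstacle.} The delicate part is the borderline case where $m-j-\frac Nr$ is a nonnegative integer. There the embedding into $L^\infty$ (or the BMO-type endpoint) fails at $a=1$, which is why $a<1$ is imposed. To handle it I would interpolate strictly inside using Step 3, taking $p_2$ large but finite, and never invoke the failing endpoint. The resulting constant depends on $a$, which is consistent with the statement.
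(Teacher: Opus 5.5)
The paper gives no proof of this lemma; it is quoted from Nirenberg~\cite{NI}, so your outline has to stand on its own. Step~2 and the subcritical part of Step~3 are fine in outline, but Step~3 has a genuine gap.

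Interpolating between $a=j/m$ and $a=1$ requires, at $a=1$, a bound $\|\nabla^j f\|_{L^{p_2}}\le C\|\nabla^m f\|_{L^r}$. Scaling $f(\lambda x)$ forces $\frac{1}{p_2}=\frac1r-\frac{m-j}{N}$. So this endpoint exists only in the subcritical range $(m-j)r<N$, whereas the lemma also covers $(m-j)r\ge N$.

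In the critical case $(m-j)r=N$, the endpoint would be $p_2=\infty$. The hypothesis $a<1$ excludes this, and for $r>1$ the embedding is in fact false.

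In the supercritical case $(m-j)r>N$ there is no endpoint at all, since $\frac1{p_2}<0$. The requirement $p\le\infty$ then confines $a$ to $[\frac jm,a^*]$ with $a^*=\frac{j/N+1/q}{m/N+1/q-1/r}<1$, whether or not $m-j-\frac Nr$ is an integer.

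Your \emph{main obstacle} paragraph only treats the case where $m-j-\frac Nr$ is a nonnegative integer, and its remedy is circular. For finite $p_2$ and $(m-j)r\ge N$, the same scaling shows that $\|\nabla^j f\|_{L^{p_2}}$ cannot be bounded by $\|\nabla^m f\|_{L^r}$ alone. A bound carrying an exponent $a_2<1$ on $\|\nabla^m f\|_{L^r}$ is an instance of the lemma you are trying to prove.

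What is missing is an independent estimate at the upper end of the admissible range of $a$ in these regimes.

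For $j=0$, $m=1$, $r>N$, combine Morrey's oscillation bound on $B_\lambda(x)$ with the ball average: $|f(x)|\le C\lambda^{-N/q}\|f\|_{L^q}+C\lambda^{1-N/r}\|\nabla f\|_{L^r}$. Optimizing in $\lambda$ gives $\|f\|_{L^\infty}\le C\|f\|_{L^q}^{1-a^*}\|\nabla f\|_{L^r}^{a^*}$. H\"older between $L^q$ and $L^\infty$ then covers all $a\in[0,a^*]$.

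For $r=N$ you need a separate argument for each finite $p$, e.g.\ Gagliardo's $L^1$ inequality applied to $|f|^\gamma$, followed by H\"older.

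The general $(j,m)$ case can then be assembled from such first-order estimates applied to $\nabla^k f$, together with your Step~2, rather than from a Sobolev endpoint at $a=1$.

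These regimes are exactly the ones the paper relies on. \eqref{4x8} and \eqref{vv7} use $j=0$, $m=1$, $r=s_2>N$, $p=\infty$. For $N=2$, the interpolation steps in \eqref{ltb58} and \eqref{ltb62} are critical cases with $r=N=2$.
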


The following one-dimensional Sobolev inequality will be used frequently.
\begin{lemma}\la{ywsi}
Let $\ep \in (0,1)$ and $f \in W^{1,1}(\ep,\infty)$.
Then for any $\ep \le r < \infty$,
\be\la{ywsi1}\ba
|f(r)| \le \int_{r}^\infty |f'(s)| ds.
\ea\ee
Moreover, for any $\ep \le r_1 < r_2 < \infty$, we have
\be\la{ywsi2}\ba
|f(r)| \le \frac{1}{r_2-r_1} \int_{r_1}^{r_2} |f(s)| ds
+ \int_{r_1}^{r_2} |f'(s)| ds \quad \textnormal{ for all } r \in [r_1,r_2].
\ea\ee
\end{lemma}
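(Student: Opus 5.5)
The statement to prove is Lemma \ref{ywsi}, and both inequalities follow from the fundamental theorem of calculus for absolutely continuous functions.

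Since $f \in W^{1,1}(\ep,\infty)$, we may take the absolutely continuous representative of $f$. For every $\ep \le r < R < \infty$ it satisfies
\be\nonumber
f(r) = f(R) - \int_r^R f'(s)\,ds .
\ee

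For \eqref{ywsi1}, the key step is to show that $f(R) \to 0$ as $R \to \infty$.
\begin{enumerate}
\item Because $f' \in L^1(\ep,\infty)$, the limit $\lim_{R\to\infty} f(R) = f(\ep) + \int_\ep^\infty f'(s)\,ds$ exists.
\item Because $f \in L^1(\ep,\infty)$, this limit must be zero. Otherwise $|f|$ would be bounded below by a positive constant on a half-line, and $f$ could not be integrable.
\item Letting $R \to \infty$ in the representation gives $f(r) = -\int_r^\infty f'(s)\,ds$. Taking absolute values yields \eqref{ywsi1}.
\end{enumerate}
This limiting step is the only point needing care, and it is straightforward.

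For \eqref{ywsi2}, fix $r \in [r_1,r_2]$. For any $s \in [r_1,r_2]$ we have
\be\nonumber
|f(r)| \le |f(s)| + \left|\int_s^r f'(\tau)\,d\tau\right| \le |f(s)| + \int_{r_1}^{r_2} |f'(\tau)|\,d\tau .
\ee
Averaging this inequality in $s$ over $[r_1,r_2]$, that is, integrating in $s$ and dividing by $r_2-r_1$, gives \eqref{ywsi2} directly.

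For a general $f \in W^{1,1}$ one can equivalently argue first for smooth functions and then pass to the limit by density. This is not needed here, since we work with the absolutely continuous representative from the start.
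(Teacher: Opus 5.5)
Your proposal is correct and follows essentially the same route as the paper. Both arguments use the continuous (absolutely continuous) representative, get existence of $\lim_{R\to\infty} f(R)$ from $f' \in L^1$, show that limit is zero using $f \in L^1$, let $R\to\infty$ in the fundamental-theorem identity, and obtain the second inequality by averaging the pointwise bound $|f(r)| \le |f(s)| + \int_{r_1}^{r_2}|f'|$ over $s \in [r_1,r_2]$.
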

\begin{proof}
First, since $f \in W^{1,1}(\ep,\infty)$, it follows from \cite{AF,EL} that $f \in C([\ep,\infty))$ and
\be\la{ywp1}\ba
f(r) = f(M) - \int_r^M f'(s) ds \quad \text{ for all } \ep \le r < M < \infty .
\ea\ee
Thus, for any $\ep \le R_1<R_2<\infty$,
\be\la{ywp2}\ba
|f(R_1) - f(R_2)| = \left| \int_{R_1}^{R_2} f'(s) ds \right|
\le \int_{R_1}^{R_2} |f'(s)| ds.
\ea\ee
Since $f' \in L^1(\ep,\infty)$, the right-hand side of (\ref{ywp2}) tends to zero as $R_1, R_2 \to \infty$.
Hence, $\lim_{M \to \infty} f(M)$ exists.
Combining this with $f \in L^1(\ep,\infty)$ leads to
\be\la{ywp3}\ba
\lim_{M \to \infty} f(M) = 0.
\ea\ee
Letting $M \to \infty$ in (\ref{ywp1}), we obtain
\be\la{ywp4}\ba
f(r) = - \int_r^\infty f'(s) ds,
\ea\ee
which immediately yields (\ref{ywsi1}).

Next, for any $r, \hat{r} \in (r_1,r_2]$, we deduce from (\ref{ywp1}) that
\be\la{ywp6}\ba
|f(r)| = \left| f(\hat{r}) - \int_{r}^{\hat{r}} f'(s) ds \right|
\le |f(\hat{r})| + \int_{r_1}^{r_2} |f'(s)| ds.
\ea\ee
Integrating (\ref{ywp6}) with respect to $\hat{r}$ over $(r_1,r_2)$ and dividing by $r_2-r_1$, we arrive at (\ref{ywsi2}).
This completes the proof of Lemma \ref{ywsi}.
\end{proof}

Motivated by an idea from \cite{CZZ1}, we establish the following weighted estimates.
\begin{lemma}\la{hdi}
Let $\ep \in (0,1)$, $M \in (2,\infty)$, and let $g(r)$ be a function defined on $(\ep,M)$ such that $rg(r), rg'(r) \in L^{2}(\ep,M)$.
Then there exists a positive constant $C$ depending only on $M$ such that
\be\la{hdi1}\ba
\int_\ep^M |g(r)|^2 dr + \| r^{\frac{1}{2}} g(r) \|^2_{L^\infty(\ep,M)}
\le C \int_\ep^M \left( r^2 |g(r)|^2 + r^2 |g'(r)|^2 \right) dr.
\ea\ee
\end{lemma}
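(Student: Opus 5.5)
The plan is to prove \eqref{hdi1} in two steps. First we bound the pointwise quantity $r|g(r)|^2$ by a one-dimensional fundamental-theorem-of-calculus argument. Then we integrate that pointwise bound against $r^{-1}$ to recover $\int_\ep^M |g|^2\,dr$. Throughout, set
\[
A \triangleq \int_\ep^M \left( r^2 |g|^2 + r^2 |g'|^2 \right) dr .
\]
Since $rg, rg' \in L^2(\ep,M)$ and $r \ge \ep > 0$, $g \in W^{1,2}(\ep,M)$ for each fixed $\ep$, hence $g$ is absolutely continuous. All manipulations below are therefore justified, but the constants must not depend on $\ep$.

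\emph{Step 1 (anchor away from the origin).} On $[1,M]$ the weight satisfies $r^2 \ge 1$. Applying \eqref{ywsi2} of Lemma \ref{ywsi} with $r_1 = 1$, $r_2 = M$ together with Cauchy--Schwarz gives
\[
\sup_{[1,M]} |g| \le C(M)\, A^{1/2}.
\]
This yields $\sup_{[1,M]} r|g|^2 \le C A$, and we fix the value $|g(1)|^2 \le C A$.

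\emph{Step 2 (weighted pointwise bound near the origin).} For $r \in (\ep,1)$ we use
\[
\frac{d}{ds}\bigl(s g^2\bigr) = g^2 + 2 s g g' .
\]
Integrating from $r$ to $1$ gives
\[
r g(r)^2 = g(1)^2 - \int_r^1 g^2\,ds - 2\int_r^1 s g g'\,ds \le g(1)^2 + 2\int_r^1 s|g||g'|\,ds .
\]
The term $-\int_r^1 g^2\,ds$ has a favorable sign and is simply dropped. This is the key point: the unweighted integral $\int g^2$, which is not yet controlled, never has to be estimated here. For the cross term, $\int s|g||g'|\,ds \le \bigl(\int s^2 g^2\bigr)^{1/2}\bigl(\int g'^2\bigr)^{1/2}$ would bring in the uncontrolled $\int g'^2$, so that factorization is avoided. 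Instead, $2 s |g||g'| \le s^2 |g|^2 + s^2|g'|^2$ holds directly, and each piece is bounded by the integrand of $A$. Hence $r g(r)^2 \le C A$ on $(\ep,1)$. Combined with Step 1, this gives $\|r^{1/2} g\|_{L^\infty(\ep,M)}^2 \le C A$.

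\emph{Step 3 (the unweighted $L^2$ term).} Using the identity again, now integrating from $\ep$ to $1$, gives
\[
\int_\ep^1 g^2\,ds = g(1)^2 - \ep g(\ep)^2 - 2\int_\ep^1 s g g'\,ds \le g(1)^2 + \int_\ep^1 \left(s^2 g^2 + s^2 g'^2\right) ds \le C A .
\]
Here the boundary term $-\ep g(\ep)^2 \le 0$ is discarded by sign. On $[1,M]$ we have $g^2 \le r^2 g^2$, so that part is trivially bounded by $A$. Adding the two pieces yields \eqref{hdi1} with $C$ depending only on $M$.

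The main obstacle is uniformity in $\ep$. A naive Sobolev or Hardy argument on $(\ep,M)$ would produce $\ep$-dependent constants, or would require the unweighted $\int g'^2$ near the origin. The sign structure of $\frac{d}{ds}(s g^2)$, combined with anchoring at $r = 1$ and integrating toward the origin, is what removes this issue. The only delicate check is that every discarded term has the correct sign, which the computation above confirms.
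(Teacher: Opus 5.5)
The step that fails is the Young inequality you use in both Step 2 and Step 3, namely $2s|g||g'| \le s^2|g|^2+s^2|g'|^2$. For $s<1$ this is false. AM--GM with $a=s|g|$, $b=s|g'|$ controls $2s^2|g||g'|$, not $2s|g||g'|$, so you lose a factor $s$; to see it fail, take $|g|=|g'|\neq 0$ at some $s<1$. The weights do not balance: $s|g||g'|=|g|\cdot s|g'|$, so any splitting that produces $s^2|g'|^2$ must also produce the unweighted $|g|^2$.

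This is more than a misjustification, because the first inequality in your Step 3 display is actually false. Take $g(s)=s^{-1/2+\delta}-1$, so that $g(1)=0$. As $\varepsilon\to 0$:
\begin{itemize}
\item $\int_\varepsilon^1 g^2\,ds=\frac{1}{2\delta}+O(1)$;
\item $\int_\varepsilon^1 s^2|g'|^2\,ds=\frac{(1-2\delta)^2}{8\delta}+o(1)$;
\item $\int_\varepsilon^1 s^2g^2\,ds=O(1)$, and $\varepsilon g(\varepsilon)^2\to 0$.
\end{itemize}
For small $\delta$ the left side is therefore about four times the right side. This reflects the sharp constant $4$ in Hardy's inequality $\int_0^1 g^2\le 4\int_0^1 s^2|g'|^2$ for $g(1)=0$. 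So your ``key point'' is backwards: the favorable-sign term $-\int g^2$ must not be dropped, since it is exactly what absorbs the cross term.

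The repair is short: use $2s|g||g'|\le \tfrac12 g^2+2s^2|g'|^2$ instead.
\begin{itemize}
\item In Step 3 this gives $\int_\varepsilon^1 g^2\le g(1)^2+\tfrac12\int_\varepsilon^1 g^2+2\int_\varepsilon^1 s^2|g'|^2$. For each fixed $\varepsilon$ you have $\int_\varepsilon^1 g^2\le \varepsilon^{-2}\int_\varepsilon^1 r^2 g^2<\infty$, so you may absorb. This yields $\int_\varepsilon^1 g^2\le 2g(1)^2+4\int_\varepsilon^1 s^2|g'|^2\le CA$ with $C$ independent of $\varepsilon$.
\item In Step 2, keep $-\int_r^1 g^2$ and absorb the same way to get $rg(r)^2\le g(1)^2+2\int_r^1 s^2|g'|^2$. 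Alternatively, $|g(r)-g(1)|\le(\int_r^1 s^{-2}ds)^{1/2}(\int_r^1 s^2|g'|^2 ds)^{1/2}$ gives this bound directly.
\end{itemize}
Once repaired, your argument is essentially the paper's. The paper integrates $\frac{d}{dr}[(r-\varepsilon)(g-g(M))^2]$ over $(\varepsilon,M)$ and absorbs $\tfrac12\int|g-g(M)|^2$ exactly as above. It bounds $g(M)$ via \eqref{ywsi2} on $[M/2,M]$. It then obtains the pointwise bound by applying \eqref{ywsi2} to $rg^2$, together with the $L^2$ bound just proved.
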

\begin{proof}
First, integration by parts together with Young's inequality leads to
\be\nonumber\ba
\int_\ep^M |g(r)-g(M)|^2 dr & = -2 \int_\ep^M (r-\ep) (g(r)-g(M)) g'(r) dr \\
& \le \frac{1}{2} \int_\ep^M |g(r)-g(M)|^2 dr
+ C \int_\ep^M r^2 |g'(r)|^2 dr,
\ea\ee
which gives
\be\la{hd2}\ba
\int_\ep^M |g(r)-g(M)|^2 dr & \le C \int_\ep^M r^2 |g'(r)|^2 dr.
\ea\ee
Moreover, by (\ref{ywsi2}) and H\"older's inequality, we have
\be\nonumber\ba
|g(M)| & \le \frac{2}{M} \int_{\frac{M}{2}}^M |g(r)| dr + \int_{\frac{M}{2}}^M |g'(r)| dr \\
& \le \frac{4}{M^2} \int_{\frac{M}{2}}^M r |g(r)| dr + \frac{2}{M} \int_{\frac{M}{2}}^M r |g'(r)| dr \\
& \le C \int_\ep^M \left( r |g(r)| + r |g'(r)| \right) dr \\
& \le C \left( \int_\ep^M \left( r^2 |g(r)|^2 + r^2 |g'(r)|^2 \right) dr \right)^{\frac{1}{2}},
\ea\ee
which together with (\ref{hd2}) yields
\be\la{hd3}\ba
\int_\ep^M |g(r)|^2 dr & \le C \int_\ep^M \left( r^2 |g(r)|^2 + r^2 |g'(r)|^2 \right) dr.
\ea\ee
It follows from (\ref{ywsi2}), (\ref{hd3}), and Young's inequality that, for any $\ep \le r \le M$,
\be\la{hd1}\ba
r |g(r)|^2
& \le \frac{1}{M-\ep} \int_\ep^M r |g(r)|^2 dr
+ \int_\ep^M \left( |g(r)|^2 + 2r |g(r)| |g'(r)| \right) dr \\
& \le C \int_\ep^M \left( |g(r)|^2 + r^2 |g'(r)|^2 \right) dr \\
& \le C \int_\ep^M \left( r^2 |g(r)|^2 + r^2 |g'(r)|^2 \right) dr.
\ea\ee
Combining (\ref{hd1}) and (\ref{hd3}), we obtain (\ref{hdi1}) and complete the proof.
\end{proof}

The following lemma plays an important role in the proof of vacuum vanishing.
\begin{lemma}\la{zkxsyl}
Let $f(t) \in L^1(0,\infty) \cap W^{1,\infty}(0,\infty)$.
Then
\be\la{zy01}\ba
|f(t)| \to 0 \ \textnormal{ as } t \to \infty.
\ea\ee
\end{lemma}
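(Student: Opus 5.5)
I will argue by contradiction, using that $f$ is Lipschitz with constant $L \triangleq \|f'\|_{L^\infty(0,\infty)}$. Since $f \in W^{1,\infty}(0,\infty)$, after modification on a null set $f$ is Lipschitz continuous, with $|f(t)-f(s)| \le L|t-s|$ for all $t,s \ge 0$. If $L=0$, then $f$ is constant, and $f \in L^1(0,\infty)$ forces $f \equiv 0$. So I may assume $L>0$.

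Suppose (\ref{zy01}) fails. Then there exist $\delta>0$ and a sequence $t_k \to \infty$ with $|f(t_k)| \ge \delta$. Passing to a subsequence, I may assume $t_{k+1} - t_k \ge 2\delta/L$ and $t_1 \ge \delta/L$. On each interval $I_k \triangleq (t_k - \delta/(2L), t_k + \delta/(2L))$, the Lipschitz bound gives
\be\nonumber
|f(t)| \ge |f(t_k)| - L|t-t_k| \ge \frac{\delta}{2}.
\ee
The intervals $I_k$ are pairwise disjoint and contained in $(0,\infty)$, so
\be\nonumber
\int_0^\infty |f(t)|\,dt \ge \sum_{k} \int_{I_k} |f(t)|\,dt \ge \sum_k \frac{\delta}{2}\cdot\frac{\delta}{L} = \infty,
\ee
which contradicts $f \in L^1(0,\infty)$. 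This proves (\ref{zy01}).

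The only point requiring care is the identification of a $W^{1,\infty}$ function with a Lipschitz representative. This is standard, via the absolutely continuous representative as in the proof of Lemma \ref{ywsi}: $f(t)-f(s) = \int_s^t f'(\tau)\,d\tau$. The pointwise statement (\ref{zy01}) refers to this continuous representative. No step is a real obstacle.
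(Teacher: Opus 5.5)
Your proof is correct and follows the paper's argument essentially verbatim: the Lipschitz bound keeps $|f|\ge \delta/2$ on disjoint intervals of fixed length around the points $t_k$, which forces $\int_0^\infty |f|\,dt=\infty$. Your explicit conditions $t_{k+1}-t_k\ge 2\delta/L$ and $t_1\ge \delta/L$ simply make precise the paper's step of passing to a subsequence so that the intervals are pairwise disjoint and contained in $(0,\infty)$.
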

\begin{proof}
First, since $f(t) \in W^{1,\infty}(0,\infty)$, $f(t)$ is Lipschitz and for any $t_1,t_2>0$,
\be\la{zy1}\ba
|f(t_1) - f(t_2)| \le \| f'(t) \|_{L^\infty(0,\infty)} |t_1-t_2|.
\ea\ee
Set
\be\la{zy2}\ba
M \triangleq \| f'(t) \|_{L^\infty(0,\infty)}.
\ea\ee
If $M=0$, then $f(t)$ is constant on $(0,\infty)$.
Since $f(t) \in L^1(0,\infty)$, this constant must be zero, and the conclusion follows.

Suppose now that $M>0$.
We argue by contradiction. Assume that
\be\nonumber\ba
|f(t)| \not\to 0 \quad \text{ as } t \to \infty.
\ea\ee
Then there exist $\ep_0>0$ and a sequence $\{t_n\}_{n \ge 1}$ with $t_n \to \infty$ such that
\be\la{zy3}\ba
|f(t_n)| \ge \ep_0 \quad \text{ for all } n \ge 1.
\ea\ee
Let
\be\la{zy4}\ba
\delta \triangleq \frac{\ep_0}{2M}.
\ea\ee
From (\ref{zy1}), (\ref{zy2}), and (\ref{zy3}), we deduce that for any $t \in (t_n-\delta,t_n+\delta)$,
\be\la{zy5}\ba
|f(t)| \ge |f(t_n)| - |f(t)-f(t_n)| \ge \ep_0 - M |t-t_n| \ge \frac{\ep_0}{2}.
\ea\ee
Since $t_n \to \infty$, after passing to a subsequence, if necessary, we may assume that the intervals $(t_n-\delta,t_n+\delta)$ are pairwise disjoint and contained in $(0,\infty)$.

By (\ref{zy5}), we have
\be\la{zy6}\ba
\int_0^\infty |f(t)| dt \ge \sum_{n=1}^\infty \int_{t_n-\delta}^{t_n+\delta} |f(t)| dt
\ge \sum_{n=1}^\infty \ep_0 \delta = + \infty,
\ea\ee
which contradicts the assumption that $f(t) \in L^1(0,\infty)$.
Thus,
\be\nonumber\ba
|f(t)| \to 0 \ \textnormal{ as } t \to \infty,
\ea\ee
which completes the proof of Lemma \ref{zkxsyl}.
\end{proof}

Next, we prove a Gr\"onwall-type inequality.
\begin{lemma}\la{gnti}
Let $f \in L^\infty(0,T)$ and $g \in L^1(0,T)$ be nonnegative functions on $[0,T]$ satisfying, for a.e. $t \in [0,T]$,
\be\la{gnt01}\ba
f(t) \le \int_0^t f(s)^{1+b} g(s) ds + A_0,
\ea\ee
for some constants $b \ge 0$ and $A_0>0$.
Assume that
\be\la{gnt02}\ba
\int_0^T g(s) ds \le B_0.
\ea\ee
If
\be\la{gnt03}\ba
b A_0^b B_0 < 1,
\ea\ee
then for a.e. $t \in [0,T]$,
\be\la{gnt04}\ba
f(t) \le
\begin{cases}
A_0 e^{B_0} \quad & \textnormal{ if } b=0, \\
A_0 \left( 1 - b A_0^b B_0 \right)^{-\frac{1}{b}} \quad & \textnormal{ if } b>0.
\end{cases}
\ea\ee
\end{lemma}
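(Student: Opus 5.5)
The case $b=0$ is the classical linear Gr\"onwall inequality, so the work is in the case $b>0$. In both cases the plan is to compare $f$ with the solution of the associated integral equation. Set
\[
F(t)\triangleq A_0+\int_0^t f(s)^{1+b}g(s)\,ds .
\]
Since $f\in L^\infty(0,T)$ and $g\in L^1(0,T)$ are nonnegative, $F$ is absolutely continuous and nondecreasing on $[0,T]$. Moreover, $F(t)\ge A_0>0$ and $F(0)=A_0$. Hypothesis \eqref{gnt01} says exactly that $0\le f(t)\le F(t)$ for a.e.\ $t$. Hence, for a.e.\ $t$,
\[
F'(t)=f(t)^{1+b}g(t)\le F(t)^{1+b}g(t).
\]

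For $b=0$, this gives $(\ln F)'\le g$. Integrating and using \eqref{gnt02}, we get $F(t)\le A_0\exp\bigl(\int_0^t g\bigr)\le A_0e^{B_0}$, and therefore $f(t)\le A_0e^{B_0}$ a.e.

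For $b>0$, divide by $F^{1+b}$, which is legitimate because $F\ge A_0>0$. Since $F^{-b}$ is absolutely continuous (composition of an absolutely continuous function bounded below by $A_0$ with a smooth function on $[A_0,\infty)$), we may use the chain rule to obtain
\[
\frac{d}{dt}\bigl(F(t)^{-b}\bigr)=-bF^{-1-b}F'\ge -b\,g(t).
\]
Integrating over $(0,t)$ gives
\[
F(t)^{-b}\ge A_0^{-b}-b\int_0^t g\,ds\ge A_0^{-b}-bB_0 .
\]
Assumption \eqref{gnt03} is precisely the statement that $A_0^{-b}-bB_0=A_0^{-b}\bigl(1-bA_0^bB_0\bigr)>0$. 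We may therefore take the $(-1/b)$-th power, which reverses the inequality between positive quantities, and obtain
\[
F(t)\le A_0\bigl(1-bA_0^bB_0\bigr)^{-1/b}.
\]
Combining this with $f\le F$ a.e.\ yields \eqref{gnt04}.

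The only delicate points are technical. First, $F$ must be absolutely continuous, so that the differential inequality can be integrated; this follows because $f^{1+b}g\in L^1$, since $f$ is bounded. Second, $F$ must stay positive, so that the division and the power are legitimate; this follows from $A_0>0$. Third, the positivity condition \eqref{gnt03} must hold, since it rules out blow-up of the comparison function before time $T$. None of these is a serious obstacle.
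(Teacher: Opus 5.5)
Your proposal is correct and follows essentially the same route as the paper: compare $f$ with $A_0+\int_0^t f^{1+b}g\,ds$, use $F'\le F^{1+b}g$, and integrate $\frac{d}{dt}(F^{-b})\ge -bg$ for $b>0$. For $b=0$ the paper uses the integrating factor $e^{-\int_0^t g}$ where you use $(\ln F)'\le g$, which is equivalent. Your added justifications (absolute continuity of $F$ and of $F^{-b}$, positivity from $A_0>0$) are points the paper uses implicitly.
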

\begin{proof}
First, set
\be\la{gnt11}\ba
h(t) \triangleq \int_0^t f(s)^{1+b} g(s) ds + A_0,
\ea\ee
which together with (\ref{gnt01}) implies
\be\la{gnt12}\ba
h'(t) = f(t)^{1+b} g(t) \le h(t)^{1+b} g(t).
\ea\ee
If $b=0$, then
\be\la{gntb01}\ba
\frac{d}{dt} \left( h(t) e^{-\int_0^t g(s) ds} \right) \le 0.
\ea\ee
Integrating (\ref{gntb01}) over $(0,t)$ and applying (\ref{gnt11}), we obtain
\be\la{gntb02}\ba
h(t) e^{-\int_0^t g(s) ds} \le h(0) = A_0,
\ea\ee
which together with (\ref{gnt02}) and (\ref{gnt11}) gives
\be\la{gntb03}\ba
f(t) \le h(t) \le A_0 e^{\int_0^t g(s) ds} \le A_0 e^{B_0}.
\ea\ee
If $b>0$, since $h \ge A_0 > 0$, we deduce from (\ref{gnt12}) that
\be\la{gnt13}\ba
\frac{d}{dt} (h^{-b}(t)) = - b h^{-(1+b)}(t) h'(t) \ge - b g(t).
\ea\ee
Integrating (\ref{gnt13}) over $(0,t)$ and using (\ref{gnt11}), we arrive at
\be\la{gnt14}\ba
h(t)^{-b} \ge A_0^{-b} - b \int_0^t g(s) ds = A_0^{-b} \left( 1 - b A_0^b \int_0^t g(s) ds \right).
\ea\ee
It follows from (\ref{gnt02}) and (\ref{gnt03}) that
\be\la{gnt15}\ba
1 - b A_0^b \int_0^t g(s) ds \ge 1 - b A_0^b B_0 > 0,
\ea\ee
which together with (\ref{gnt01}), (\ref{gnt11}), and (\ref{gnt14}) yields
\be\la{gnt16}\ba
f(t) \le h(t) \le A_0 \left( 1 - b A_0^b B_0 \right)^{-\frac{1}{b}}.
\ea\ee

Combining (\ref{gntb03}) and (\ref{gnt16}), we obtain (\ref{gnt04}), thereby completing the proof of Lemma \ref{gnti}.
\end{proof}

Finally, we recall the following ``inversion'' operator for the divergence on bounded domains (see \cite{CL} for details).
\begin{lemma}\la{iod}
Let $\OM$ be a bounded smooth domain.
For $1<p<\infty$, there exists a bounded linear operator $\mathcal{B}$ given by
\be\ba\nonumber
\mathcal{B}:\left\{f \in L^p(\OM) :  \int_\Omega fdx=0 \right\}&\rightarrow (W^{1,p}_0(\OM))^N,
\ea\ee
such that $v=\mathcal{B}(f)$ solves
\be\la{iod1}\ba
\begin{cases}
\mathrm{div}v=f&\ \textnormal{ in }\Omega,\\
v=0&\ \textnormal{ on }\partial\Omega.
\end{cases}
\ea\ee
Moreover, the operator satisfies the following properties:

(1) For $1<p<\infty$, there exists a positive constant $C$ depending on $\Omega$ and $p$ such that
\bnn
\|\mathcal{B}(f)\|_{W^{1,p}(\om)}\leq C(p)\|f\|_{L^p(\om)}.
\enn

(2) If $f=\mathrm{div}h$, for some $h\in L^p$ with $h\cdot n=0$ on $\partial\Omega$, then $v=\mathcal{B}(f)$ is a weak solution to the problem \eqref{iod1} and satisfies
\bnn
\|\mathcal{B}(f)\|_{L^p(\om)}\leq C(p) \| h \|_{L^p(\om)}.
\enn
\end{lemma}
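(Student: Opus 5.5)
The statement to prove is Lemma \ref{iod}, the Bogovskii-type inversion of the divergence on a bounded smooth domain. The plan is to build $\mathcal{B}$ explicitly. First treat star-shaped domains with respect to a ball, then pass to general domains by a partition of unity. Here $\OM$ is a ball $B_R$, which is star-shaped with respect to $B_{R/2}$, so in our application only the first step is needed.

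Fix $\omega\in C_c^\infty(B_{R/2})$ with $\int\omega=1$. For $f\in L^p$ with zero mean, define
\bnn
\mathcal{B}f(x)=\int_\OM f(y)\,\frac{x-y}{|x-y|^N}\int_{|x-y|}^\infty \omega\Big(y+s\frac{x-y}{|x-y|}\Big)s^{N-1}\,ds\,dy.
\enn
For smooth compactly supported $f$ with zero mean, a direct computation gives $\mathcal{B}f\in C_c^\infty(\OM)$. The same computation gives $\div\mathcal{B}f=f-\omega\int f=f$; the zero-mean condition is used exactly here. Compact support in $\OM$ follows from star-shapedness: the integrand vanishes unless the ray from $y$ through $x$ meets $\mathrm{supp}\,\omega$ beyond $x$, which forces $x$ to lie in the convex hull of $\{y\}\cup\mathrm{supp}\,\omega$, a compact subset of $\OM$.

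For property (1), differentiate the kernel. $\na\mathcal{B}f$ splits into two parts. The first is a Calder\'on--Zygmund singular integral $\mathrm{p.v.}\int K(x,x-y)f(y)dy$, where $K(x,z)$ is homogeneous of degree $-N$ in $z$, smooth in $x$, and has vanishing mean on spheres. The second is a bounded multiple of $f$. The Calder\'on--Zygmund theorem for variable kernels (via spherical harmonic expansion in $z$) then gives $\|\na\mathcal{B}f\|_{L^p}\le C\|f\|_{L^p}$ for $1<p<\infty$. Poincar\'e's inequality on $W_0^{1,p}$ gives the full $W^{1,p}$ bound. Density of zero-mean $C_c^\infty$ functions in the zero-mean subspace of $L^p$ extends $\mathcal{B}$ by continuity.

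For property (2), use duality. For $\phi\in C_c^\infty(\OM)^N$, write $\int \mathcal{B}(\div h)\cdot\phi=\int h\cdot \mathcal{T}\phi$, where $\mathcal{T}$ is essentially $\na\mathcal{B}^*$ with $\mathcal{B}^*$ the adjoint integral operator. The condition $h\cdot n=0$ kills the boundary term. By the same Calder\'on--Zygmund argument, $\mathcal{T}$ is bounded on $L^{p'}$. This yields $\|\mathcal{B}(\div h)\|_{L^p}\le C\|h\|_{L^p}$, after a density argument for $h$ in the class $\{h\in L^p:\div h\in L^p,\ h\cdot n=0\}$.

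The main obstacle is the singular-integral step: checking the cancellation property of the differentiated kernel and the uniformity in $x$. This is classical (Bogovskii; Galdi's monograph), so in the paper it suffices to cite \cite{CL} and the references there. For a general smooth bounded domain, cover $\OM$ by finitely many domains star-shaped with respect to balls, and decompose $f=\sum f_i$ into zero-mean pieces supported in these domains. Doing this with $L^p$ control is standard.
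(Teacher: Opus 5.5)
Your outline is correct. It is the classical Bogovskii construction: the explicit kernel on a domain star-shaped with respect to a ball, Calder\'on--Zygmund bounds for $\na\mathcal{B}f$ (the differentiated kernel also has weakly singular lower-order parts, which Young's inequality handles), and duality with the boundary term killed by $h\cdot n=0$ for property (2). The paper gives no proof of its own and simply recalls the lemma from \cite{CL}, which rests on this same classical argument, so citing it as you suggest matches the paper.
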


\section{Approximate system and basic a priori estimates}

The proof of the global existence of weak solutions relies on the construction of a family of global smooth approximate solutions.
A central issue is to derive upper and lower bounds for the approximate density.
To this end, for any $0<\ep<1$, we consider the approximate system:
\be\ba\la{bjns}
\begin{cases}
(\rho_\ep)_t + \div(\rho_\ep \mathbf{u}_\ep) = 0, \\
(\n_\ep \mathbf{u}_\ep)_t + \div(\n_\ep \mathbf{u}_\ep \otimes \mathbf{u}_\ep) - \div( \mu_\ep(\n_\ep) \mathbb{D}\mathbf{u}_\ep) - \na( \lambda_\ep(\n_\ep) \div \mathbf{u}_\ep ) + \na \n_\ep^\ga = 0,
\end{cases}
\ea\ee
in the exterior domain
\be\la{bjqy}\ba
\om_\ep \triangleq \{ x \in \mathbb{R}^N \mid \ep<|x|<\infty \}.
\ea\ee
The viscosity coefficients are given by
\be\la{bjnxxs}\ba
\mu_\ep(\n_\ep) = \n_\ep^\alpha + \ep \n_\ep^\theta, \quad \lam_\ep(\n_\ep) = (\alpha-1) \n_\ep^\alpha + \ep (\theta-1) \n_\ep^\theta,
\ea\ee
where $\theta$ is defined by
\be\la{rgylt1}\ba
\theta = 
\begin{cases}
\alpha & \text{ if } \alpha \le 1, \\
\frac{\alpha}{2\alpha-1} & \text{ if } N=2 \text{ and } \alpha > 1, \\
\frac{2\alpha-1}{3\alpha-2} & \text{ if } N=3 \text{ and } 1 < \alpha \le 2, \\
\frac{3}{4} & \text{ if } N=3 \text{ and } \alpha>2.
\end{cases}
\ea\ee
It follows from (\ref{cs1})--(\ref{cs4}) and the definition of $\theta$ that
\be\la{rgylt2}\ba
\varphi(\alpha) = \varphi(\theta) & \quad \text{ if } N=2, \\
\psi(\alpha) = \psi(\theta) & \quad \text{ if } N=3 \text{ and } \alpha \le 2, \\
\psi(\alpha) < \psi(\theta) & \quad \text{ if } N=3 \text{ and } \alpha>2.
\ea\ee
The approximate system is supplemented with the spherically symmetric initial data
\be\la{bjcstj}\ba
\n_\ep(x,0) = \n_{0,\ep}(x)= \n_{0,\ep}(r), \quad \n_\ep \mathbf{u}_\ep(x,0) = \mathbf{m}_{0,\ep}(x) = m_{0,\ep}(r) \frac{x}{r}, \quad x\in \OM_\ep,
\ea\ee
the Dirichlet boundary condition
\be\la{bjbjtj1}\ba
\mathbf{u}_\ep = 0 \ \text{ on } \ \p \om_\ep,
\ea\ee
and the far-field condition
\be\la{bjbjtj2}\ba
(\n_\ep,\mathbf{u}_\ep)(x,t) \to (1,0) \ \text{ as } |x| \to \infty, \  t>0.
\ea\ee
Let the initial data $(\n_0,\mathbf{m}_0)$ of the original system (\ref{ns}) satisfy (\ref{czre}) and
\be\la{bjcz1}\ba
\n_0 - 1 \in L^2(\mathbb{R}^N), \quad \na \n_0^{\alpha-1+\frac{1}{s_1}} \in L^{s_1}(\mathbb{R}^N), \quad \n_0^{-p_1+1}|\mathbf{m}_0|^{p_1} \in L^1(\mathbb{R}^N),
\ea\ee
for some $N < s_1 \le p_1 < \mathcal{P}_N(\alpha)$ with $\alpha-1+\frac{1}{s_1}>0$.
We choose spherically symmetric approximate initial data $(\n_{0,\ep},\mathbf{m}_{0,\ep}) \in C^\infty(\mathbb{R}^N)$ such that, as $\ep \to 0^+$,
\be\la{bjcz2}\ba
\begin{cases}
\n_{0,\ep} - 1 \to \n_0 - 1 \textnormal{ in } L^2(\mathbb{R}^N) \cap L^\infty(\mathbb{R}^N), \\
\na \n_{0,\ep}^{\alpha-\frac{1}{2}} \to \na \n_0^{\alpha-\frac{1}{2}} \textnormal{ in } L^2(\mathbb{R}^N), \quad
\na \n_{0,\ep}^{\alpha-1+\frac{1}{s_1}} \to \na \n_{0}^{\alpha-1+\frac{1}{s_1}} \textnormal{ in } L^{s_1}(\mathbb{R}^N), \\
\mathbf{m}_{0,\ep} \to \mathbf{m}_{0} \textnormal{ in } L^2(\mathbb{R}^N), \quad
\n_{0,\ep}^{-1}|\mathbf{m}_{0,\ep}|^2 \to \n_{0}^{-1}|\mathbf{m}_{0}|^2 \textnormal{ in } L^1(\mathbb{R}^N), \\
\n_{0,\ep}^{-p_1+1}|\mathbf{m}_{0,\ep}|^{p_1} \to \n_0^{-p_1+1}|\mathbf{m}_0|^{p_1} \textnormal{ in } L^1(\mathbb{R}^N).
\end{cases}
\ea\ee
Moreover, setting
\be\la{bjcz3}\ba
\mathbf{u}_{0,\ep} \triangleq \n_{0,\ep}^{-1} \mathbf{m}_{0,\ep},
\ea\ee
we require that
\be\la{bjcz4}\ba
\mathbf{u}_{0,\ep} = 0 \text{ on } \p \om_{\ep}, \quad
(\n_{0,\ep},\mathbf{u}_{0,\ep})(x) \to (1,0) \ \text{ as } |x| \to \infty,
\ea\ee
and
\be\la{bjcz5}\ba
\n_{0,\ep} \ge
\begin{cases}
\ep \quad & \text{ if } \alpha \le 1, \\
\ep^{\frac{1}{\alpha-\theta}} \quad & \text{ if } \alpha>1.
\end{cases}
\ea\ee

The detailed construction of the smooth spherically symmetric approximate initial data $(\n_{0,\ep},\mathbf{m}_{0,\ep})$ is given in Section 9.

Under the spherical symmetry assumption
\be\nonumber\ba
\n_\ep(x,t) = \n_\ep(r,t), \quad \mathbf{u}_\ep(x,t) = u_\ep(r,t) \frac{x}{r}, \quad r = |x|,
\ea\ee
the approximate system reduces to
\be\la{bjnsqdc}\ba
\begin{cases}
(\n_\ep)_t + (\n_\ep u_\ep)_r + \frac{(N-1)}{r} \n_\ep u_\ep = 0, \\
\n_\ep( (u_\ep)_t + u_\ep (u_\ep)_r) + (\n_\ep^\ga)_r \\
\qquad - \left( r^{-(N-1)} (\alpha \n_\ep^\alpha + \ep \theta \n_\ep^\theta) (r^{N-1}u_\ep)_r \right)_r
+ \frac{N-1}{r}(\n_\ep^\alpha+\ep \n_\ep^\theta)_r u_\ep = 0, \\
u_\ep(\ep,t) = 0, \\
(\n_\ep,u_\ep)(r,t) \to (1,0) \text{ as } r \to \infty.
\end{cases}
\ea\ee

We define the effective velocity associated with the approximate system by
\be\la{bjyxsd}\ba
w_\ep \triangleq u_\ep + \n_\ep^{-1} (\n_\ep^\alpha + \ep \n_\ep^\theta)_r.
\ea\ee
It follows from (\ref{bjnsqdc}) that $w_\ep$ satisfies
\be\la{bjyxsdfc1}\ba
\n_\ep (w_\ep)_t + \n_\ep u_\ep (w_\ep)_r + (\n_\ep^\ga)_r = 0.
\ea\ee

By adapting arguments similar to those in \cite{CZZ1,CZZ2,JR,LPZ,ZZ}, we can obtain that, for each fixed $\ep>0$, the problem (\ref{bjns}), (\ref{bjqy}), (\ref{bjnxxs}), (\ref{bjbjtj1}), (\ref{bjbjtj2}) with smooth spherically symmetric initial data $(\n_{0,\ep},\mathbf{m}_{0,\ep})$ satisfying (\ref{bjcz3})--(\ref{bjcz5}) has a unique local spherically symmetric classical solution $(\n_\ep,u_\ep)$ on $\om_\ep \times [0,T]$.

Moreover, there exists a positive constant $c_\ep$ such that
\be\nonumber\ba
\n_\ep(r,t) \ge c_\ep \quad \text{ for all } (r,t) \in [\ep,\infty) \times [0,T].
\ea\ee

For such a solution, we introduce the Lagrangian coordinates
\be\la{lzb1}
y(r,t) = \int_\ep^r \rho_\ep(s,t) s^{N-1} \, ds, \quad \tau(r,t) = t, \quad r \in (\ep,\infty).
\ee
Since $\n_\ep$ is strictly positive, the mapping $r \mapsto y(r,t)$ is strictly increasing and hence invertible.
Using the continuity equation $(\ref{bjnsqdc})_1$ and the boundary condition $u_\ep(\ep,t)=0$, we obtain
\be\la{lzb2}
\frac{\partial y}{\partial r} = \rho_\ep r^{N-1}, \quad \frac{\partial y}{\partial t} = -\rho_\ep u_\ep r^{N-1}, \quad \frac{\partial \tau}{\partial r} = 0, \quad \frac{\partial \tau}{\partial t} = 1, \quad \frac{\partial r}{\partial \tau} = u_\ep.
\ee

Next, we derive several basic a priori estimates for the local classical solution $(\n_\ep,u_\ep)$.

We begin with the standard energy estimate.
\begin{lemma}\la{bjl1}
There exists a positive constant $C$ independent of $\ep$ and $T$ such that
\be\la{bj01}\ba
& \sup_{0\le t \le T} \int_\ep^\infty \left( \n_\ep |u_\ep|^2 + K(\n_\ep) \right) r^{N-1} dr
+ \int_0^T \int_\ep^\infty \left( \n_\ep^{\alpha} \frac{|u_\ep|^2}{r^2} + \n_\ep^{\alpha} |\p_r u_\ep|^2 \right) r^{N-1} dr dt \\
& \quad + \ep \int_0^T \int_\ep^\infty \left( \n_\ep^{\theta} \frac{|u_\ep|^2}{r^2}
+ \n_\ep^{\theta} |\p_r u_\ep|^2 \right) r^{N-1} dr dt
\le C.
\ea\ee
\end{lemma}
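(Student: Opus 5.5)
The plan is the standard energy identity for the spherically symmetric system \eqref{bjnsqdc}. I will multiply the momentum equation $(\ref{bjnsqdc})_2$ by $u_\ep r^{N-1}$ and integrate over $(\ep,\infty)$. The boundary condition $u_\ep(\ep,t)=0$ and the far-field behavior \eqref{bjbjtj2} kill the boundary terms (for the local classical solution, $u_\ep$ and the relevant derivatives decay at infinity).

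\textbf{Convective and pressure terms.} Using the continuity equation $(\ref{bjnsqdc})_1$, written as $(\n_\ep r^{N-1})_t+(\n_\ep u_\ep r^{N-1})_r=0$, the convective part gives $\frac{d}{dt}\int_\ep^\infty \frac12\n_\ep|u_\ep|^2r^{N-1}dr$. For the pressure I will use \eqref{ca9}, which follows from $K'(\n)\n-K(\n)=P(\n)-1$ and the continuity equation. Its right side $\int (\n_\ep^\ga)_r u_\ep r^{N-1}dr$ is exactly the pressure term appearing in the identity. Hence the time derivative of $\int(\frac12\n_\ep|u_\ep|^2+K(\n_\ep))r^{N-1}dr$ equals minus the viscous contribution.

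\textbf{Viscous terms.} For a coefficient $\mu=\n^\alpha$ (and likewise $\ep\n^\theta$), integrating by parts gives
\[
\int \alpha\n^\alpha (r^{N-1}u)_r\,u_r\,dr-\int (N-1)\n^\alpha\Big(\frac{u}{r}\Big)_r\,u\,r^{N-1}\,dr .
\]
Here the term $\frac{N-1}{r}(\n^\alpha)_ru$ has been integrated by parts, again with no boundary contribution. Expanding $(r^{N-1}u)_r=r^{N-1}(u_r+(N-1)u/r)$ and $(u/r)_r=u_r/r-u/r^2$, the viscous dissipation becomes the quadratic form
\[
\int \n^\alpha\Big[\alpha u_r^2+(\alpha-1)(N-1)\frac{u\,u_r}{r}+(N-1)\frac{u^2}{r^2}\Big]r^{N-1}dr .
\]
This is equivalent to the tensorial form $\int \n^\alpha|\mathbb{D}\mathbf u|^2+(\alpha-1)\n^\alpha(\div\mathbf u)^2$ in radial variables, i.e.\ $u_r^2+(N-1)u^2/r^2+(\alpha-1)(u_r+(N-1)u/r)^2$.

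\textbf{Main obstacle: positive definiteness.} The crux is that this quadratic form in $(u_r,u/r)$ is positive definite. Written in the tensorial variables, its matrix is
\[
\begin{pmatrix} \alpha & (\alpha-1)(N-1)\\ (\alpha-1)(N-1) & (N-1)+(\alpha-1)(N-1)^2\end{pmatrix}.
\]
Its determinant is $\alpha(N-1)+(\alpha-1)(N-1)^2-(\alpha-1)^2(N-1)^2$. Dividing by $N-1$ and simplifying gives $\alpha(1+(N-1)(2-\alpha))-(N-1)$. For $\alpha\le1$ the condition reduces to $N\alpha>N-1$ after checking the eigenvalues, using $\alpha>\frac{N-1}{N}$. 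For $\alpha>1$ the cross term is controlled differently, and one needs to verify the admissible range; this is where the restriction on $\alpha$ in \eqref{th11} enters. I will verify that the minimal eigenvalue is bounded below by a constant $c(\alpha,N)>0$ on the range of \eqref{th11}; the same check applies to $\theta\in(\frac{N-1}{N},1]$. This yields a lower bound of the form $c\int\n^\alpha(u_r^2+u^2/r^2)r^{N-1}$ plus the analogous $\ep\n^\theta$ term.

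\textbf{Conclusion.} Integrating in time, the bound \eqref{bj01} follows with $C$ equal to $C(\alpha,N)$ times the initial energy $\int(\n_{0,\ep}|u_{0,\ep}|^2+K(\n_{0,\ep}))r^{N-1}dr$. This is uniformly bounded in $\ep$ by \eqref{bjcz2} and \eqref{qkjsn}, since $\n_{0,\ep}$ is uniformly bounded. Hence $C$ is independent of $\ep$ and $T$.
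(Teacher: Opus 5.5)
Your overall route is the paper's: multiply $(\ref{bjnsqdc})_2$ by $u_\ep r^{N-1}$, combine with the $K$-identity, and show that the viscous quadratic form in $(\p_r u_\ep,\,u_\ep/r)$ is coercive. But the coercivity step, which you correctly identify as the crux, is miscomputed and then deferred. As written it would not close.

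\textbf{The integration by parts.} The divergence term gives $\int\alpha\n_\ep^\alpha r^{1-N}|(r^{N-1}u_\ep)_r|^2\,dr$, not $\int\alpha\n_\ep^\alpha(r^{N-1}u_\ep)_r\,\p_r u_\ep\,dr$. So your displayed form $\alpha u_r^2+(\alpha-1)(N-1)uu_r/r+(N-1)u^2/r^2$ is wrong. The correct form is
\[
\alpha u_r^2+2(\alpha-1)(N-1)\frac{u u_r}{r}+\bigl(\alpha(N-1)^2-(N-1)(N-2)\bigr)\frac{u^2}{r^2}.
\]
This is what your tensorial expression and your matrix actually encode, so those two are correct.

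\textbf{The determinant.} The determinant of that matrix is
\[
\alpha\bigl[(N-1)+(\alpha-1)(N-1)^2\bigr]-(\alpha-1)^2(N-1)^2=(N-1)(N\alpha-N+1).
\]
You dropped the factor $\alpha$ on the middle term, and your subsequent simplification is also off. Your expression becomes negative for large $\alpha$: for $N=2$ it equals $-\alpha^2+4\alpha-2$, which is negative once $\alpha>2+\sqrt2$. But \eqref{th11} puts no upper bound on $\alpha$ when $N=2$. For $N=3$ your expression $-4\alpha^2+14\alpha-8$ is negative on parts of the range allowed by \eqref{th11}, namely near $7-2\sqrt{10}$ and above $(7+\sqrt{17})/4$. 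So ``this is where \eqref{th11} enters'' cannot rescue the argument.

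\textbf{The fix.} With the correct determinant, and diagonal entry $\alpha>0$, the form is positive definite for every $\alpha>\frac{N-1}{N}$. There is no case split at $\alpha=1$ and no upper restriction on $\alpha$. This is exactly the paper's inequality $\frac{(N-1)^2(1-\alpha)^2}{\alpha}<\alpha(N-1)^2-(N-1)(N-2)$, whose gap equals $\frac{(N-1)(N\alpha-N+1)}{\alpha}$. A small margin $\delta$ as in \eqref{bj13}, together with Young's inequality, then yields the dissipation bound, and the same works verbatim for $\theta\in(\frac{N-1}{N},1]$. The hypothesis \eqref{th11} plays no role in this lemma beyond guaranteeing $\alpha,\theta>\frac{N-1}{N}$. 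With this correction, the rest of your argument is fine, including bounding the initial energy via \eqref{bjcz2} and \eqref{qkjsn}.
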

\begin{proof}
First, multiplying $(\ref{bjnsqdc})_2$ by $r^{N-1} u_\ep$ and integrating by parts over $(\ep,\infty)$, we derive
\be\la{bj11}\ba
& \frac{1}{2} \frac{d}{dt} \int_\ep^\infty \n_\ep |u_\ep|^2 r^{N-1} dr
+ \left(\alpha(N-1)^2-(N-1)(N-2)\right) \int_\ep^\infty \n_\ep^{\alpha} |u_\ep|^2 r^{N-3} dr \\
& \quad + \alpha \int_\ep^\infty \n_\ep^{\alpha} |\p_r u_\ep|^2 r^{N-1} dr
+ \ep \theta \int_\ep^\infty \n_\ep^{\theta} |\p_r u_\ep|^2 r^{N-1} dr \\
& \quad + \ep \left(\theta(N-1)^2-(N-1)(N-2)\right) \int_\ep^\infty \n_\ep^{\theta} |u_\ep|^2 r^{N-3} dr \\
& = 2(N-1)(1-\alpha) \int_\ep^\infty \n_\ep^\alpha u_\ep (\p_r u_\ep) r^{N-2} dr
- \int_\ep^\infty \p_r(\n_\ep^\ga) u_\ep r^{N-1} dr \\
& \quad + 2\ep (N-1)(1-\theta) \int_\ep^\infty \n_\ep^\theta u_\ep (\p_r u_\ep) r^{N-2} dr.
\ea\ee
Since $\alpha,\theta>\frac{N-1}{N}$, we have
\be\la{bj12}\ba
\frac{(N-1)^2(1-\alpha)^2}{\alpha} < \alpha(N-1)^2-(N-1)(N-2),
\ea\ee
and
\be\la{bj12a}\ba
\frac{(N-1)^2(1-\theta)^2}{\theta} < \theta(N-1)^2-(N-1)(N-2).
\ea\ee
Thus, there exists a sufficiently small $0<\delta<\min\{\alpha,\theta\}$ such that
\be\la{bj13}\ba
\frac{(N-1)^2(1-\alpha)^2}{\alpha-\delta} \le \alpha(N-1)^2-(N-1)(N-2)-\delta,
\ea\ee
and
\be\la{bj13a}\ba
\frac{(N-1)^2(1-\theta)^2}{\theta-\delta} \le \theta(N-1)^2-(N-1)(N-2)-\delta.
\ea\ee
For such $\delta$, using Young's inequality, we obtain
\be\la{bj14}\ba
& 2(N-1)|1-\alpha| \n_\ep^{\alpha} |u_\ep| |\p_r u_\ep| r^{N-2} \\
& \le \frac{(N-1)^2(1-\alpha)^2}{\alpha-\delta} \n_\ep^{\alpha} |u_\ep|^2 r^{N-3} + (\alpha-\delta) \n_\ep^{\alpha} |\p_r u_\ep|^2 r^{N-1} \\
& \le \left( \alpha(N-1)^2-(N-1)(N-2)-\delta \right) \n_\ep^{\alpha} |u_\ep|^2 r^{N-3} + (\alpha-\delta) \n_\ep^{\alpha} |\p_r u_\ep|^2 r^{N-1}.
\ea\ee
Similarly,
\be\la{bj14a}\ba
& 2(N-1)|1-\theta| \n_\ep^{\theta} |u_\ep| |\p_r u_\ep| r^{N-2} \\
& \le \left( \theta(N-1)^2-(N-1)(N-2)-\delta \right) \n_\ep^{\theta} |u_\ep|^2 r^{N-3} + (\theta-\delta) \n_\ep^{\theta} |\p_r u_\ep|^2 r^{N-1}.
\ea\ee
Note that $K(\n_\ep)$ satisfies
\be\la{bj15a}\ba
K'(\n_\ep) \n_\ep - K(\n_\ep) = \n_\ep^\ga - 1.
\ea\ee
Multiplying $(\ref{bjnsqdc})_1$ by $K'(\n_\ep) r^{N-1}$, integrating by parts over $(\ep,\infty)$, and using (\ref{bj15a}), we arrive at
\be\la{bj15}\ba
\frac{d}{dt} \int_\ep^\infty K(\n_\ep) r^{N-1} dr - \int_\ep^\infty \p_r(\n_\ep^\ga) u_\ep r^{N-1} dr = 0.
\ea\ee
Combining (\ref{bj11}), (\ref{bj14}), (\ref{bj14a}), and (\ref{bj15}) and using the fact that $\alpha, \theta>\frac{N-1}{N}$, we obtain
\be\la{bj16}\ba
& \frac{d}{dt} \int_\ep^\infty \left( \frac{1}{2} \n_\ep |u_\ep|^2 + K(\n_\ep) \right) r^{N-1} dr
+ \delta \int_\ep^\infty \n_\ep^{\alpha} |u_\ep|^2 r^{N-3} dr
+ \delta \int_\ep^\infty \n_\ep^{\alpha} |\p_r u_\ep|^2 r^{N-1} dr \\
& + \ep \delta \int_\ep^\infty \n_\ep^{\theta} |u_\ep|^2 r^{N-3} dr
+ \ep \delta\int_\ep^\infty \n_\ep^{\theta} |\p_r u_\ep|^2 r^{N-1} dr
\le 0.
\ea\ee
Integrating (\ref{bj16}) over $(0,T)$ gives (\ref{bj01}) and completes the proof of Lemma \ref{bjl1}.
\end{proof}

Next, we derive the BD entropy estimate.
\begin{lemma}\la{bjl2}
There exists a positive constant $C$ independent of $\ep$ and $T$ such that
\be\la{bj02}\ba
& \sup_{0\le t \le T} \int_\ep^\infty \left( |\p_r \n_\ep^{\alpha-\frac{1}{2}}|^2 + \ep^2 |\p_r \n_\ep^{\theta-\frac{1}{2}}|^2 \right) r^{N-1} dr \\
& \quad + \int_0^T \int_\ep^\infty \left( |\p_r \n_\ep^{\frac{\ga+\alpha-1}{2}}|^{2}
+ \ep |\p_r \n_\ep^{\frac{\ga+\theta-1}{2}}|^{2} \right) r^{N-1} dr dt
\le C.
\ea\ee
\end{lemma}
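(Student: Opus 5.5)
We derive the estimate from the effective velocity equation (\ref{bjyxsdfc1}) together with the energy estimate of Lemma \ref{bjl1}. It is the BD entropy estimate in radial form.

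First we set $\mu_\ep(\n_\ep)=\n_\ep^\alpha+\ep\n_\ep^\theta$, so that $w_\ep=u_\ep+\n_\ep^{-1}\p_r\mu_\ep(\n_\ep)$. We multiply (\ref{bjyxsdfc1}) by $w_\ep r^{N-1}$ and integrate over $(\ep,\infty)$. Because of the continuity equation and $u_\ep(\ep,t)=0$, the transport part becomes the exact derivative $\frac12\frac{d}{dt}\int_\ep^\infty \n_\ep|w_\ep|^2r^{N-1}dr$. The boundary flux at $r=\ep$ vanishes, and at infinity the far-field condition removes it. This yields
\be\nonumber
\frac12\frac{d}{dt}\int_\ep^\infty \n_\ep|w_\ep|^2 r^{N-1}dr+\int_\ep^\infty \p_r(\n_\ep^\ga)\,\n_\ep^{-1}\p_r\mu_\ep(\n_\ep)\, r^{N-1}dr=-\int_\ep^\infty \p_r(\n_\ep^\ga)u_\ep r^{N-1}dr .
\ee
The second term on the left is nonnegative, since $\p_r\n_\ep^\ga$ and $\p_r\mu_\ep$ have the same sign. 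It equals
\be\nonumber
\ga\int_\ep^\infty\left(\alpha\n_\ep^{\ga+\alpha-3}+\ep\theta\n_\ep^{\ga+\theta-3}\right)|\p_r\n_\ep|^2r^{N-1}dr ,
\ee
which is a constant multiple of $\int(|\p_r\n_\ep^{\frac{\ga+\alpha-1}{2}}|^2+\ep|\p_r\n_\ep^{\frac{\ga+\theta-1}{2}}|^2)r^{N-1}dr$. The right-hand side is exactly $-\frac{d}{dt}\int K(\n_\ep)r^{N-1}dr$ by (\ref{bj15}). We integrate in time and use Lemma \ref{bjl1} to bound $\int K(\n_\ep)r^{N-1}dr$ uniformly. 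This bounds $\sup_t\int\n_\ep|w_\ep|^2r^{N-1}dr$ and the dissipation integral by the initial value $\int\n_{0,\ep}|w_{0,\ep}|^2r^{N-1}dr+C$.

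Next we extract the gradient bounds. We have $\n_\ep^{-1/2}\p_r\n_\ep^\alpha=\frac{2\alpha}{2\alpha-1}\p_r\n_\ep^{\alpha-\frac12}$, and similarly $\n_\ep^{-1/2}\p_r\n_\ep^\theta=\frac{2\theta}{2\theta-1}\p_r\n_\ep^{\theta-\frac12}$. Here $\alpha,\theta>\frac12$, because both exceed $\frac{N-1}{N}$. Hence
\be\nonumber
\sqrt{\n_\ep}(w_\ep-u_\ep)=\frac{2\alpha}{2\alpha-1}\p_r\n_\ep^{\alpha-\frac12}+\ep\frac{2\theta}{2\theta-1}\p_r\n_\ep^{\theta-\frac12}.
\ee
Squaring gives the sum of the two squares plus the cross term $2\ep c\,\n_\ep^{\alpha+\theta-3}|\p_r\n_\ep|^2$ with $c>0$. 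This cross term is nonnegative, so it can be dropped. Combining this with the $L^2$ bounds on $\sqrt{\n_\ep}w_\ep$ and, from Lemma \ref{bjl1}, on $\sqrt{\n_\ep}u_\ep$ gives the first line of (\ref{bj02}).

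It remains to show that the initial quantity is bounded uniformly in $\ep$. We bound $\int\n_{0,\ep}|w_{0,\ep}|^2r^{N-1}dr$ by $C\int\n_{0,\ep}^{-1}|m_{0,\ep}|^2r^{N-1}dr+C\int|\p_r\n_{0,\ep}^{\alpha-\frac12}|^2r^{N-1}dr+C\ep^2\int|\p_r\n_{0,\ep}^{\theta-\frac12}|^2r^{N-1}dr$. The first two terms are controlled by (\ref{bjcz2}). For the last term, we expect the main obstacle to be showing that it stays bounded despite the lower bound (\ref{bjcz5}). We would write $\p_r\n^{\theta-\frac12}=\frac{2\theta-1}{2\alpha-1}\n^{\theta-\alpha}\p_r\n^{\alpha-\frac12}$. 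When $\alpha\le1$ we have $\theta=\alpha$, so the term is immediate. When $\alpha>1$ we have $\theta<\alpha$, and (\ref{bjcz5}) gives $\ep\n_{0,\ep}^{\theta-\alpha}\le\ep\cdot\ep^{-1}=1$. In both cases the last term is $\le C\int|\p_r\n_{0,\ep}^{\alpha-\frac12}|^2r^{N-1}dr$. This is precisely why (\ref{bjcz5}) was imposed, and it completes the proof of (\ref{bj02}).
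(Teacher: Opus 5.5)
Your proposal is correct and follows the paper's proof. You multiply (\ref{bjyxsdfc1}) by $w_\ep r^{N-1}$, combine with (\ref{bj15}) to obtain the identity (\ref{bj21}), integrate in time, and then extract the gradient bounds from $\sqrt{\n_\ep}(w_\ep-u_\ep)$ together with Lemma \ref{bjl1}; your explicit check via (\ref{bjcz5}) that $\int_\ep^\infty\n_{0,\ep}|w_{0,\ep}|^2r^{N-1}dr$ is bounded uniformly in $\ep$ is correct, and it fills in a step the paper leaves implicit here (the analogous computation appears in (\ref{cpt59})).
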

\begin{proof}
Multiplying $(\ref{bjyxsdfc1})$ by $w_\ep r^{N-1}$, integrating by parts over $(\ep,\infty)$, and using (\ref{bjnsqdc}) and (\ref{bj15}), we obtain
\be\la{bj21}\ba
& \frac{1}{2} \frac{d}{dt} \int_\ep^\infty \n_\ep |w_\ep|^2 r^{N-1} dr \\
& = - \int_\ep^\infty \left( \p_r(\n_\ep^\ga) u_\ep + \p_r(\n_\ep^\ga) \n_\ep^{-1} \p_r(\n_\ep^\alpha + \ep \n_\ep^\theta) \right) r^{N-1} dr \\
& = - \frac{d}{dt} \int_\ep^\infty K(\n_\ep) r^{N-1} dr
- \int_\ep^\infty \left( \frac{\ga}{\alpha} \n_\ep^{\ga-\alpha-1} |\p_r \n_\ep^\alpha|^2
+ \frac{\ep \ga}{\theta} \n_\ep^{\ga-\theta-1} |\p_r \n_\ep^\theta|^2 \right) r^{N-1} dr.
\ea\ee
Integrating (\ref{bj21}) over $(0,T)$ gives
\be\la{bj22}\ba
& \sup_{0 \le t \le T} \int_\ep^\infty \left( \n_\ep |w_\ep|^2 + K(\n_\ep) \right) r^{N-1} dr \\
& \quad + \int_0^T \int_\ep^\infty \left( |\p_r \n_\ep^{\frac{\ga+\alpha-1}{2}}|^{2}
+ \ep |\p_r \n_\ep^{\frac{\ga+\theta-1}{2}}|^{2} \right) r^{N-1} dr dt \le C.
\ea\ee
By (\ref{bjyxsd}), we have
\be\la{bj23}\ba
\left(\alpha \n_\ep^{\alpha-\frac{3}{2}} + \ep \theta \n_\ep^{\theta-\frac{3}{2}} \right)^2 |\p_r \n_\ep|^2
= \n_\ep^{-1} |\p_r ( \n_\ep^{\alpha} + \ep \n_\ep^{\theta} )|^2
= \n_\ep|w_\ep - u_\ep|^2.
\ea\ee
It follows from (\ref{bj01}), (\ref{bj22}), (\ref{bj23}), and Cauchy's inequality that
\be\la{bj24}\ba
\int_\ep^\infty \left( |\p_r \n_\ep^{\alpha-\frac{1}{2}}|^2 + \ep^2 |\p_r \n_\ep^{\theta-\frac{1}{2}}|^2 \right) r^{N-1} dr
\le C \int_\ep^\infty \n_\ep \left( |u_\ep|^2 + |w_\ep|^2 \right) r^{N-1} dr \le C.
\ea\ee
Combining (\ref{bj22}) and (\ref{bj24}) yields (\ref{bj02}) and completes the proof of Lemma \ref{bjl2}.
\end{proof}

\begin{lemma}\la{cptl1}
If $N=2$, for any $p \in [2,\infty)$, there exists a positive constant $C$ independent of $\ep$ and $T$ such that
\be\la{cpt01}\ba
\sup_{0\le t \le T} \int_\ep^\infty |\n_\ep^{\alpha-\frac{1}{2}}-1|^p r dr \le C(p).
\ea\ee
If $N=3$, there exists a positive constant $C$ independent of $\ep$ and $T$ such that
\be\la{cpt001}\ba
\sup_{0\le t \le T} \int_\ep^\infty \left( |\n_\ep^{\alpha-\frac{1}{2}}-1|^2 + |\n_\ep^{\alpha-\frac{1}{2}}-1|^6 \right) r^2 dr \le C.
\ea\ee
\end{lemma}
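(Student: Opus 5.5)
Set $f_\ep \triangleq \n_\ep^{\alpha-\frac{1}{2}}-1$. The two inputs are the energy bound of Lemma \ref{bjl1}, which controls $\int_\ep^\infty K(\n_\ep) r^{N-1}dr$, and the BD bound of Lemma \ref{bjl2}, which controls $\int_\ep^\infty |\p_r f_\ep|^2 r^{N-1}dr$. The plan is to get an $L^2$ bound on $f_\ep$ from the potential energy and then raise the integrability by Sobolev. To make the Sobolev constants independent of $\ep$, I extend $f_\ep$ to $(0,\infty)$ by even reflection across $r=\ep$, and then treat the result as a radial function on $\mathbb{R}^N$ (or work directly with one-dimensional weighted inequalities).

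\textbf{Step 1 (the $L^2$ bound).} I will show $|f_\ep|^2 \le C\big(K(\n_\ep) + |\p$-free terms$|\big)$ pointwise, in the following sense.
\begin{itemize}
\item On $\{\n_\ep\le 2\}$, use (\ref{qkjsn}) together with $|\n^{\alpha-1/2}-1|\le C|\n-1|$. This holds since $\alpha>\frac12$ and $\n\le 2$. Near $\n=0$ with $\alpha-\frac12<1$ the map is not Lipschitz, but $|f_\ep|\le 1$ there while $(\n-1)^2\ge c$, so the inequality still holds.
\item On $\{\n_\ep>2\}$, note that $K(\n)\sim \n^\ga$ while $f_\ep^2\sim \n^{2\alpha-1}$, and $2\alpha-1$ may exceed $\ga$. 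So I cannot bound $f_\ep^2$ by $K$ directly. Instead I control the measure of this set: $|\{\n_\ep>2\}|_{r^{N-1}dr}\le C\int K\,r^{N-1}dr\le C$.
\item On this set, $g\triangleq (f_\ep - (2^{\alpha-1/2}-1))_+$ vanishes on its complement. I then apply Poincaré/Sobolev on a finite-measure set: $\|g\|_{L^{2^*}}\le C\|\na g\|_{L^2}$ for $N=3$, and Hölder on the finite-measure support gives $\|g\|_{L^2}\le C$.
\item For $N=2$, $\|g\|_{L^p}\le C|{\rm supp}\, g|^{1/p}\|\na g\|_{L^2}$ by Ladyzhenskaya-type Gagliardo--Nirenberg with $L^1$ from Hölder, iterated. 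Alternatively, apply Lemma \ref{gn1} with $q=1$: $\|g\|_{L^p}\le C\|g\|_{L^1}^{1-a}\|\na g\|_{L^2}^a$, and bound $\|g\|_{L^1}\le |{\rm supp}|^{1/2}\|g\|_{L^2}$, absorbing with $p=2$ first.
\end{itemize}
This yields $\sup_t\int f_\ep^2 r^{N-1}dr\le C$.

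\textbf{Step 2 (higher integrability).} Now $f_\ep$, extended, lies in $H^1(\mathbb{R}^N)$ with uniform bounds. Note that the reflection doubles the norms at most, up to weights: on $(\ep-\ep,\ep)$ the weight $r^{N-1}$ differs, but since the reflected interval is $(0,\ep)$ and $r\le\ep\le$ the reflected point, the integrals on the reflected part are dominated by those on $(\ep,2\ep)$. For $N=3$ the Sobolev embedding $H^1\subset L^6$ gives (\ref{cpt001}). For $N=2$, Lemma \ref{gn1} gives $\|f\|_{L^p}\le C\|f\|_{L^2}^{2/p}\|\na f\|_{L^2}^{1-2/p}$ for all $p\in[2,\infty)$, which gives (\ref{cpt01}).

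\textbf{Main obstacle.} The delicate point is the high-density region in Step 1 when $2\alpha-1>\ga$. The plan above handles it through the finite measure of $\{\n_\ep>2\}$ combined with the gradient bound. I also need to check that the reflection preserves the weighted gradient bound with constants independent of $\ep$.
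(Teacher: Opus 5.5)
Your proposal is correct and follows essentially the paper's route. The energy bound controls $|\rho_\varepsilon^{\alpha-\frac12}-1|^2$ on $\{\rho_\varepsilon<2\}$ and the $r^{N-1}dr$-measure of $\{\rho_\varepsilon\ge 2\}$, the BD bound controls the gradient of the cutoff $(\rho_\varepsilon^{\alpha-\frac12}-2^{\alpha-\frac12})_+$, and even reflection across $r=\varepsilon$ makes the Sobolev and Gagliardo--Nirenberg constants independent of $\varepsilon$. The differences are only organizational:
\begin{itemize}
\item For $N=3$, the paper applies $D^{1,2}\subset L^6$ directly to $\rho_\varepsilon^{\alpha-\frac12}-1$. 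It then recovers the $L^2$ bound on $\{\rho_\varepsilon\ge 2\}$ from $|\rho_\varepsilon^{\alpha-\frac12}-1|^2\le C|\rho_\varepsilon^{\alpha-\frac12}-1|^6$ there, so no cutoff is needed.
\item For $N=2$, the paper bounds the $L^1(r\,dr)$ norm of the cutoff's gradient by H\"older on the finite-measure set and uses the two-dimensional embedding $W^{1,1}\subset L^2$; this is the same content as your finite-measure Poincar\'e inequality. It then treats the low-density part via $|\rho_\varepsilon^{\alpha-\frac12}-1|^p\le C|\rho_\varepsilon^{\alpha-\frac12}-1|^2$ rather than a global Gagliardo--Nirenberg step.
\end{itemize}
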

\begin{proof}
First, from the definition \eqref{pe} of $K$, we conclude that there is a positive constant $C$ depending only on $\ga$ such that
\be\la{cpt11}\ba
(\n_\ep - 1)^2 \le C K(\n_\ep), \text{ if } \n_\ep < 2, \quad
(\n_\ep - 1)^\ga \le C K(\n_\ep), \text{ if } \n_\ep \ge 2,
\ea\ee
which together with (\ref{bj01}) gives
\be\la{cpt12}\ba
\int_\ep^\infty \chi_{(\n_\ep<2)} |\n_\ep - 1|^2 r^{N-1} dr
+ \int_\ep^\infty \chi_{(\n_\ep \ge 2)} |\n_\ep - 1|^\ga r^{N-1} dr \le C.
\ea\ee
By (\ref{cpt12}), we have
\be\la{cpt12a}\ba
\int_\ep^\infty \chi_{(\n_\ep \ge 2)} r^{N-1} dr
\le C \int_\ep^\infty \chi_{(\n_\ep \ge 2)} |\n_\ep - 1|^\ga r^{N-1} dr \le C.
\ea\ee
Combining (\ref{bj02}), (\ref{cpt12a}), and H\"older's inequality gives
\be\la{cpt12b}\ba
\int_\ep^\infty | \p_r ( \n_\ep^{\alpha-\frac{1}{2}} - 2^{\alpha-\frac{1}{2}} )_+ | r^{N-1} dr
& \le C \int_\ep^\infty \chi_{(\n_\ep \ge 2)} | \p_r \n_\ep^{\alpha-\frac{1}{2}} | r^{N-1} dr \\
& \le C \left( \int_\ep^\infty \chi_{(\n_\ep \ge 2)} r^{N-1} dr \right)^{\frac{1}{2}} \left( \int_\ep^\infty | \p_r \n_\ep^{\alpha-\frac{1}{2}} |^2 r^{N-1} dr \right)^{\frac{1}{2}} \\
& \le C.
\ea\ee
We next extend $\n_\ep(r)$ continuously to $(0,\infty)$ by
\be\la{cpt13}\ba
\tilde{\n}_\ep(r,t) \triangleq
\begin{cases}
\n_\ep(r,t) \quad & \text{ if } \ep < r < \infty, \\
\n_\ep(2\ep-r,t) \quad & \text{ if } 0 < r \le \ep.
\end{cases}
\ea\ee
If $N=2$, the Gagliardo-Nirenberg inequality together with (\ref{bj02}) and (\ref{cpt12b}) yields that for any $2 \le p < \infty$,
\be\la{cpt14}\ba
& \left( \int_\ep^\infty ( \n_\ep^{\alpha-\frac{1}{2}} - 2^{\alpha-\frac{1}{2}} )^{p}_+ r dr \right)^{\frac{2}{p}} \\
& \le \left( \int_0^\infty ( \tilde{\n}_\ep^{\alpha-\frac{1}{2}} - 2^{\alpha-\frac{1}{2}} )^{p}_+ r dr \right)^{\frac{2}{p}} \\
& \le C \int_0^\infty ( \tilde{\n}_\ep^{\alpha-\frac{1}{2}} - 2^{\alpha-\frac{1}{2}} )^2_+ r dr
+ C \int_0^\infty |\p_r ( \tilde{\n}_\ep^{\alpha-\frac{1}{2}} - 2^{\alpha-\frac{1}{2}} )_+|^2 r dr \\
& \le C \left( \int_0^\infty | \p_r ( \tilde{\n}_\ep^{\alpha-\frac{1}{2}} - 2^{\alpha-\frac{1}{2}} )_+ | r dr \right)^2
+ C \int_\ep^\infty |\p_r ( \n_\ep^{\alpha-\frac{1}{2}} - 2^{\alpha-\frac{1}{2}} )_+|^2 r dr \\
& \le C \left( \int_\ep^\infty | \p_r ( \n_\ep^{\alpha-\frac{1}{2}} - 2^{\alpha-\frac{1}{2}} )_+ | r dr \right)^2
+ C \int_\ep^\infty |\p_r \n_\ep^{\alpha-\frac{1}{2}}|^2 r dr \\
& \le C,
\ea\ee
where we have used
\be\nonumber\ba
\int_0^\ep |\p_r ( \tilde{\n}_\ep^{\alpha-\frac{1}{2}} - 2^{\alpha-\frac{1}{2}} )_+|^2 r dr
& = \int_0^\ep |\p_r ( \n_\ep^{\alpha-\frac{1}{2}}(2\ep-r) - 2^{\alpha-\frac{1}{2}} )_+|^2 r dr \\
& = \int_0^\ep \chi_{(\n_\ep(2\ep-r) \ge 2)} |\p_r \n_\ep^{\alpha-\frac{1}{2}}(2\ep-r)|^2 r dr \\
& = \int_\ep^{2\ep} \chi_{(\n_\ep(s) \ge 2)} |\p_s \n_\ep^{\alpha-\frac{1}{2}}(s)|^2 (2\ep-s) ds \\
& \le \int_\ep^{2\ep} \chi_{(\n_\ep(s) \ge 2)} |\p_s \n_\ep^{\alpha-\frac{1}{2}}(s)|^2 s ds \\
& = \int_\ep^{2\ep} |\p_r ( \n_\ep^{\alpha-\frac{1}{2}} - 2^{\alpha-\frac{1}{2}})_+|^2 r dr \\
& \le \int_\ep^{\infty} |\p_r ( \n_\ep^{\alpha-\frac{1}{2}} - 2^{\alpha-\frac{1}{2}})_+|^2 r dr,
\ea\ee
and
\be\nonumber\ba
\int_0^\ep |\p_r ( \tilde{\n}_\ep^{\alpha-\frac{1}{2}} - 2^{\alpha-\frac{1}{2}} )_+| r dr
\le \int_\ep^{\infty} |\p_r ( \n_\ep^{\alpha-\frac{1}{2}} - 2^{\alpha-\frac{1}{2}})_+| r dr.
\ea\ee
Using (\ref{cpt12a}) and (\ref{cpt14}), we arrive at
\be\la{cpt15}\ba
& \int_\ep^\infty \chi_{(\n_\ep \ge 2)} |\n_\ep^{\alpha-\frac{1}{2}}-1|^p r dr \\
& \le C \int_\ep^\infty \chi_{(\n_\ep \ge 2)} |\n_\ep^{\alpha-\frac{1}{2}} - 2^{\alpha-\frac{1}{2}}|^p r dr
+ C \int_\ep^\infty \chi_{(\n_\ep \ge 2)} |2^{\alpha-\frac{1}{2}} - 1|^p r dr \\
& \le C \int_\ep^\infty (\n_\ep^{\alpha-\frac{1}{2}} - 2^{\alpha-\frac{1}{2}})_+^p r dr
+ C \int_\ep^\infty \chi_{(\n_\ep \ge 2)} r dr \\
& \le C.
\ea\ee
Moreover, (\ref{cpt11}) implies that for any $2 \le p < \infty$,
\be\la{cpt16}\ba
\int_\ep^\infty \chi_{(\n_\ep < 2)} |\n_\ep^{\alpha-\frac{1}{2}}-1|^p r dr
& \le C \int_\ep^\infty \chi_{(\n_\ep < 2)} |\n_\ep^{\alpha-\frac{1}{2}}-1|^2 r dr \\
& \le C \int_\ep^\infty \chi_{(\n_\ep < 2)} |\n_\ep-1|^2 r dr
\le C,
\ea\ee
which together with (\ref{cpt15}) yields (\ref{cpt01}).

If $N=3$, by (\ref{bj02}), (\ref{cpt13}), and the Sobolev inequality, we have
\be\la{cpt17}\ba
\int_\ep^\infty |\n_\ep^{\alpha-\frac{1}{2}}-1|^6 r^2 dr
& \le \int_0^\infty |\tilde{\n}_\ep^{\alpha-\frac{1}{2}}-1|^6 r^2 dr
\le C \left( \int_0^\infty |\p_r \tilde{\n}_\ep^{\alpha-\frac{1}{2}}|^2 r^{2} dr \right)^3 \\
& \le C \left( \int_\ep^\infty |\p_r \n_\ep^{\alpha-\frac{1}{2}}|^2 r^{2} dr \right)^3
\le C,
\ea\ee
where we have used the following estimate:
\be\nonumber\ba
\int_0^\ep |\p_r \tilde{\n}_\ep^{\alpha-\frac{1}{2}}|^2 r^{2} dr
& = \int_0^\ep |\p_r \n_\ep^{\alpha-\frac{1}{2}}(2\ep-r)|^2 r^{2} dr \\
& = \int_\ep^{2 \ep} |\p_s \n_\ep^{\alpha-\frac{1}{2}}(s)|^2 (2\ep-s)^{2} ds \\
& \le \int_\ep^{2 \ep} |\p_s \n_\ep^{\alpha-\frac{1}{2}}(s)|^2 s^{2} ds \\
& \le \int_\ep^{\infty} |\p_r \n_\ep^{\alpha-\frac{1}{2}}|^2 r^{2} dr.
\ea\ee
Combining (\ref{cpt17}) with (\ref{cpt12}) leads to
\be\la{cpt18}\ba
\int_\ep^\infty |\n_\ep^{\alpha-\frac{1}{2}}-1|^2 r^2 dr
& = \int_\ep^\infty \chi_{(\n_\ep<2)} |\n_\ep^{\alpha-\frac{1}{2}}-1|^2 r^2 dr
+ \int_\ep^\infty \chi_{(\n_\ep \ge 2)} |\n_\ep^{\alpha-\frac{1}{2}}-1|^2 r^2 dr \\
& \le C \int_\ep^\infty \chi_{(\n_\ep<2)} |\n_\ep-1|^2 r^2 dr
+ C \int_\ep^\infty \chi_{(\n_\ep \ge 2)} |\n_\ep^{\alpha-\frac{1}{2}}-1|^6 r^2 dr \\
& \le C,
\ea\ee
which together with (\ref{cpt17}) yields (\ref{cpt001}) and completes the proof of Lemma \ref{cptl1}.
\end{proof}

\begin{lemma}\la{cptl2}
There exist positive constants $R_0 \ge 2$, $c_1$, and $c_2$ independent of $\ep$ and $T$ such that
\be\la{cpt02}\ba
0<c_1 \le \n_\ep(r,t) \le c_2 \quad \textnormal{ for all } (r,t) \in [R_0,\infty) \times (0,T).
\ea\ee
Furthermore, for any $2 \le p \le 6$, there exists a positive constant $C$ independent of $\ep$ and $T$ such that
\be\la{cpt02a}\ba
\left( \int_\ep^{R_0} \chi_{(\n_\ep \le \frac{c_1}{2})}(r,t) r^{N-1} dr \right)^{\frac{2}{p}}
\le C \int_\ep^{R_0} |\p_r \n_\ep^{\frac{\ga+\alpha-1}{2}}(r,t)|^{2} r^{N-1} dr.
\ea\ee
Moreover, if $N=2$, for any $\xi>0$, there exists a positive constant $C$ independent of $\ep$ and $T$ such that
\be\la{cpt002}\ba
\sup_{0 \le t \le T} \| \n_\ep r^{\xi} \|_{L^\infty(\ep,R_0)} \le C.
\ea\ee
If $N=3$, there exists a positive constant $C$ independent of $\ep$ and $T$ such that
\be\la{cpt0002}\ba
\sup_{0 \le t \le T} \| \n_\ep^{2\alpha-1} r \|_{L^\infty(\ep,R_0)} \le C,
\ea\ee
and for any $t \in [0,T]$,
\be\la{cpt012}\ba
\int_\ep^{R_0} \chi_{(\n_\ep > 2c_2)} \n_\ep^{\ga+\alpha-1}(r,t) dr
\le C \int_\ep^{R_0} |\p_r \n_\ep^{\frac{\ga+\alpha-1}{2}}(r,t)|^2 r^2 dr.
\ea\ee
\end{lemma}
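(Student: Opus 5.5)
We will prove the five assertions of Lemma \ref{cptl2} in order. Each one combines the uniform bounds of Lemma \ref{cptl1}, the BD entropy bound (\ref{bj02}), and the one-dimensional Sobolev inequality of Lemma \ref{ywsi}.

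\textbf{Step 1: the bounds (\ref{cpt02}) for large $r$.} Set $f=\n_\ep^{\alpha-\frac12}-1$. By (\ref{cpt01}) or (\ref{cpt001}), together with (\ref{bj02}), both $f$ and $\p_r f$ are bounded in weighted $L^2$ on $(R,\infty)$, with weight $r^{N-1}$, uniformly in $\ep$ and $T$. For $r\ge R$ we apply (\ref{ywsi2}) on the interval $[r,r+1]$ and use H\"older:
\[
|f(r)|\le C R^{-\frac{N-1}{2}}\Big(\int_R^\infty (|f|^2+|\p_r f|^2)s^{N-1}ds\Big)^{\frac12}.
\]
A plain $L^\infty$ bound is not enough here; we need the quantity itself to become small. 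The tail $\int_R^\infty$ of the $f$-part is small by the $L^2$ bounds. The decay factor $R^{-\frac{N-1}{2}}$ makes the whole bound small for large $R$. Hence we can choose $R_0\ge 2$, independent of $\ep$ and $T$, such that $|f|\le\frac12$ on $[R_0,\infty)$. This gives $\n_\ep^{\alpha-\frac12}\in[\frac12,\frac32]$ there, and so (\ref{cpt02}) holds with suitable $c_1,c_2$.

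\textbf{Step 2: the measure estimate (\ref{cpt02a}).} Set $g=(\n_\ep^{\frac{\ga+\alpha-1}{2}}-(c_1/2)^{\frac{\ga+\alpha-1}{2}})_-$. By Step 1, $g$ vanishes near $r=R_0$. It satisfies $g\ge$ a positive constant on $\{\n_\ep\le c_1/4\}$, though this set is not quite $\{\n_\ep\le c_1/2\}$. To fix this, we define $g$ with threshold $3c_1/4$ instead; then $g\ge c>0$ on $\{\n_\ep\le c_1/2\}$, and $g$ still vanishes on $[R_0,\infty)$.

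We extend $g$ by reflection as in (\ref{cpt13}). This gives a compactly supported radial function on $B_{R_0}\subset\mathbb{R}^N$. The Sobolev inequality for $N=3$, or the Gagliardo--Nirenberg and Poincar\'e inequalities on a ball for $N=2$, then give $\|g\|_{L^p}^2\le C\|\na g\|_{L^2}^2$ for $2\le p\le6$. Since $g\ge c$ on the set $\{\n_\ep\le c_1/2\}$, the left-hand side controls $c\,|\{\n_\ep\le c_1/2\}|^{2/p}$, which yields (\ref{cpt02a}).

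\textbf{Step 3: the weighted bounds near the origin.} For $N=2$, write $\n_\ep r^\xi=(\n_\ep^{\alpha-\frac12}r^{\xi'})^{1/(\alpha-\frac12)}$. We bound $h=\n_\ep^{\alpha-\frac12}$ using $h(r)\le h(R_0)+\int_r^{R_0}|\p_s h|ds$. By Cauchy--Schwarz with weight $s$,
\[
\int_r^{R_0}|\p_s h|\,ds\le C(\log(R_0/r))^{1/2}.
\]
Multiplying by $r^{\xi'}$ with $\xi'=\xi(\alpha-\frac12)>0$ kills the logarithm, which gives (\ref{cpt002}).

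For $N=3$, apply Lemma \ref{hdi} to the reflected function $g=\n_\ep^{\alpha-\frac12}$ on $(\ep,R_0)$. The bound (\ref{bj02}) gives $\int r^2|g'|^2\le C$. Lemma \ref{cptl1} gives $\int r^2|g|^2\le C$. Therefore $r|g|^2=r\n_\ep^{2\alpha-1}\le C$, which is (\ref{cpt0002}).

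\textbf{Step 4: the high-density estimate (\ref{cpt012}), the main obstacle.} Take $G=(\n_\ep^{\frac{\ga+\alpha-1}{2}}-(2c_2)^{\frac{\ga+\alpha-1}{2}})_+$. By Step 1, $G$ vanishes near $R_0$. On $\{\n_\ep>4c_2\}$ it dominates a fixed multiple of $\n_\ep^{\frac{\ga+\alpha-1}{2}}$; we therefore adjust thresholds, using $c_2$ in $G$ and $2c_2$ in the statement.

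We then apply the Hardy-type inequality of Lemma \ref{hdi}, in its first part, on $(\ep,R_0)$. Since $G(R_0)=0$, estimate (\ref{hd2}) gives $\int G^2dr\le C\int r^2|G'|^2dr$ directly, without the $r^2|g|^2$ term. This yields (\ref{cpt012}).

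The delicate point is checking that the boundary term at $R_0$ really vanishes, and that no $\int r^2G^2$ term is left over. Dropping such a term is exactly what makes the estimate purely gradient-controlled.
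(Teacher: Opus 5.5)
Your proof is correct, and for most of the lemma it coincides with the paper's. The one genuinely different piece is (\ref{cpt002}) for $N=2$.

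\textbf{Steps that match the paper.}
In Step 1 the paper applies (\ref{ywsi1}) on $(1,\infty)$ to $(\n_\ep^{\alpha-\frac12}-1)^2r^{N-1}$. This gives $(\n_\ep^{\alpha-\frac12}-1)^2\le C_0r^{1-N}$, the same decay you extract from (\ref{ywsi2}) on $[r,r+1]$. Only this decay is needed. Your remark that the tail $\int_R^\infty$ is small ``by the $L^2$ bounds'' is not justified uniformly in $\ep$ and $T$, so drop it.

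For (\ref{cpt02a}) the paper uses the threshold $c_1$ itself. The function $(c_1^{\frac{\ga+\alpha-1}{2}}-\tilde{\n}_\ep^{\frac{\ga+\alpha-1}{2}})_+$ already vanishes at $R_0$ because $\n_\ep(R_0,t)\ge c_1$, so your shift to $3c_1/4$ is harmless but unnecessary.

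(\ref{cpt0002}) is proved exactly as you do, with $\int_\ep^{R_0}\n_\ep^{2\alpha-1}r^2dr\le C$ following from (\ref{cpt001}) and the boundedness of $(\ep,R_0)$. For (\ref{cpt012}) the paper repeats by hand, for $(\tilde{\n}_\ep^{\frac{\ga+\alpha-1}{2}}-c_2^{\frac{\ga+\alpha-1}{2}})_+$, the integration by parts behind (\ref{hd2}).

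\textbf{The different route for $N=2$.}
The paper applies (\ref{ywsi2}) on all of $(\ep,R_0)$ to $\n_\ep^{\alpha-\frac12}r^\zeta$ with $0<\zeta\le2$. It bounds the averaged term by H\"older's inequality and the high-integrability estimate (\ref{cpt01}) with $p=4/\zeta$, which rests on the reflection and Gagliardo--Nirenberg argument of Lemma \ref{cptl1}. Larger $\zeta$ is then trivial.

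You instead integrate inward from $R_0$, using only $\n_\ep(R_0,t)\le c_2$ and the weighted BD bound (\ref{bj02}). This gives $\n_\ep^{\alpha-\frac12}(r,t)\le C\bigl(1+\sqrt{\log(R_0/r)}\bigr)$. Your route is more elementary and gives the sharper logarithmic growth at the origin. It also uses (\ref{cpt01}) only with $p=2$, whereas the paper's version relies on the large-$p$ part of Lemma \ref{cptl1}.
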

\begin{proof}
First, it follows from (\ref{ywsi1}), (\ref{cpt01}), (\ref{cpt001}), and (\ref{bj02}) that for any $r \ge 1$,
\be\la{cpt21}\ba
(\n_\ep^{\alpha-\frac{1}{2}}-1)^2 r^{N-1}
& \le \int_1^\infty | \p_r (\n_\ep^{\alpha-\frac{1}{2}}-1)^2 | r^{N-1} dr
+ (N-1) \int_1^\infty | \n_\ep^{\alpha-\frac{1}{2}}-1 |^2 r^{N-2} dr \\
& \le 2 \int_1^\infty |\n_\ep^{\alpha-\frac{1}{2}}-1| |\p_r \n_\ep^{\alpha-\frac{1}{2}}| r^{N-1} dr
+ (N-1) \int_1^\infty | \n_\ep^{\alpha-\frac{1}{2}}-1 |^2 r^{N-1} dr \\
& \le C \int_1^\infty |\n_\ep^{\alpha-\frac{1}{2}}-1|^2 r^{N-1} dr
+ C \int_1^\infty |\p_r \n_\ep^{\alpha-\frac{1}{2}}|^2 r^{N-1} dr \\
& \le C_0,
\ea\ee
where $C_0$ is independent of $\ep$ and $T$.

Set
\be\la{cpt22}\ba
R_0 \triangleq 2 + \left( 4 C_0 \right)^{\frac{1}{N-1}}.
\ea\ee
Thus, from (\ref{cpt21}), we conclude that for any $r \ge R_0$,
\be\la{cpt23}\ba
(\n_\ep^{\alpha-\frac{1}{2}}-1)^2 \le C_0 r^{1-N} \le \frac{1}{4},
\ea\ee
which implies that there exist positive constants $c_1$ and $c_2$ independent of $\ep$ and $T$ such that
\be\la{cpt24}\ba
0<c_1 \le \n_\ep(r,t) \le c_2 \quad \textnormal{ for all } (r,t) \in [R_0,\infty) \times (0,T).
\ea\ee
We next prove (\ref{cpt02a}).
For $\tilde{\n}_\ep$ as in (\ref{cpt13}), we define
\be\la{xcpt21}\ba
H(r,t) \triangleq (c_1^{\frac{\ga+\alpha-1}{2}} - \tilde{\n}_\ep^{\frac{\ga+\alpha-1}{2}})_+.
\ea\ee
Since $\tilde{\n}_\ep(R_0,t) = \n_\ep(R_0,t) \ge c_1$, we have $H(R_0,t) = 0$.
Thus, the Sobolev-Poincar\'e inequality implies that for any $2 \le p \le 6$,
\be\la{xcpt22}\ba
\left( \int_0^{R_0} H^{p} r^{N-1} dr \right)^{\frac{2}{p}}
& = |\mathbb{S}^{N-1}|^{-\frac{2}{p}} \| H \|_{L^p(B_{R_0})}^2
\le C \| \na H \|_{L^2(B_{R_0})}^2 \\
& \le C \int_0^{R_0} |\p_r H|^{2} r^{N-1} dr
\le C \int_0^{R_0} |\p_r \tilde{\n}_\ep^{\frac{\ga+\alpha-1}{2}}|^{2} r^{N-1} dr \\
& \le C \int_\ep^{R_0} |\p_r \n_\ep^{\frac{\ga+\alpha-1}{2}}|^{2} r^{N-1} dr,
\ea\ee
where in the last inequality we have used the following estimate:
\be\nonumber\ba
\int_0^\ep |\p_r \tilde{\n}_\ep^{\frac{\ga+\alpha-1}{2}}|^{2} r^{N-1} dr
\le \int_\ep^{2 \ep} |\p_r \n_\ep^{\frac{\ga+\alpha-1}{2}}|^{2} r^{N-1} dr
\le \int_\ep^{R_0} |\p_r \n_\ep^{\frac{\ga+\alpha-1}{2}}|^{2} r^{N-1} dr.
\ea\ee
Noticing that
\be\la{xcpt23}\ba
\chi_{(\n_\ep \le \frac{c_1}{2})} \left( c_1^{\frac{\ga+\alpha-1}{2}} - \frac{c_1}{2}^{\frac{\ga+\alpha-1}{2}} \right)
\le \chi_{(\n_\ep \le \frac{c_1}{2})} (c_1^{\frac{\ga+\alpha-1}{2}} - \n_\ep^{\frac{\ga+\alpha-1}{2}})_+,
\ea\ee
we deduce from (\ref{cpt13}), (\ref{xcpt21}), and (\ref{xcpt22}) that for any $2 \le p \le 6$,
\be\la{xcpt24}\ba
\left( \int_\ep^{R_0} \chi_{(\n_\ep \le \frac{c_1}{2})} r^{N-1} dr \right)^{\frac{2}{p}}
& \le C \left( \int_\ep^{R_0} \chi_{(\n_\ep \le \frac{c_1}{2})} H^{p} r^{N-1} dr \right)^{\frac{2}{p}} \\
& \le C \left( \int_0^{R_0} H^{p} r^{N-1} dr \right)^{\frac{2}{p}} \\
& \le C \int_\ep^{R_0} |\p_r \n_\ep^{\frac{\ga+\alpha-1}{2}}|^{2} r^{N-1} dr,
\ea\ee
which gives (\ref{cpt02a}).

We next prove (\ref{cpt002}) and (\ref{cpt0002}).
If $N=2$, we deduce from (\ref{bj02}), (\ref{cpt01}), and H\"older's inequality that for any $0 < \zeta \le 2$,
\be\la{cpt25}\ba
& \| \n_\ep^{\alpha-\frac{1}{2}} r^{\zeta} \|_{L^\infty(\ep,R_0)} \\
& \le \frac{1}{R_0-\ep} \int_\ep^{R_0} \n_\ep^{\alpha-\frac{1}{2}} r^{\zeta} dr
+ C \int_\ep^{R_0} \left( |\p_r \n_\ep^{\alpha-\frac{1}{2}}| r^{\zeta} + \n_\ep^{\alpha-\frac{1}{2}} r^{\zeta-1} \right) dr \\
& \le C \int_\ep^{R_0} \n_\ep^{\alpha-\frac{1}{2}} r^{\zeta-1} dr
+ C \int_\ep^{R_0} |\p_r \n_\ep^{\alpha-\frac{1}{2}}| r^{\zeta} dr \\
& \le C \left( \int_\ep^{R_0} \n_\ep^{\frac{4\alpha-2}{\zeta}} r dr \right)^{\frac{\zeta}{4}}
\left( \int_\ep^{R_0} r^{-\frac{4-3\zeta}{4-\zeta}} dr \right)^{\frac{4-\zeta}{4}}
+ C \left( \int_\ep^{R_0} |\p_r \n_\ep^{\alpha-\frac{1}{2}}|^2 r dr \right)^{\frac{1}{2}}
\left( \int_\ep^{R_0} r^{2\zeta-1} dr \right)^{\frac{1}{2}} \\
& \le C + C \int_\ep^{R_0} \n_\ep^{\frac{4\alpha-2}{\zeta}} r dr \\
& \le C + C \int_\ep^{R_0} |\n_\ep^{\alpha-\frac{1}{2}}-1|^{\frac{4}{\zeta}} r dr \\
& \le C,
\ea\ee
due to $2 \le \frac{4}{\zeta}<\infty$.

In addition, for any $\zeta > 2$, by (\ref{cpt25}), we have
\be\la{cpt26}\ba
& \| \n_\ep^{\alpha-\frac{1}{2}} r^{\zeta} \|_{L^\infty(\ep,R_0)}
\le R_0^{\zeta-2} \| \n_\ep^{\alpha-\frac{1}{2}} r^{2} \|_{L^\infty(\ep,R_0)}
\le C.
\ea\ee
Hence, for any $\zeta>0$, it holds that
\be\la{cpt27}\ba
\sup_{0 \le t \le T} \| \n_\ep^{\alpha-\frac{1}{2}} r^{\zeta} \|_{L^\infty(\ep,R_0)} \le C.
\ea\ee
For any $\xi>0$, we choose $\zeta=(\alpha-\frac{1}{2}) \xi$ in (\ref{cpt27}) to obtain
\be\la{cpt28}\ba
\sup_{0 \le t \le T} \| \n_\ep r^{\xi} \|_{L^\infty(\ep,R_0)} \le C.
\ea\ee
If $N=3$, using (\ref{hdi1}), (\ref{bj02}), and (\ref{cpt001}), we arrive at
\be\la{cpt29}\ba
\left\| \n_\ep^{\alpha-\frac{1}{2}} r^{\frac{1}{2}} \right\|_{L^\infty(\ep,R_0)}^2
& \le C \int_\ep^{R_0} \n_\ep^{2\alpha-1} r^2 dr + C \int_\ep^{R_0} |\p_r \n_\ep^{\alpha-\frac{1}{2}}|^2 r^2 dr \\
& \le C \int_\ep^{R_0} |\n_\ep^{\alpha-\frac{1}{2}} - 1|^2 r^2 dr + C \\
& \le C,
\ea\ee
which yields (\ref{cpt0002}).

It remains to prove (\ref{cpt012}).
Define
\be\la{cpt210}\ba
\Pi(r,t) \triangleq \left( \tilde{\n}_\ep^{\frac{\ga+\alpha-1}{2}} - c_2^{\frac{\ga+\alpha-1}{2}} \right)_{+}.
\ea\ee
Since $\n_\ep(R_0,t) \le c_2$, we have $\Pi(R_0,t) = 0$.
Thus, integration by parts together with Young's inequality yields
\be\la{cpt2110}\ba
\int_\ep^{R_0} \Pi^2(r,t) dr & = (r \Pi^2(r,t))|_{\ep}^{R_0} - 2 \int_\ep^{R_0} r \Pi (\p_r \Pi) dr \\
& = - \ep \Pi^2(\ep,t) - 2 \int_\ep^{R_0} r \Pi (\p_r \Pi) dr \\
& \le \frac{1}{2} \int_\ep^{R_0} \Pi^2(r,t) dr
+ 2 \int_\ep^{R_0} |\p_r \n_\ep^{\frac{\ga+\alpha-1}{2}}(r,t)|^2 r^2 dr,
\ea\ee
which gives
\be\la{cpt211}\ba
\int_\ep^{R_0} \Pi^2(r,t) dr
\le 4 \int_\ep^{R_0} |\p_r \n_\ep^{\frac{\ga+\alpha-1}{2}}(r,t)|^2 r^2 dr.
\ea\ee
The definition of $\Pi(r,t)$ shows
\be\la{cpt212}\ba
\chi_{(\n_\ep > 2c_2)} (1-2^{-\frac{\ga+\alpha-1}{2}}) \n_\ep^{\frac{\ga+\alpha-1}{2}}(r,t)
\le \chi_{(\n_\ep > 2c_2)} \Pi(r,t),
\ea\ee
which along with (\ref{cpt211}) implies for any $t \in [0,T]$,
\be\la{cpt213}\ba
\int_\ep^{R_0} \chi_{(\n_\ep > 2c_2)} \n_\ep^{\ga+\alpha-1}(r,t) dr
\le C \int_\ep^{R_0} |\p_r \n_\ep^{\frac{\ga+\alpha-1}{2}}(r,t)|^2 r^2 dr.
\ea\ee
This gives (\ref{cpt012}) and completes the proof of Lemma \ref{cptl2}.
\end{proof}

\section{Global time-uniform upper bound for the density}

In this section, we derive a priori estimates that are uniform with respect to both time and the approximation parameter $\ep$.
These estimates play a crucial role in proving the finite-time vanishing of vacuum.

\begin{lemma}\la{tl4}
Let $s_1$ and $p_1$ satisfy
\be\nonumber\ba
N < s_1 \le p_1 < \mathcal{P}_N(\alpha), \quad \alpha-1+\frac{1}{s_1}>0,
\ea\ee
as in Theorem \ref{th1}.
Choose $s_2>N$ sufficiently close to $N$ such that
\be\la{t04a}\ba
N < s_2 \le \min\left\{s_1,4\right\}, \quad \frac{1}{s_2} \ge \frac{\alpha+1}{2} - \ga.
\ea\ee
There exists a positive constant $\eta_0 \in (0,1]$ depending only on $s_1$, $\alpha$, $\mu$, and the initial data such that if $\alpha$ and $\ga$ satisfy \eqref{th110} and \eqref{th11}, then there exists a positive constant $C$ independent of $\ep$ and $T$ such that
\be\la{t04}\ba
& \sup_{0\le t \le T} \left( \int_\ep^\infty \n_\ep |u_\ep|^{s_2} r^{N-1} dr
+ \int_\ep^\infty |\p_r \n_\ep^{\alpha-1+\frac{1}{s_2}}|^{s_2} r^{N-1} dr \right) \\
& \quad + \int_0^T \int_\ep^\infty \left( \n_\ep^\alpha \frac{|u_\ep|^{s_2}}{r^2} + \n_\ep^\alpha |u_\ep|^{s_2-2} |\p_r u_\ep|^2 \right) r^{N-1} dr dt \\
& \quad + \ep \int_0^T \int_\ep^\infty \left( \n_\ep^{\theta} \frac{|u_\ep|^{s_2}}{r^2} + \n_\ep^{\theta} |u_\ep|^{s_2-2} |\p_r u_\ep|^2 \right) r^{N-1} dr dt
\le C.
\ea\ee
\end{lemma}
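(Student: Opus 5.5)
The plan follows the outline sketched in the introduction, combining an $L^{s_2}$-type energy estimate for $u_\ep$ with a matching estimate for the effective velocity $w_\ep$.

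\textbf{Step 1: weighted momentum estimate.} Multiply $(\ref{bjnsqdc})_2$ by $u_\ep(1+|u_\ep|^2)^{\frac{s_2-2}{2}}r^{N-1}$ and integrate over $(\ep,\infty)$. The viscous part produces a quadratic form in $(u_\ep/r,\p_r u_\ep)$ with weights $\n_\ep^\alpha|u_\ep|^{s_2-2}$ (and the analogous $\ep\n_\ep^\theta$ terms), together with a cross term coming from differentiating $(1+|u_\ep|^2)^{\frac{s_2-2}{2}}$. Positive definiteness of this form is exactly the algebraic condition $s_2<\mathcal{P}_N(\alpha)\le\mathcal{P}_N(\theta)$; this is where the functions $f_i,g_i$ and their inverses $\varphi,\psi$ enter. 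It yields the dissipation $\mathcal{D}_{\ep,s_2}$, which dominates the integrand of the time integral in (\ref{t04}).

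Adding (\ref{bj15}) gives (\ref{ca10}). For the pressure remainder, split with the cutoff $\eta$ as in (\ref{ca11}) and integrate by parts. The term with $\p_r\eta$ lives where $c_1\le\n_\ep\le c_2$; it is bounded using (\ref{ca12}) by the energy (\ref{bj01}) and $\int\n_\ep^\alpha|u_\ep|^{s_2}r^{N-3}$, absorbed into the dissipation. The term with $\n_\ep^\ga-\eta$ produces $\int(\n_\ep^\ga-\eta)\,\p_r[\cdots]\,r^{N-1}$ plus $(N-1)/r$ terms. These are split into the regions $\{\n_\ep\le c_1/2\}$, $\{c_1/2<\n_\ep\le 2c_2\}$ and $\{\n_\ep>2c_2\}$ inside $(\ep,R_0)$, and controlled by Young's inequality against the dissipation. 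The leftover factors are bounded via (\ref{cpt02a}), (\ref{cpt012}), (\ref{cpt002})/(\ref{cpt0002}) and H\"older. The condition $\frac1{s_2}\ge\frac{\alpha+1}{2}-\ga$, and for $N=3$ the upper bound $\ga\le6\alpha-3+\eta_0$, are what make the exponents of $\n_\ep$ fit the BD quantity $\p_r\n_\ep^{\frac{\ga+\alpha-1}{2}}$. This produces (\ref{ca120}) with some $\tilde b\ge0$; $\tilde b=0$ except possibly in the borderline three-dimensional range.

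\textbf{Step 2: effective velocity.} Multiply (\ref{bjyxsdfc1}) by $|w_\ep|^{s_2-2}w_\ep r^{N-1}$ and $(\ref{bjnsqdc})_2$ by $|u_\ep|^{s_2-2}u_\ep r^{N-1}$, and subtract to get (\ref{ca13}). The pressure term has the good sign (\ref{ca14}) because $w_\ep-u_\ep=\n_\ep^{-1}(\alpha\n_\ep^{\alpha-1}+\ep\theta\n_\ep^{\theta-1})\p_r\n_\ep$ and $z\mapsto z|z|^{s_2-2}$ is monotone. The viscous remainder $\tilde{\mathcal D}_{\ep,s_2}$ is bounded by a multiple of $\mathcal{D}_{\ep,s_2}$. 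Integrating in time, $\int\n_\ep|w_\ep|^{s_2}r^{N-1}$ is controlled by its initial value, $C\int\n_\ep|u_\ep|^{s_2}r^{N-1}$ and $C\int_0^T\mathcal{D}_{\ep,s_2}$. Combining with (\ref{ca120}), with a suitable weight so the dissipation absorbs, gives
\[
F(t)\le A_0+\int_0^tF^{1+\tilde b}g\,ds,\qquad F=\int_\ep^\infty\n_\ep(|u_\ep|^{s_2}+|w_\ep|^{s_2})r^{N-1}dr,
\]
where $g(t)=\int_\ep^{R_0}|\p_r\n_\ep^{\frac{\ga+\alpha-1}{2}}|^2r^{N-1}dr$. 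By Lemma \ref{bjl2}, $\int_0^Tg\le B_0$ uniformly in $\ep$ and $T$, and $A_0$ depends only on the initial data through (\ref{bjcz2}) (using $s_2\le s_1\le p_1$ and interpolation with the $L^2$ quantities).

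\textbf{Step 3: conclusion.} Lemma \ref{gnti} then bounds $F$ uniformly. For $\tilde b=0$ this is immediate. For $\tilde b>0$ we need $\tilde bA_0^{\tilde b}B_0<1$, and this smallness is what fixes $\eta_0$, since $\tilde b$ grows with $\ga-(6\alpha-3)$; hence $\eta_0$ depends on the data. Finally, since $|\p_r\n_\ep^{\alpha-1+\frac1{s_2}}|^{s_2}\le C\n_\ep|w_\ep-u_\ep|^{s_2}$ (the $\ep\n_\ep^\theta$ part is handled using $\theta\le\alpha$ and (\ref{bjcz5})), the gradient bound follows, and the dissipation integral follows from Step 1.

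\textbf{Main obstacle.} I expect the hardest step to be the pressure estimate in Step 1: producing exponents compatible with the Gr\"onwall lemma while allowing the endpoint $\ga$ up to $6\alpha-3+\eta_0$ when $N=3$. That requires the weighted bound (\ref{cpt0002}) near the origin, used carefully.
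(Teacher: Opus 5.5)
\textbf{Overall.} Your architecture is the paper's: the same multiplier $u_\varepsilon(1+|u_\varepsilon|^2)^{\frac{s_2-2}{2}}r^{N-1}$, the same cutoff splitting of the pressure term, the same monotonicity argument for $w_\varepsilon$, and Lemma~\ref{gnti} with the smallness condition that fixes $\eta_0$.

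\textbf{The gap.} It sits in Step 1, at the point you yourself flag as the main obstacle. For $N=3$ and $\gamma\ge\alpha$, the high-density part of the pressure term cannot be closed with the tools you list. After Young's inequality against $\rho_\varepsilon^\alpha|u_\varepsilon|^{s_2}r^{N-3}$, the leftover is $\chi_{(\rho_\varepsilon>2c_2)}\rho_\varepsilon^{s_2(\gamma-\alpha)+\alpha}r^{s_2}$. Write it as $\rho_\varepsilon^{e}\,(\rho_\varepsilon^{2\alpha-1}r)^{s_2}\,\rho_\varepsilon^{\gamma+\alpha-1}$, and use (\ref{cpt0002}) and (\ref{cpt012}). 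This leaves the factor $\rho_\varepsilon^{e}$ with $e=(s_2-1)\gamma-(3\alpha-1)s_2+1$. It is harmless only when $e\le 0$, i.e. $\gamma\le\frac{(3\alpha-1)s_2-1}{s_2-1}$, which is strictly below $6\alpha-3$; at $\gamma=6\alpha-3$ one gets $e=(3\alpha-2)(s_2-2)>0$. The factor from (\ref{cpt012}) is integrable in time uniformly in $T$ only to the first power, so $\rho_\varepsilon^{e}$ must be bounded pointwise on $(\varepsilon,R_0)$. But that is exactly the bound Lemma~\ref{tl5} derives \emph{from} this lemma. Nothing in your Step 1 produces it, and nothing explains why $\int\rho_\varepsilon|w_\varepsilon|^{s_2}r^{N-1}$ appears on the right-hand side of (\ref{ca120}). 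You have imported the shape of (\ref{ca120}) from the introduction without a mechanism that generates it.

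\textbf{The missing idea.} The paper closes this circularity with a Gagliardo--Nirenberg interpolation, which makes the estimate nonlinear but still closable. Set $\kappa=\alpha-1+\frac{1}{s_2}$ and $\Pi_2=(\tilde\rho_\varepsilon^{\kappa}-(c_2+2)^{\kappa})_+$, which vanishes at $R_0$. Interpolate $\|\Pi_2\|_{L^\infty(B_{R_0})}$ between two quantities:
\begin{itemize}
\item $\|\Pi_2\|_{L^{(6\alpha-3)/\kappa}(B_{R_0})}$, which is bounded by (\ref{cpt001});
\item $\|\nabla\Pi_2\|_{L^{s_2}(B_{R_0})}$, which by (\ref{4x3}) is at most $CY^{1/s_2}$, where $Y=\int_\varepsilon^\infty\rho_\varepsilon(|u_\varepsilon|^{s_2}+|w_\varepsilon|^{s_2})r^{2}dr$.
\end{itemize}
This gives $\|\rho_\varepsilon\|_{L^\infty(\varepsilon,R_0)}^{\sigma}\le C+CY^{1+b}$, with $\sigma$ as in (\ref{gama2}) and $b$ as in (\ref{zb11}). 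Three things follow from this step.
\begin{itemize}
\item It is the source of $\tilde b$.
\item It explains the threshold $6\alpha-3$: it is the integrability $\rho_\varepsilon^{6\alpha-3}\in L^1(r^2dr)$ coming from the $L^6$ bound on $\rho_\varepsilon^{\alpha-\frac12}$. It is not a matter of the exponents ``fitting the BD quantity''.
\item It is why the $u$- and $w$-estimates must be coupled at all.
\end{itemize}

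\textbf{Two smaller points.}
\begin{itemize}
\item (\ref{bjcz5}) is needed for the initial value $A_0$, not for the bound at time $t$. When $\alpha>1$ it controls $\varepsilon^{s_2}|\partial_r\rho_{0,\varepsilon}^{\theta-1+\frac1{s_2}}|^{s_2}=c\,\varepsilon^{s_2}\rho_{0,\varepsilon}^{-s_2(\alpha-\theta)}|\partial_r\rho_{0,\varepsilon}^{\alpha-1+\frac1{s_2}}|^{s_2}$. At time $t$ the $\varepsilon$-term is simply dropped by positivity.
\item On $(2R_0,\infty)$ the weight $r^{N-1}$ cannot be absorbed into the dissipation, which carries $r^{N-3}$. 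There you need the $L^\infty$ bound on $u_\varepsilon$ from (\ref{cpt49}).
\end{itemize}
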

\begin{proof}
First, multiplying $(\ref{bjnsqdc})_2$ by $u_\ep (1+|u_\ep|^2)^{\frac{s_2-2}{2}} r^{N-1}$ and integrating by parts over $(\ep,\infty)$, we derive
\be\la{t41}\ba
& \frac{1}{s_2} \frac{d}{dt} \int_\ep^\infty \n_\ep \left( (1+|u_\ep|^2)^{\frac{s_2}{2}}-1 \right) r^{N-1} dr
+ \int_\ep^\infty \alpha (s_{u_\ep}-1) \n_\ep^\alpha (1+|u_\ep|^2)^{\frac{s_2-2}{2}} |\p_r u_\ep|^2 r^{N-1} dr \\
& \quad + \left(\alpha(N-1)^2-(N-1)(N-2)\right) \int_\ep^\infty \n_\ep^\alpha |u_\ep|^2 (1+|u_\ep|^2)^{\frac{s_2-2}{2}} r^{N-3} dr \\
& \quad + \ep \int_\ep^\infty \theta (s_{u_\ep}-1) \n_\ep^\theta (1+|u_\ep|^2)^{\frac{s_2-2}{2}} |\p_r u_\ep|^2 r^{N-1} dr \\
& \quad + \ep \left(\theta(N-1)^2-(N-1)(N-2)\right) \int_\ep^\infty \n_\ep^\theta |u_\ep|^2 (1+|u_\ep|^2)^{\frac{s_2-2}{2}} r^{N-3} dr \\
& = (1-\alpha)(N-1) \int_\ep^\infty s_{u_\ep} \n_\ep^\alpha u_\ep (1+|u_\ep|^2)^{\frac{s_2-2}{2}} (\p_r u_\ep) r^{N-2} dr \\
& \quad + \ep (1-\theta)(N-1) \int_\ep^\infty s_{u_\ep} \n_\ep^\theta u_\ep (1+|u_\ep|^2)^{\frac{s_2-2}{2}} (\p_r u_\ep) r^{N-2} dr \\
& \quad - \int_\ep^\infty \p_r (\n_\ep^\ga) u_\ep (1+|u_\ep|^2)^{\frac{s_2-2}{2}} r^{N-1} dr,
\ea\ee
where
\be\la{t42}\ba
s_{u_\ep} \triangleq 2 + (s_2-2) \frac{|u_\ep|^2}{1+|u_\ep|^2}.
\ea\ee
Note that for $2 < s_2 < \mathcal{P}_{N}(\alpha)$, we have
\be\la{t43}\ba
M_{N,s_2}(\alpha) \triangleq \frac{ s_2^2 (N-1)^2 (1-\alpha)^2 }{4\alpha (s_2-1) \left(\alpha(N-1)^2-(N-1)(N-2)\right) } < 1.
\ea\ee
Define
\be\la{t44}\ba
\delta_1 \triangleq \frac{ 1-\sqrt{ M_{N,s_2}(\alpha) } }{2} \in \left(0,\frac{1}{2}\right].
\ea\ee
Since the function $f(z) = \frac{z^2}{z-1}$ is strictly increasing on $[2,\infty)$ and $2 \le s_{u_\ep} < s_2$, we obtain
\be\la{t45}\ba
\frac{ s_{u_\ep}^2 (N-1)^2 (1-\alpha)^2 }{4\alpha (s_{u_\ep}-1) \left(\alpha(N-1)^2-(N-1)(N-2)\right) } \le M_{N,s_2}(\alpha) = ( 1 - 2\delta_1 )^2 < (1-\delta_1)^2.
\ea\ee
It follows from (\ref{t45}) and Young's inequality that
\be\la{t46}\ba
& |1-\alpha| (N-1) s_{u_\ep} \n_\ep^\alpha |u_\ep| (1+|u_\ep|^2)^{\frac{s_2-2}{2}} |\p_r u_\ep| r^{N-2} \\
& \le (1-\delta_1) \alpha (s_{u_\ep}-1) \n_\ep^\alpha (1+|u_\ep|^2)^{\frac{s_2-2}{2}} |\p_r u_\ep|^2 r^{N-1} \\
& \quad + \frac{ s_{u_\ep}^2 (N-1)^2 (1-\alpha)^2 \n_\ep^\alpha |u_\ep|^2 (1+|u_\ep|^2)^{\frac{s_2-2}{2}} r^{N-3} } {4 \alpha (s_{u_\ep}-1) (1-\delta_1) } \\
& \le (1-\delta_1) \alpha (s_{u_\ep}-1) \n_\ep^\alpha (1+|u_\ep|^2)^{\frac{s_2-2}{2}} |\p_r u_\ep|^2 r^{N-1} \\
& \quad + (1-\delta_1) \left(\alpha(N-1)^2-(N-1)(N-2)\right) \n_\ep^\alpha |u_\ep|^2 (1+|u_\ep|^2)^{\frac{s_2-2}{2}} r^{N-3}.
\ea\ee
On the other hand, the definition of $\theta$ implies that $\mathcal{P}_{N}(\alpha) \le \mathcal{P}_{N}(\theta)$.
Hence, for $2 < s_2 < \mathcal{P}_{N}(\alpha)$, it holds that
\be\la{t47}\ba
M_{N,s_2}(\theta) \triangleq \frac{ s_2^2 (N-1)^2 (1-\theta)^2 }{4\theta (s_2-1) \left(\theta(N-1)^2-(N-1)(N-2)\right) } < 1.
\ea\ee
Set
\be\la{t48}\ba
\delta_2 \triangleq \frac{ 1 - \sqrt{ M_{N,s_2}(\theta) } }{2} \in \left(0,\frac{1}{2}\right],
\ea\ee
which together with $2 \le s_{u_\ep}<s_2$ gives
\be\la{t48a}\ba
\frac{ s_{u_\ep}^2 (N-1)^2 (1-\theta)^2 }{4\theta (s_{u_\ep}-1) \left(\theta(N-1)^2-(N-1)(N-2)\right) } \le M_{N,s_2}(\theta) = ( 1 - 2\delta_2 )^2 < ( 1 - \delta_2 )^2.
\ea\ee
Similarly, Young's inequality implies
\be\la{t49}\ba
& |1-\theta| (N-1) s_{u_\ep} \n_\ep^\theta |u_\ep| (1+|u_\ep|^2)^{\frac{s_2-2}{2}} |\p_r u_\ep| r^{N-2} \\
& \le (1-\delta_2) \theta (s_{u_\ep}-1) \n_\ep^\theta (1+|u_\ep|^2)^{\frac{s_2-2}{2}} |\p_r u_\ep|^2 r^{N-1} \\
& \quad + (1-\delta_2) \left(\theta(N-1)^2-(N-1)(N-2)\right) \n_\ep^\theta |u_\ep|^2 (1+|u_\ep|^2)^{\frac{s_2-2}{2}} r^{N-3}.
\ea\ee
Recalling (\ref{bj15}), we have that $K(\n_\ep)$ satisfies
\be\la{t410}\ba
\frac{d}{dt} \int_\ep^\infty K(\n_\ep) r^{N-1} dr - \int_\ep^\infty \p_r(\n_\ep^\ga) u_\ep r^{N-1} dr = 0.
\ea\ee
Putting (\ref{t46}) and (\ref{t49}) into (\ref{t41}) and adding the resulting inequality to (\ref{t410}), we arrive at
\be\la{t411}\ba
& \frac{d}{dt} \int_\ep^\infty \left[ \frac{1}{s_2} \n_\ep \left( (1+|u_\ep|^2)^{\frac{s_2}{2}}-1 \right) + K(\n_\ep) \right] r^{N-1} dr \\
& \quad + \delta_1 \int_\ep^\infty \alpha (s_{u_\ep}-1) \n_\ep^\alpha (1+|u_\ep|^2)^{\frac{s_2-2}{2}} |\p_r u_\ep|^2 r^{N-1} dr \\
& \quad + \delta_1 \left(\alpha(N-1)^2-(N-1)(N-2)\right) \int_\ep^\infty \n_\ep^\alpha |u_\ep|^2 (1+|u_\ep|^2)^{\frac{s_2-2}{2}} r^{N-3} dr \\
& \quad + \delta_2 \ep \int_\ep^\infty \theta (s_{u_\ep}-1) \n_\ep^\theta (1+|u_\ep|^2)^{\frac{s_2-2}{2}} |\p_r u_\ep|^2 r^{N-1} dr \\
& \quad + \delta_2 \ep \left(\theta(N-1)^2-(N-1)(N-2)\right) \int_\ep^\infty \n_\ep^\theta |u_\ep|^2 (1+|u_\ep|^2)^{\frac{s_2-2}{2}} r^{N-3} dr \\
& \le - \int_\ep^\infty \p_r (\n_\ep^\ga) \left[ u_\ep (1+|u_\ep|^2)^{\frac{s_2-2}{2}} - u_\ep \right] r^{N-1} dr.
\ea\ee
We next estimate the pressure term.
Let $R_0$ be as in Lemma \ref{cptl2}, and choose a cutoff function $\eta(r) \in C^\infty([0,\infty))$ satisfying
\be\la{bj415}\ba
0 \le \eta(r) \le 1, \quad
\eta(r) =
\begin{cases}
0 & \text{ if } 0 \le r \le R_0, \\
1 & \text{ if } r \ge 2 R_0.
\end{cases}
\ea\ee
Integration by parts gives
\be\la{t411a}\ba
& - \int_\ep^\infty \p_r (\n_\ep^\ga) \left[ u_\ep (1+|u_\ep|^2)^{\frac{s_2-2}{2}} - u_\ep \right] r^{N-1} dr \\
& = - \int_\ep^\infty \p_r \left(\n_\ep^\ga - \eta(r)\right) \left[ u_\ep (1+|u_\ep|^2)^{\frac{s_2-2}{2}} - u_\ep \right] r^{N-1} dr \\
& \quad - \int_\ep^\infty \p_r \left( \eta(r) \right) \left[ u_\ep (1+|u_\ep|^2)^{\frac{s_2-2}{2}} - u_\ep \right] r^{N-1} dr \\
& = \int_\ep^\infty \left(\n_\ep^\ga - \eta(r)\right) \left[ (s_{u_\ep}-1) (1+|u_\ep|^2)^{\frac{s_2-2}{2}} - 1 \right] (\p_r u_\ep) r^{N-1} dr \\
& \quad + (N-1) \int_\ep^\infty \left(\n_\ep^\ga - \eta(r)\right) \left[ u_\ep (1+|u_\ep|^2)^{\frac{s_2-2}{2}} - u_\ep \right] r^{N-2} dr \\
& \quad - \int_{R_0}^{2 R_0} \eta'(r) \left[ u_\ep (1+|u_\ep|^2)^{\frac{s_2-2}{2}} - u_\ep \right] r^{N-1} dr
\triangleq J_1 + J_2 + J_3.
\ea\ee
Note that for any $z\in (0,\infty)$ and $s \in (0,1]$, it holds that
\be\la{t412}\ba
(1+z)^s - 1 \le \min\{z,z^s\},
\ea\ee
which together with the fact that $2 < s_2 \le 4$ implies
\be\la{t415}\ba
(1+|u_\ep|^2)^{\frac{s_2-2}{2}} - 1 \le \min\{ |u_\ep|^2,|u_\ep|^{s_2-2} \}.
\ea\ee
Using (\ref{t415}) and the definition of $s_{u_\ep}$, we arrive at
\be\la{t416}\ba
& (s_{u_\ep}-1) (1+|u_\ep|^2)^{\frac{s_2-2}{2}} - 1 \\
& = (1+|u_\ep|^2)^{\frac{s_2-2}{2}} - 1 + (s_2-2) \frac{|u_\ep|^2}{1+|u_\ep|^2} (1+|u_\ep|^2)^{\frac{s_2-2}{2}} \\
& \le \min\{ |u_\ep|^2,|u_\ep|^{s_2-2} \} + (s_2-2) |u_\ep|^2 (1+|u_\ep|^2)^{\frac{s_2-4}{2}} \\
& \le C \min\{ |u_\ep|^2,|u_\ep|^{s_2-2} \}.
\ea\ee
Define
\be\la{aue}\ba
A(u_\ep) \triangleq \min\{ |u_\ep|^2,|u_\ep|^{s_2-2} \}.
\ea\ee
By (\ref{t416}), Young's inequality, and the fact that $s_{u_\ep} \ge 2$, we have
\be\la{t416a1}\ba
|J_1| & \le \frac{\delta_1}{4} \int_\ep^\infty \alpha \left[ (s_{u_\ep}-1) (1+|u_\ep|^2)^{\frac{s_2-2}{2}} - 1 \right] \n_\ep^\alpha |\p_r u_\ep|^2 r^{N-1} dr \\
& \quad + C \int_\ep^\infty \n_\ep^{-\alpha} |\n_\ep^\ga - \eta(r)|^2 \left[ (s_{u_\ep}-1) (1+|u_\ep|^2)^{\frac{s_2-2}{2}} - 1 \right] r^{N-1} dr \\
& \le \frac{\delta_1}{4} \int_\ep^\infty \alpha (s_{u_\ep}-1) \n_\ep^\alpha (1+|u_\ep|^2)^{\frac{s_2-2}{2}} |\p_r u_\ep|^2 r^{N-1} dr \\
& \quad + C \int_\ep^{R_0} \n_\ep^{2\ga-\alpha} A(u_\ep) r^{N-1} dr
+ C \int_{R_0}^\infty \n_\ep^{-\alpha} |\n_\ep^\ga - \eta(r)|^2 |u_\ep|^2 r^{N-1} dr.
\ea\ee
Similarly, Young's inequality and (\ref{t415}) yield
\be\la{t416a2}\ba
|J_2| & \le \frac{\delta_1}{8} \left(\alpha(N-1)^2-(N-1)(N-2)\right) \int_\ep^\infty \n_\ep^\alpha |u_\ep|^2 \left[ (1+|u_\ep|^2)^{\frac{s_2-2}{2}} - 1 \right] r^{N-3} dr \\
& \quad + C \int_\ep^\infty \n_\ep^{-\alpha} |\n_\ep^\ga - \eta(r)|^2 \left[ (1+|u_\ep|^2)^{\frac{s_2-2}{2}} - 1 \right] r^{N-1} dr \\
& \le \frac{\delta_1}{8} \left(\alpha(N-1)^2-(N-1)(N-2)\right) \int_\ep^\infty \n_\ep^\alpha |u_\ep|^2 (1+|u_\ep|^2)^{\frac{s_2-2}{2}} r^{N-3} dr \\
& \quad + C \int_\ep^{R_0} \n_\ep^{2\ga-\alpha} A(u_\ep) r^{N-1} dr
+ C \int_{R_0}^\infty \n_\ep^{-\alpha} |\n_\ep^\ga - \eta(r)|^2 |u_\ep|^2 r^{N-1} dr,
\ea\ee
and
\be\la{t416a3}\ba
|J_3| & \le \frac{\delta_1}{8} \left(\alpha(N-1)^2-(N-1)(N-2)\right) \int_{R_0}^{2 R_0} \n_\ep^\alpha |u_\ep|^2 \left[ (1+|u_\ep|^2)^{\frac{s_2-2}{2}} - 1 \right] r^{N-3} dr \\
& \quad + C \int_{R_0}^{2 R_0} \left[ (1+|u_\ep|^2)^{\frac{s_2-2}{2}} - 1 \right] r^{N-1} dr \\
& \le \frac{\delta_1}{8} \left(\alpha(N-1)^2-(N-1)(N-2)\right) \int_\ep^\infty \n_\ep^\alpha |u_\ep|^2 (1+|u_\ep|^2)^{\frac{s_2-2}{2}} r^{N-3} dr \\
& \quad + C \int_\ep^\infty \n_\ep^\alpha |u_\ep|^2 r^{N-3} dr.
\ea\ee
It follows from (\ref{cpt02}) that
\be\la{cpt47}\ba
\int_{R_0}^{2 R_0} \n_\ep^{-\alpha} |\n_\ep^\ga - \eta(r)|^2 |u_\ep|^2 r^{N-1} dr
\le C \int_{R_0}^{2 R_0} \n_\ep^{\alpha} |u_\ep|^2 r^{N-3} dr
\le C \int_{\ep}^{\infty} \n_\ep^{\alpha} |u_\ep|^2 r^{N-3} dr.
\ea\ee
Moreover, in view of (\ref{cpt01}), (\ref{cpt001}), (\ref{cpt02}), and the fact that $\eta(r) = 1$ on $r \in (2 R_0,\infty)$, we derive
\be\la{cpt48}\ba
\int_{2 R_0}^\infty \n_\ep^{-\alpha} |\n_\ep^\ga - \eta(r)|^2 |u_\ep|^2 r^{N-1} dr
& = \int_{2 R_0}^\infty \n_\ep^{-\alpha} |\n_\ep^\ga - 1|^2 |u_\ep|^2 r^{N-1} dr \\
& \le C \| u_\ep \|_{L^\infty(2R_0,\infty)}^2 \int_{2 R_0}^\infty |\n_\ep^{\alpha-\frac{1}{2}} - 1|^2 r^{N-1} dr \\
& \le C \int_\ep^\infty \n_\ep^\alpha |u_\ep|^2 r^{N-3} dr
+ C \int_\ep^\infty \n_\ep^\alpha |\p_r u_\ep|^2 r^{N-1} dr,
\ea\ee
where in the last inequality we have used the following estimate: for any $r > 2 R_0$,
\be\la{cpt49}\ba
|u_\ep(r)|^2
& = -2 \int_r^\infty u_\ep(s) (\p_s u_\ep(s) ) ds \\
& \le C \left( \int_{r}^\infty |u_\ep(s)|^2 s^{1-N} ds \right)^{\frac{1}{2}}
\left( \int_{r}^\infty |\p_s u_\ep(s)|^2 s^{N-1} ds \right)^{\frac{1}{2}} \\
& \le C \left( \int_r^\infty \n_\ep^\alpha |u_\ep(s)|^2 s^{N-3} ds \right)^{\frac{1}{2}}
\left( \int_r^\infty \n_\ep^\alpha |\p_s u_\ep(s)|^2 s^{N-1} ds \right)^{\frac{1}{2}} \\
& \le C \int_\ep^\infty \n_\ep^\alpha |u_\ep|^2 r^{N-3} dr
+ C \int_\ep^\infty \n_\ep^\alpha |\p_r u_\ep|^2 r^{N-1} dr.
\ea\ee
From (\ref{t411a}), (\ref{t416a1}), (\ref{t416a2}), (\ref{t416a3}), (\ref{cpt47}), and (\ref{cpt48}), we conclude that
\be\la{cpt410}\ba
& - \int_\ep^\infty \p_r (\n_\ep^\ga) \left[ u_\ep (1+|u_\ep|^2)^{\frac{s_2-2}{2}} - u_\ep \right] r^{N-1} dr \\
& \le \frac{\delta_1}{2} \int_\ep^\infty \alpha (s_{u_\ep}-1) \n_\ep^\alpha (1+|u_\ep|^2)^{\frac{s_2-2}{2}} |\p_r u_\ep|^2 r^{N-1} dr \\
& \quad + \frac{\delta_1}{2} \left(\alpha(N-1)^2-(N-1)(N-2)\right) \int_\ep^\infty \n_\ep^\alpha |u_\ep|^2 (1+|u_\ep|^2)^{\frac{s_2-2}{2}} r^{N-3} dr \\
& \quad + C \int_\ep^{R_0} \n_\ep^{2\ga-\alpha} A(u_\ep) r^{N-1} dr
+ C \int_\ep^\infty \left( \n_\ep^{\alpha} \frac{|u_\ep|^2}{r^2} + \n_\ep^{\alpha} |\p_r u_\ep|^2 \right) r^{N-1} dr.
\ea\ee
Substituting (\ref{cpt410}) into (\ref{t411}) leads to
\be\la{cpt414}\ba
& \frac{d}{dt} \int_\ep^\infty \left[ \frac{1}{s_2} \n_\ep \left( (1+|u_\ep|^2)^{\frac{s_2}{2}}-1 \right) + K(\n_\ep) \right] r^{N-1} dr \\
& \quad + \frac{\delta_1}{2} \int_\ep^\infty \alpha (s_{u_\ep}-1) \n_\ep^\alpha (1+|u_\ep|^2)^{\frac{s_2-2}{2}} |\p_r u_\ep|^2 r^{N-1} dr \\
& \quad + \frac{\delta_1}{2} \left(\alpha(N-1)^2-(N-1)(N-2)\right) \int_\ep^\infty \n_\ep^\alpha |u_\ep|^2 (1+|u_\ep|^2)^{\frac{s_2-2}{2}} r^{N-3} dr \\
& \quad + \delta_2 \ep \int_\ep^\infty \theta (s_{u_\ep}-1) \n_\ep^\theta (1+|u_\ep|^2)^{\frac{s_2-2}{2}} |\p_r u_\ep|^2 r^{N-1} dr \\
& \quad + \delta_2 \ep \left(\theta(N-1)^2-(N-1)(N-2)\right) \int_\ep^\infty \n_\ep^\theta |u_\ep|^2 (1+|u_\ep|^2)^{\frac{s_2-2}{2}} r^{N-3} dr \\
& \le C \int_\ep^{R_0} \n_\ep^{2\ga-\alpha} A(u_\ep) r^{N-1} dr
+ C \int_\ep^\infty \left( \n_\ep^{\alpha} \frac{|u_\ep|^2}{r^2} + \n_\ep^{\alpha} |\p_r u_\ep|^2 \right) r^{N-1} dr.
\ea\ee
Integrating (\ref{cpt414}) over $(0,T)$ and using (\ref{bj01}) and $s_{u_\ep} \ge 2$, we obtain
\be\la{cpt415}\ba
& \sup_{0 \le t \le T} \int_\ep^\infty \left[ \frac{1}{s_2} \n_\ep \left( (1+|u_\ep|^2)^{\frac{s_2}{2}}-1 \right) + K(\n_\ep) \right] r^{N-1} dr \\
& \quad + \int_0^T \int_\ep^\infty \left( \n_\ep^{\alpha} \frac{|u_\ep|^{s_2}}{r^2}
+ \n_\ep^{\alpha} |u_\ep|^{s_2-2} |\p_r u_\ep|^2 \right) r^{N-1} dr dt \\
& \quad + \ep \int_0^T \int_\ep^\infty \left( \n_\ep^{\theta} \frac{|u_\ep|^{s_2}}{r^2} + \n_\ep^{\theta} |u_\ep|^{s_2-2} |\p_r u_\ep|^2 \right) r^{N-1} dr dt \\
& \le C \int_\ep^\infty \n_{0,\ep} \left( (1+|u_{0,\ep}|^2)^{\frac{s_2}{2}}-1 \right) r^{N-1} dr + C
+ C \int_0^T \int_\ep^{R_0} \n_\ep^{2\ga-\alpha} A(u_\ep) r^{N-1} dr dt.
\ea\ee
Noting that for any $2 < s_2 \le 4$,
\be\nonumber\ba
|u_\ep|^{s_2} \le (1+|u_\ep|^2)^{\frac{s_2}{2}}-1 \le C \left( |u_\ep|^2 + |u_\ep|^{s_2} \right) \le C \left( |u_\ep|^2 + |u_\ep|^{p_1} \right),
\ea\ee
we deduce from (\ref{bj01}) and (\ref{cpt415}) that
\be\la{cpt416}\ba
& \sup_{0 \le t \le T} \int_\ep^\infty \n_\ep |u_\ep|^{s_2} r^{N-1} dr
+ \int_0^T \int_\ep^\infty \left( \n_\ep^{\alpha} \frac{|u_\ep|^{s_2}}{r^2}
+ \n_\ep^{\alpha} |u_\ep|^{s_2-2} |\p_r u_\ep|^2 \right) r^{N-1} dr dt \\
& \quad + \ep \int_0^T \int_\ep^\infty \left( \n_\ep^{\theta} \frac{|u_\ep|^{s_2}}{r^2} + \n_\ep^{\theta} |u_\ep|^{s_2-2} |\p_r u_\ep|^2 \right) r^{N-1} dr dt \\
& \le C + C \int_\ep^\infty \left( \n_{0,\ep} |u_{0,\ep}|^2 + \n_{0,\ep} |u_{0,\ep}|^{p_1} \right) r^{N-1} dr
+ C \int_0^T \int_\ep^{R_0} \n_\ep^{2\ga-\alpha} A(u_\ep) r^{N-1} dr dt \\
& \le C + C \int_\ep^\infty \left( \n^{-1}_{0,\ep} |m_{0,\ep}|^2 + \n^{-p_1+1}_{0,\ep} |m_{0,\ep}|^{p_1} \right) r^{N-1} dr
+ C \int_0^T \int_\ep^{R_0} \n_\ep^{2\ga-\alpha} A(u_\ep) r^{N-1} dr dt \\
& \le C + C \int_0^T \int_\ep^{R_0} \n_\ep^{2\ga-\alpha} A(u_\ep) r^{N-1} dr dt.
\ea\ee
Next, we estimate the last integral in (\ref{cpt416}).
We distinguish three cases.

\textit{Case 1: $\ga<\alpha$.}

For $c_1$ as in Lemma \ref{cptl2}, on the one hand, by (\ref{aue}), we have
\be\la{xcpt41}\ba
& \int_0^T \int_\ep^{R_0} \chi_{(\n_\ep>\frac{c_1}{2})} \n_\ep^{2\ga-\alpha} A(u_\ep) r^{N-1} dr dt \\
& \le \int_0^T \int_\ep^{R_0} \chi_{(\n_\ep>\frac{c_1}{2})} \n_\ep^{2\ga-\alpha} |u_\ep|^2 r^{N-1} dr dt \\
& \le C \int_0^T \int_\ep^{R_0} \chi_{(\n_\ep>\frac{c_1}{2})} \n_\ep^\alpha |u_\ep|^2 r^{N-3} dr dt \le C.
\ea\ee

On the other hand, it follows from (\ref{t04a}) and (\ref{cpt02a}) that
\be\la{xcpt42}\ba
& \int_0^T \int_\ep^{R_0} \chi_{(\n_\ep \le \frac{c_1}{2})} \n_\ep^{2\ga-\alpha} A(u_\ep) r^{N-1} dr dt \\
& \le \int_0^T \int_\ep^{R_0} \chi_{(\n_\ep \le \frac{c_1}{2})} \n_\ep^{2\ga-\alpha} |u_\ep|^{s_2 - 2} r^{N-1} dr dt \\
& \le \int_0^T \left( \int_\ep^{R_0} \n_\ep |u_\ep|^{s_2} r^{N-1} dr \right)^{\frac{s_2-2}{s_2}}
\left( \int_\ep^{R_0} \chi_{(\n_\ep \le \frac{c_1}{2})} \n_\ep^{\frac{s_2}{2}(2\ga-\alpha-1)+1} r^{N-1} dr \right)^{\frac{2}{s_2}} dt \\
& \le \int_0^T \left( \int_\ep^{R_0} \n_\ep |u_\ep|^{s_2} r^{N-1} dr \right)^{\frac{s_2-2}{s_2}}
\left( \int_\ep^{R_0} \chi_{(\n_\ep \le \frac{c_1}{2})} r^{N-1} dr \right)^{\frac{2}{s_2}} dt \\
& \le C \int_0^T \left( \int_\ep^{R_0} \n_\ep |u_\ep|^{s_2} r^{N-1} dr \right)^{\frac{s_2-2}{s_2}}
\int_\ep^{R_0} |\p_r \n_\ep^{\frac{\ga+\alpha-1}{2}}|^{2} r^{N-1} dr dt \\
& \le C + C \int_0^T \left( \int_\ep^{R_0} \n_\ep |u_\ep|^{s_2} r^{N-1} dr
\int_\ep^{R_0} |\p_r \n_\ep^{\frac{\ga+\alpha-1}{2}}|^{2} r^{N-1} dr \right) dt,
\ea\ee
which along with (\ref{xcpt41}) yields
\be\la{xcpt43}\ba
& \int_0^T \int_\ep^{R_0} \n_\ep^{2\ga-\alpha} A(u_\ep) r^{N-1} dr dt \\
& \le C + C \int_0^T \left( \int_\ep^{R_0} \n_\ep |u_\ep|^{s_2} r^{N-1} dr
\int_\ep^{R_0} |\p_r \n_\ep^{\frac{\ga+\alpha-1}{2}}|^{2} r^{N-1} dr \right) dt.
\ea\ee

\textit{Case 2: $N=2$ and $\ga \ge \alpha$.}

Using (\ref{aue}), (\ref{bj01}), (\ref{cpt002}), and the fact that $\ga \ge \alpha$, we derive
\be\la{t419a}\ba
\int_0^T \int_\ep^{R_0} \n_\ep^{2\ga-\alpha} A(u_\ep) r dr dt
& \le \int_0^T \int_\ep^{R_0} \n_\ep^{2\ga-\alpha} |u_\ep|^2 r dr dt \\
& \le \sup_{0 \le t \le T} \| \n_\ep^{2(\ga-\alpha)} r^2 \|_{L^\infty(\ep,R_0)} \int_0^T \int_\ep^{R_0} \n_\ep^{\alpha} |u_\ep|^2 r^{-1} dr dt \\
& \le C + C \sup_{0 \le t \le T} \| \n_\ep^{2(\ga-\alpha+1)} r^2 \|_{L^\infty(\ep,R_0)} \\
& \le C + C \sup_{0 \le t \le T} \| \n_\ep r^{\frac{1}{\ga-\alpha+1}} \|_{L^\infty(\ep,R_0)}^{2(\ga-\alpha+1)}
\le C.
\ea\ee

\textit{Case 3: $N=3$ and $\ga \ge \alpha$.}

For $c_2$ as in Lemma \ref{cptl2}, on the one hand, from (\ref{bj01}), (\ref{aue}), and $\ga \ge \alpha$, we deduce that
\be\la{3t2}\ba
\int_0^T \int_\ep^{R_0} \chi_{(\n_\ep \le 2c_2)} \n_\ep^{2\ga-\alpha} A(u_\ep) r^{2} dr dt
& \le \int_0^T \int_\ep^{R_0} \chi_{(\n_\ep \le 2c_2)} \n_\ep^{2\ga-\alpha} |u_\ep|^2 r^{2} dr dt \\
& \le C \int_0^T \int_\ep^{R_0} \chi_{(\n_\ep \le 2c_2)} \n_\ep^{\alpha} |u_\ep|^2 dr dt \le C.
\ea\ee
On the other hand, by (\ref{aue}) and Young's inequality, we have for any $\de>0$,
\be\la{3t3}\ba
& \int_0^T \int_\ep^{R_0} \chi_{(\n_\ep > 2c_2)} \n_\ep^{2\ga-\alpha} A(u_\ep) r^{2} dr dt \\
& \le \int_0^T \int_\ep^{R_0} \chi_{(\n_\ep > 2c_2)} \n_\ep^{2\ga-\alpha} |u_\ep|^{s_2-2} r^{2} dr dt \\
& \le \de \int_0^T \int_\ep^{R_0} \n_\ep^\alpha |u_\ep|^{s_2} dr dt
+ C(\de) \int_0^T \int_\ep^{R_0} \chi_{(\n_\ep > 2c_2)} \n_\ep^{s_2(\ga-\alpha)+\alpha} r^{s_2} dr dt.
\ea\ee
We next estimate the last integral in (\ref{3t3}).
Set
\be\la{gama1}\ba
(\ga-6\alpha+3)_+ \triangleq \max\{\ga-6\alpha+3,0\},
\ea\ee
and
\be\la{gama2}\ba
\si \triangleq (3\alpha-2)(s_2-2)+(s_2-1)(\ga-6\alpha+3)_+.
\ea\ee
For any fixed $t \in [0,T]$, from (\ref{cpt0002}), (\ref{cpt012}), and (\ref{gama2}), we deduce that
\be\la{4x1}\ba
& \int_\ep^{R_0} \chi_{(\n_\ep > 2c_2)} \n_\ep^{s_2(\ga-\alpha)+\alpha} r^{s_2} dr \\
& \le \left( 1 + \| \n_\ep \|_{L^\infty(\ep,R_0)}^{\si} \right) \| \n_\ep^{2\alpha-1} r \|_{L^\infty(\ep,R_0)}^{s_2} \int_\ep^{R_0} \chi_{(\n_\ep > 2c_2)} \n_\ep^{\ga+\alpha-1} dr \\
& \le C \left( 1 + \| \n_\ep \|_{L^\infty(\ep,R_0)}^{\si} \right) \int_\ep^{R_0} |\p_r \n_\ep^{\frac{\ga+\alpha-1}{2}}|^{2} r^{2} dr.
\ea\ee
The definition of $w_\ep$ shows that
\be\la{4x3}\ba
\frac{1}{(\alpha-1+\frac{1}{s_2})^{s_2}} |\p_r \n_\ep^{\alpha-1+\frac{1}{s_2}}|^{s_2}
& = \n_\ep^{s_2(\alpha-2)+1} |\p_r \n_\ep|^{s_2} \\
& \le \alpha^{-s_2} \n_\ep^{1-s_2} (\alpha \n_\ep^{\alpha-1} + \ep \theta \n_\ep^{\theta-1})^{s_2} |\p_r \n_\ep|^{s_2} \\
& \le C \n_\ep |u_\ep|^{s_2} + C \n_\ep |w_\ep|^{s_2},
\ea\ee
which implies
\be\la{4x4}\ba
\int_\ep^\infty |\p_r \n_\ep^{\alpha-1+\frac{1}{s_2}}|^{s_2} r^{2} dr
\le C \int_\ep^\infty \left( \n_\ep |u_\ep|^{s_2} + \n_\ep |w_\ep|^{s_2} \right) r^{2} dr.
\ea\ee
For $\tilde{\n}_\ep(r,t)$ as in (\ref{cpt13}), we define
\be\la{4x5}\ba
\ka \triangleq \alpha-1+\frac{1}{s_2}, \quad \Pi_2(x,t) \triangleq ( \tilde{\n}_\ep^\ka(|x|,t) - (c_2+2)^\ka )_+.
\ea\ee
Using (\ref{cpt001}), (\ref{4x4}), (\ref{4x5}), and a change of variables, we obtain
\be\la{4x6}\ba
\int_0^{R_0} |\Pi_2|^{\frac{6\alpha-3}{\ka}} r^2 dr
& \le 2 \int_\ep^{R_0} |\Pi_2|^{\frac{6\alpha-3}{\ka}} r^2 dr \\
& \le C \int_\ep^{R_0} \chi_{(\n_\ep \ge c_2+2)} \n_\ep^{6\alpha-3} r^2 dr \\
& \le C \int_\ep^{R_0} \chi_{(\n_\ep \ge 2)} |\n_\ep^{\alpha-\frac{1}{2}}-1|^{6} r^2 dr \le C,
\ea\ee
and
\be\la{4x7}\ba
\int_0^{R_0} |\p_r \Pi_2|^{s_2} r^{2} dr
& \le 2 \int_\ep^{R_0} |\p_r \Pi_2|^{s_2} r^{2} dr \\
& \le C \int_\ep^{R_0} |\p_r \n_\ep^{\alpha-1+\frac{1}{s_2}}|^{s_2} r^{2} dr \\
& \le C \int_\ep^{R_0} \left( \n_\ep |u_\ep|^{s_2} + \n_\ep |w_\ep|^{s_2} \right) r^{2} dr.
\ea\ee
Since $\tilde{\n}_\ep(R_0,t) = \n_\ep(R_0,t) \le c_2$, we have $\Pi_2(R_0,t) = 0$.
Hence, the Gagliardo-Nirenberg inequality implies
\be\la{4x8}\ba
\| \Pi_2 \|_{L^\infty(B_{R_0})} \le C \| \Pi_2 \|_{L^{\frac{6\alpha-3}{\ka}}(B_{R_0})}^{a}
\| \na \Pi_2 \|_{L^{s_2}(B_{R_0})}^{ 1-a },
\ea\ee
where
\be\la{4x9}\ba
a = \frac{(6\alpha-3)(s_2-3)}{3 \ka s_2 + (6\alpha-3)(s_2-3)}, \quad 1-a = \frac{3 \ka s_2}{3 \ka s_2 + (6\alpha-3)(s_2-3)}.
\ea\ee
Set
\be\la{zb11}\ba
b \triangleq \frac{(s_2-1)(\ga-6\alpha+3)_+}{(3\alpha-2)(s_2-2)} \ge 0.
\ea\ee
By (\ref{4x5}), (\ref{zb11}), and (\ref{gama2}), we have
\be\la{zb12}\ba
3 \ka s_2 + (6\alpha-3)(s_2-3) = 3 (3\alpha-2)(s_2-2),
\ea\ee
and
\be\la{zb13}\ba
(1-a)\frac{\si}{\ka} = \frac{3 {\si} s_2}{3 \ka s_2 + (6\alpha-3)(s_2-3)}
= \frac{{\si} s_2}{(3\alpha-2)(s_2-2)}
= s_2(1+b).
\ea\ee
It follows from (\ref{4x8}), (\ref{4x9}), (\ref{zb13}), and (\ref{4x7}) that for any $t \in [0,T]$,
\be\la{4x10}\ba
\| \tilde{\n}_\ep \|_{L^\infty(B_{R_0})}^{\si} & \le C + C \| \Pi_2 \|_{L^\infty(B_{R_0})}^{\frac{\si}{\ka}} \\
& \le C + C \| \na \Pi_2 \|_{L^{s_2}(B_{R_0})}^{ (1-a)\frac{\si}{\ka} } \\
& \le C + C \| \na \Pi_2 \|_{L^{s_2}(B_{R_0})}^{s_2(1+b)} \\
& \le C + C \left( \int_\ep^\infty \left( \n_\ep |u_\ep|^{s_2} + \n_\ep |w_\ep|^{s_2} \right) r^{2} dr \right)^{1+b}.
\ea\ee
Set
\be\la{4x11}\ba
\tilde{b} \triangleq
\begin{cases}
0 \quad & \text{ if } N=2, \\
b \quad & \text{ if } N=3.
\end{cases}
\ea\ee

Hence, when $\ga \ge \alpha$, the combination of (\ref{t419a}), (\ref{3t2}), (\ref{3t3}), (\ref{4x1}), and (\ref{4x10}) yields for any $\de>0$,
\be\la{4x13}\ba
& \int_0^T \int_\ep^{R_0} \chi_{(\n_\ep > 2c_2)} \n_\ep^{2\ga-\alpha} A(u_\ep) r^{N-1} dr dt \\
& \le \de \int_0^T \int_\ep^{R_0} \n_\ep^\alpha |u_\ep|^{s_2} r^{N-3} dr dt
+ C(\de) + C(\de) \int_0^T \left( Y^{1+\tilde{b}}(t) \int_\ep^{R_0} |\p_r \n_\ep^{\frac{\ga+\alpha-1}{2}}|^{2} r^{N-1} dr \right) dt,
\ea\ee
where
\be\la{4x14}\ba
Y(t) \triangleq \int_\ep^\infty \left( \n_\ep |u_\ep|^{s_2} + \n_\ep |w_\ep|^{s_2} \right) r^{N-1} dr.
\ea\ee
Combining (\ref{cpt416}), (\ref{xcpt43}), (\ref{t419a}), (\ref{3t2}), and (\ref{4x13}), and choosing $\de>0$ sufficiently small, we obtain
\be\la{x3t1}\ba
& \sup_{0 \le t \le T} \int_\ep^\infty \n_\ep |u_\ep|^{s_2} r^{N-1} dr
+ \int_0^T \int_\ep^\infty \left( \n_\ep^{\alpha} \frac{|u_\ep|^{s_2}}{r^2}
+ \n_\ep^{\alpha} |u_\ep|^{s_2-2} |\p_r u_\ep|^2 \right) r^{N-1} dr dt \\
& \quad + \ep \int_0^T \int_\ep^\infty \left( \n_\ep^{\theta} \frac{|u_\ep|^{s_2}}{r^2} + \n_\ep^{\theta} |u_\ep|^{s_2-2} |\p_r u_\ep|^2 \right) r^{N-1} dr dt \\
& \le C + C \int_0^T \left( Y^{1+\tilde{b}}(t) \int_\ep^{R_0} |\p_r \n_\ep^{\frac{\ga+\alpha-1}{2}}|^{2} r^{N-1} dr \right) dt.
\ea\ee
On the other hand, multiplying (\ref{bjyxsdfc1}) by $w_\ep |w_\ep|^{s_2-2} r^{N-1}$ and integrating by parts, we arrive at
\be\la{cpt51}\ba
\frac{1}{s_2} \frac{d}{dt} \int_\ep^\infty \n_\ep |w_\ep|^{s_2} r^{N-1} dr
+ \int_\ep^\infty (\n_\ep^\ga)_r w_\ep |w_\ep|^{s_2-2} r^{N-1} dr
= 0.
\ea\ee
Moreover, multiplying $(\ref{bjnsqdc})_2$ by $u_\ep |u_\ep|^{s_2-2} r^{N-1}$ and integrating by parts, we derive
\be\la{cpt52}\ba
& \frac{1}{s_2} \frac{d}{dt} \int_\ep^\infty \n_\ep |u_\ep|^{s_2} r^{N-1} dr
+ \int_\ep^\infty \p_r (\n_\ep^\ga) u_\ep |u_\ep|^{s_2-2} r^{N-1} dr \\
& = - \left(\alpha(N-1)^2-(N-1)(N-2)\right) \int_\ep^\infty \n_\ep^{\alpha} |u_\ep|^{s_2} r^{N-3} dr \\
& \quad - \alpha (s_2-1) \int_\ep^\infty \n_\ep^{\alpha} |u_\ep|^{s_2-2} |\p_r u_\ep|^2 r^{N-1} dr
- \ep \theta (s_2-1) \int_\ep^\infty \n_\ep^{\theta} |u_\ep|^{s_2-2} |\p_r u_\ep|^2 r^{N-1} dr \\
& \quad - \ep \left(\theta(N-1)^2-(N-1)(N-2)\right) \int_\ep^\infty \n_\ep^{\theta} |u_\ep|^{s_2} r^{N-3} dr \\
& \quad + s_2(N-1)(1-\alpha) \int_\ep^\infty \n_\ep^\alpha |u_\ep|^{s_2-2} u_\ep (\p_r u_\ep) r^{N-2} dr \\
& \quad + \ep s_2 (N-1)(1-\theta) \int_\ep^\infty \n_\ep^\theta |u_\ep|^{s_2-2} u_\ep (\p_r u_\ep) r^{N-2} dr
\triangleq \sum_{i=1}^6 J_i.
\ea\ee
Subtracting (\ref{cpt52}) from (\ref{cpt51}) yields
\be\la{cpt53}\ba
& \frac{1}{s_2} \frac{d}{dt} \int_\ep^\infty \left( \n_\ep |w_\ep|^{s_2} - \n_\ep |u_\ep|^{s_2} \right) r^{N-1} dr \\
& = - \int_\ep^\infty (\n_\ep^\ga)_r \left( w_\ep |w_\ep|^{s_2-2} - u_\ep |u_\ep|^{s_2-2} \right) r^{N-1} dr - \sum_{i=1}^6 J_i \\
& = - \int_\ep^\infty \frac{\ga \n_\ep^{\ga}}{ \alpha \n_\ep^{\alpha-1} + \ep \theta \n_\ep^{\theta-1} } (w_\ep - u_\ep) \left( w_\ep |w_\ep|^{s_2-2} - u_\ep |u_\ep|^{s_2-2} \right) r^{N-1} dr - \sum_{i=1}^6 J_i,
\ea\ee
where in the second equality we have used
\be\la{cpt54}\ba
\p_r (\n_\ep^\ga) = \frac{\ga \n_\ep^{\ga}}{ \alpha \n_\ep^{\alpha-1} + \ep \theta \n_\ep^{\theta-1} } (w_\ep - u_\ep),
\ea\ee
due to (\ref{bjyxsd}).

Noticing that the function $f(z) = z |z|^{s_2-2}$ is strictly increasing and $\n_\ep>0$, we have
\be\la{cpt55}\ba
- \int_\ep^\infty \frac{\ga \n_\ep^{\ga}}{ \alpha \n_\ep^{\alpha-1} + \ep \theta \n_\ep^{\theta-1} } (w_\ep - u_\ep) \left( w_\ep |w_\ep|^{s_2-2} - u_\ep |u_\ep|^{s_2-2} \right) r^{N-1} dr \le 0.
\ea\ee
Furthermore, Young's inequality implies that
\be\la{cpt56}\ba
- \sum_{i=1}^6 J_i
& \le C \int_\ep^\infty \left( \n_\ep^\alpha \frac{|u_\ep|^{s_2}}{r^2} + \n_\ep^\alpha |u_\ep|^{s_2-2} |\p_r u_\ep|^2 \right) r^{N-1} dr \\
& \quad + C \ep \int_\ep^\infty \left( \n_\ep^{\theta} \frac{|u_\ep|^{s_2}}{r^2} + \n_\ep^{\theta} |u_\ep|^{s_2-2} |\p_r u_\ep|^2 \right) r^{N-1} dr.
\ea\ee
It follows from (\ref{cpt53}), (\ref{cpt55}), and (\ref{cpt56}) that
\be\la{cpt57}\ba
& \frac{1}{s_2} \frac{d}{dt} \int_\ep^\infty \left( \n_\ep |w_\ep|^{s_2} - \n_\ep |u_\ep|^{s_2} \right) r^{N-1} dr \\
& \le C \int_\ep^\infty \left( \n_\ep^\alpha \frac{|u_\ep|^{s_2}}{r^2} + \n_\ep^\alpha |u_\ep|^{s_2-2} |\p_r u_\ep|^2 \right) r^{N-1} dr \\
& \quad + C \ep \int_\ep^\infty \left( \n_\ep^{\theta} \frac{|u_\ep|^{s_2}}{r^2} + \n_\ep^{\theta} |u_\ep|^{s_2-2} |\p_r u_\ep|^2 \right) r^{N-1} dr.
\ea\ee
Integrating (\ref{cpt57}) over $(0,T)$ leads to
\be\la{cpt58}\ba
& \sup_{0 \le t \le T} \int_\ep^\infty \n_\ep |w_\ep|^{s_2} r^{N-1} dr \\
& \le \int_\ep^\infty \n_{0,\ep} |w_{0,\ep}|^{s_2} r^{N-1} dr
+ C \sup_{0 \le t \le T} \int_\ep^\infty \n_\ep |u_\ep|^{s_2} r^{N-1} dr \\
& \quad + C \int_0^T \int_\ep^\infty \left( \n_\ep^\alpha \frac{|u_\ep|^{s_2}}{r^2} + \n_\ep^\alpha |u_\ep|^{s_2-2} |\p_r u_\ep|^2 \right) r^{N-1} dr dt \\
& \quad + C \ep \int_0^T \int_\ep^\infty \left( \n_\ep^{\theta} \frac{|u_\ep|^{s_2}}{r^2} + \n_\ep^{\theta} |u_\ep|^{s_2-2} |\p_r u_\ep|^2 \right) r^{N-1} dr dt.
\ea\ee
Using the fact that $\alpha=\theta$ when $\alpha \le 1$, and $\n_{0,\ep} \ge c_0 \ep^{\frac{1}{\alpha-\theta}}$ when $\alpha>1$, we derive
\be\la{cpt59}\ba
& \int_\ep^\infty \n_{0,\ep} |w_{0,\ep}|^{s_2} r^{N-1} dr \\
& = \int_\ep^\infty \n_{0,\ep} | u_{0,\ep} + \n_{0,\ep}^{-1} \p_r (\n_{0,\ep}^\alpha + \ep \n_{0,\ep}^\theta)|^{s_2} r^{N-1} dr \\
& \le C \int_\ep^\infty \left( \n_{0,\ep} |u_{0,\ep}|^{s_2} + | \p_r \n_{0,\ep}^{\alpha-1+\frac{1}{s_2}} |^{s_2} + \ep^{s_2} | \p_r \n_{0,\ep}^{\theta-1+\frac{1}{s_2}} |^{s_2} \right) r^{N-1} dr \\
& \le C \int_\ep^\infty \left( \n_{0,\ep} (|u_{0,\ep}|^{2}+|u_{0,\ep}|^{p_1}) + | \p_r \n_{0,\ep}^{\alpha-1+\frac{1}{s_2}} |^{s_2} + \ep^{s_2} | \p_r \n_{0,\ep}^{\alpha-1+\frac{1}{s_2}} |^{s_2} \n_{0,\ep}^{-s_2(\alpha-\theta)} \right) r^{N-1} dr \\
& \le C \int_\ep^\infty \left( \n_{0,\ep}^{-1}|m_{0,\ep}|^{2} + \n_{0,\ep}^{-p_1+1}|m_{0,\ep}|^{p_1} + | \p_r \n_{0,\ep}^{\alpha-1+\frac{1}{s_2}} |^{s_2} \right) r^{N-1} dr \\
& \le C + C \int_\ep^\infty \left( | \p_r \n_{0,\ep}^{\alpha-\frac{1}{2}} |^{2} + | \p_r \n_{0,\ep}^{\alpha-1+\frac{1}{s_1}} |^{s_1} \right) r^{N-1} dr \\
& \le C.
\ea\ee
If $N=2$, it follows from (\ref{4x11}), (\ref{x3t1}), (\ref{cpt58}), and (\ref{cpt59}) that
\be\la{cpt590}\ba
& \sup_{0 \le t \le T} Y(t)
+ \int_0^T \int_\ep^\infty \left( \n_\ep^{\alpha} \frac{|u_\ep|^{s_2}}{r^2}
+ \n_\ep^{\alpha} |u_\ep|^{s_2-2} |\p_r u_\ep|^2 \right) r dr dt \\
& \quad + \ep \int_0^T \int_\ep^\infty \left( \n_\ep^{\theta} \frac{|u_\ep|^{s_2}}{r^2} + \n_\ep^{\theta} |u_\ep|^{s_2-2} |\p_r u_\ep|^2 \right) r dr dt \\
& \le C + C \int_0^T \left( Y(t) \int_\ep^{R_0} |\p_r \n_\ep^{\frac{\ga+\alpha-1}{2}}|^{2} r dr \right) dt,
\ea\ee
which together with Gr\"onwall's inequality and (\ref{bj02}) yields
\be\la{cpt591}\ba
& \sup_{0 \le t \le T} \int_\ep^\infty \left( \n_\ep |u_\ep|^{s_2} + \n_\ep |w_\ep|^{s_2} \right) r dr \\
& \quad + \int_0^T \int_\ep^\infty \left( \n_\ep^{\alpha} \frac{|u_\ep|^{s_2}}{r^2}
+ \n_\ep^{\alpha} |u_\ep|^{s_2-2} |\p_r u_\ep|^2 \right) r dr dt \\
& \quad + \ep \int_0^T \int_\ep^\infty \left( \n_\ep^{\theta} \frac{|u_\ep|^{s_2}}{r^2} + \n_\ep^{\theta} |u_\ep|^{s_2-2} |\p_r u_\ep|^2 \right) r dr dt
\le C.
\ea\ee

Next, we suppose that $N=3$.
Since $1<\ga \le 6\alpha-2$, from (\ref{x3t1}), (\ref{cpt58}), (\ref{cpt59}), and (\ref{bj02}), we conclude that there exist positive constants $\mathbf{C}_1$, $\mathbf{C}_2$, and $\mathbf{C}_3$ depending only on $s_1$, $\alpha$, $\mu$, and the initial data such that
\be\la{cpt510}\ba
& \sup_{0 \le t \le T} Y(t)
+ \int_0^T \int_\ep^\infty \left( \n_\ep^{\alpha} \frac{|u_\ep|^{s_2}}{r^2}
+ \n_\ep^{\alpha} |u_\ep|^{s_2-2} |\p_r u_\ep|^2 \right) r^{2} dr dt \\
& \quad + \ep \int_0^T \int_\ep^\infty \left( \n_\ep^{\theta} \frac{|u_\ep|^{s_2}}{r^2} + \n_\ep^{\theta} |u_\ep|^{s_2-2} |\p_r u_\ep|^2 \right) r^{2} dr dt \\
& \le \mathbf{C}_1 + \mathbf{C}_2 \int_0^T \left( Y^{1+\tilde{b}}(t) \int_\ep^{R_0} |\p_r \n_\ep^{\frac{\ga+\alpha-1}{2}}|^{2} r^{2} dr \right) dt,
\ea\ee
and
\be\la{cpt511}\ba
\int_0^T \int_\ep^{R_0} |\p_r \n_\ep^{\frac{\ga+\alpha-1}{2}}|^{2} r^{2} dr dt \le \mathbf{C}_3.
\ea\ee
Since $1 < \ga \le 6\alpha-2$, by (\ref{zb11}), we have
\be\la{cpt5111}\ba
b = \frac{(s_2-1)(\ga-6\alpha+3)_+}{(3\alpha-2)(s_2-2)} \le \frac{s_2-1}{(3\alpha-2)(s_2-2)} \triangleq b_0.
\ea\ee
Define
\be\la{cpt5112}\ba
\eta_0 \triangleq \min\left\{ 1,\frac{ (3\alpha-2)(s_2-2) }{ (s_2-1) (1+\mathbf{C}_1)^{b_0} \mathbf{C}_2 \mathbf{C}_3 } \right\}.
\ea\ee
Thus, when $1 < \ga \le 6\alpha-3+\eta_0$, it holds that
\be\la{cpt5113}\ba
b \mathbf{C}_1^{b} \mathbf{C}_2 \mathbf{C}_3 < \frac{(s_2-1) \eta_0}{(3\alpha-2)(s_2-2)} (1+\mathbf{C}_1)^{b_0} \mathbf{C}_2 \mathbf{C}_3 \le 1.
\ea\ee
The combination of (\ref{cpt510}), (\ref{cpt511}), (\ref{cpt5113}), and Lemma \ref{gnti} yields
\be\la{cpt512}\ba
& \sup_{0 \le t \le T} \int_\ep^\infty \left( \n_\ep |u_\ep|^{s_2} + \n_\ep |w_\ep|^{s_2} \right) r^{2} dr \\
& \quad + \int_0^T \int_\ep^\infty \left( \n_\ep^{\alpha} \frac{|u_\ep|^{s_2}}{r^2}
+ \n_\ep^{\alpha} |u_\ep|^{s_2-2} |\p_r u_\ep|^2 \right) r^{2} dr dt \\
& \quad + \ep \int_0^T \int_\ep^\infty \left( \n_\ep^{\theta} \frac{|u_\ep|^{s_2}}{r^2} + \n_\ep^{\theta} |u_\ep|^{s_2-2} |\p_r u_\ep|^2 \right) r^{2} dr dt
\le C.
\ea\ee
By (\ref{cpt591}), (\ref{cpt512}), and (\ref{4x3}), we have
\be\la{cpt513}\ba
\sup_{0 \le t \le T} \int_\ep^\infty |\p_r \n_\ep^{\alpha-1+\frac{1}{s_2}}|^{s_2} r^{N-1} dr
\le C \sup_{0 \le t \le T} \int_\ep^\infty \left( \n_\ep |u_\ep|^{s_2} + \n_\ep |w_\ep|^{s_2} \right) r^{N-1} dr.
\ea\ee

In view of (\ref{cpt591}), (\ref{cpt512}), and (\ref{cpt513}), we obtain (\ref{t04}), which completes the proof of Lemma \ref{tl4}.
\end{proof}

\begin{lemma}\la{tl5}
Under the hypotheses of Lemma \ref{tl4}, there exists a positive constant $C$ independent of $\ep$ and $T$ such that
\be\la{t05}\ba
\sup_{0\le t \le T} \| \n_\ep \|_{L^\infty(\ep,\infty)} \le C.
\ea\ee
\end{lemma}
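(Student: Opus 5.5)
On $[R_0,\infty)$ the bound is already contained in Lemma \ref{cptl2}: there $\n_\ep\le c_2$ uniformly in $\ep$ and $T$. So I only need to control $\n_\ep$ on the interior interval $(\ep,R_0)$. For this I will use the time-uniform bound from Lemma \ref{tl4},
\be\nonumber\ba
\sup_{0\le t\le T}\int_\ep^\infty |\p_r \n_\ep^{\ka}|^{s_2} r^{N-1}dr\le C,\qquad \ka=\alpha-1+\tfrac{1}{s_2}>0,
\ea\ee
together with $s_2>N$. The idea is that a gradient in $L^{s_2}$ with weight $r^{N-1}$ and $s_2>N$ gives Hölder continuity of the radial function, uniformly up to the origin.

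Fix $t$ and $r\in(\ep,R_0)$. By the fundamental theorem of calculus (Lemma \ref{ywsi}) and Hölder's inequality,
\be\nonumber\ba
|\n_\ep^\ka(r)-\n_\ep^\ka(R_0)|\le \int_r^{R_0}|\p_s\n_\ep^\ka|\,ds
\le \Big(\int_\ep^{R_0}|\p_s\n_\ep^\ka|^{s_2}s^{N-1}ds\Big)^{\frac{1}{s_2}}
\Big(\int_\ep^{R_0}s^{-\frac{N-1}{s_2-1}}ds\Big)^{\frac{s_2-1}{s_2}}.
\ea\ee
The last integral is finite independently of $\ep$, because $s_2>N$ gives $\frac{N-1}{s_2-1}<1$. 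Hence $\n_\ep^\ka(r)\le c_2^\ka+C$ on $(\ep,R_0)$. Since $\ka>0$, this yields $\n_\ep\le C$ there, with $C$ independent of $\ep$ and $T$. Equivalently, one could extend by the reflection $\tilde\n_\ep$ of (\ref{cpt13}), apply Morrey's inequality on $B_{R_0}\subset\mathbb{R}^N$ to $(\tilde\n_\ep^\ka-c_2^\ka)_+$, which vanishes at $r=R_0$, and use the reflection estimate from Lemma \ref{cptl1} to control the gradient on $(0,\ep)$. The direct one-dimensional computation above avoids the reflection altogether.

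Combining the two regions gives (\ref{t05}). The substance is really in Lemma \ref{tl4}; here there is no real obstacle. The only points to check are that the Hölder exponent is integrable at $r=0$ uniformly in $\ep$, which is exactly the condition $s_2>N$, and that $\ka>0$. The latter holds because $s_2\le s_1$ and $\alpha-1+\frac{1}{s_1}>0$.
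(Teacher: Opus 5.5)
Your proposal is correct and is essentially the paper's own proof: it likewise integrates $\p_r \n_\ep^{\alpha-1+\frac{1}{s_2}}$ from $r$ to $R_0$, uses $\n_\ep(R_0,t)\le c_2$ from (\ref{cpt02}), applies H\"older's inequality with the weight $r^{N-1}$ (the factor $\int_0^{R_0} r^{-\frac{N-1}{s_2-1}}dr$ is finite precisely because $s_2>N$), and combines the resulting bound on $(\ep,R_0)$ with (\ref{cpt02}) on $[R_0,\infty)$. Your observation that $\alpha-1+\frac{1}{s_2}\ge \alpha-1+\frac{1}{s_1}>0$ is exactly what justifies passing from the bound on $\n_\ep^{\ka}$ to the bound on $\n_\ep$.
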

\begin{proof}
From (\ref{t04}), we conclude that there exists $s_2>N$ satisfying
\be\la{t51}\ba
\alpha-1+\frac{1}{s_2}>0,
\ea\ee
such that
\be\la{t52}\ba
\int_\ep^{R_0} |\p_r \n_\ep^{\alpha-1+\frac{1}{s_2}}(r)|^{s_2} r^{N-1} dr \le C.
\ea\ee
It follows from (\ref{cpt02}), (\ref{t51}), (\ref{t52}), H\"older's inequality, and the fundamental theorem of calculus that for any $r \in (\ep,R_0)$,
\be\la{t53}\ba
\n_\ep^{\alpha-1+\frac{1}{s_2}}(r)
& \le \n_\ep^{\alpha-1+\frac{1}{s_2}}(R_0) - \int_r^{R_0} \p_s \n_\ep^{\alpha-1+\frac{1}{s_2}}(s) ds \\
& \le C + \int_\ep^{R_0} |\p_r \n_\ep^{\alpha-1+\frac{1}{s_2}}(r)| dr \\
& \le C + C \left( \int_\ep^{R_0} |\p_r \n_\ep^{\alpha-1+\frac{1}{s_2}}(r)|^{s_2} r^{N-1} dr \right)^{\frac{1}{s_2}}
\left( \int_0^{R_0} r^{ -\frac{N-1}{s_2-1} } dr \right)^{\frac{s_2-1}{s_2}} \\
& \le C,
\ea\ee
which yields
\be\la{t54}\ba
\| \n_\ep \|_{L^\infty(\ep,R_0)} \le C.
\ea\ee
This, together with (\ref{cpt02}), gives (\ref{t05}), thereby completing the proof of Lemma \ref{tl5}.
\end{proof}

\section{Positive lower bound for the density of the approximate system}

In this section, we derive an $\ep$-dependent positive lower bound for the density of the approximate solution.
Let $(\n_\ep,\mathbf{u}_\ep)$ be the spherically symmetric classical solution to the approximate problem (\ref{bjns}), (\ref{bjqy}), (\ref{bjnxxs}), (\ref{bjbjtj1}), (\ref{bjbjtj2}) introduced in Section 3 on $\om_\ep \times (0,T]$.

Throughout this section, we assume that $\alpha$ and $\gamma$ satisfy (\ref{th110}) and (\ref{th11}).

\begin{lemma}\la{bjl4}
For any $2 < p < \mathcal{P}_N(\alpha)$, there exists a positive constant $C$ depending on $\ep$, $T$, and $p$ such that
\be\la{bj042}\ba
& \sup_{0 \le t \le T} \int_\ep^\infty \n_\ep |u_\ep|^p r^{N-1} dr
+ \int_0^T \int_\ep^\infty \left( \n_\ep^{\theta} \frac{|u_\ep|^p}{r^2} + \n_\ep^{\theta} |u_\ep|^{p-2} |\p_r u_\ep|^2 \right) r^{N-1} dr dt \le C,
\ea\ee
and
\be\la{bj05}\ba
& \sup_{0 \le t \le T} \int_\ep^\infty |\p_r \n_\ep^{\theta-1+\frac{1}{p}}|^{p} r^{N-1} dr \le C.
\ea\ee
\end{lemma}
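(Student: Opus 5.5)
We will run the argument of Lemma \ref{tl4} once more, now with exponent $p$ and with $\ep$- and $T$-dependent constants allowed. The new ingredient is the uniform density bound of Lemma \ref{tl5}, $\n_\ep\le C$, which makes the pressure terms harmless.

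\textbf{Step 1: the $p$-th moment of $u_\ep$.} Multiply $(\ref{bjnsqdc})_2$ by $|u_\ep|^{p-2}u_\ep r^{N-1}$ and integrate over $(\ep,\infty)$. This gives the identity (\ref{cpt52}) with $s_2$ replaced by $p$. Since $2<p<\mathcal{P}_N(\alpha)\le\mathcal{P}_N(\theta)$, we have $M_{N,p}(\alpha)<1$ and $M_{N,p}(\theta)<1$ as in (\ref{t43}) and (\ref{t47}). Young's inequality then absorbs the cross terms $J_5,J_6$, leaving a positive fraction of both the $\n_\ep^\alpha$ and the $\ep\n_\ep^\theta$ dissipations.

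For the pressure term $\int \p_r(\n_\ep^\ga)|u_\ep|^{p-2}u_\ep r^{N-1}dr$, we do not integrate by parts. Instead we write $\p_r(\n_\ep^\ga)$ via (\ref{cpt54}), which gives the bound
\[
\int \frac{\ga\n_\ep^\ga}{\alpha\n_\ep^{\alpha-1}+\ep\theta\n_\ep^{\theta-1}}|w_\ep-u_\ep||u_\ep|^{p-1}r^{N-1}dr .
\]
Using $\n_\ep\le C$ together with $\ep\theta\n_\ep^{\theta-1}\ge c\,\ep$ (when $\theta\le1$), the coefficient is bounded by $C(\ep)\n_\ep$. Hölder's inequality then gives the bound $C(\ep)(Z_u+Z_w)$, where $Z_u=\int\n_\ep|u_\ep|^pr^{N-1}dr$ and $Z_w=\int\n_\ep|w_\ep|^pr^{N-1}dr$.

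\textbf{Step 2: the $p$-th moment of $w_\ep$.} Multiply (\ref{bjyxsdfc1}) by $|w_\ep|^{p-2}w_\ep r^{N-1}$. The pressure term is estimated in exactly the same way, giving
\[
\frac{d}{dt}Z_w\le C(\ep)(Z_u+Z_w).
\]

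\textbf{Step 3: initial data and Gr\"onwall.} We need $Z_w(0)<\infty$. Here $\n_{0,\ep}$ is smooth and bounded below by (\ref{bjcz5}), and it converges to $1$ at infinity. If smooth convergence at infinity is not automatic, we would use the construction in Section 9, which guarantees that $\p_r\n_{0,\ep}$ has the required decay. Summing Steps 1 and 2 and applying Gr\"onwall's inequality yields (\ref{bj042}) with a constant depending on $\ep$, $T$ and $p$.

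\textbf{Step 4: the gradient bound (\ref{bj05}).} Since $\theta-1+\frac1p$ may be negative, we bound the quantity by the effective velocity pointwise:
\[
|\p_r\n_\ep^{\theta-1+\frac1p}|^p=c\,\n_\ep^{p(\theta-2)+1}|\p_r\n_\ep|^p\le C\ep^{-p}\n_\ep^{1-p}(\alpha\n_\ep^{\alpha-1}+\ep\theta\n_\ep^{\theta-1})^p|\p_r\n_\ep|^p\le C(\ep)\n_\ep(|u_\ep|^p+|w_\ep|^p).
\]
The first inequality uses the $\ep$-term, and the second follows from the definition (\ref{bjyxsd}) of $w_\ep$. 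Integrating in $r$ and using Steps 1–3 gives (\ref{bj05}).

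The main obstacle is the pressure term in Step 1 when $\alpha>1$. There $\n_\ep^{\ga}/(\alpha\n_\ep^{\alpha-1}+\ep\theta\n_\ep^{\theta-1})$ must still be controlled by $C(\ep)\n_\ep$. This holds because $\theta<1$ and $\n_\ep$ is bounded above, so the factor $\ep\theta\n_\ep^{\theta-1}$ in the denominator is at least $c\,\ep$. If this estimate fails in some regime, the fallback is to integrate the pressure term by parts as in (\ref{t411a}) and absorb the result with the $\ep\n_\ep^\theta$ dissipation.
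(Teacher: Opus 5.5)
Your argument is correct, but it handles the pressure term in the velocity estimate differently from the paper.

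The paper closes the bound for $\int_\ep^\infty\n_\ep|u_\ep|^p r^{N-1}dr$ on its own, without $w_\ep$:
\begin{enumerate}
\item It integrates $\p_r(\n_\ep^\ga)$ by parts against the cutoff $\eta$.
\item It absorbs the result into the $\ep\n_\ep^\theta$ dissipation.
\item It controls the remainders with the far-field bounds (\ref{cpt01}), (\ref{cpt001}), (\ref{cpt02}) and the density bound (\ref{t05}); see (\ref{bj416})--(\ref{bj418}).
\end{enumerate}
Only afterwards does it estimate $w_\ep$, in (\ref{bj52})--(\ref{bj53}). There it uses the damping term and the coefficient bound $\ga\n_\ep^\ga/(\alpha\n_\ep^{\alpha-1}+\ep\theta\n_\ep^{\theta-1})\le C(\ep)\n_\ep$, which comes from $\ga>1\ge\theta$ and (\ref{t05}).

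You use the same coefficient bound, through (\ref{cpt54}), already in the $u$-equation. Both pressure terms then become $C(\ep)(Z_u+Z_w)$, and one Gr\"onwall argument on $Z_u+Z_w$ finishes the proof.

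\textbf{What your route buys.} It is shorter. There is no cutoff, no integration by parts, and no far-field or near-origin bookkeeping. The dissipation is needed only to absorb the cross terms.

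\textbf{What it costs.} You need $Z_w(0)<\infty$ before any velocity bound is available. The paper also relies on this, implicitly, in (\ref{bj53}). It does hold for the Section 9 data: $\nabla\tilde H_{0,\ep}=\eta_\ep*\nabla H_0$ lies in $L^2\cap L^\infty$, and for fixed $\ep$ the density $\n_{0,\ep}$ is bounded above and below. Hence $\nabla\n_{0,\ep}\in L^p$.

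\textbf{Why the paper is not organised your way.} Your shortcut depends on $\ep$-dependent constants and on (\ref{t05}). In the $\ep$-uniform Lemma \ref{tl4}, the density bound is not yet known. Without the $\ep$-term, the coefficient is only $\lesssim\n_\ep^{\ga-\alpha+1}$, which is not $O(\n_\ep)$ near vacuum when $\ga<\alpha$. The paper's structure is designed around that uniform case.

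Your Step 4 coincides with the paper's (\ref{bj54}).
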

\begin{proof}
For any $2 < p < \mathcal{P}_N(\alpha)$, we multiply $(\ref{bjnsqdc})_2$ by $|u_\ep|^{p-2} u_\ep r^{N-1}$ and integrate by parts over $(\ep,\infty)$ to obtain
\be\la{bj41}\ba
& \frac{1}{p} \frac{d}{dt} \int_\ep^\infty \n_\ep |u_\ep|^p r^{N-1} dr
+ \left(\alpha(N-1)^2-(N-1)(N-2)\right) \int_\ep^\infty \n_\ep^{\alpha} |u_\ep|^p r^{N-3} dr \\
& \quad + \alpha (p-1) \int_\ep^\infty \n_\ep^{\alpha} |u_\ep|^{p-2} |\p_r u_\ep|^2 r^{N-1} dr
+ \ep \theta (p-1) \int_\ep^\infty \n_\ep^{\theta} |u_\ep|^{p-2} |\p_r u_\ep|^2 r^{N-1} dr \\
& \quad + \ep \left(\theta(N-1)^2-(N-1)(N-2)\right) \int_\ep^\infty \n_\ep^{\theta} |u_\ep|^p r^{N-3} dr \\
& = p(N-1)(1-\alpha) \int_\ep^\infty \n_\ep^\alpha |u_\ep|^{p-2} u_\ep (\p_r u_\ep) r^{N-2} dr \\
& \quad + \ep p (N-1)(1-\theta) \int_\ep^\infty \n_\ep^\theta |u_\ep|^{p-2} u_\ep (\p_r u_\ep) r^{N-2} dr
- \int_\ep^\infty \p_r (\n_\ep^\ga) u_\ep |u_\ep|^{p-2} r^{N-1} dr.
\ea\ee

We now estimate the first two terms on the right-hand side of (\ref{bj41}).
We distinguish the cases $N=2$ and $N=3$.

\textit{Case 1: $N=2$.}
Using Young's inequality, we obtain for any sufficiently small $\delta>0$,
\be\la{bj43}\ba
p |1-\alpha| \n_\ep^\alpha |u_\ep|^{p-2} |u_\ep| |\p_r u_\ep|
\le (1-2\delta) \alpha \n_\ep^\alpha \frac{|u_\ep|^p}{r}
+ \frac{p^2(1-\alpha)^2}{4(1-2\delta)\alpha} \n_\ep^{\alpha} |u_\ep|^{p-2} |\p_r u_\ep|^2 r.
\ea\ee
Since $2 < p < \mathcal{P}_N(\alpha)$, the definition of $\mathcal{P}_2(\alpha)$ gives
\be\la{bj44}\ba
\frac{ p^2 - 2p \sqrt{p-1} }{(p-2)^2} < \alpha < \frac{ p^2 + 2p \sqrt{p-1} }{(p-2)^2},
\ea\ee
which yields
\be\la{bj45}
p^2 (1-\alpha)^2 < 4(p-1)\alpha^2.
\ee
Hence, we can choose sufficiently small $\delta>0$ such that
\be\la{bj46}\ba
\frac{p^2(1-\alpha)^2}{4(1-2\delta)\alpha} \le (1-2\delta) (p-1) \alpha.
\ea\ee
It follows from (\ref{bj43}) and (\ref{bj46}) that
\be\la{bj47}\ba
& p |1-\alpha| \n_\ep^\alpha |u_\ep|^{p-2} |u_\ep| |\p_r u_\ep| \\
& \le (1-2\delta) \alpha \n_\ep^\alpha \frac{|u_\ep|^p}{r}
+ (1-2\delta) (p-1) \alpha \n_\ep^{\alpha} |u_\ep|^{p-2} |\p_r u_\ep|^2 r.
\ea\ee
By (\ref{rgylt2}), we have $\varphi(\theta)=\varphi(\alpha)$ for $N=2$.
Thus, repeating the same argument with $\alpha$ replaced by $\theta$, we obtain
\be\la{bj472}\ba
& p |1-\theta| \n_\ep^\theta |u_\ep|^{p-2} |u_\ep| |\p_r u_\ep| \\
& \le (1-2\delta) \theta \n_\ep^\theta \frac{|u_\ep|^p}{r}
+ (1-2\delta) (p-1) \theta \n_\ep^{\theta} |u_\ep|^{p-2} |\p_r u_\ep|^2 r.
\ea\ee

\textit{Case 2: $N=3$.}
Young's inequality shows that for any sufficiently small $\delta>0$,
\be\la{bj48}\ba
& 2p |1-\alpha| \n_\ep^\alpha |u_\ep|^{p-2} |u_\ep| |\p_r u_\ep| r \\
& \le (1-2\delta) (4\alpha-2) \n_\ep^\alpha |u_\ep|^p
+ \frac{p^2(1-\alpha)^2}{(1-2\delta)(4\alpha-2)} \n_\ep^{\alpha} |u_\ep|^{p-2} |\p_r u_\ep|^2 r^2.
\ea\ee
Using $2 < p < \mathcal{P}_N(\alpha)$, we get
\be\la{bj49}\ba
\frac{p^2-p+1-\sqrt{2p^3-p^2-2p+1} }{(p-2)^2} < \alpha < \frac{p^2-p+1+\sqrt{2p^3-p^2-2p+1} }{(p-2)^2},
\ea\ee
which implies that
\be\la{bj410}
p^2 (1-\alpha)^2 < (4\alpha-2)(p-1)\alpha.
\ee
Consequently, we may choose sufficiently small $\delta>0$ such that
\be\la{bj411}\ba
\frac{p^2(1-\alpha)^2}{(1-2\delta)(4\alpha-2)} \le (1-2\delta) (p-1) \alpha.
\ea\ee
Combining (\ref{bj48}) and (\ref{bj411}), we arrive at
\be\la{bj412}\ba
& 2p |1-\alpha| \n_\ep^\alpha |u_\ep|^{p-2} |u_\ep| |\p_r u_\ep| r \\
& \le (1-2\delta) (4\alpha-2) \n_\ep^\alpha |u_\ep|^p
+ (1-2\delta) (p-1) \alpha \n_\ep^{\alpha} |u_\ep|^{p-2} |\p_r u_\ep|^2 r^2.
\ea\ee
A similar argument also yields
\be\la{bj413}\ba
& 2p |1-\theta| \n_\ep^\theta |u_\ep|^{p-2} |u_\ep| |\p_r u_\ep| r \\
& \le (1-2\delta) (4\theta-2) \n_\ep^\theta |u_\ep|^p
+ (1-2\delta) (p-1) \theta \n_\ep^{\theta} |u_\ep|^{p-2} |\p_r u_\ep|^2 r^2.
\ea\ee
In view of (\ref{bj41}), (\ref{bj47}), (\ref{bj472}), (\ref{bj412}), and (\ref{bj413}), we obtain
\be\la{bj414}\ba
& \frac{1}{p} \frac{d}{dt} \int_\ep^\infty \n_\ep |u_\ep|^p r^{N-1} dr
+ 2 \delta \ep \theta (p-1) \int_\ep^\infty \n_\ep^{\theta} |u_\ep|^{p-2} |\p_r u_\ep|^2 r^{N-1} dr \\
& \quad + 2 \delta \ep \left(\theta(N-1)^2-(N-1)(N-2)\right) \int_\ep^\infty \n_\ep^{\theta} |u_\ep|^p r^{N-3} dr \\
& \le - \int_\ep^\infty \p_r (\n_\ep^\ga) u_\ep |u_\ep|^{p-2} r^{N-1} dr.
\ea\ee
Let $\eta(r)$ be the smooth cutoff function satisfying (\ref{bj415}).
Integration by parts together with (\ref{cpt02}), (\ref{bj415}), and Young's inequality yields
\be\la{bj416}\ba
& - \int_\ep^\infty \p_r (\n_\ep^\ga) u_\ep |u_\ep|^{p-2} r^{N-1} dr \\
& = - \int_\ep^\infty \p_r (\n_\ep^\ga - \eta(r)) u_\ep |u_\ep|^{p-2} r^{N-1} dr
- \int_\ep^\infty \p_r (\eta(r)) u_\ep |u_\ep|^{p-2} r^{N-1} dr \\
& = (p-1) \int_\ep^\infty (\n_\ep^\ga - \eta(r)) |u_\ep|^{p-2} (\p_r u_\ep) r^{N-1} dr \\
& \quad + (N-1) \int_\ep^\infty (\n_\ep^\ga - \eta(r)) u_\ep |u_\ep|^{p-2} r^{N-2} dr
- \int_{R_0}^{2 R_0} \eta'(r) u_\ep |u_\ep|^{p-2} r^{N-1} dr \\
& \le \frac{\delta \ep}{8} \left(\theta(N-1)^2-(N-1)(N-2)\right) \int_\ep^\infty \n_\ep^{\theta} |u_\ep|^p r^{N-3} dr \\
& \quad + \frac{\delta \ep}{8} \theta (p-1) \int_\ep^\infty \n_\ep^{\theta} |u_\ep|^{p-2} |\p_r u_\ep|^2 r^{N-1} dr
+ C \int_\ep^\infty \n_\ep^{-\theta} |\n_\ep^\ga - \eta(r)|^2 |u_\ep|^{p-2} r^{N-1} dr \\
& \quad + C + C \int_{R_0}^{2 R_0} \n_\ep^\theta |u_\ep|^{p} r^{N-1} dr.
\ea\ee
Moreover, by (\ref{cpt01}), (\ref{cpt001}), (\ref{cpt02}), (\ref{bj415}), and Young's inequality, we have
\be\la{bj417}\ba
& C \int_\ep^\infty \n_\ep^{-\theta} |\n_\ep^\ga - \eta(r)|^2 |u_\ep|^{p-2} r^{N-1} dr \\
& = C \int_\ep^{R_0} \n_\ep^{-\theta} |\n_\ep^\ga - \eta(r)|^2 |u_\ep|^{p-2} r^{N-1} dr
+ C \int_{R_0}^{2 R_0} \n_\ep^{-\theta} |\n_\ep^\ga - \eta(r)|^2 |u_\ep|^{p-2} r^{N-1} dr \\
& \quad + C \int_{2 R_0}^\infty \n_\ep^{-\theta} |\n_\ep^\ga - \eta(r)|^2 |u_\ep|^{p-2} r^{N-1} dr \\
& \le C \int_\ep^{R_0} \n_\ep^{2\ga-\theta} |u_\ep|^{p-2} r^{N-1} dr
+ C \int_{R_0}^{2 R_0} \n_\ep ( 1 + |u_\ep|^{p} ) r^{N-1} dr \\
& \quad + C \int_{2 R_0}^\infty |\n_\ep - 1|^2 |u_\ep|^{p-2} r^{N-1} dr \\
& \le C \int_\ep^{R_0} \n_\ep^{2\ga-\theta} |u_\ep|^{p-2} r^{N-1} dr
+ C + C \int_{R_0}^{2 R_0} \n_\ep |u_\ep|^{p} r^{N-1} dr \\
& \quad + C \int_{2 R_0}^\infty |\n_\ep^{\theta-\frac{1}{2}} - 1|^2 ( 1 + |u_\ep|^p ) r^{N-1} dr \\
& \le \frac{\delta \ep}{8} \left(\theta(N-1)^2-(N-1)(N-2)\right) \int_\ep^\infty \n_\ep^{\theta} |u_\ep|^p r^{N-3} dr \\
& \quad + C + C \int_\ep^{R_0} \n_\ep^{p(\ga-\theta)+\theta} r^{p+N-3} dr
+ C \int_\ep^\infty \n_\ep |u_\ep|^{p} r^{N-1} dr \\
& \le \frac{\delta \ep}{8} \left(\theta(N-1)^2-(N-1)(N-2)\right) \int_\ep^\infty \n_\ep^{\theta} |u_\ep|^p r^{N-3} dr
+ C + C \int_\ep^\infty \n_\ep |u_\ep|^{p} r^{N-1} dr.
\ea\ee
Combining (\ref{bj416}) and (\ref{bj417}) leads to
\be\la{bj417a}\ba
& - \int_\ep^\infty \p_r (\n_\ep^\ga) u_\ep |u_\ep|^{p-2} r^{N-1} dr \\
& \le \frac{\delta \ep}{4} \left(\theta(N-1)^2-(N-1)(N-2)\right) \int_\ep^\infty \n_\ep^{\theta} |u_\ep|^p r^{N-3} dr \\
& \quad + \frac{\delta \ep}{8} \theta (p-1) \int_\ep^\infty \n_\ep^{\theta} |u_\ep|^{p-2} |\p_r u_\ep|^2 r^{N-1} dr
+ C + C \int_\ep^\infty \n_\ep |u_\ep|^{p} r^{N-1} dr.
\ea\ee
Putting (\ref{bj417a}) into (\ref{bj414}) gives
\be\la{bj418}\ba
& \frac{1}{p} \frac{d}{dt} \int_\ep^\infty \n_\ep |u_\ep|^p r^{N-1} dr
+ \delta \ep \theta (p-1) \int_\ep^\infty \n_\ep^{\theta} |u_\ep|^{p-2} |\p_r u_\ep|^2 r^{N-1} dr \\
& + \delta \ep \left(\theta(N-1)^2-(N-1)(N-2)\right) \int_\ep^\infty \n_\ep^{\theta} |u_\ep|^p r^{N-3} dr \\
& \le C + C \int_\ep^\infty \n_\ep |u_\ep|^{p} r^{N-1} dr.
\ea\ee
Applying Gr\"onwall's inequality to (\ref{bj418}) yields (\ref{bj042}).

Moreover, from (\ref{bjyxsd}) and (\ref{bjyxsdfc1}), we deduce that the effective velocity $w_\ep$ satisfies
\be\la{bj51}\ba
\n_\ep (w_\ep)_t + \n_\ep u_\ep (w_\ep)_r
+ \frac{\ga \n_\ep^{\ga}}{ \alpha \n_\ep^{\alpha-1} + \ep \theta \n_\ep^{\theta-1} } (w_\ep - u_\ep) = 0.
\ea\ee
For any $2 < p < \mathcal{P}_N(\alpha)$, we multiply (\ref{bj51}) by $|w_\ep|^{p-2} w_\ep r^{N-1}$, integrate over $(\ep,\infty)$, and use $(\ref{bjnsqdc})_1$ and Young's inequality to derive
\be\la{bj52}\ba
& \frac{1}{p} \frac{d}{dt} \int_\ep^\infty \n_\ep |w_\ep|^{p} r^{N-1} dr
+ \int_\ep^\infty \frac{\ga \n_\ep^{\ga}}{ \alpha \n_\ep^{\alpha-1} + \ep \theta \n_\ep^{\theta-1} } |w_\ep|^{p} r^{N-1} dr \\
& = \int_\ep^\infty \frac{\ga \n_\ep^{\ga}}{ \alpha \n_\ep^{\alpha-1} + \ep \theta \n_\ep^{\theta-1} } |w_\ep|^{p-2} w_\ep u_\ep r^{N-1} dr \\
& \le \frac{1}{2} \int_\ep^\infty \frac{\ga \n_\ep^{\ga}}{ \alpha \n_\ep^{\alpha-1} + \ep \theta \n_\ep^{\theta-1} } |w_\ep|^{p} r^{N-1} dr
+ C \int_\ep^\infty \n_\ep^{\ga+1-\theta} |u_\ep|^{p} r^{N-1} dr,
\ea\ee
which together with (\ref{t05}) gives
\be\la{bj52a}\ba
\frac{1}{p} \frac{d}{dt} \int_\ep^\infty \n_\ep |w_\ep|^{p} r^{N-1} dr
& \le C \int_\ep^{\infty} \n_\ep^{\ga+1-\theta} |u_\ep|^{p} r^{N-1} dr \\
& \le C \int_\ep^{\infty} \n_\ep |u_\ep|^{p} r^{N-1} dr \le C,
\ea\ee
where we have used $\ga > 1 \ge \theta$.

Integrating (\ref{bj52a}) over $(0,T)$ yields
\be\la{bj53}\ba
\sup_{0 \le t \le T} \int_\ep^\infty \n_\ep |w_\ep|^{p} r^{N-1} dr \le C.
\ea\ee
In addition, the definition of $w_\ep$ and Young's inequality give
\be\la{bj54}\ba
\n_\ep^{1-p} (\alpha \n_\ep^{\alpha-1} + \ep \theta \n_\ep^{\theta-1})^{p} |\p_r \n_\ep|^{p}
\le C \n_\ep |u_\ep|^p + C \n_\ep |w_\ep|^p.
\ea\ee

Combining (\ref{bj53}), (\ref{bj54}), and (\ref{bj042}), we obtain (\ref{bj05}), thereby finishing the proof of Lemma \ref{bjl4}.
\end{proof}

Using Lemmas \ref{tl5} and \ref{bjl4}, and adapting the arguments in \cite{CZZ1,CZZ2}, we can obtain the following positive lower bound for the density of the approximate solution.

\begin{lemma}\la{1ee3}
Let $N=2$ or $N=3$, and let $\alpha \le 1$.
There exists a positive constant $C$ depending on $\ep$ and $T$ such that
\be\la{1e03}\ba
\sup_{0 \le t \le T} \| \n_\ep^{-1} \|_{L^\infty(\ep,\infty)} \le C.
\ea\ee
\end{lemma}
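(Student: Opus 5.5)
We would prove the $\ep$-dependent lower bound in Lagrangian coordinates \eqref{lzb1}, following \cite{CZZ1,CZZ2}. Since $\alpha\le1$, we have $\theta=\alpha$. The viscosity is then $\mu_\ep(\n_\ep)=(1+\ep)\n_\ep^\alpha$, and the effective velocity is $w_\ep=u_\ep+(1+\ep)\n_\ep^{-1}\p_r\n_\ep^\alpha$.

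\emph{Outer region.} For $r\ge R_0$, Lemma \ref{cptl2} already gives $\n_\ep\ge c_1$, so it suffices to bound $\n_\ep$ from below on $(\ep,R_0)$.

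\emph{Spatial derivative bound.} Fix $p\in(N,\mathcal{P}_N(\alpha))$, which exists by Remark \ref{rk1}. Lemma \ref{bjl4} gives
$$\sup_{0\le t\le T}\int_\ep^\infty|\p_r\n_\ep^{\alpha-1+\frac1p}|^p r^{N-1}dr\le C(\ep,T).$$
On $(\ep,R_0)$ the weight satisfies $r^{N-1}\ge\ep^{N-1}$. Hence $\n_\ep^{\alpha-1+\frac1p}$ is H\"older continuous in $r$ there, with a constant depending on $\ep$ and $T$. If $\alpha-1+\frac1p>0$, this bound alone does not yield positivity. We therefore also use the function $\n_\ep^{\alpha-1}$ (or $\log\n_\ep$ when $\alpha=1$), whose derivative is $\n_\ep^{-1}\p_r\n_\ep^\alpha$ up to constants. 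By H\"older's inequality,
$$\int_\ep^{R_0}|\p_r\n_\ep^{\alpha-1}|\,dr\le C\Big(\int\n_\ep(|u_\ep|^p+|w_\ep|^p)r^{N-1}dr\Big)^{\frac1p}\Big(\int_\ep^{R_0}\n_\ep^{-\frac{1}{p-1}}r^{-\frac{N-1}{p-1}}dr\Big)^{\frac{p-1}{p}}.$$
The first factor is controlled by Lemma \ref{bjl4}, but the second factor contains a negative power of $\n_\ep$. This circularity is the main obstacle.

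\emph{Plan to break the circularity.} We would work along particle paths in the Lagrangian variable $y$, where the mass equation reads $\p_\tau(\n_\ep^{-1})=\p_y(r^{N-1}u_\ep)$. Combined with the effective-velocity equation \eqref{bj51}, this gives a transport identity for $(1+\ep)\n_\ep^{\alpha-1}$ (or $\log\n_\ep$):
$$\p_\tau\big((1+\ep)\tfrac{\alpha}{\alpha-1}\n_\ep^{\alpha-1}\big)\ \text{expressed through}\ \p_y(r^{N-1}u_\ep)\ \text{and } w_\ep.$$
Equivalently, we use $\p_\tau\n_\ep^{\alpha}=-\alpha\n_\ep^\alpha\,\div\mathbf{u}_\ep$ and write $(1+\ep)\alpha\n_\ep^\alpha\div\mathbf{u}_\ep$ as the effective flux $F$ plus the pressure term $\n_\ep^\gamma$.

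On the Lagrangian side the momentum equation is
$$\p_\tau u_\ep=r^{N-1}\p_y F,\qquad F\triangleq(1+\ep)\alpha\n_\ep^{1+\alpha}\p_y(r^{N-1}u_\ep)-\n_\ep^\gamma,$$
up to the lower-order term $\frac{N-1}{r}(\n_\ep^\alpha)_r u_\ep$. This term we would absorb by rewriting it through $w_\ep-u_\ep$. Integrating in $y$ from the outer boundary point $y(R_0,t)$, where the density is bounded above and below, gives
$$F(y,\tau)=F(y_{R_0},\tau)-\int_y^{y_{R_0}}r^{1-N}\p_\tau u_\ep\,dy'.$$
The integral $\int_y^{y_{R_0}}r^{1-N}u_\ep\,dy$ is bounded using Lemma \ref{bjl4} and $r\ge\ep$, and $F(y_{R_0},\cdot)$ is controlled in $L^1(0,T)$ by local higher estimates near $R_0$. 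Then along each path
$$\frac{d}{d\tau}\Big(\log\n_\ep+\tfrac{1}{(1+\ep)\alpha}\textstyle\int r^{1-N}u_\ep\,dy\Big)=-\frac{1}{(1+\ep)\alpha}\big(\n_\ep^{\gamma-\alpha}+\text{bounded}\big),$$
with the analogous identity for $\n_\ep^{-\alpha}$ when $\alpha<1$. Since the pressure contribution has the favourable sign for a lower bound only after dividing by $\n_\ep^\alpha$, and $\gamma-\alpha>0$ while Lemma \ref{tl5} bounds $\n_\ep$ above, the right-hand side is bounded below by $-C(\ep,T)$. Integrating over $(0,T)$ and using \eqref{bjcz5} ($\n_{0,\ep}\ge\ep$) would give $\n_\ep\ge c(\ep,T)>0$.

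\emph{Expected difficulties.} We expect the hardest part to be the exact form of $F$ with the $\frac{N-1}{r}$ correction and the boundary terms at $r=\ep$, where $u_\ep=0$ is used, together with bounding $\int_0^T|F(y_{R_0},\tau)|\,d\tau$. For the latter we would fall back on the $L^\infty$ bound of $u_\ep$ away from the origin from Lemma \ref{bjl4} and \eqref{cpt49}.
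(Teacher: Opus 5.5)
\textbf{The step meant to break the circularity is false.} With $\theta=\alpha$ and your flux $F=(1+\ep)\alpha\n_\ep^{\alpha}\div\mathbf{u}_\ep-\n_\ep^\ga$, the mass equation gives
\[
\p_\tau\log\n_\ep=-\div\mathbf{u}_\ep=-\frac{F+\n_\ep^{\ga}}{(1+\ep)\alpha\,\n_\ep^{\alpha}} ,
\]
so $F$ enters with the weight $\n_\ep^{-\alpha}$. Inserting $F=F(y_{R_0},\tau)-\p_\tau\int_y^{y_{R_0}}r^{1-N}u_\ep\,dy'+\cdots$ therefore produces two bad terms. The first is $\n_\ep^{-\alpha}\p_\tau(\cdots)$, which is not a total time derivative. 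The second is $\n_\ep^{-\alpha}F(y_{R_0},\tau)$, which is bounded only once the lower bound is already known.

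Your identity for $\log\n_\ep+\frac{1}{(1+\ep)\alpha}\int r^{1-N}u_\ep\,dy$ has the constant-viscosity (Kazhikhov-type) structure, where this weight is absent. For $\alpha>0$ the quantity in which $F$ enters linearly is $\n_\ep^\alpha$: $\p_\tau\n_\ep^\alpha=-\frac{1}{1+\ep}(F+\n_\ep^\ga)$. Integrating gives at best $\n_\ep^\alpha(y,\tau)=\n_{0,\ep}^\alpha(y)+O(C(\ep,T))$. This additive relation bounds $\n_\ep$ from above but cannot keep it away from zero, since $\n_{0,\ep}^\alpha$ may be as small as $\ep^\alpha$ while the error terms are not small. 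So the circularity you correctly identified is not resolved.

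\textbf{What the paper does, and the idea you missed.} The paper gives no details here; it derives the lemma from \eqref{t05} and Lemma~\ref{bjl4}, adapting \cite{CZZ1,CZZ2}. The companion Lemmas~\ref{2ee2}--\ref{2ee5} for $\alpha>1$ show the mechanism. In Lagrangian coordinates one gets $\|\n_\ep^{-1}\|_{L^\infty}\le C+C(\ep)\|\n_\ep^{-1}\|_{L^\infty}^{\theta+\frac1p}$, and this closes by Young's inequality because $\theta+\frac1p<1$, i.e. because the exponent in \eqref{bj05} is negative.

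You noted that \eqref{bj05} gives no positivity when $\alpha-1+\frac1p>0$, but you did not try to make that exponent negative. Lemma~\ref{bjl4} holds for every $2<p<\mathcal{P}_N(\alpha)$; since $r\ge\ep$, you do not need $p>N$. If $\alpha<1$ and you take $p>\frac{1}{1-\alpha}$, then $\kappa=\alpha-1+\frac1p<0$. Then \eqref{bj05} together with $\n_\ep(R_0,t)\ge c_1$ gives directly
\[
\n_\ep^{\kappa}(r,t)\le c_1^{\kappa}+\int_\ep^{R_0}|\p_s\n_\ep^{\kappa}|\,ds\le C(\ep,T),\qquad r\in(\ep,R_0),
\]
with no negative power of $\n_\ep$ on the right.

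This requires $\mathcal{P}_N(\alpha)>\frac{1}{1-\alpha}$. That holds for all $\alpha\in(\frac12,1)$ when $N=2$. It fails for $N=3$ with $\alpha$ slightly above $7-2\sqrt{10}$, and at $\alpha=1$ no admissible $p$ exists. Those cases need the further argument the paper imports from \cite{CZZ1,CZZ2}, which your flux approach does not supply.
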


\begin{lemma}\la{2ee2}
Let $N=2$ and $\alpha >1$.
There exists a positive constant $C$ depending on $\ep$ and $T$ such that
\be\la{2e02}\ba
\sup_{0 \le t \le T} \| \n_\ep^{-1} \|_{L^\infty(\ep,\infty)} \le C.
\ea\ee
\end{lemma}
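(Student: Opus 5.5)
The plan is to obtain the lower bound directly from the $\ep$-dependent gradient estimate \eqref{bj05} of Lemma \ref{bjl4}, choosing the exponent $p$ so that the power of the density appearing there is \emph{negative}. For $N=2$ and $\alpha>1$ we have $\theta=\frac{\alpha}{2\alpha-1}<1$. Hence
\[
\theta-1+\frac1p=\frac{1-\alpha}{2\alpha-1}+\frac1p<0
\quad\Longleftrightarrow\quad
p>\frac{2\alpha-1}{\alpha-1}.
\]
So the first step is to check that the interval $\big(\frac{2\alpha-1}{\alpha-1},\,\mathcal{P}_2(\alpha)\big)$ is nonempty, where $\mathcal{P}_2(\alpha)=\varphi(\alpha)=\frac{2\alpha^2+2\alpha\sqrt{2\alpha-1}}{(\alpha-1)^2}$. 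This holds because
\[
(\alpha-1)\varphi(\alpha)=\frac{2\alpha^2+2\alpha\sqrt{2\alpha-1}}{\alpha-1}>2\alpha-1,
\]
which follows from $2\alpha^2+2\alpha\sqrt{2\alpha-1}>2\alpha^2-3\alpha+1=(2\alpha-1)(\alpha-1)$. Since also $\frac{2\alpha-1}{\alpha-1}>2$, we may fix such a $p>2$. Set $\kappa\triangleq 1-\theta-\frac1p>0$. Then \eqref{bj05} reads
\[
\sup_{0\le t\le T}\int_\ep^\infty |\p_r \n_\ep^{-\kappa}|^p\, r\, dr\le C(\ep,T).
\]

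The second step is to integrate this bound from $r\in(\ep,R_0)$ out to $R_0$, as in the proof of Lemma \ref{tl5}. By \eqref{cpt02}, $\n_\ep^{-\kappa}(R_0,t)\le c_1^{-\kappa}$. Applying the fundamental theorem of calculus and then H\"older's inequality,
\[
\n_\ep^{-\kappa}(r,t)\le c_1^{-\kappa}+\Big(\int_\ep^{R_0}|\p_r\n_\ep^{-\kappa}|^p r\,dr\Big)^{\frac1p}\Big(\int_0^{R_0} r^{-\frac{1}{p-1}}dr\Big)^{\frac{p-1}{p}}\le C(\ep,T).
\]
The last integral is finite since $p>2$. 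This bounds $\n_\ep^{-1}$ on $(\ep,R_0)$. On $[R_0,\infty)$ the bound $\n_\ep\ge c_1$ from \eqref{cpt02} gives $\n_\ep^{-1}\le c_1^{-1}$ directly. Together these give \eqref{2e02}.

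The main point to verify is that \eqref{bj05} really holds for this comparatively large $p$. Its proof requires only $2<p<\mathcal{P}_N(\alpha)$, together with $\varphi(\theta)=\varphi(\alpha)$ from \eqref{rgylt2} for the $\ep\n_\ep^\theta$ viscous part. It also uses the $w_\ep$-estimate, which needs only the upper bound \eqref{t05} and $\ga>1>\theta$; here \eqref{t05} relies on \eqref{t04}, which holds under our standing hypotheses. The passage from $\n_\ep|u_\ep|^p+\n_\ep|w_\ep|^p$ to the gradient uses the $\ep\theta\n_\ep^{\theta-1}$ part of the effective velocity: by \eqref{bj54}, $\ep^p\n_\ep^{1-p+p(\theta-1)}|\p_r\n_\ep|^p\le C\n_\ep(|u_\ep|^p+|w_\ep|^p)$. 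This is exactly $\ep^p$ times $|\p_r\n_\ep^{\theta-1+1/p}|^p$, up to a constant, and this is why the constant depends on $\ep$. Should the $\ep$-weighted term turn out to be insufficient, the fallback is the Lagrangian-coordinate argument of \cite{CZZ1,CZZ2}, using \eqref{lzb1}--\eqref{lzb2} and the upper bound \eqref{t05}.
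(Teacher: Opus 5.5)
Your proof is correct, and its core is the same as the paper's. Both arguments choose $p\in\big(\frac{2\alpha-1}{\alpha-1},\varphi(\alpha)\big)$, which is exactly the condition $\theta+\frac1p<1$ for $\theta=\frac{\alpha}{2\alpha-1}$. Both then feed this choice into the $\ep$-dependent bound \eqref{bj05}.

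The two proofs differ only in how that gradient bound is turned into an $L^\infty$ bound on $\n_\ep^{-1}$.

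\textbf{The paper's route.} It works in Lagrangian coordinates on $(\ep,2R_0)$ and uses the mass bounds $c_1R_0^2\le y_0\le C$ to control the average of $\n_\ep^{-1}$. It then writes $\n_\ep^{-2}|\p_r\n_\ep|$ as a multiple of $\n_\ep^{-(\theta+1/s_0)}|\p_r\n_\ep^{\theta-1+1/s_0}|$. Finally it absorbs the factor $\|\n_\ep^{-1}\|_{L^\infty}^{\theta+1/s_0}$ by Young's inequality. This step uses $\theta+\frac1{s_0}<1$ together with the a priori finiteness of $\|\n_\ep^{-1}\|_{L^\infty}$ for the classical solution.

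\textbf{Your route.} You observe that the same condition makes $\n_\ep^{\theta-1+1/p}=\n_\ep^{-\kappa}$ a negative power of the density. The fundamental theorem of calculus from $r=R_0$, where $\n_\ep\ge c_1$ by \eqref{cpt02}, then bounds $\n_\ep^{-\kappa}$ directly. This removes the need for Lagrangian coordinates, the mass bounds and the absorption step, so it is the cleaner of the two.

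Your fallback remark about the Lagrangian argument of \cite{CZZ1,CZZ2} is unnecessary, since the $\ep$-weighted term in \eqref{bj54} already suffices.
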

\begin{proof}
First, a direct calculation shows that for any $1<\alpha<\infty$,
\be\la{2e21}\ba
\varphi(\alpha) - \frac{2\alpha-1}{\alpha-1}
= \frac{ 2\alpha^2 + 2\alpha\sqrt{2\alpha-1} }{(\alpha-1)^2} - \frac{2\alpha-1}{\alpha-1}
= \frac{ 3\alpha - 1 + 2\alpha\sqrt{2\alpha-1} }{(\alpha-1)^2} > 0.
\ea\ee
Thus, we may choose $s_0$ such that
\be\la{2e22}\ba
2 < \frac{2\alpha-1}{\alpha-1} < s_0 < \varphi(\alpha),
\ea\ee
which implies that
\be\la{2e23}\ba
\frac{\alpha}{2\alpha-1}+\frac{1}{s_0} < 1.
\ea\ee
Choosing $p=s_0$ in (\ref{bj05}), we deduce that
\be\la{2e29}\ba
\int_\ep^\infty | \p_r \n_\ep^{\theta-1+\frac{1}{s_0}} |^{s_0} r dr \le C(\ep).
\ea\ee
Let $R_0$ be as in Lemma \ref{cptl2}, and set
\be\la{2e29a1}\ba
y_0(t) \triangleq \int_\ep^{2 R_0} \rho_\ep(s,t) s ds.
\ea\ee
On the one hand, (\ref{cpt02}) implies that
\be\la{2e29a2}\ba
y_0(t) = \int_\ep^{2 R_0} \rho_\ep(s,t) s ds > \int_{R_0}^{2 R_0} \rho_\ep(s,t) s ds \ge c_1 R_0^2.
\ea\ee
On the other hand, (\ref{t05}) ensures that
\be\la{2e29a3}\ba
y_0(t) = \int_\ep^{2 R_0} \rho_\ep(s,t) s ds \le C.
\ea\ee
In view of (\ref{2e29a1}), (\ref{2e29a2}), and (\ref{2e29a3}), we have
\be\la{2e29a4}\ba
c_1 R_0^2 \le y_0 \le C.
\ea\ee

For any $r\in(\ep,2R_0)$, with $y$ defined by (\ref{lzb1}), we use (\ref{ywsi2}), (\ref{lzb1}), (\ref{lzb2}), (\ref{2e21}), (\ref{2e29}), (\ref{2e29a4}), and H\"older's inequality to obtain
\be\la{2e210}\ba
\n_\ep^{-1}(r,t) = \n_\ep^{-1}(y,t)
& \le \frac{1}{y_0} \int_0^{y_0} \n_\ep^{-1} dy
+ \int_0^{y_0} |\p_y(\n_\ep^{-1})| dy \\
& \le \frac{1}{c_1 R_0^2} \int_\ep^{2 R_0} \n_\ep^{-1} \n_\ep r dr + C \int_\ep^{2 R_0} \n_\ep^{-2} |\p_r \n_\ep| dr \\
& \le C + C \| \n_\ep^{-1} \|^{\theta+\frac{1}{s_0}}_{L^\infty(\ep,2 R_0)}
\left( \int_\ep^{2 R_0} | \p_r \n_\ep^{\theta-1+\frac{1}{s_0}} |^{s_0} r dr \right)^{\frac{1}{s_0}}
\left( \int_\ep^{2 R_0} r^{- \frac{1}{s_0-1}} dr \right)^{\frac{s_0-1}{s_0}} \\
& \le C + C(\ep) \| \n_\ep^{-1} \|^{\frac{\alpha}{2\alpha-1}+\frac{1}{s_0}}_{L^\infty(\ep,2 R_0)},
\ea\ee
where in the last inequality we have used
\be\la{2e24}\ba
\theta = \frac{\alpha}{2\alpha-1}.
\ea\ee
By (\ref{2e23}) and Young's inequality, we arrive at
\be\la{2e211}\ba
\sup_{0 \le t \le T} \| \n_\ep^{-1}(r,t) \|_{L^\infty(\ep,2 R_0)} \le C(\ep),
\ea\ee
which together with (\ref{cpt02}) yields (\ref{2e02}) and completes the proof of Lemma \ref{2ee2}.
\end{proof}

\begin{lemma}\la{2ee4}
Let $N=3$ and $1 < \alpha \le 2$.
There exists a positive constant $C$ depending on $\ep$ and $T$ such that
\be\la{2e04}\ba
\sup_{0 \le t \le T} \| \n_\ep^{-1} \|_{L^\infty(\ep,\infty)} \le C.
\ea\ee
\end{lemma}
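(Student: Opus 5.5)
The plan is to follow the proof of Lemma \ref{2ee2}, replacing the two-dimensional weight by $r^2$ and using $\theta=\frac{2\alpha-1}{3\alpha-2}$ from (\ref{rgylt1}). By (\ref{cpt02}), $\n_\ep\ge c_1$ on $[R_0,\infty)$, so it suffices to bound $\n_\ep^{-1}$ on $(\ep,2R_0)$. Everything rests on absorbing a power of $\|\n_\ep^{-1}\|_{L^\infty}$ that is strictly smaller than one.

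\emph{Step 1 (choice of exponent).} I would choose $s_0$ with
\be\nonumber
3<\frac{3\alpha-2}{\alpha-1}<s_0<\psi(\alpha).
\ee
The first inequality holds since $\alpha>1$. The second must be checked, and is the only genuinely new computation. Since $\psi(\alpha)(\alpha-1)^2=\alpha(2\alpha-1)+\sqrt{\alpha(2\alpha-1)(3\alpha-2)}$ and $(3\alpha-2)(\alpha-1)=3\alpha^2-5\alpha+2$, it amounts to
\be\nonumber
-\alpha^2+4\alpha-2+\sqrt{\alpha(2\alpha-1)(3\alpha-2)}>0 .
\ee
This holds on $(1,2]$, since there $-\alpha^2+4\alpha-2\ge1>0$; this is presumably where $\alpha\le2$ is used. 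The condition $s_0>\frac{3\alpha-2}{\alpha-1}=\frac{1}{1-\theta}$ is equivalent to $\theta+\frac{1}{s_0}<1$. The condition $s_0>3$ gives $\int_\ep^{2R_0} r^{-\frac{2}{s_0-1}}dr\le C$. Taking $p=s_0$ in (\ref{bj05}), which is allowed since $2<s_0<\mathcal{P}_3(\alpha)$, gives
\be\nonumber
\int_\ep^\infty|\p_r\n_\ep^{\theta-1+\frac{1}{s_0}}|^{s_0}r^2dr\le C(\ep).
\ee

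\emph{Step 2 (Lagrangian mass bounds).} Set $y_0(t)=\int_\ep^{2R_0}\n_\ep s^2ds$. Exactly as in (\ref{2e29a2})--(\ref{2e29a4}), (\ref{cpt02}) and (\ref{t05}) give $c\le y_0\le C$ uniformly in $t$.

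\emph{Step 3 (main estimate).} Apply (\ref{ywsi2}) in the variable $y$ on $(0,y_0)$, using $\p_y=\n_\ep^{-1}r^{-2}\p_r$ from (\ref{lzb2}). This gives
\be\nonumber
\n_\ep^{-1}(r,t)\le\frac{1}{y_0}\int_\ep^{2R_0}r^2dr+\int_\ep^{2R_0}\n_\ep^{-2}|\p_r\n_\ep|\,dr .
\ee
Write $\n_\ep^{-2}|\p_r\n_\ep|=C\n_\ep^{-(\theta+\frac{1}{s_0})}|\p_r\n_\ep^{\theta-1+\frac{1}{s_0}}|$ and apply H\"older with weight $r^2$. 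This yields
\be\nonumber
\|\n_\ep^{-1}\|_{L^\infty(\ep,2R_0)}\le C+C(\ep)\|\n_\ep^{-1}\|_{L^\infty(\ep,2R_0)}^{\theta+\frac{1}{s_0}} .
\ee
Since $\theta+\frac{1}{s_0}<1$, Young's inequality closes the bound, and combining with (\ref{cpt02}) gives (\ref{2e04}). I expect the only real obstacle to be verifying the exponent window in Step 1, i.e. that $\frac{3\alpha-2}{\alpha-1}<\psi(\alpha)$ on $(1,2]$; the rest is routine.
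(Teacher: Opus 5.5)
Your proposal is correct and follows essentially the same argument as the paper: (\ref{bj05}) at an exponent $p>3$ with $\theta+\frac1p<1$, the Lagrangian mass bounds from (\ref{cpt02}) and (\ref{t05}), the one-dimensional estimate (\ref{ywsi2}) in $y$, H\"older with weight $r^2$, and absorption of the power of $\|\n_\ep^{-1}\|_{L^\infty}$. The only difference is that the paper fixes the specific exponent $\tilde{q}_1=3+\frac{2}{\alpha-1}$, which lies inside your window $\left(\frac{3\alpha-2}{\alpha-1},\psi(\alpha)\right)$, whereas you allow any exponent in that window and explicitly verify that it is nonempty on $(1,2]$.
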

\begin{proof}
First, for any $1<\alpha \le 2$, set
\be\nonumber\ba
\tilde{q}_1 \triangleq 3+\frac{2}{\alpha-1}.
\ea\ee
A direct computation shows that $\tilde{q}_1 \in (3,\psi(\alpha))$ and
\be\la{2e41}\ba
\alpha > \frac{\tilde{q}_1-2}{\tilde{q}_1-3}.
\ea\ee
Recall from (\ref{rgylt1}) that for $N=3$ and $1 < \alpha \le 2$,
\be\nonumber\ba
\theta = \frac{2\alpha-1}{3\alpha-2},
\ea\ee
which together with (\ref{2e41}) yields
\be\la{2e41a}\ba
\theta + \frac{1}{\tilde{q}_1} < 1.
\ea\ee
Choosing $p=\tilde{q}_1$ in (\ref{bj05}), we obtain
\be\la{2e41b}\ba
\int_\ep^\infty | \p_r \n_\ep^{\theta-1+\frac{1}{\tilde{q}_1}} |^{\tilde{q}_1} r^2 dr \le C(\ep).
\ea\ee

Let $R_0$ be as in Lemma \ref{cptl2}, and define
\be\la{2e42a1}\ba
y_1(t) \triangleq \int_\ep^{2 R_0} \rho_\ep(s,t) s^2 ds.
\ea\ee
By (\ref{cpt02}), we have
\be\la{2e42a2}\ba
\int_\ep^{2 R_0} \rho_\ep(s,t) s^2 ds > \int_{R_0}^{2 R_0} \rho_\ep(s,t) s^2 ds \ge c_1 R_0^3.
\ea\ee
It follows from (\ref{t05}) that
\be\la{2e42a3}\ba
\int_\ep^{2 R_0} \rho_\ep(s,t) s^2 ds \le C.
\ea\ee
Using (\ref{2e42a1}), (\ref{2e42a2}), and (\ref{2e42a3}), we arrive at
\be\la{2e42a4}\ba
c_1 R_0^3 \le y_1 \le C.
\ea\ee
For any $r\in(\ep,2 R_0)$ with $y$ defined by (\ref{lzb1}), we deduce from (\ref{ywsi2}), (\ref{lzb1}), (\ref{lzb2}), and (\ref{2e41b}) that
\be\la{2e42}\ba
\n_\ep^{-1}(r,t) = \n_\ep^{-1}(y,t)
& \le \frac{1}{y_1} \int_0^{y_1} \n_\ep^{-1} dy
+ \int_0^{y_1} |\p_y(\n_\ep^{-1})| dy \\
& \le \frac{1}{c_1 R_0^3} \int_\ep^{2 R_0} \n_\ep^{-1} \n_\ep r^2 dr + \int_\ep^{2 R_0} |\p_r(\n_\ep^{-1})| dr \\
& \le C + C \int_\ep^{2 R_0} \n_\ep^{-2} |\p_r \n_\ep| dr \\
& \le C + C \| \n_\ep^{-1} \|^{\theta+\frac{1}{\tilde{q}_1}}_{L^\infty(\ep,2 R_0)}
\left( \int_\ep^{2 R_0} | \p_r \n_\ep^{\theta-1+\frac{1}{\tilde{q}_1}} |^{\tilde{q}_1} r^2 dr \right)^{\frac{1}{\tilde{q}_1}}
\left( \int_\ep^{2 R_0} r^{- \frac{2}{\tilde{q}_1-1}} dr \right)^{\frac{\tilde{q}_1-1}{\tilde{q}_1}} \\
& \le C + C \| \n_\ep^{-1} \|^{\theta + \frac{1}{\tilde{q}_1}}_{L^\infty(\ep,2 R_0)},
\ea\ee
which together with (\ref{cpt02}), (\ref{2e41a}), and Young's inequality implies that
\be\la{2e43}\ba
\sup_{0 \le t \le T} \| \n_\ep^{-1}(r,t) \|_{L^\infty(\ep,\infty)} \le C.
\ea\ee
This gives (\ref{2e04}) and completes the proof of Lemma \ref{2ee4}.
\end{proof}

\begin{lemma}\la{2ee5}
Let $N=3$ and $2 < \alpha < 7 + 2 \sqrt{10}$.
There exists a positive constant $C$ depending on $\ep$ and $T$ such that
\be\la{2e05}\ba
\sup_{0 \le t \le T} \| \n_\ep^{-1} \|_{L^\infty(\ep,\infty)} \le C.
\ea\ee
\end{lemma}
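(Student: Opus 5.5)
The plan is to follow the Lagrangian argument of Lemmas \ref{2ee2} and \ref{2ee4}, which needs an exponent $p$ with
\be\nonumber\ba
\theta + \frac{1}{p} < 1 \quad \text{and} \quad \sup_{0 \le t \le T}\int_\ep^\infty |\p_r \n_\ep^{\theta-1+\frac{1}{p}}|^{p} r^{2} dr \le C(\ep,T).
\ea\ee
Here $\theta=\frac34$ by (\ref{rgylt1}), so I need $p>4$. Lemma \ref{bjl4} only gives (\ref{bj05}) for $p<\psi(\alpha)$, and $\psi(\alpha)$ tends to $3$ as $\alpha \to 7+2\sqrt{10}$, since $g_2(3)=7+2\sqrt{10}$. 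So Lemma \ref{bjl4} cannot be quoted directly. The first and main step is therefore to redo the $L^p$ estimate of Lemma \ref{bjl4} for a fixed $p>4$ (say $p=5$), allowing constants that depend on $\ep$.

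Multiply $(\ref{bjnsqdc})_2$ by $|u_\ep|^{p-2}u_\ep r^{2}$ as in (\ref{bj41}). In Lemma \ref{bjl4} the cross term $p(N-1)(1-\alpha)\int \n_\ep^\alpha |u_\ep|^{p-2}u_\ep \p_r u_\ep\, r\,dr$ was absorbed by the structural condition $p<\psi(\alpha)$. Instead, I will split it by Young's inequality as
\be\nonumber\ba
\frac{\alpha(p-1)}{2}\int \n_\ep^\alpha |u_\ep|^{p-2}|\p_r u_\ep|^2 r^2 dr + C(p,\alpha)\int_\ep^\infty \n_\ep^\alpha |u_\ep|^p dr .
\ea\ee
The first piece is absorbed by the $\alpha$-dissipation. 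For the second piece I use the two facts available on the approximate domain. First, $r\ge\ep$, so $1\le \ep^{-2}r^2$. Second, $\alpha>1$ and the uniform upper bound (\ref{t05}) give $\n_\ep^\alpha \le C\n_\ep$. Together these bound it by $C\ep^{-2}\int_\ep^\infty \n_\ep|u_\ep|^p r^2dr$, which is a Gr\"onwall term. The $\theta$ cross term is handled the same way, using $\ep\n_\ep^\theta \le C\n_\ep^{\theta}$ against $\n_\ep^{\theta}\le C(\ep)\n_\ep$. This last inequality needs some care because $\theta<1$. I will keep the $\theta$-dissipation with a fraction, or handle the term with the a priori positivity $\n_\ep\ge c_\ep$ available on the local existence interval. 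The safer route is to put the whole cross term into the $\alpha$-part, since $\ep\n_\ep^\theta$ is dominated by neither piece.

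The pressure term is treated exactly as in (\ref{bj416})--(\ref{bj417}). The key inputs are (\ref{cpt02}), the bound $\n_\ep\le C$ from (\ref{t05}), and the weight $r^{p-1}\le C$ on $(\ep,R_0)$. Gr\"onwall's inequality then gives $\sup_t\int\n_\ep|u_\ep|^pr^2dr\le C(\ep,T)$. Next I repeat (\ref{bj51})--(\ref{bj53}) for $w_\ep$, which uses only $\n_\ep^{\ga+1-\theta}\le C\n_\ep$ and needs no restriction on $p$. This yields $\sup_t\int\n_\ep|w_\ep|^pr^2dr\le C(\ep,T)$. Finally, from (\ref{bj54}), keeping only the $\ep\theta\n_\ep^{\theta-1}$ part of the coefficient, I obtain the desired bound on $\p_r\n_\ep^{\theta-1+\frac1p}$ in $L^p(r^2dr)$ with a constant $C(\ep,T)$.

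With $p=5$ we have $\theta+\frac1p=\frac34+\frac15<1$, and I proceed verbatim as in Lemma \ref{2ee4}. Set $y_1(t)=\int_\ep^{2R_0}\n_\ep s^2ds$, which satisfies $c_1R_0^3\le y_1\le C$ by (\ref{cpt02}) and (\ref{t05}). Then apply (\ref{ywsi2}) in the Lagrangian variable $y$, followed by H\"older's inequality. The factor $\int_\ep^{2R_0}r^{-\frac{2}{p-1}}dr$ is finite, and $\int\n_\ep^{-2}|\p_r\n_\ep|dr$ is bounded by $C(\ep)\|\n_\ep^{-1}\|_{L^\infty(\ep,2R_0)}^{\theta+\frac1p}$. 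Since the exponent $\theta+\frac1p$ is less than $1$, Young's inequality closes the estimate on $(\ep,2R_0)$, and (\ref{cpt02}) covers $[2R_0,\infty)$. I expect the only real obstacle to be the first step: checking carefully that every cross and pressure term in the $p$-th moment estimate can be absorbed using only $r\ge\ep$ and the upper bound on the density, without the restriction $p<\psi(\alpha)$.
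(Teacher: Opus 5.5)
Your overall plan matches the paper's proof. You redo the $p$-th moment estimate with $\ep$-dependent constants, absorb the $\alpha$ cross term using Young's inequality, $r\ge\ep$ and $\n_\ep^\alpha\le C\n_\ep$ (exactly (\ref{2e52})), then estimate $w_\ep$, then run the Lagrangian argument with some $p>4$.

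The gap is the $\theta$ cross term $2\ep p(1-\theta)\int_\ep^\infty \n_\ep^\theta|u_\ep|^{p-2}u_\ep\,\p_r u_\ep\,r\,dr$. You leave it unresolved, and your closing claim that every term can be absorbed using only $r\ge\ep$ and the upper bound is false for it. After Young's inequality it produces $\ep\int_\ep^\infty \n_\ep^\theta|u_\ep|^p\,dr$. Since $\theta<1<\alpha$, neither $\n_\ep^\theta\le C\n_\ep$ nor $\n_\ep^\theta\le C\n_\ep^\alpha$ holds without a lower bound on $\n_\ep$, which is what you are proving. Your ``safer route'' of moving the term into the $\alpha$-part fails for the same reason, as you note yourself. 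Invoking the positivity $\n_\ep\ge c_\ep$ from local existence is circular: the lemma must give a bound depending only on $\ep$ and $T$, because it is what allows the local solution to be continued.

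The missing step is to absorb this term into the $\theta$-dissipation
$\ep(4\theta-2)\int_\ep^\infty\n_\ep^\theta|u_\ep|^p\,dr+\ep\theta(p-1)\int_\ep^\infty\n_\ep^\theta|u_\ep|^{p-2}|\p_r u_\ep|^2r^2\,dr$,
using the sharp Young inequality of (\ref{bj48})--(\ref{bj413}) with $\alpha$ replaced by $\theta$. This requires $p^2(1-\theta)^2<(4\theta-2)(p-1)\theta$, i.e. $p<\psi(\theta)$. For $\theta=\frac34$ the condition reads $p^2-12p+12<0$, i.e. $p\in(6-2\sqrt6,\,6+2\sqrt6)$. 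So $p=5$ is admissible with a strict margin $\delta>0$.

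That margin matters for a second reason. The pressure estimate (\ref{bj416})--(\ref{bj417}) you invoke also absorbs terms into the same $\ep\n_\ep^\theta$ dissipation, so some of it must remain after the $\theta$ cross term is handled.

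This is why the paper sets $\theta=\frac34$ for $\alpha>2$: $\psi(\frac34)=6+2\sqrt6>4$, so the $\theta$-structure survives for some $p>4$ even though $\psi(\alpha)$ may drop below $4$. With this step made explicit, the rest of your argument goes through as in the paper.
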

\begin{proof}
First, we conclude from (\ref{bj41}) that for any $p>2$,
\be\la{2e51}\ba
& \frac{1}{p} \frac{d}{dt} \int_\ep^\infty \n_\ep |u_\ep|^p r^{2} dr
+ (4\alpha - 2) \int_\ep^\infty \n_\ep^{\alpha} |u_\ep|^p dr
+ \alpha (p-1) \int_\ep^\infty \n_\ep^{\alpha} |u_\ep|^{p-2} |\p_r u_\ep|^2 r^{2} dr \\
& \quad + \ep (4\theta - 2) \int_\ep^\infty \n_\ep^{\theta} |u_\ep|^p dr + \ep \theta (p-1) \int_\ep^\infty \n_\ep^{\theta} |u_\ep|^{p-2} |\p_r u_\ep|^2 r^{2} dr \\
& = 2p(1-\alpha) \int_\ep^\infty \n_\ep^\alpha |u_\ep|^{p-2} u_\ep (\p_r u_\ep) r dr
+ 2\ep p(1-\theta) \int_\ep^\infty \n_\ep^\theta |u_\ep|^{p-2} u_\ep (\p_r u_\ep) r dr \\
& \quad - \int_\ep^\infty \p_r (\n_\ep^\ga) u_\ep |u_\ep|^{p-2} r^{2} dr.
\ea\ee
By Young's inequality, we have
\be\la{2e52}\ba
& 2p |1-\alpha| \int_\ep^\infty \n_\ep^\alpha |u_\ep|^{p-2} |u_\ep| |\p_r u_\ep| r dr \\
& \le C \int_\ep^\infty \n_\ep^\alpha |u_\ep|^p dr + \frac{\alpha (p-1)}{4} \int_\ep^\infty \n_\ep^{\alpha} |u_\ep|^{p-2} |\p_r u_\ep|^2 r^2 dr \\
& \le C(\ep) \int_\ep^\infty \n_\ep |u_\ep|^p r^2 dr + \frac{\alpha (p-1)}{4} \int_\ep^\infty \n_\ep^{\alpha} |u_\ep|^{p-2} |\p_r u_\ep|^2 r^2 dr.
\ea\ee

For any $p \in (2,\psi(\theta))$, the argument leading to (\ref{bj48})--(\ref{bj412}), with $\alpha$ replaced by $\theta$, shows that there exists a sufficiently small $\delta>0$ such that
\be\la{2e53}\ba
& 2p |1-\theta| \n_\ep^\theta |u_\ep|^{p-2} |u_\ep| |\p_r u_\ep| r \\
& \le (1-\delta) (4\theta-2) \n_\ep^\theta |u_\ep|^p
+ (1-\delta) (p-1) \theta \n_\ep^{\theta} |u_\ep|^{p-2} |\p_r u_\ep|^2 r^2.
\ea\ee
Moreover, from (\ref{bj417a}), we conclude that
\be\la{2e54}\ba
- \int_\ep^\infty \p_r (\n_\ep^\ga) u_\ep |u_\ep|^{p-2} r^{2} dr
& \le \frac{\de \theta \ep (p-1)}{8} \int_\ep^\infty \n_\ep^{\theta} |u_\ep|^{p-2} |\p_r u_\ep|^2 r^{2} dr \\
& \quad + \frac{\delta \ep}{4} (4\theta-2) \int_\ep^\infty \n_\ep^{\theta} |u_\ep|^p dr
+ C + C \int_\ep^\infty \n_\ep |u_\ep|^{p} r^{2} dr.
\ea\ee
Substituting (\ref{2e52}), (\ref{2e53}), and (\ref{2e54}) into (\ref{2e51}) and using (\ref{t05}), we obtain for any $p \in (2,\psi(\theta))$,
\be\la{2e55}\ba
& \frac{1}{p} \frac{d}{dt} \int_\ep^\infty \n_\ep |u_\ep|^p r^{2} dr
+ \frac{\delta \ep}{4} (4\theta - 2) \int_\ep^\infty \n_\ep^{\theta} |u_\ep|^p dr
+ \frac{\de \theta \ep (p-1)}{4} \int_\ep^\infty \n_\ep^{\theta} |u_\ep|^{p-2} |\p_r u_\ep|^2 r^{2} dr \\
& \le C + C(\ep) \int_\ep^\infty \n_\ep |u_\ep|^p r^{2} dr.
\ea\ee
Applying Gr\"onwall's inequality to (\ref{2e55}) yields for any $p \in (2,\psi(\theta))$,
\be\la{2e56}\ba
\sup_{0 \le t \le T} \int_\ep^\infty \n_\ep |u_\ep|^p r^{2} dr
+ \int_0^T \int_\ep^\infty \left( \n_\ep^{\theta} |u_\ep|^p + \n_\ep^{\theta} |u_\ep|^{p-2} |\p_r u_\ep|^2 r^{2} \right) dr dt
\le C(\ep).
\ea\ee

On the other hand, for any $p \in (2,\psi(\theta))$, multiplying (\ref{bj51}) by $|w_\ep|^{p-2} w_\ep r^{N-1}$, integrating over $(\ep,\infty)$, and using $(\ref{bjnsqdc})_1$ and Young's inequality, we derive
\be\la{2e560}\ba
& \frac{1}{p} \frac{d}{dt} \int_\ep^\infty \n_\ep |w_\ep|^{p} r^{N-1} dr
+ \int_\ep^\infty \frac{\ga \n_\ep^{\ga}}{ \alpha \n_\ep^{\alpha-1} + \ep \theta \n_\ep^{\theta-1} } |w_\ep|^{p} r^{N-1} dr \\
& = \int_\ep^\infty \frac{\ga \n_\ep^{\ga}}{ \alpha \n_\ep^{\alpha-1} + \ep \theta \n_\ep^{\theta-1} } |w_\ep|^{p-2} w_\ep u_\ep r^{N-1} dr \\
& \le \frac{1}{2} \int_\ep^\infty \frac{\ga \n_\ep^{\ga}}{ \alpha \n_\ep^{\alpha-1} + \ep \theta \n_\ep^{\theta-1} } |w_\ep|^{p} r^{N-1} dr
+ C \int_\ep^\infty \n_\ep^{\ga+1-\theta} |u_\ep|^{p} r^{N-1} dr.
\ea\ee
Integrating (\ref{2e560}) over $(0,T)$ and using (\ref{t05}) and $\ga > 1 \ge \theta$, we arrive at
\be\la{2e57}\ba
\sup_{0 \le t \le T} \int_\ep^\infty \n_\ep |w_\ep|^{p} r^{N-1} dr
& \le C + C \int_0^T \int_\ep^{\infty} \n_\ep^{\ga+1-\theta} |u_\ep|^{p} r^{N-1} dr dt \\
& \le C + C \int_0^T \int_\ep^{\infty} \n_\ep |u_\ep|^{p} r^{N-1} dr dt \le C.
\ea\ee
Combining (\ref{bj54}), (\ref{2e56}), and (\ref{2e57}) leads to
\be\la{2e58}\ba
\sup_{0 \le t \le T} \int_\ep^\infty |\p_r \n_\ep^{\theta-1+\frac{1}{p}}|^{p} r^{2} dr \le C(\ep).
\ea\ee
Since for $N=3$ and $\alpha>2$, we have $\theta=\frac{3}{4}$ and $\psi(\frac{3}{4}) = 6+2\sqrt{6}>4$, we may choose $\tilde{q}_2 \in (4,\psi(\frac{3}{4}))$.
Taking $p=\tilde{q}_2$ in (\ref{2e58}) gives
\be\la{2e59}\ba
\sup_{0 \le t \le T} \int_\ep^\infty |\p_r \n_\ep^{\theta-1+\frac{1}{\tilde{q}_2}}|^{\tilde{q}_2} r^{2} dr \le C.
\ea\ee
Let $y_1$ be as in (\ref{2e42a1}) and $y$ be defined by (\ref{lzb1}).
Then for any $r\in(\ep,2 R_0)$, we deduce from (\ref{ywsi2}), (\ref{lzb1}), (\ref{lzb2}), (\ref{2e42a4}), and (\ref{2e59}) that
\be\la{2e510}\ba
\n_\ep^{-1}(r,t) = \n_\ep^{-1}(y,t)
& \le \frac{1}{y_1} \int_0^{y_1} \n_\ep^{-1} dy
+ \int_0^{y_1} |\p_y(\n_\ep^{-1})| dy \\
& \le \frac{1}{c_1 R_0^3} \int_\ep^{2 R_0} \n_\ep^{-1} \n_\ep r^2 dr + \int_\ep^{2 R_0} |\p_r(\n_\ep^{-1})| dr \\
& \le C + C \int_\ep^{2 R_0} \n_\ep^{-2} |\p_r \n_\ep| dr \\
& \le C + C \| \n_\ep^{-1} \|^{\theta+\frac{1}{\tilde{q}_2}}_{L^\infty(\ep,2 R_0)}
\left( \int_\ep^{2 R_0} |\p_r \n_\ep^{\theta-1+\frac{1}{\tilde{q}_2}}|^{\tilde{q}_2} r^2 dr \right)^{\frac{1}{\tilde{q}_2}}
\left( \int_\ep^{2 R_0} r^{- \frac{2}{\tilde{q}_2-1}} dr \right)^{\frac{\tilde{q}_2-1}{\tilde{q}_2}} \\
& \le C + C \| \n_\ep^{-1} \|^{\frac{3}{4} + \frac{1}{\tilde{q}_2}}_{L^\infty(\ep,2 R_0)}.
\ea\ee
This, along with Young's inequality and the fact that $\tilde{q}_2>4$, implies that
\be\la{2e511}\ba
\sup_{0 \le t \le T} \| \n_\ep^{-1}(r,t) \|_{L^\infty(\ep,2 R_0)} \le C,
\ea\ee
which together with (\ref{cpt02}) gives (\ref{2e05}) and completes the proof of Lemma \ref{2ee5}.
\end{proof}

\section{Global smooth solutions to the approximate system and compactness results}

Throughout this section, it will always be assumed that $\alpha$ and $\ga$ satisfy the conditions stated in Theorem \ref{th1}.
The aim of this section is to prove the global existence of smooth solutions to the approximate system and to establish the compactness results required for passing to the limit.

\subsection{Global smooth solutions to the approximate system}

For any fixed $\ep \in (0,1)$, combining the upper and strictly positive lower bounds for the density established in Sections 4 and 5 with the standard arguments in \cite{CZZ1,CZZ2,Lei}, we can derive the higher-order estimates.
Consequently, the local classical solution $(\n_\ep,\mathbf{u}_\ep)$ can be extended globally in time.

\subsection{Compactness results}

In this subsection, we adapt the compactness arguments in \cite{GJX,LX1,MV} to prove that, up to a subsequence, the limit $(\n,\sqrt{\n} \mathbf{u})$ of $(\n_\ep,\sqrt{\n_\ep} \mathbf{u}_\ep)$ is a weak solution to (\ref{ns}).

Since the approximate solutions $(\n_\ep(r),u_\ep(r))$ are defined only on $(\ep,\infty)$, we extend them continuously to $[0,\infty)$ by
\be\la{cr1}\ba
\tilde{\n}_{\ep}(r,t) \triangleq
\begin{cases}
\n_{\ep}(r,t) \quad & \text{ if } r \in (\ep,\infty), \\
\n_{\ep}(2\ep-r,t) \quad & \text{ if } r \in [0,\ep], \\
\end{cases}
\ea\ee
and
\be\la{cr2}\ba
\tilde{u}_{\ep}(r,t) \triangleq
\begin{cases}
u_{\ep}(r,t) \quad & \text{ if } r \in (\ep,\infty), \\
0 \quad & \text{ if } r \in [0,\ep].
\end{cases}
\ea\ee
The corresponding velocity field on $\mathbb{R}^N$ is defined by
\be\la{cr2a}\ba
\tilde{ \mathbf{u} }_{\ep}(x,t) \triangleq
\begin{cases}
\tilde{u}_{\ep}(r,t) \frac{x}{r} \quad & \text{ if } x \neq 0, \\
0 \quad & \text{ if } x = 0.
\end{cases}
\ea\ee
For simplicity, we denote the approximate solutions $( \tilde{\n}_{\ep}, \tilde{u}_{\ep},\tilde{ \mathbf{u} }_{\ep} )$ by $( \n_\ep,u_\ep,\mathbf{u}_{\ep} )$.

The boundary condition $u_\ep(\ep,t) = 0$, together with the estimates established in Section 4, implies the following bounds, in which $C=C(T)>0$ is independent of $\ep$:
\be\la{cr3}\ba
& \sup_{0 \le t \le T} \left( \| \n_{\ep} - 1 \|_{L^2(\mathbb{R}^N) \cap L^\infty(\mathbb{R}^N)} + \| \na \n_{\ep}^{\alpha-\frac{1}{2}} \|_{L^2(\mathbb{R}^N)} + \| \na \n_{\ep}^{\alpha-1+\frac{1}{q}} \|_{L^q(\mathbb{R}^N)} \right) \\
& \quad + \int_0^T \left( \| \na \n_{\ep}^{\frac{\ga+\alpha-1}{2}} \|^2_{L^2(\mathbb{R}^N)} + \| \na \n_{\ep}^{\alpha-1+\frac{\ga-\alpha+1}{q}} \|^q_{L^q(\mathbb{R}^N)} \right) dt \le C,
\ea\ee
and
\be\la{cr4}\ba
\sup_{0 \le t \le T} \int_{\mathbb{R}^N} \left( \n_\ep |\mathbf{u}_\ep|^2 + \n_\ep |\mathbf{u}_\ep|^p \right) dx
+ \int_0^T \int_{\mathbb{R}^N} \left( \n_\ep^\alpha |\na \mathbf{u}_\ep|^2 + \ep \n_\ep^\theta |\na \mathbf{u}_\ep|^2 \right) dx dt \le C,
\ea\ee
for some $p$ and $q$ satisfying
\be\nonumber\ba
N < q \le p < \mathcal{P}_N(\alpha), \quad \alpha-1+\frac{1}{q}>0.
\ea\ee

We begin with the convergence of $\n_\ep$.
\begin{lemma}\la{jxl1}
There exists a function $\n \in L^\infty(\mathbb{R}^N \times (0,T)) \cap C(\mathbb{R}^N \times (0,T))$ such that up to a subsequence for any $M>0$, 
\be\la{jx01}\ba
\n_\ep \to \n \textnormal{ in } L^\infty(B_M \times (0,T)).
\ea\ee
In particular,
\be\la{jx01a}\ba
\n_\ep \to \n \textnormal{ almost everywhere in } \mathbb{R}^N \times (0,T).
\ea\ee
Moreover,
\be\la{jx001}\ba
\na \n_\ep^{\alpha-\frac{1}{2}} \rightharpoonup \na \n^{\alpha-\frac{1}{2}} \textnormal{ in } L^\infty(0,T;L^2(\mathbb{R}^N)),
\ea\ee
and
\be\la{jx0001}\ba
\na \n_\ep^{\alpha-1+\frac{1}{q}} \rightharpoonup \na \n^{\alpha-1+\frac{1}{q}} \textnormal{ in } L^\infty(0,T;L^q (\mathbb{R}^N)).
\ea\ee
\end{lemma}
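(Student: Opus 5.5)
The plan is the standard Aubin--Lions/Arzelà--Ascoli compactness argument, carried out on $B_M\times(0,T)$ for each fixed $M$ and followed by a diagonal extraction in $M\to\infty$. The spatial ingredient comes from \eqref{cr3}. Since $q>N$ and $\kappa\triangleq\alpha-1+\frac1q>0$, the bound $\sup_t\|\na \n_\ep^{\kappa}\|_{L^q}\le C$ together with Morrey's embedding gives a uniform Hölder bound
\be\nonumber
|\n_\ep^{\kappa}(x,t)-\n_\ep^{\kappa}(y,t)|\le C|x-y|^{1-\frac{N}{q}}.
\ee
Because $0\le\n_\ep\le C$ and $z\mapsto z^{1/\kappa}$ is Hölder continuous on $[0,C]$ (exponent $\min\{1,1/\kappa\}$), $\n_\ep$ is uniformly Hölder continuous in $x$, uniformly in $t$ and $\ep$. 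The reflection extension \eqref{cr1} preserves these bounds, as already used in Section 3. Near the origin, spherical symmetry turns the radial integral into a genuine $L^q(\mathbb{R}^N)$ norm, so the extended functions are really $W^{1,q}$ in $x$.

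The temporal ingredient is equicontinuity in $t$ with values in a weak space. I would use the continuity equation in the form $(\n_\ep)_t=-\div(\sqrt{\n_\ep}\,\sqrt{\n_\ep}\mathbf{u}_\ep)$. By \eqref{cr4} and $\n_\ep\le C$, the flux $\sqrt{\n_\ep}\sqrt{\n_\ep}\mathbf{u}_\ep$ is bounded in $L^\infty(0,T;L^2)$, so $(\n_\ep)_t$ is bounded in $L^\infty(0,T;H^{-1})$ and $\|\n_\ep(t)-\n_\ep(s)\|_{H^{-1}(B_M)}\le C|t-s|$. (Near $r\le\ep$ the extension is a reflection, so this needs a short check; since the measure of that region shrinks, I would instead test against functions supported away from the origin plus an error bounded by $\|\n_\ep\|_{L^\infty}\ep^N$.) Next, I interpolate between the uniform $C^{0,\beta}(B_M)$ bound and the Lipschitz-in-time $H^{-1}$ bound. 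Concretely, I mollify at scale $h$: the spatial Hölder bound controls $|\n_\ep-\n_\ep*\omega_h|\le Ch^\beta$, while $|(\n_\ep*\omega_h)(t)-(\n_\ep*\omega_h)(s)|\le Ch^{-1-N/2}|t-s|$. Optimizing in $h$ then gives a uniform modulus of continuity in $(x,t)$ on $\ol{B_M}\times[0,T]$. Arzelà--Ascoli plus the diagonal argument yields \eqref{jx01} and \eqref{jx01a}, with $\n$ continuous and bounded.

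For \eqref{jx001} and \eqref{jx0001}, the uniform bounds in \eqref{cr3} give weak-$*$ subsequential limits $\chi_1\in L^\infty(0,T;L^2)$ and $\chi_2\in L^\infty(0,T;L^q)$. Local uniform convergence of $\n_\ep$ implies $\n_\ep^{\alpha-\frac12}\to\n^{\alpha-\frac12}$ and $\n_\ep^{\kappa}\to\n^{\kappa}$ locally uniformly (both exponents are positive since $\alpha>\frac12$). Hence their gradients converge in distributions, which identifies $\chi_1=\na\n^{\alpha-\frac12}$ and $\chi_2=\na\n^{\kappa}$.

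The main obstacle, I expect, is the time-equicontinuity step near the artificial inner boundary $r=\ep$, where the reflected density does not satisfy the continuity equation. The cleanest route is probably to prove the modulus of continuity only on $\{|x|\ge\delta\}$ for every $\delta>0$. Near the origin I would rely on the spatial Hölder bound alone: each $\n_\ep(\cdot,t)$ is uniformly close to its value at $|x|=\delta$ within $C\delta^\beta$, which gives equicontinuity on all of $B_M$.
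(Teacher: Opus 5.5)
Your argument is correct, but it takes a different route from the paper's.

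\textbf{The paper's route.} The paper never proves equicontinuity of $\rho_\varepsilon$ by hand. It shows that $\rho_\varepsilon^\alpha$ is bounded in $L^\infty(0,T;W^{1,q}(B_M))$, using $\nabla\rho_\varepsilon^\alpha=\frac{\alpha}{\kappa}\rho_\varepsilon^{1-\frac1q}\nabla\rho_\varepsilon^{\kappa}$. It then shows that the strong time derivative $\partial_t\rho_\varepsilon^\alpha=-\frac{2\alpha}{2\alpha-1}\sqrt{\rho_\varepsilon}\mathbf{u}_\varepsilon\cdot\nabla\rho_\varepsilon^{\alpha-\frac12}-\alpha\rho_\varepsilon^{\frac\alpha2}\rho_\varepsilon^{\frac\alpha2}\mathrm{div}\,\mathbf{u}_\varepsilon$ is bounded in $L^2(0,T;L^1(B_M))$; this step uses the viscous dissipation and the BD bound. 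Aubin--Lions with $W^{1,q}(B_M)\subset\subset C(\overline{B_M})\subset L^1(B_M)$ then gives $\rho_\varepsilon^\alpha\to Z$ uniformly, and the paper sets $\rho=Z^{1/\alpha}$.

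The reflected core is handled by a change of variables: the reflection acts only in $r$, so the time derivative of the reflected function is just the reflected time derivative. Hence $\int_{B_\varepsilon}|\partial_t\rho_\varepsilon^\alpha|\,dx\le\int_{B_{2\varepsilon}\setminus B_\varepsilon}|\partial_t\rho_\varepsilon^\alpha|\,dx$. In this strong-derivative setting, the inner-boundary issue you anticipated costs nothing.

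\textbf{Your route.} You get the spatial modulus from Morrey applied to $\rho_\varepsilon^\kappa$. You get the temporal modulus from the weak continuity equation tested against $\omega_h(x-\cdot)$. On $\{|x|<\delta\}$ you let spatial H\"older continuity alone carry the estimate, and Arzel\`a--Ascoli replaces Aubin--Lions.

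\textbf{Comparison.} The paper's version is shorter and relies on a standard compactness lemma. Working with $\rho_\varepsilon^\alpha$ rather than $\rho_\varepsilon$ is what makes both terms of the strong time derivative controllable near vacuum for every $\alpha$. Your version is more elementary and needs weaker inputs: only $\rho_\varepsilon\le C$, $\sqrt{\rho_\varepsilon}\mathbf{u}_\varepsilon\in L^\infty(0,T;L^2)$, and the $W^{1,q}$ bound on $\rho_\varepsilon^\kappa$, with no use of the dissipation $\rho_\varepsilon^\alpha|\nabla\mathbf{u}_\varepsilon|^2$ or the BD bound. It also yields an explicit space--time H\"older modulus for the limit.

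\textbf{One point to state explicitly.} The time estimate on $\{|x|\ge\delta\}$ requires $\varepsilon<\delta/2$ and $h<\delta/2$. This guarantees $\mathrm{supp}\,\omega_h(x-\cdot)\subset\Omega_\varepsilon$, so no boundary term appears. The finitely many members of the sequence with $\varepsilon\ge\delta/2$ are each uniformly continuous on $\overline{B_M}\times[0,T]$. Hence the sequence is equicontinuous and Arzel\`a--Ascoli applies.
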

\begin{proof}
First, from (\ref{bjns}), we conclude that
\be\la{jx11}\ba
\p_t(\n_\ep^\alpha) & = - \mathbf{u}_\ep \cdot \na \n_\ep^\alpha - \alpha \n_\ep^\alpha \div \mathbf{u}_\ep \\
& = - \frac{2 \alpha}{2 \alpha - 1} \sqrt{\n_\ep} \mathbf{u}_\ep \cdot \na \n_\ep^{\alpha-\frac{1}{2}}
- \alpha \n_\ep^{\frac{\alpha}{2}} \n_\ep^{\frac{\alpha}{2}} \div \mathbf{u}_\ep \quad \text{ in } \om_\ep,
\ea\ee
which together with (\ref{cr3}) and (\ref{cr4}) yields for any $M > 1$,
\be\la{jx12}\ba
\p_t(\n_\ep^\alpha) \text{ is bounded in } L^2( 0,T;L^1(B_M \setminus B_\ep) ).
\ea\ee
Moreover, by (\ref{cr1}) and a change of variables, we arrive at
\be\la{jx13}\ba
\int_{B_\ep} |\p_t(\n_\ep^\alpha)| dx
& = |\mathbb{S}^{N-1}| \int_0^\ep \alpha \n_\ep^{\alpha-1}(2\ep-r) |\p_t \n_\ep(2\ep-r)| r^{N-1} dr \\
& = |\mathbb{S}^{N-1}| \int_\ep^{2 \ep} \alpha \n_\ep^{\alpha-1}(s) |\p_t \n_\ep(s)| (2\ep-s)^{N-1} ds \\
& \le |\mathbb{S}^{N-1}| \int_\ep^{2 \ep} \alpha \n_\ep^{\alpha-1}(s) |\p_t \n_\ep(s)| s^{N-1} ds \\
& = \int_{B_{2\ep} \setminus B_\ep } |\p_t(\n_\ep^\alpha)| dx.
\ea\ee
The combination of (\ref{jx12}) and (\ref{jx13}) shows that for any $M>0$,
\be\la{jx14}\ba
\p_t(\n_\ep^\alpha) \text{ is bounded in } L^2( 0,T;L^1(B_M) ).
\ea\ee
On the other hand, noticing that
\be\la{jx15}\ba
\na \n_\ep^\alpha = \frac{\alpha}{\alpha-1+\frac{1}{q}} \n_\ep^{1-\frac{1}{q}} \na \n_\ep^{\alpha-1+\frac{1}{q}},
\ea\ee
we deduce from (\ref{cr3}) that for any $M>0$,
\be\la{jx16}\ba
\n_\ep^\alpha \text{ is bounded in } L^\infty(0,T;W^{1,q}(B_M)).
\ea\ee
Using (\ref{cr3}), (\ref{jx14}), (\ref{jx16}), and the Aubin-Lions lemma together with a standard diagonal argument, we obtain that there exists a non-negative function $Z \in L^\infty(\mathbb{R}^N \times (0,T))$ such that for any $M>0$,
\be\la{jx17}\ba
\n_\ep^\alpha \to Z \text{ in } C(\ol{B_M} \times (0,T)).
\ea\ee
Set
\be\la{jx18}\ba
\n \triangleq Z^{\frac{1}{\alpha}}.
\ea\ee
It follows from (\ref{jx17}) that for any $M>0$,
\be\la{jx19}\ba
\n_\ep \to \n \text{ in } C(\ol{B_M} \times (0,T)),
\ea\ee
which gives (\ref{jx01}) and (\ref{jx01a}).

Finally, (\ref{jx19}) and (\ref{cr3}) ensure (\ref{jx001}) and (\ref{jx0001}).
This completes the proof of Lemma \ref{jxl1}.
\end{proof}

\begin{lemma}\la{jxl2}
There exists a function $\mathbf{m}_\alpha \in L^\infty(0,T;L^2(\mathbb{R}^N))$ such that up to a subsequence for any $M>0$ and $s \in [1,\frac{3}{2})$,
\be\la{jx02}\ba
\n_\ep^{2\alpha+1} \mathbf{u}_\ep \to \mathbf{m}_\alpha \textnormal{ in } L^2(0,T;L^s(B_M)).
\ea\ee
Moreover, for any $M>0$,
\be\la{jx002}\ba
\sqrt{\n_\ep} \mathbf{u}_\ep \to \sqrt{\n} \mathbf{u} \textnormal{ in } L^2(B_M \times (0,T)),
\ea\ee
where
\be\la{jx0002}\ba
\mathbf{u} \triangleq
\begin{cases}
\n^{-(2\alpha+1)} \mathbf{m}_\alpha \quad & \textnormal{ if } \n>0, \\
0 \quad & \textnormal{ if } \n=0,
\end{cases}
\ea\ee
and
\be\la{jx0002a}\ba
\sqrt{\n} \mathbf{u} \in L^\infty(0,T;L^2(\mathbb{R}^N)).
\ea\ee
\end{lemma}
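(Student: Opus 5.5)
The plan is to follow the Mellet--Vasseur compactness scheme (as adapted in \cite{GJX,LX1,MV}) and apply the Aubin--Lions lemma to $\mathbf{m}_\ep\triangleq\n_\ep^{2\alpha+1}\mathbf{u}_\ep$ on each ball $B_M$. The high power of $\n_\ep$ is chosen so that every term produced by differentiating is controlled by the uniform bounds (\ref{cr3})--(\ref{cr4}) together with $\sup\|\n_\ep\|_{L^\infty}\le C$ from Lemma \ref{tl5}. Since $\tilde u_\ep\equiv0$ on $[0,\ep]$ and $u_\ep(\ep,t)=0$, the extended field $\mathbf{m}_\ep$ is continuous across $|x|=\ep$. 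It vanishes identically in $B_\ep$, together with its time derivative, so all bounds only need to be checked on $B_M\setminus B_\ep$.

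\emph{Step 1: spatial bound.} I will show that $\mathbf{m}_\ep$ is bounded in $L^2(0,T;W^{1,1}(B_M))$. We have $|\mathbf{m}_\ep|\le C\sqrt{\n_\ep}|\mathbf{u}_\ep|$, which is bounded in $L^\infty(0,T;L^2)$. For the gradient, write
\[
\na \mathbf{m}_\ep=\n_\ep^{\alpha+1}\,\n_\ep^{\alpha}\na\mathbf{u}_\ep+\tfrac{2\alpha+1}{\alpha-\frac12}\,\n_\ep^{\alpha+\frac12}\,\na\n_\ep^{\alpha-\frac12}\otimes\mathbf{u}_\ep .
\]
The first term is $\n_\ep^{\alpha+1+\frac{\alpha}{2}}\cdot(\n_\ep^{\frac{\alpha}{2}}\na\mathbf{u}_\ep)$, which lies in $L^2(0,T;L^2)$ by (\ref{cr4}) and the $L^\infty$ bound on $\n_\ep$. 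The second term is $\n_\ep^{\alpha+\frac12-\frac1p}\na\n_\ep^{\alpha-\frac12}\cdot(\n_\ep^{\frac1p}\mathbf{u}_\ep)$, which lies in $L^\infty(0,T;L^{\frac{2p}{p+2}})\subset L^\infty(0,T;L^1(B_M))$. This uses the $L^p$ bound $\int\n_\ep|\mathbf{u}_\ep|^p\le C$ with $p>N\ge2$ and the bound on $\na\n_\ep^{\alpha-\frac12}$.

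\emph{Step 2: time derivative.} I will show that $\p_t\mathbf{m}_\ep$ is bounded in $L^1(0,T;W^{-1,1}(B_M))$, or in $L^1(0,T;W^{-2,1}(B_M))$ if necessary. Write $\p_t\mathbf{m}_\ep=\n_\ep^{2\alpha}\p_t(\n_\ep\mathbf{u}_\ep)+\n_\ep\mathbf{u}_\ep\,\p_t(\n_\ep^{2\alpha})$. Substitute the momentum equation and the continuity equation in the form (\ref{jx11}) with $2\alpha$ in place of $\alpha$. Then move $\n_\ep^{2\alpha}$ inside the divergences. Each resulting term is either a divergence of an $L^1$ quantity or an $L^1$ function. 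For instance, $\n_\ep^{2\alpha}\div(\n_\ep\mathbf{u}_\ep\otimes\mathbf{u}_\ep)=\div(\n_\ep^{2\alpha+1}\mathbf{u}_\ep\otimes\mathbf{u}_\ep)-\na\n_\ep^{2\alpha}\cdot\n_\ep\mathbf{u}_\ep\otimes\mathbf{u}_\ep$; here $\n_\ep^{2\alpha+1}|\mathbf{u}_\ep|^2\in L^\infty L^1$, and the commutator is of the form $\na\n_\ep^{\alpha-\frac12}\times$ (bounded power of $\n_\ep$) $\times\n_\ep|\mathbf{u}_\ep|^2$, which is in $L^\infty L^1$ because $\n_\ep|\mathbf{u}_\ep|^2\in L^\infty L^{p/2}$ and $p>N\ge2$ allow Hölder. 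The viscous terms are $\n_\ep^{2\alpha}\div(\n_\ep^\alpha\mathbb{D}\mathbf{u}_\ep)$ and the analogous $\ep\n_\ep^\theta$ terms. They split into $\div(\n_\ep^{3\alpha}\na\mathbf{u}_\ep)$, which is in $L^2L^2$, minus $\na\n_\ep^{2\alpha}\cdot\n_\ep^\alpha\na\mathbf{u}_\ep$, which is (bounded) $\times\na\n_\ep^{\alpha-\frac12}\cdot\n_\ep^{\frac\alpha2}\na\mathbf{u}_\ep\in L^2L^1$. The $\ep$-terms are handled the same way using $\ep\n_\ep^\theta|\na\mathbf{u}_\ep|^2\in L^1$. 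I expect this bookkeeping to be the main obstacle, especially checking the exponent of $\n_\ep$ in each commutator when $\alpha<1$ and $\theta<\alpha$. Where an exponent becomes negative, the fallback is to raise the power $2\alpha+1$; that costs nothing here, since only the $\n>0$ region matters afterwards.

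\emph{Step 3: compactness and identification.} Since $W^{1,1}(B_M)\subset\subset L^s(B_M)$ for $s<\frac{N}{N-1}$, which includes $s<\frac32$ for $N\le3$, the Aubin--Lions lemma plus a diagonal argument in $M$ gives (\ref{jx02}), and by Fatou $\mathbf{m}_\alpha\in L^\infty(0,T;L^2)$. For (\ref{jx002}), pass to a further subsequence so that $\mathbf{m}_\ep\to\mathbf{m}_\alpha$ a.e. On $\{\n>0\}$, Lemma \ref{jxl1} gives $\n_\ep\to\n$ uniformly, so $\mathbf{u}_\ep=\mathbf{m}_\ep\n_\ep^{-(2\alpha+1)}\to\mathbf{u}$ a.e. and hence $\sqrt{\n_\ep}\mathbf{u}_\ep\to\sqrt\n\mathbf{u}$ a.e. there. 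Near vacuum, Hölder with the $L^p$ bound gives
\[
\int\!\!\int_{\n_\ep\le\de}\n_\ep|\mathbf{u}_\ep|^2\le C\de^{1-\frac2p},
\]
and $\int\!\!\int\n_\ep|\mathbf{u}_\ep|^2\chi_{|\mathbf{u}_\ep|\ge K}\le CK^{2-p}$. This yields uniform integrability of $|\sqrt{\n_\ep}\mathbf{u}_\ep|^2$ on $B_M\times(0,T)$. Vitali's theorem, applied on $\{\n>\de\}$ and then letting $\de\to0$, gives strong $L^2$ convergence. Finally Fatou's lemma yields (\ref{jx0002a}).
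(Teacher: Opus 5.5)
Your Steps 1 and 3 are sound, and your Vitali/near-vacuum splitting for $\sqrt{\rho_\varepsilon}\mathbf{u}_\varepsilon$ is a legitimate alternative to the paper's truncation $T_L$ plus dominated convergence. (In Step 1 the power in the second term of $\nabla\mathbf{m}_\varepsilon$ is $\rho_\varepsilon^{\alpha+\frac32}$, not $\rho_\varepsilon^{\alpha+\frac12}$; this is harmless.)

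The gap is in Step 2. You claim that, since $\mathbf{m}_\varepsilon$ and $\partial_t\mathbf{m}_\varepsilon$ vanish in $B_\varepsilon$, all bounds need only be checked on $B_M\setminus B_\varepsilon$. That holds for the $W^{1,1}$ bound, since there is no jump across $|x|=\varepsilon$. It fails for the negative norm. A test function $\phi\in W^{1,\infty}_0(B_M)$ need not vanish on $\partial B_\varepsilon$, and your divergence-form identities hold only in $\Omega_\varepsilon$. Pairing them with $\phi$ and integrating by parts over $B_M\setminus B_\varepsilon$ therefore produces a flux through $\partial B_\varepsilon$. Because $u_\varepsilon(\varepsilon,t)=0$, the convective flux vanishes and the pressure flux is $O(\varepsilon^{N-1})$. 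The viscous flux, however, is
\[
\int_{\partial B_\varepsilon}\rho_\varepsilon^{2\alpha}\left(\alpha\rho_\varepsilon^{\alpha}+\varepsilon\theta\rho_\varepsilon^{\theta}\right)\partial_r u_\varepsilon(\varepsilon,t)\,(\phi\cdot\mathbf{n})\,dS ,
\]
and it involves the trace of $\partial_r u_\varepsilon$ at $r=\varepsilon$. Nothing in (\ref{cr3})--(\ref{cr4}) controls this trace in $L^1(0,T)$ uniformly in $\varepsilon$. By the continuity equation at $r=\varepsilon$, the flux equals $-\partial_t$ of a bounded function of $\rho_\varepsilon(\varepsilon,t)$. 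So it can be disposed of only after integrating by parts in time against a time-dependent test function, which is how the paper treats the same term in (\ref{pf117}). That does not give the $L^1(0,T;W^{-1,1}(B_M))$ bound that Aubin--Lions requires. Raising the power of $\rho_\varepsilon$ or passing to $W^{-2,1}$ does not help, because the uncontrolled factor is $\partial_r u_\varepsilon(\varepsilon,t)$ itself.

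The missing idea is to stay away from the origin, as the paper does.
\begin{itemize}
\item Fix $0<\delta<M$, take $\varepsilon<\delta$, and bound $\partial_t\mathbf{m}_\varepsilon$ in $L^1(0,T;W^{-1,1}(B_M\setminus\overline{B_\delta}))$. On this fixed annulus the equations hold classically and test functions vanish near $\partial B_\delta$, so no flux term appears.
\item Apply Aubin--Lions on the annulus, with a diagonal argument in $\delta$ and $M$, to get strong convergence there.
\item Handle the inner ball with the uniform $L^\infty(0,T;L^2)$ bound and $s<2$:
\[
\|\mathbf{m}_\varepsilon-\mathbf{m}_\alpha\|_{L^2(0,T;L^s(B_\delta))}\le T^{\frac12}\,|B_\delta|^{\frac{2-s}{2s}}\,\|\mathbf{m}_\varepsilon-\mathbf{m}_\alpha\|_{L^\infty(0,T;L^2)}\le C\,\delta^{\frac{(2-s)N}{2s}} ,
\]
uniformly in $\varepsilon$.
\end{itemize}
With Step 2 replaced in this way, the rest of your argument goes through, including the identification of the limit and the convergence of $\sqrt{\rho_\varepsilon}\mathbf{u}_\varepsilon$.
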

\begin{proof}
First, by (\ref{cr3}) and (\ref{cr4}), we have
\be\la{jx021}\ba
\n_\ep^{2\alpha+1} \mathbf{u}_\ep \text{ is bounded in } L^\infty(0,T;L^2(\mathbb{R}^N)),
\ea\ee
which implies that there exists a function $\mathbf{m}_\alpha \in L^\infty(0,T;L^2(\mathbb{R}^N))$ such that for any $M>0$,
\be\la{jx022}\ba
\n_\ep^{2\alpha+1} \mathbf{u}_\ep \stackrel{*}{\rightharpoonup} \mathbf{m}_\alpha \text{ in } L^\infty(0,T;L^2(\mathbb{R}^N)).
\ea\ee
In addition, a direct calculation gives
\be\la{jx21}\ba
\na (\n_\ep^{2\alpha+1} \mathbf{u}_\ep)
& = \mathbf{u}_\ep \otimes \na \n_\ep^{2\alpha+1} + \n_\ep^{2\alpha+1} \na \mathbf{u}_\ep \\
& = \frac{4\alpha+2}{2\alpha-1} \n_\ep^{\alpha+1} \sqrt{\n_\ep} \mathbf{u}_\ep \otimes \na \n_\ep^{\alpha-\frac{1}{2}} + \n_\ep^{\frac{3\alpha}{2}+1} \n_\ep^{\frac{\alpha}{2}} \na \mathbf{u}_\ep,
\ea\ee
and
\be\la{jx22}\ba
\n_\ep^{2\alpha+1} \mathbf{u}_\ep = \n_\ep^{2\alpha+\frac{1}{2}} \sqrt{\n_\ep} \mathbf{u}_\ep.
\ea\ee
From (\ref{cr3}), (\ref{cr4}), (\ref{jx21}), and (\ref{jx22}), we conclude that for any $M>0$,
\be\la{jx23}\ba
\n_\ep^{2\alpha+1} \mathbf{u}_\ep \text{ is bounded in } L^2(0,T;W^{1,1}(B_M)).
\ea\ee
We note that
\be\la{jx24}\ba
\p_t (\n_\ep^{2\alpha+1} \mathbf{u}_\ep) = (2\alpha+1) \n_\ep^{2\alpha} (\n_\ep)_t \mathbf{u}_\ep + \n_\ep^{2\alpha+1} (\mathbf{u}_\ep)_t.
\ea\ee
On the one hand, the continuity equation $(\ref{bjns})_1$ yields
\be\la{jx25}\ba
& (2\alpha+1) \n_\ep^{2\alpha} (\n_\ep)_t \mathbf{u}_\ep \\
& = - (2\alpha+1) \n_\ep^{2\alpha} \div(\n_\ep \mathbf{u}_\ep) \mathbf{u}_\ep \\
& = - \div(\n_\ep^{2\alpha+1} \mathbf{u}_\ep \otimes \mathbf{u}_\ep) - 2\alpha \n_\ep^{2\alpha+1} (\div \mathbf{u}_\ep) \mathbf{u}_\ep + \n_\ep^{2\alpha+1} \mathbf{u}_\ep \cdot \na \mathbf{u}_\ep  \quad \text{ in } \om_\ep.
\ea\ee
On the other hand, we use (\ref{bjns}) and (\ref{bjnxxs}) to derive
\be\la{jx26}\ba
\n_\ep^{2\alpha+1} (\mathbf{u}_\ep)_t
& = - \n_\ep^{2\alpha+1} \mathbf{u}_\ep \cdot \na \mathbf{u}_\ep
+ \n_\ep^{2\alpha} \div( (\n_\ep^\alpha + \ep \n_\ep^\theta) \mathbb{D}\mathbf{u}_\ep) \\
& \quad + \n_\ep^{2\alpha} \na( ((\alpha-1) \n_\ep^\alpha + \ep (\theta-1) \n_\ep^\theta) \div \mathbf{u}_\ep ) - \n_\ep^{2\alpha} \na \n_\ep^\ga \\
& = - \n_\ep^{2\alpha+1} \mathbf{u}_\ep \cdot \na \mathbf{u}_\ep
+ \div( \n_\ep^{2\alpha} (\n_\ep^\alpha + \ep \n_\ep^\theta) \mathbb{D}\mathbf{u}_\ep)
- (\n_\ep^\alpha + \ep \n_\ep^\theta) \mathbb{D}\mathbf{u}_\ep \cdot \na \n_\ep^{2\alpha} \\
& \quad + \na( \n_\ep^{2\alpha} ((\alpha-1) \n_\ep^\alpha + \ep (\theta-1) \n_\ep^\theta) \div \mathbf{u}_\ep ) \\
& \quad - ((\alpha-1) \n_\ep^\alpha + \ep (\theta-1) \n_\ep^\theta) \div \mathbf{u}_\ep \na \n_\ep^{2\alpha}
- \frac{\ga}{\ga+2\alpha} \na \n_\ep^{\ga+2\alpha}  \quad \text{ in } \om_\ep.
\ea\ee
For any $M>0$, fix $0<\delta<M$ and choose $\ep<\de$.
It follows from (\ref{cr3}) and (\ref{cr4}) that
\be\la{jx27}\ba
& \int_0^T \int_{B_M \setminus \ol{B_\de}} \left( \n_\ep^{2\alpha+1} |\mathbf{u}_\ep|^2
+ \n_\ep^{2\alpha+1} |\na \mathbf{u}_\ep| |\mathbf{u}_\ep| \right) dx dt \\
& \le C \int_0^T \int_{B_M \setminus \ol{B_\de}} \left( \n_\ep |\mathbf{u}_\ep|^2
+ \n_\ep^{\frac{\alpha}{2}} |\na \mathbf{u}_\ep| |\sqrt{\n_\ep} \mathbf{u}_\ep| \right) dx dt \\
& \le C \int_0^T \int_{B_M \setminus \ol{B_\de}} \left( \n_\ep |\mathbf{u}_\ep|^2
+ \n_\ep^{\alpha} |\na \mathbf{u}_\ep|^2 \right) dx dt
\le C,
\ea\ee
and
\be\la{jx28}\ba
& \int_0^T \int_{B_M \setminus \ol{B_\de}} \left( \n_\ep^{2\alpha} (\n_\ep^\alpha + \ep \n_\ep^\theta) |\na \mathbf{u}_\ep|
+ (\n_\ep^\alpha + \ep \n_\ep^\theta) |\na \mathbf{u}_\ep| |\na \n_\ep^{2\alpha}|
+ |\na \n_\ep^{\ga+2\alpha}| \right) dx dt \\
& \le C \int_0^T \int_{B_M \setminus \ol{B_\de}} \left( \n_\ep^{\frac{\alpha}{2}} |\na \mathbf{u}_\ep|
+ \n_\ep^{\frac{\alpha}{2}} |\na \mathbf{u}_\ep| |\na \n_\ep^{\alpha-\frac{1}{2}}|
+ |\na \n_\ep^{\alpha-\frac{1}{2}}| \right) dx dt \\
& \le C + C \int_0^T \int_{B_M \setminus \ol{B_\de}} \left( \n_\ep^{\alpha} |\na \mathbf{u}_\ep|^2
+ |\na \n_\ep^{\alpha-\frac{1}{2}}|^2 \right) dx dt
\le C.
\ea\ee
Combining (\ref{jx24}), (\ref{jx25}), (\ref{jx26}), (\ref{jx27}), and (\ref{jx28}), we obtain for any $0<\de<M$,
\be\la{jx29}\ba
\p_t (\n_\ep^{2\alpha+1} \mathbf{u}_\ep) \text{ is bounded in } L^1(0,T;W^{-1,1}(B_M \setminus \ol{B_\de})).
\ea\ee
By (\ref{jx022}), (\ref{jx23}), (\ref{jx29}), the Aubin-Lions lemma, and a standard diagonal argument, we deduce that for any $0<\de<M$ and $s \in [1,\frac{3}{2})$,
\be\la{jx210}\ba
\n_\ep^{2\alpha+1} \mathbf{u}_\ep \to \mathbf{m}_\alpha \text{ in } L^2(0,T;L^s(B_M \setminus \ol{B_\de})).
\ea\ee
Moreover, using (\ref{cr3}), (\ref{cr4}), and H\"older's inequality, we get for any $s \in [1,\frac{3}{2})$,
\be\la{jx0210}\ba
\| \n_\ep^{2\alpha+1} \mathbf{u}_\ep - \mathbf{m}_\alpha \|_{L^2(0,T;L^s(B_\de))}
& \le T^{\frac{1}{2}} \| \n_\ep^{2\alpha+1} \mathbf{u}_\ep - \mathbf{m}_\alpha \|_{L^\infty(0,T;L^2(B_\de))} |B_\de|^{\frac{2-s}{2s}} \\
& \le C \de^{\frac{(2-s)N}{2s}} \to 0 \text{ as } \de \to 0.
\ea\ee
The combination of (\ref{jx210}) and (\ref{jx0210}) yields for any $M>0$ and $s \in [1,\frac{3}{2})$,
\be\la{jx0211}\ba
\n_\ep^{2\alpha+1} \mathbf{u}_\ep \to \mathbf{m}_\alpha \text{ in } L^2(0,T;L^s(B_M)).
\ea\ee
In particular,
\be\la{jx211}\ba
\n_\ep^{2\alpha+1} \mathbf{u}_\ep \to \mathbf{m}_\alpha \text{ almost everywhere in } \mathbb{R}^N \times (0,T).
\ea\ee
Since $\sqrt{\n_\ep} \mathbf{u}_\ep$ is bounded in $L^\infty(0,T;L^2(\mathbb{R}^N))$, Fatou's lemma implies
\be\la{jx212}\ba
\int_0^T \int_{\mathbb{R}^N} \liminf_{\ep \to 0} \frac{ |\n^{2\alpha+1}_\ep u_\ep|^2 }{\n^{4\alpha+1}_\ep} dx dt
\le \liminf_{\ep \to 0} \int_0^T \int_{\mathbb{R}^N} \n_\ep |\mathbf{u}_\ep|^2 dx dt \le C,
\ea\ee
which shows $\mathbf{m}_\alpha(x,t) = 0$ almost everywhere in $\{ (x,t) \in \mathbb{R}^N \times (0,T) \mid \n(x,t)=0 \}$.
Thus, for $\mathbf{u}(x,t)$ as in (\ref{jx0002}), we have
\be\la{jx214}\ba
\mathbf{m}_\alpha(x,t) = \n^{2\alpha+1}(x,t) \mathbf{u}(x,t).
\ea\ee
It remains to prove (\ref{jx002}).
Using (\ref{jx01a}) and (\ref{jx211}), we arrive at
\be\la{jx214a}\ba
\mathbf{u}_\ep \to \mathbf{u} \text{ almost everywhere in } \{ (x,t) \in \mathbb{R}^N \times (0,T) \mid \n(x,t)>0 \},
\ea\ee
which together with (\ref{cr4}) and Fatou's lemma gives
\be\la{jx215}\ba
\int_0^T \int_{\mathbb{R}^N} \n |\mathbf{u}|^p dx dt
& = \int_0^T \int_{\mathbb{R}^N} \chi_{(\n>0)} \n |\mathbf{u}|^p dx dt \\
& = \int_0^T \int_{\mathbb{R}^N} \chi_{(\n>0)} \liminf_{\ep \to 0} \n_\ep |\mathbf{u}_\ep|^p dx dt \\
& \le \liminf_{\ep \to 0} \int_0^T \int_{\mathbb{R}^N} \n_\ep |\mathbf{u}_\ep|^p dx dt \le C.
\ea\ee
For $L>0$ and $\mathbf{v} \in \mathbb{R}^N$, we define the continuous truncation operator
\be\la{jx2150}\ba
T_L(\mathbf{v}) \triangleq \frac{L \mathbf{v}}{ \max\{L,|\mathbf{v}| \} },
\ea\ee
which satisfies
\be\la{jx2151}\ba
|T_L(\mathbf{v})| \le L, \quad T_L(\mathbf{v}) = \mathbf{v} \ \text{ for } |\mathbf{v}| \le L, \quad |\mathbf{v}-T_L(\mathbf{v})| \le |\mathbf{v}| \chi_{(|\mathbf{v}| \ge L)}.
\ea\ee
A direct calculation yields for any $M,L>0$,
\be\la{jx216}\ba
& \int_0^T \int_{B_M} \left| \sqrt{\n_\ep} \mathbf{u}_\ep - \sqrt{\n} \mathbf{u} \right|^2 dx dt \\
& \le C \int_0^T \int_{B_M} \left| \sqrt{\n_\ep} \mathbf{u}_\ep - \sqrt{\n_\ep} T_L(\mathbf{u}_\ep) \right|^2 dx dt
+ C \int_0^T \int_{B_M} \left| \sqrt{\n} \mathbf{u} - \sqrt{\n} T_L(\mathbf{u}) \right|^2 dx dt \\
& \quad + C \int_0^T \int_{B_M} \left| \sqrt{\n_\ep} T_L(\mathbf{u}_\ep) - \sqrt{\n} T_L(\mathbf{u}) \right|^2 dx dt.
\ea\ee
From (\ref{jx01a}) and (\ref{jx211}), we conclude that $\sqrt{\n_\ep} \mathbf{u}_\ep$ converges almost everywhere to $\sqrt{\n} \mathbf{u}$ in the set $\{ (x,t) \in \mathbb{R}^N \times (0,T) \mid \n(x,t)>0 \}$.

Since
\be\la{jx217}\ba
\sqrt{\n_\ep} |T_L(\mathbf{u}_\ep)| \le L \sqrt{\n_\ep},
\ea\ee
and $\n_\ep \to 0$ almost everywhere in the set $\{ (x,t) \in \mathbb{R}^N \times (0,T) \mid \n(x,t)=0 \}$, we obtain by (\ref{jx214a}),
\be\la{jx218}\ba
\sqrt{\n_\ep} T_L(\mathbf{u}_\ep) \to \sqrt{\n} T_L(\mathbf{u}) \text{ almost everywhere in } \mathbb{R}^N \times (0,T).
\ea\ee
It follows from (\ref{cr3}), (\ref{jx217}), (\ref{jx218}), and the dominated convergence theorem that
\be\la{jx219}\ba
\lim_{\ep \to 0} \int_0^T \int_{B_M} \left| \sqrt{\n_\ep} T_L(\mathbf{u}_\ep) - \sqrt{\n} T_L(\mathbf{u}) \right|^2 dx dt = 0.
\ea\ee
On the other hand, (\ref{cr4}), (\ref{jx215}), and (\ref{jx2151}) give
\be\la{jx220}\ba
& \int_0^T \int_{B_M} \left| \sqrt{\n_\ep} \mathbf{u}_\ep - \sqrt{\n_\ep} T_L(\mathbf{u}_\ep) \right|^2 dx dt
+ \int_0^T \int_{B_M} \left| \sqrt{\n} \mathbf{u} - \sqrt{\n} T_L(\mathbf{u}) \right|^2 dx dt \\
& \le \int_0^T \int_{B_M} \left| \sqrt{\n_\ep} \mathbf{u}_\ep \right|^2 \chi_{(|\mathbf{u}_\ep| \ge L)} dx dt
+ \int_0^T \int_{B_M} \left| \sqrt{\n} \mathbf{u} \right|^2 \chi_{(|\mathbf{u}| \ge L)} dx dt \\
& \le \frac{C}{L^{p-2}} \int_0^T \int_{B_M} \left( \n_\ep |\mathbf{u}_\ep|^p + \n |\mathbf{u}|^p \right) dx dt
\le \frac{C}{L^{p-2}}.
\ea\ee
Combining (\ref{jx216}), (\ref{jx219}), and (\ref{jx220}) leads to
\be\la{jx221}\ba
\limsup_{\ep \to 0} \int_0^T \int_{B_M} \left| \sqrt{\n_\ep} \mathbf{u}_\ep - \sqrt{\n} \mathbf{u} \right|^2 dx dt
& \le \frac{C}{L^{p-2}}.
\ea\ee
Letting $L \to \infty$ in (\ref{jx221}), we arrive at (\ref{jx002}).

Finally, (\ref{jx002}) and (\ref{cr4}) imply (\ref{jx0002a}), which completes the proof of Lemma \ref{jxl2}.
\end{proof}

\section{Proofs of Theorems \ref{th1} and \ref{th3}}

In this section, we prove Theorems \ref{th1} and \ref{th3}.

\subsection{Proof of Theorem \ref{th1}}

\noindent\textbf{Proof of Theorem \ref{th1}.}
First, we consider the whole space case, while the bounded domain case can be proved with minor modifications.

Using Lemmas \ref{jxl1} and \ref{jxl2}, we can pass to the limit in the continuity equation to conclude that $(\sqrt{\n},\sqrt{\n}\mathbf{u})$ satisfies (\ref{rjdf1}) and (\ref{rjdf2}).

Next, we prove that $(\sqrt{\n},\sqrt{\n}\mathbf{u})$ satisfies (\ref{rjdf3}).
Let
\be\nonumber\ba
\phi(x,t) = (\phi(x,t)^1,\dots,\phi(x,t)^N) \in C_c^\infty(\mathbb{R}^N \times [0,T])
\ea\ee
be a smooth test function satisfying $\phi(x,T) = 0$.

Multiplying $(\ref{bjns})_2$ by $\phi$, integrating by parts over $\om_\ep$, and using the boundary condition (\ref{bjbjtj1}), we obtain
\be\la{pf11}\ba
& \int_{\om_\ep} \mathbf{m}_{0,\ep} \cdot \phi(x,0) dx
+ \int_0^T \int_{\om_\ep} \left( \sqrt{\n_\ep} \sqrt{\n_\ep} \mathbf{u}_\ep \cdot \phi_t
+ (\sqrt{\n_\ep} \mathbf{u}_\ep \otimes \sqrt{\n_\ep} \mathbf{u}_\ep) : \na \phi \right) dx dt \\
& + \int_0^T \int_{\om_\ep} \n_\ep^\ga \div \phi dx dt
- \int_0^T \int_{\om_\ep} \n_\ep^\alpha \na \mathbf{u}_\ep : \na \phi dx dt
- (\alpha-1) \int_0^T \int_{\om_\ep} \n_\ep^\alpha \div \mathbf{u}_\ep \div \phi dx dt \\
& - \ep \int_0^T \int_{\om_\ep} \n_\ep^\theta \na \mathbf{u}_\ep : \na \phi dx dt
- (\theta-1) \ep \int_0^T \int_{\om_\ep} \n_\ep^\theta \div \mathbf{u}_\ep \div \phi dx dt
- \int_0^T \int_{\p \om_\ep} \n_\ep^\ga (\phi \cdot \mathbf{n}) dS dt \\
& + \int_0^T \int_{\p \om_\ep} \n_\ep^\alpha \mathbf{n} \cdot \mathbb{D}\mathbf{u}_\ep \cdot \phi dS dt
+ (\alpha-1) \int_0^T \int_{\p \om_\ep} \n_\ep^\alpha \div \mathbf{u}_\ep (\phi \cdot \mathbf{n}) dS dt \\
& + \ep \int_0^T \int_{\p \om_\ep} \n_\ep^\theta \mathbf{n} \cdot \mathbb{D}\mathbf{u}_\ep \cdot \phi dS dt
+ (\theta-1) \ep \int_0^T \int_{\p \om_\ep} \n_\ep^\theta \div \mathbf{u}_\ep (\phi \cdot \mathbf{n}) dS dt = 0,
\ea\ee
where $\mathbf{n} = - \frac{x}{\ep}$ denotes the unit outer normal vector on $\p \om_\ep$.

Based on the compactness results established in Section 6, we now pass to the limit in each term of (\ref{pf11}).

Since $\mathbf{m}_{0,\ep} = 0$ in $B_\ep$, we have
\be\la{pf12}\ba
& \left| \int_{\om_\ep} \mathbf{m}_{0,\ep} \cdot \phi(x,0) dx - \int_{\mathbb{R}^N} \mathbf{m}_{0} \cdot \phi(x,0) dx \right| \\
& = \left| \int_{\mathbb{R}^N} \left( \mathbf{m}_{0,\ep} - \mathbf{m}_{0} \right) \cdot \phi(x,0) dx \right| \\
& \le \| \mathbf{m}_{0,\ep} - \mathbf{m}_{0} \|_{L^2(\mathbb{R}^N)} \| \phi(x,0) \|_{L^2(\mathbb{R}^N)},
\ea\ee
which together with (\ref{bjcz2}) yields
\be\la{pf13}\ba
\int_{\om_\ep} \mathbf{m}_{0,\ep} \cdot \phi(x,0) dx \to \int_{\mathbb{R}^N} \mathbf{m}_{0} \cdot \phi(x,0) dx \quad \text{ as } \ep \to 0.
\ea\ee
Using (\ref{jx01}), (\ref{jx002}), and the fact that $\mathbf{u}_\ep = 0$ in $B_\ep$, we derive
\be\la{pf14}\ba
& \int_0^T \int_{\om_\ep} \left( \sqrt{\n_\ep} \sqrt{\n_\ep} \mathbf{u}_\ep \cdot \phi_t
+ (\sqrt{\n_\ep} \mathbf{u}_\ep \otimes \sqrt{\n_\ep} \mathbf{u}_\ep) : \na \phi \right) dx dt \\
& = \int_0^T \int_{\mathbb{R}^N} \left( \sqrt{\n_\ep} \sqrt{\n_\ep} \mathbf{u}_\ep \cdot \phi_t
+ (\sqrt{\n_\ep} \mathbf{u}_\ep \otimes \sqrt{\n_\ep} \mathbf{u}_\ep) : \na \phi \right) dx dt \\
& \to \int_0^T \int_{\mathbb{R}^N} \left( \sqrt{\n} \sqrt{\n} \mathbf{u} \cdot \phi_t
+ (\sqrt{\n} \mathbf{u} \otimes \sqrt{\n} \mathbf{u}) : \na \phi \right) dx dt \quad \text{ as } \ep \to 0.
\ea\ee
It follows from (\ref{cr3}) and H\"older's inequality that
\be\la{pf15}\ba
& \left| \int_0^T \int_{\om_\ep} \n_\ep^\ga \div \phi dx dt - \int_0^T \int_{\mathbb{R}^N} \n^\ga \div \phi dx dt \right| \\
& \le \int_0^T \int_{B_\ep} \n_\ep^\ga |\div \phi| dx dt
+ \left| \int_0^T \int_{\mathbb{R}^N} (\n_\ep^\ga - \n^\ga) \div \phi dx dt \right| \\
& \le C \ep^N + C \sup_{0 \le t \le T} \| \n_\ep - \n \|_{L^2(\text{supp}(\phi))} \| \div \phi \|_{L^2(\mathbb{R}^N \times (0,T))},
\ea\ee
which together with (\ref{jx01}) implies
\be\la{pf16}\ba
\int_0^T \int_{\om_\ep} \n_\ep^\ga \div \phi dx dt \to
\int_0^T \int_{\mathbb{R}^N} \n^\ga \div \phi dx dt \quad \text{ as } \ep \to 0.
\ea\ee

Integrating by parts and using (\ref{jx01}), (\ref{jx001}), (\ref{jx002}), and the fact that $\mathbf{u}_\ep = 0$ in $\ol{B_\ep}$, we arrive at
\be\la{pf17}\ba
& \int_0^T \int_{\om_\ep} \n_\ep^\alpha \na \mathbf{u}_\ep : \na \phi dx dt \\
& = - \int_0^T \int_{\om_\ep} \n_\ep^\alpha \mathbf{u}_\ep \cdot \Delta \phi dx dt
- \int_0^T \int_{\om_\ep} \na \n_\ep^\alpha \cdot \na \phi \cdot \mathbf{u}_\ep dx dt \\
& = - \int_0^T \int_{\mathbb{R}^N} \left( \n_\ep^{\alpha-\frac{1}{2}} \sqrt{\n_\ep} \mathbf{u}_\ep \cdot \Delta \phi
+ \frac{2\alpha}{2\alpha-1} \na \n_\ep^{\alpha-\frac{1}{2}} \cdot \na \phi \cdot \sqrt{\n_\ep} \mathbf{u}_\ep \right) dx dt \\
& \to - \int_0^T \int_{\mathbb{R}^N} \left( \n^{\alpha-\frac{1}{2}} \sqrt{\n} \mathbf{u} \cdot \Delta \phi + \frac{2\alpha}{2\alpha-1} \na \n^{\alpha-\frac{1}{2}} \cdot \na \phi \cdot \sqrt{\n} \mathbf{u} \right) dx dt \quad \text{ as } \ep \to 0,
\ea\ee
which along with (\ref{rjdf4}) gives
\be\la{pf18}\ba
\int_0^T \int_{\om_\ep} \n_\ep^\alpha \na \mathbf{u}_\ep : \na \phi dx dt
\to \left< \n^\alpha \na \mathbf{u}, \na \phi \right> \quad \text{ as } \ep \to 0.
\ea\ee
Similarly,
\be\la{pf19}\ba
\int_0^T \int_{\om_\ep} \n_\ep^\alpha \div \mathbf{u}_\ep \div \phi dx dt
\to \left< \n^\alpha \div \mathbf{u}, \div \phi \right> \quad \text{ as } \ep \to 0.
\ea\ee
By (\ref{cr3}), (\ref{cr4}), and H\"older's inequality, we have
\be\la{pf110}\ba
& \left| \ep \int_0^T \int_{\om_\ep} \n_\ep^\theta \na \mathbf{u}_\ep : \na \phi dx dt
+ (\theta-1) \ep \int_0^T \int_{\om_\ep} \n_\ep^\theta \div \mathbf{u}_\ep \div \phi dx dt \right| \\
& = \left| \ep \int_0^T \int_{\mathbb{R}^N} \n_\ep^\theta \na \mathbf{u}_\ep : \na \phi dx dt
+ (\theta-1) \ep \int_0^T \int_{\mathbb{R}^N} \n_\ep^\theta \div \mathbf{u}_\ep \div \phi dx dt \right| \\
& \le C \sqrt{\ep} \| \sqrt{\ep} \n_\ep^{\frac{\theta}{2}} \na \mathbf{u}_\ep \|_{L^2(\mathbb{R}^N \times (0,T))} \| \na \phi \|_{L^2(\mathbb{R}^N \times (0,T))} \| \n_\ep^{\frac{\theta}{2}} \|_{L^\infty(\mathbb{R}^N \times (0,T))} \\
& \le C \sqrt{\ep}.
\ea\ee
Thus,
\be\la{pf111}\ba
\ep \int_0^T \int_{\om_\ep} \n_\ep^\theta \na \mathbf{u}_\ep : \na \phi dx dt
+ (\theta-1) \ep \int_0^T \int_{\om_\ep} \n_\ep^\theta \div \mathbf{u}_\ep \div \phi dx dt
\to 0 \text{ as } \ep \to 0.
\ea\ee
We next treat the boundary terms.
From (\ref{cr3}), we deduce that
\be\la{pf112}\ba
\left| \int_0^T \int_{\p \om_\ep} \n_\ep^\ga \phi \cdot \mathbf{n} dS dt \right|
\le C \ep^{N-1} \to 0 \quad \text{ as } \ep \to 0.
\ea\ee
Noticing that in the radially symmetric setting $\na \mathbf{u}_\ep = (\na \mathbf{u}_\ep)^{\top}$, we have
\be\la{pf113}\ba
\mathbf{n} \cdot \na \mathbf{u}_\ep \cdot \phi
& = \phi \cdot \na \mathbf{u}_\ep \cdot \mathbf{n}
= \phi \cdot \na ( \mathbf{u}_\ep \cdot \mathbf{n} ) - \phi \cdot \na \mathbf{n} \cdot \mathbf{u}_\ep
= \phi \cdot \na ( \mathbf{u}_\ep \cdot \mathbf{n} ) \\
& = - \phi^i \frac{\p r}{\p x_i} \p_r ( u_\ep )
= (\phi \cdot \mathbf{n}) \div \mathbf{u}_\ep
\quad \text{ on } \p \om_\ep,
\ea\ee
where we have used the following facts
\be\la{pf114}\ba
\mathbf{u}_\ep \cdot \mathbf{n} = - u_\ep, \quad \frac{\p r}{\p x} = \frac{x}{r} = -\mathbf{n}, \quad \div \mathbf{u}_\ep = \p_r u_\ep \quad \text{ on } \p \om_\ep.
\ea\ee
In view of (\ref{pf113}), we obtain
\be\la{pf115}\ba
& \int_0^T \int_{\p \om_\ep} \n_\ep^\alpha \mathbf{n} \cdot \mathbb{D}\mathbf{u}_\ep \cdot \phi dS dt
+ (\alpha-1) \int_0^T \int_{\p \om_\ep} \n_\ep^\alpha \div \mathbf{u}_\ep (\phi \cdot \mathbf{n}) dS dt \\
& = \alpha \int_0^T \int_{\p \om_\ep} \n_\ep^\alpha \div \mathbf{u}_\ep (\phi \cdot \mathbf{n}) dS dt.
\ea\ee
Multiplying $(\ref{bjns})_1$ by $\alpha \n_\ep^{\alpha-1}$ gives
\be\la{pf116}\ba
\p_t \n_\ep^\alpha + \mathbf{u}_\ep \cdot \na \n_\ep^\alpha + \alpha \n_\ep^\alpha \div \mathbf{u}_\ep = 0,
\ea\ee
which together with the fact that $\mathbf{u}_\ep = 0$ on $\p \om_\ep$ yields
\be\la{pf117}\ba
& \left| \alpha \int_0^T \int_{\p \om_\ep} \n_\ep^\alpha \div \mathbf{u}_\ep (\phi \cdot \mathbf{n}) dS dt \right| \\
& = \left| \int_0^T \int_{\p \om_\ep} \left( \p_t \n_\ep^\alpha + \mathbf{u}_\ep \cdot \na \n_\ep^\alpha  \right) (\phi \cdot \mathbf{n}) dS dt \right| \\
& = \left| \int_0^T \int_{\p \om_\ep} \left( \p_t \left( \n_\ep^\alpha (\phi \cdot \mathbf{n}) \right) - \n_\ep^\alpha (\p_t \phi \cdot \mathbf{n}) \right) dS dt \right| \\
& \le \left| \int_{\p \om_\ep} \left( \n_\ep^\alpha (\phi \cdot \mathbf{n}) (T) - \n_\ep^\alpha (\phi \cdot \mathbf{n}) (0) \right) dS \right|
+ C |\p \om_\ep| \\
& \le C \ep^{N-1}.
\ea\ee
Combining (\ref{pf115}) and (\ref{pf117}) leads to
\be\la{pf118}\ba
\int_0^T \int_{\p \om_\ep} \n_\ep^\alpha \mathbf{n} \cdot \mathbb{D}\mathbf{u}_\ep \cdot \phi dS dt
+ (\alpha-1) \int_0^T \int_{\p \om_\ep} \n_\ep^\alpha \div \mathbf{u}_\ep (\phi \cdot \mathbf{n}) dS dt \to 0 \text{ as } \ep \to 0.
\ea\ee
By the same argument, we arrive at
\be\la{pf119}\ba
\ep \int_0^T \int_{\p \om_\ep} \n_\ep^\theta \mathbf{n} \cdot \mathbb{D}\mathbf{u}_\ep \cdot \phi dS dt
+ (\theta-1) \ep \int_0^T \int_{\p \om_\ep} \n_\ep^\theta \div \mathbf{u}_\ep (\phi \cdot \mathbf{n}) dS dt \to 0 \text{ as } \ep \to 0.
\ea\ee
Passing to the limit in (\ref{pf11}) by means of (\ref{pf13}), (\ref{pf14}), (\ref{pf16}), (\ref{pf18}), (\ref{pf19}), (\ref{pf111}), (\ref{pf112}), (\ref{pf118}), and (\ref{pf119}), we conclude that $(\sqrt{\n},\sqrt{\n}\mathbf{u})$ satisfies (\ref{rjdf3}).
This completes the proof of Theorem \ref{th1} for $\om = \mathbb{R}^N$.

Next, we prove the global existence of weak solutions in the bounded domain case.
For $0<\ep<\frac{R}{2}$, we consider the approximate system (\ref{bjns}), (\ref{bjnxxs}), and (\ref{rgylt1}) with smooth initial data in the truncated domain
\be\nonumber\ba
\om_\ep^R \triangleq \{ x \in \mathbb{R}^N \mid \ep<|x|<R \}.
\ea\ee
Let $(\n_\ep,\mathbf{u}_\ep)$ be the local smooth solution to the approximate system.
We first establish some a priori estimates that allow the local solution to be extended globally in time.

We assume that
\be\la{yjyzl}\ba
\int_\ep^R \n_{0,\ep}(r) r^{N-1} dr = M_0,
\ea\ee
where $M_0>0$ is a constant independent of $\ep$.

Arguing as in Lemmas \ref{bjl1} and \ref{bjl2}, we can obtain the standard energy estimates and BD entropy estimates.
\begin{proposition}\la{bdbjl1}
There exists a positive constant $C$ independent of $\ep$ and $T$ such that
\be\la{bdbj01}\ba
& \sup_{0\le t \le T} \int_\ep^R \left( \n_\ep |u_\ep|^2 + \n_\ep^\ga + |\p_r \n_\ep^{\alpha-\frac{1}{2}}|^2 + \ep^2 |\p_r \n_\ep^{\theta-\frac{1}{2}}|^2 \right) r^{N-1} dr \\
& \quad + \int_0^T \int_\ep^R \left( \n_\ep^{\alpha} \frac{|u_\ep|^2}{r^2} + \n_\ep^{\alpha} |\p_r u_\ep|^2 + |\p_r \n_\ep^{\frac{\ga+\alpha-1}{2}}|^{2} \right) r^{N-1} dr dt \\
& \quad + \ep \int_0^T \int_\ep^R \left( \n_\ep^{\theta} \frac{|u_\ep|^2}{r^2}
+ \n_\ep^{\theta} |\p_r u_\ep|^2 + |\p_r \n_\ep^{\frac{\ga+\theta-1}{2}}|^{2} \right) r^{N-1} dr dt
\le C.
\ea\ee
\end{proposition}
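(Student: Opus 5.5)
The plan is to follow the proofs of Lemmas \ref{bjl1} and \ref{bjl2} on the truncated annulus $(\ep,R)$. The only changes come from the two boundary points $r=\ep$ and $r=R$, where $u_\ep=0$, and from replacing the potential energy $K$ by the pure pressure potential $\frac{1}{\ga-1}\n_\ep^\ga$. No far-field normalisation is needed here, and mass conservation (\ref{yjyzl}) is available.

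\emph{Energy estimate.} Multiply $(\ref{bjnsqdc})_2$ by $u_\ep r^{N-1}$ and integrate over $(\ep,R)$. All boundary terms from integrating the viscous term by parts vanish because $u_\ep(\ep,t)=u_\ep(R,t)=0$. This gives exactly the analogue of (\ref{bj11}).

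For the pressure, multiply $(\ref{bjnsqdc})_1$ by $\frac{\ga}{\ga-1}\n_\ep^{\ga-1}r^{N-1}$. This yields
\[
\frac{d}{dt}\int_\ep^R \frac{\n_\ep^\ga}{\ga-1} r^{N-1}dr=\int_\ep^R \p_r(\n_\ep^\ga)u_\ep r^{N-1}dr,
\]
and again the boundary contributions $\n_\ep^\ga u_\ep r^{N-1}|_\ep^R$ vanish.

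The cross terms are absorbed with the Young inequalities (\ref{bj14}), (\ref{bj14a}), using $\alpha,\theta>\frac{N-1}{N}$. Integrating in time then gives the bounds on $\n_\ep|u_\ep|^2$, on $\n_\ep^\ga$, and on the dissipation terms. The initial energy is bounded uniformly in $\ep$ by the construction of the data.

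\emph{BD entropy.} With $w_\ep$ as in (\ref{bjyxsd}), equation (\ref{bjyxsdfc1}) still holds pointwise. Multiply it by $w_\ep r^{N-1}$. The transport term $\n_\ep u_\ep \p_r(|w_\ep|^2/2)$ combines with the continuity equation into an exact derivative plus the boundary flux $\n_\ep u_\ep |w_\ep|^2 r^{N-1}|_\ep^R$, which is zero.

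Splitting $w_\ep=u_\ep+\n_\ep^{-1}\p_r(\n_\ep^\alpha+\ep\n_\ep^\theta)$ reproduces (\ref{bj21}). The $u_\ep$ part is the pressure-potential derivative above. The other part gives the good terms $\frac{\ga}{\alpha}\n_\ep^{\ga-\alpha-1}|\p_r\n_\ep^\alpha|^2$ and $\frac{\ep\ga}{\theta}\n_\ep^{\ga-\theta-1}|\p_r\n_\ep^\theta|^2$, which are multiples of $|\p_r\n_\ep^{\frac{\ga+\alpha-1}{2}}|^2$ and $\ep|\p_r\n_\ep^{\frac{\ga+\theta-1}{2}}|^2$. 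Integrating in time and applying (\ref{bj23}) with the energy bound controls $|\p_r\n_\ep^{\alpha-\frac12}|^2+\ep^2|\p_r\n_\ep^{\theta-\frac12}|^2$ by $\n_\ep(|u_\ep|^2+|w_\ep|^2)$.

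The main point to check is the initial value $\int_\ep^R\n_{0,\ep}|w_{0,\ep}|^2r^{N-1}dr$. It is bounded by $\n_{0,\ep}|u_{0,\ep}|^2$, by $|\p_r\n_{0,\ep}^{\alpha-\frac12}|^2$, and by $\ep^2|\p_r\n_{0,\ep}^{\theta-\frac12}|^2$. The last term is controlled as in (\ref{cpt59}) using the lower bound (\ref{bjcz5}): when $\alpha>1$, $\ep^2\n_{0,\ep}^{-2(\alpha-\theta)}\le C$. This gives an $\ep$-independent constant.

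No Gr\"onwall argument is needed, so the constant is also independent of $T$.
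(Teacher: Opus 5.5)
Your proposal is correct and is essentially the paper's own argument. The paper proves this proposition only by saying it follows by arguing as in Lemmas \ref{bjl1} and \ref{bjl2}; you carry that out on $(\ep,R)$, with all boundary fluxes vanishing because $u_\ep=0$ at both endpoints, and with the potential $\frac{1}{\ga-1}\n_\ep^\ga$ in place of $K$. Your explicit check of the initial BD entropy, which uses the lower bound on $\n_{0,\ep}$ to control $\ep^2\n_{0,\ep}^{-2(\alpha-\theta)}$ when $\alpha>1$, is a detail the paper leaves implicit.
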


We next derive an upper bound for the density away from the origin, together with the corresponding $r$-weighted estimates.
\begin{proposition}\la{bdtl2}
There exists a positive constant $\tilde{C}_1$ independent of $\ep$ and $T$ such that
\be\la{bdt02}\ba
\n_\ep(r,t) \le \tilde{C}_1 \quad \textnormal{ for all } (r,t) \in \left[\frac{R}{2},R\right] \times (0,T).
\ea\ee
For any $2 \le p \le 6$, there exist positive constants $\tilde{C}_2$ and $C$ independent of $\ep$ and $T$ such that
\be\la{bdt02a}\ba
\left( \int_\ep^{R} \chi_{(\n_\ep \le \tilde{C}_2)} r^{N-1} dr \right)^{\frac{2}{p}}
\le C \int_\ep^{R} |\p_r \n_\ep^{\frac{\ga+\alpha-1}{2}}|^{2} r^{N-1} dr.
\ea\ee
Moreover, if $N=2$, for any $\xi>0$, there exists a positive constant $C$ independent of $\ep$ and $T$ such that
\be\la{bdt002}\ba
\sup_{0 \le t \le T} \| \n_\ep r^{\xi} \|_{L^\infty(\ep,R)} \le C.
\ea\ee
If $N=3$, there exists a positive constant $C$ independent of $\ep$ and $T$ such that
\be\la{bdt0002}\ba
\sup_{0 \le t \le T} \| \n_\ep^{2\alpha-1} r \|_{L^\infty(\ep,R)}
+ \int_0^T \| \chi_{(\n_\ep > 2\tilde{C}_1)} \n_\ep^{\ga+\alpha-1} r \|_{L^\infty(\ep,R)} dt \le C,
\ea\ee
and for any $t \in [0,T]$,
\be\la{bdt012}\ba
\int_\ep^{R} \chi_{(\n_\ep > 2\tilde{C}_1)} \n_\ep^{\ga+\alpha-1}(r,t) dr
\le C \int_\ep^{R} |\p_r \n_\ep^{\frac{\ga+\alpha-1}{2}}(r,t)|^2 r^2 dr.
\ea\ee
\end{proposition}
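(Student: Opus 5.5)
The plan is to adapt the whole-space proof of Lemma \ref{cptl2}, with mass conservation (\ref{yjyzl}) and the bound on $\int \n_\ep^\ga r^{N-1}dr$ from Proposition \ref{bdbjl1} replacing the far-field information near $r=R$.

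\emph{Step 1: the bound (\ref{bdt02}).} For $r\in[\frac R4,R]$ the weight $r^{N-1}$ is comparable to a constant. Proposition \ref{bdbjl1} gives
\[
\int_{R/4}^R \n_\ep^{2\alpha-1}\,dr \le C\Big(1+\int \n_\ep^\ga r^{N-1}dr\Big) + C \ \text{ (via } |\p_r\n_\ep^{\alpha-\frac12}|^2\text{)}.
\]
The simplest route is to apply (\ref{ywsi2}) to $f=\n_\ep^{\alpha-\frac12}$ on $[\frac R4,R]$:
\[
\n_\ep^{\alpha-\frac12}(r)\le C\int_{R/4}^R \n_\ep^{\alpha-\frac12}dr + C\Big(\int_{R/4}^R|\p_r\n_\ep^{\alpha-\frac12}|^2 r^{N-1}dr\Big)^{1/2}.
\]
The first integral is controlled by the mass and the $\n^\ga$ bound when $\alpha-\frac12\le\ga$. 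Otherwise I would use the BD bound to control $\n^{\alpha-\frac12}$ in $L^6$ (for $N=3$) or in every $L^p$ (for $N=2$), via the Sobolev inequality on the annulus combined with the $L^1$ mass bound. This yields $\tilde C_1$.

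\emph{Step 2: the low-density set estimate (\ref{bdt02a}).} Since the total mass equals $M_0$ and $\n_\ep\le\tilde C_1$ near $R$, I would pick $\tilde C_2$ small, e.g. $\tilde C_2^{\frac{\ga+\alpha-1}{2}}$ below an average level of $\n_\ep^{\frac{\ga+\alpha-1}{2}}$ that mass conservation forces. Then $H=(\tilde C_2'^{\frac{\ga+\alpha-1}{2}}-\tilde\n_\ep^{\frac{\ga+\alpha-1}{2}})_+$ (with the reflected extension $\tilde\n_\ep$) vanishes on a set of measure bounded below. A Poincaré--Sobolev inequality on $B_R$ for functions vanishing on a set of positive measure then gives $\|H\|_{L^p}^2\le C\|\na H\|_{L^2}^2$ for $2\le p\le6$. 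The argument concludes exactly as in (\ref{xcpt22})--(\ref{xcpt24}), including the reflection estimate for $\int_0^\ep$. \textbf{This is the main obstacle:} we no longer have a point where $\n_\ep\ge c_1$, so the lower bound on the measure of $\{\n_\ep\ge 2\tilde C_2\}$ must come from $M_0$ and the upper bound $\int\n^\ga\le C$, by Chebyshev and interpolation.

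\emph{Step 3: the weighted bounds (\ref{bdt002}) and (\ref{bdt0002}).} These follow as in (\ref{cpt25})--(\ref{cpt29}). For $N=2$, $\n_\ep^{\alpha-\frac12}\in L^p(r\,dr)$ for all $p$, by Gagliardo--Nirenberg on the reflected extension over $B_R$ using the BD and mass bounds. For $N=3$, apply Lemma \ref{hdi} with $M=R$ to $g=\n_\ep^{\alpha-\frac12}$.

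\emph{Step 4: the high-density estimates (\ref{bdt012}) and the time-integral part of (\ref{bdt0002}).} Use $\Pi=(\tilde\n_\ep^{\frac{\ga+\alpha-1}{2}}-\tilde C_1^{\frac{\ga+\alpha-1}{2}})_+$, which vanishes for $r\ge R/2$, and repeat (\ref{cpt2110})--(\ref{cpt213}). The boundary term at $\ep$ is again nonpositive. For the $L^\infty$-in-space bound, apply Lemma \ref{hdi} to $\Pi$ to get
\[
\|r^{1/2}\Pi\|_{L^\infty}^2\le C\int(r^2\Pi^2+r^2|\p_r\Pi|^2)\,dr\le C\int|\p_r\n_\ep^{\frac{\ga+\alpha-1}{2}}|^2r^2\,dr,
\]
using Poincaré since $\Pi$ vanishes near $R$. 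Integrating in time with Proposition \ref{bdbjl1} gives the remaining bound.
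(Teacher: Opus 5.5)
Your proposal is correct and follows the paper's proof. The steps match: a pointwise bound on $[\frac R2,R]$ from the bound on $\int\n_\ep^\ga r^{N-1}dr$ and the BD gradient bound; a lower bound on the measure of $\{\n_\ep\ge d_0\}$ from mass conservation plus H\"older against $\int\n_\ep^\ga r^{N-1}dr$, followed by Poincar\'e--Sobolev for the truncation vanishing there; and then the arguments of Lemma \ref{cptl2} with $\tilde C_1$ in place of $c_2$. Your use of Lemma \ref{hdi} for the time-integrated $L^\infty$ term in (\ref{bdt0002}) fills in a step the paper leaves implicit.

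The one simplification in the paper is in Step 1. Rather than splitting cases on $\alpha-\frac12$ versus $\ga$, it picks $r(t)\in[\frac R2,R]$ with $\n_\ep^\ga(r(t),t)\,r(t)^{N-1}\le\frac2R E_0$ and applies the fundamental theorem of calculus to $\n_\ep^{\alpha-\frac12}$, which works for all exponents at once.
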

\begin{proof}
First, from (\ref{bdbj01}), we conclude that there exists a positive constant $E_0$ such that
\be\la{bd21}\ba
\int_{\ep}^R \n_\ep^\ga(r,t) r^{N-1} dr \le E_0.
\ea\ee
Thus, for any fixed $t \in [0,T]$, there exists $r(t) \in [R/2,R]$ such that
\be\la{bd22}\ba
\n_\ep^\ga(r(t),t) (r(t))^{N-1} \le \frac{2}{R} \int_{\frac{R}{2}}^R \n_\ep^\ga(r,t) r^{N-1} dr \le \frac{2}{R} E_0,
\ea\ee
which implies
\be\la{bd23}\ba
\n_\ep(r(t),t) \le C.
\ea\ee
The fundamental theorem of calculus, together with (\ref{bd23}), yields for any $r \in [R/2,R]$,
\be\la{bd24}\ba
\n_\ep^{\alpha-\frac{1}{2}}(r,t)
& \le C + \int_{\frac{R}{2}}^R |\p_r \n_\ep^{\alpha-\frac{1}{2}}(r,t)| dr \\
& \le C + C \int_{\frac{R}{2}}^R |\p_r \n_\ep^{\alpha-\frac{1}{2}}(r,t)|^2 dr \le C.
\ea\ee
Since $\alpha > \frac{1}{2}$, we may choose a positive constant $\tilde{C}_1$ such that (\ref{bdt02}) holds.

In addition, we set
\be\la{bd25}\ba
d_0 \triangleq \frac{M_0 N}{2 R^N}.
\ea\ee
By (\ref{yjyzl}) and (\ref{bd25}), we have
\be\la{bd26}\ba
\int_\ep^R \chi_{(\n_\ep \ge d_0)} \n_\ep r^{N-1} dr
& = \int_\ep^R \n_\ep r^{N-1} dr - \int_\ep^R \chi_{(\n_\ep < d_0)} \n_\ep r^{N-1} dr \\
& \ge M_0 - d_0 \frac{R^N}{N} \ge \frac{M_0}{2}.
\ea\ee
On the other hand, using (\ref{bd21}) and H\"older's inequality, we arrive at
\be\la{bd27}\ba
\int_\ep^R \chi_{(\n_\ep \ge d_0)} \n_\ep r^{N-1} dr
& \le \left( \int_\ep^R \chi_{(\n_\ep \ge d_0)} \n_\ep^\ga r^{N-1} dr \right)^{\frac{1}{\ga}}
\left( \int_\ep^R \chi_{(\n_\ep \ge d_0)} r^{N-1} dr \right)^{\frac{\ga-1}{\ga}} \\
& \le E_0^{\frac{1}{\ga}} \left( \int_\ep^R \chi_{(\n_\ep \ge d_0)} r^{N-1} dr \right)^{\frac{\ga-1}{\ga}}.
\ea\ee
Combining (\ref{bd26}) and (\ref{bd27}) leads to
\be\la{bd28}\ba
\int_\ep^R \chi_{(\n_\ep \ge d_0)} r^{N-1} dr \ge \left( \frac{M_0}{ 2 E_0^{\frac{1}{\ga}} } \right)^{\frac{\ga}{\ga-1}} \triangleq h_0 > 0.
\ea\ee
As in the whole-space case, we extend the density continuously to $(0,R)$ by
\be\la{bd29}\ba
\tilde{\n}_\ep(r,t) \triangleq
\begin{cases}
\n_\ep(r,t) \quad & \text{ if } \ep < r < R, \\
\n_\ep(2\ep-r,t) \quad & \text{ if } 0 < r \le \ep.
\end{cases}
\ea\ee
The change of variables yields
\be\la{bd210}\ba
\int_0^\ep |\p_r \tilde{\n}_\ep^{\frac{\ga+\alpha-1}{2}}|^{2} r^{N-1} dr
\le \int_\ep^{2 \ep} |\p_r \n_\ep^{\frac{\ga+\alpha-1}{2}}|^{2} r^{N-1} dr.
\ea\ee
Define
\be\la{bd211}\ba
\Gamma(x,t) \triangleq (d_0^{\frac{\ga+\alpha-1}{2}} - \tilde{\n}_\ep^{\frac{\ga+\alpha-1}{2}}(|x|,t) )_+,
\ea\ee
and
\be\la{bd212}\ba
\ol{\Gamma} \triangleq \frac{1}{|B_R|} \int_{B_R} \Gamma(x,t) dx.
\ea\ee
It follows from (\ref{bd28}), (\ref{bd211}), and Poincar\'e's inequality that
\be\la{bd213}\ba
|\ol{\Gamma}| & \le \| \Gamma - \ol{\Gamma} \|_{L^2(\n_\ep \ge d_0)} \left( |\mathbb{S}^{N-1}| h_0 \right)^{-\frac{1}{2}} \\
& \le \| \Gamma - \ol{\Gamma} \|_{L^2(B_R)} \left( |\mathbb{S}^{N-1}| h_0 \right)^{-\frac{1}{2}}
\le C \| \na \Gamma \|_{L^2(B_R)}.
\ea\ee
This, together with the Sobolev-Poincar\'e inequality, implies that for any $2 \le p \le 6$,
\be\la{bd214}\ba
\| \Gamma \|_{L^p(B_R)}^2 \le C \| \Gamma - \ol{\Gamma} \|_{L^p(B_R)}^2
+ C |\ol{\Gamma}|^2 \le C \| \na \Gamma \|_{L^2(B_R)}^2.
\ea\ee
Set
\be\la{bd215}\ba
\tilde{C}_2 \triangleq \frac{d_0}{2} = \frac{M_0 N}{4 R^N}.
\ea\ee
Consequently,
\be\la{bd216}\ba
\chi_{(\n_\ep \le \tilde{C}_2)} \left( d_0^{\frac{\ga+\alpha-1}{2}} - \frac{d_0}{2}^{\frac{\ga+\alpha-1}{2}} \right)
\le \chi_{(\n_\ep \le \tilde{C}_2)} (d_0^{\frac{\ga+\alpha-1}{2}} - \n_\ep^{\frac{\ga+\alpha-1}{2}})_+.
\ea\ee
For any $2 \le p \le 6$, we use (\ref{bd210}), (\ref{bd214}), and (\ref{bd216}) to derive
\be\la{bd217}\ba
\left( \int_\ep^{R} \chi_{(\n_\ep \le \tilde{C}_2)} r^{N-1} dr \right)^{\frac{2}{p}}
& \le C \left( \int_\ep^{R} \chi_{(\n_\ep \le \tilde{C}_2)} \Gamma^p r^{N-1} dr \right)^{\frac{2}{p}} \\
& \le C \left( \int_0^{R} \Gamma^p r^{N-1} dr \right)^{\frac{2}{p}} \\
& \le C \| \Gamma \|_{L^p(B_R)}^2 \le C \| \na \Gamma \|_{L^2(B_R)}^2 \\
& \le C \int_\ep^{R} |\p_r \n_\ep^{\frac{\ga+\alpha-1}{2}}|^{2} r^{N-1} dr,
\ea\ee
which gives (\ref{bdt02a}).

Finally, using (\ref{bdbj01}) and (\ref{bdt02}), and adapting the arguments in Lemma \ref{cptl2}, we can obtain (\ref{bdt002}) and (\ref{bdt0002}).
This completes the proof of Proposition \ref{bdtl2}.
\end{proof}

Using Propositions \ref{bdbjl1} and \ref{bdtl2} and following arguments similar to those in Sections 4 and 5, we can obtain an $\ep$-uniform upper bound and a strictly positive lower bound for the density, provided that $\alpha$ and $\ga$ satisfy the conditions in Theorem \ref{th1}.
Combined with the standard argument, these estimates imply that the approximate system has a unique global smooth solution.

Moreover, adapting the compactness arguments in Lemmas \ref{jxl1} and \ref{jxl2}, we can obtain the compactness properties required for passing to the limit as $\ep \to 0$.
Arguing as in the whole space case, we can prove the global existence of weak solutions in the ball $B_R$.
This completes the proof of Theorem \ref{th1}.

\subsection{Proof of Theorem \ref{th3}}

\noindent\textbf{Proof of Theorem \ref{th3}.}
We prove Theorem \ref{th3} only in the whole space case; the bounded domain case can be treated similarly.
From (\ref{bj01}), (\ref{bj02}), (\ref{cpt01}), (\ref{cpt001}), (\ref{t04}), (\ref{t05}), and the weak lower semicontinuity, we deduce that, for $N<s_2 \le \min\{s_1,4\}$, with $s_1$ as in Theorem \ref{th3},
\be\la{vv1}\ba
& \sup_{0 \le t < \infty} \left( \| \sqrt{\n} \mathbf{u} \|_{L^2(\mathbb{R}^N)} + \| \n - 1 \|_{L^2(\mathbb{R}^N) \cap L^\infty(\mathbb{R}^N)} + \| \na \n^{\alpha-\frac{1}{2}} \|_{L^2(\mathbb{R}^N)} + \| \na \n^{\alpha-1+\frac{1}{s_2}} \|_{L^{s_2}(\mathbb{R}^N)} \right) \\
& \quad + \int_0^\infty \| \na \n^{\frac{\ga+\alpha-1}{2}} \|_{L^2(\mathbb{R}^N)}^2 dt \le C.
\ea\ee
Choose $b$ sufficiently large such that
\be\la{vv1a}\ba
b > \max\left\{ 1,\alpha,\frac{\ga+\alpha-1}{2} \right\}.
\ea\ee
Combining (\ref{vv1}) and (\ref{vv1a}) yields
\be\la{vv2}\ba
\sup_{0 \le t < \infty} \left( \| \na \n^{b} \|_{L^2(\mathbb{R}^N)} + \| \na \n^{b-\frac{1}{2}} \|_{L^2(\mathbb{R}^N)} + \| \na \n^{b} \|_{L^{s_2}(\mathbb{R}^N)} \right)
+ \int_0^\infty \| \na \n^b \|_{L^2(\mathbb{R}^N)}^{2} dt \le C.
\ea\ee
On the one hand, by (\ref{vv2}) and the Gagliardo-Nirenberg inequality, we have
\be\la{vv3}\ba
\int_0^\infty \| \n^b - 1 \|_{L^{4}(\mathbb{R}^N)}^{4} dt
& \le C \int_0^\infty \| \n^b - 1 \|_{L^{N}(\mathbb{R}^N)}^{2} \| \na \n^b \|_{L^2(\mathbb{R}^N)}^{2} dt \\
& \le C \int_0^\infty \| \na \n^b \|_{L^2(\mathbb{R}^N)}^{2} dt \le C,
\ea\ee
where we have used the estimate
\be\nonumber\ba
\sup_{0 \le t < \infty} \| \n^b - 1 \|_{L^{N}(\mathbb{R}^N)} \le C.
\ea\ee
On the other hand, it follows from $(\ref{ns})_1$, (\ref{vv1}), and H\"older's inequality that
\be\la{vv4}\ba
& \frac{d}{dt} \left( \| \n^b - 1 \|_{L^{4}(\mathbb{R}^N)}^{4} \right) \\
& = 4 b \left< (\n^b - 1)^3 \n^{b-1}, \n_t \right> \\
& = 4 b \int_{\mathbb{R}^N} \na \left( (\n^b - 1)^3 \n^{b-1} \right) \cdot \sqrt{\n} \sqrt{\n} \mathbf{u} dx \\
& \le C \int_{\mathbb{R}^N} \left( |\na \n^b| |\n^b - 1|^{2} |\n^{b-\frac{1}{2}}| |\sqrt{\n} \mathbf{u}|
+ |\n^b - 1|^{3} |\na \n^{b-\frac{1}{2}}| |\sqrt{\n} \mathbf{u}| \right) dx \\
& \le C \| \na \n^{b} \|_{L^2(\mathbb{R}^N)} \| \sqrt{\n} \mathbf{u} \|_{L^2(\mathbb{R}^N)}
+ C \| \na \n^{b-\frac{1}{2}} \|_{L^2(\mathbb{R}^N)} \| \sqrt{\n} \mathbf{u} \|_{L^2(\mathbb{R}^N)}.
\ea\ee
Combining (\ref{vv1}), (\ref{vv2}), (\ref{vv4}), and H\"older's inequality leads to
\be\la{vv5}\ba
\sup_{0 \le t < \infty} \left| \frac{d}{dt} \left( \| \n^b - 1 \|_{L^{4}(\mathbb{R}^N)}^{4} \right) \right| \le C.
\ea\ee
From (\ref{vv3}), (\ref{vv5}) and Lemma \ref{zkxsyl}, we conclude that
\be\la{vv6}\ba
\| \n^b - 1 \|_{L^{4}(\mathbb{R}^N)}^{4} \to 0 \ \text{ as } t \to \infty.
\ea\ee
Using (\ref{vv6}), (\ref{vv2}), and the Gagliardo-Nirenberg inequality, we obtain
\be\la{vv7}\ba
\| \n^b - 1 \|_{L^\infty(\mathbb{R}^N)}
& \le C \| \n^b - 1 \|_{L^{4}(\mathbb{R}^N)}^{ \frac{4(s_2-N)}{s_2 N+4(s_2-N)} }
\| \na \n^b \|_{L^{s_2}(\mathbb{R}^N)}^{\frac{s_2 N}{s_2 N+4(s_2-N)}} \\
& \le C \| \n^b - 1 \|_{L^{4}(\mathbb{R}^N)}^{\frac{4(s_2-N)}{s_2 N+4(s_2-N)}} \to 0 \ \text{ as } t \to \infty.
\ea\ee
This implies that there exist positive constants $T_0$ and $\n_{-}$ such that
\be\la{vv9}\ba
\inf_{x \in \mathbb{R}^N} \n(x,t) \ge \n_{-}>0, \quad \textnormal{ for } t \in [T_0,\infty),
\ea\ee
which gives (\ref{th36}) and completes the proof of Theorem \ref{th3}.

\section{Proof of Theorem \ref{th4}}

In this section, we prove Theorem \ref{th4}.
Under the assumptions of Theorem \ref{th4}, we deduce from \cite{CZZ1,CZZ2} that the system (\ref{ns}), (\ref{i1}), (\ref{cz}), (\ref{nxxs}) with the boundary conditions (\ref{qkjbjtj}) or (\ref{yjybjtj}) has a unique local spherically symmetric classical solution.
Thus, let $T>0$ be a fixed time and $(\n,\mathbf{u})$ be a spherically symmetric classical solution on $\OM \times (0,T]$ with initial data $(\n_0,\mathbf{u}_0)$ satisfying (\ref{th42}).
By spherical symmetry, $\n$ and $\mathbf{u}$ can be written as
\be\nonumber\ba
\n(x,t) = \n(r,t), \quad \mathbf{u}(x,t) = u(r,t)\frac{x}{r}, \quad r = |x|.
\ea\ee

\subsection{Proof of Theorem \ref{th4} for the Cauchy problem}

In this subsection, we prove Theorem \ref{th4} for the Cauchy problem.

First, following Lemmas \ref{bjl1}, \ref{bjl2}, and \ref{cptl1}, we can obtain the standard energy estimates and BD entropy estimates.
\begin{lemma}\la{ltbl1}
There exists a positive constant $C$ independent of $T$ such that
\be\la{ltb01}\ba
& \sup_{0\le t \le T} \int_0^\infty \left( \n |u|^2 + K(\n) + |\p_r \n^{\alpha-\frac{1}{2}}|^2 + |\n^{\alpha-\frac{1}{2}}-1|^2 \right) r^{N-1} dr \\
& \quad + \int_0^T \int_0^\infty \left( \n^{\alpha} \frac{|u|^2}{r^2} + \n^{\alpha} |\p_r u|^2 + |\p_r \n^{\frac{\ga+\alpha-1}{2}}|^{2} \right) r^{N-1} dr dt
\le C.
\ea\ee
\end{lemma}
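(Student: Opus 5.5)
Lemma \ref{ltbl1} restates Lemmas \ref{bjl1}, \ref{bjl2} and \ref{cptl1} for the exact system \eqref{nsqdc} on $(0,\infty)$. The plan is to redo those arguments for the classical solution, with $\ep=0$ in the viscosity and without any inner boundary. Since the solution is classical with $0<\n\le C(T)$ on $[0,T]$ and decays suitably at infinity, all integrations by parts below are justified. Boundary terms at $r=0$ vanish because $u(0,t)=0$ (spherical symmetry) and the weights $r^{N-1}$, $r^{N-2}$ vanish there. Boundary terms at infinity vanish by \eqref{qkjbjtj} and the $H^3$ regularity.

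\emph{Energy estimate.} Multiply $\eqref{nsqdc}_2$ by $u r^{N-1}$ and integrate over $(0,\infty)$. This yields \eqref{bj11} with $\ep=0$. The cross term $2(N-1)(1-\alpha)\int \n^\alpha u\,\p_r u\, r^{N-2}dr$ is absorbed exactly as in \eqref{bj12}--\eqref{bj14}, using $\alpha>\frac{N-1}{N}$; this holds under \eqref{th41}, since $7-2\sqrt{10}>\frac23$ for $N=3$ and $\alpha>\frac12$ for $N=2$. The pressure term is combined with the identity \eqref{bj15} for $K(\n)$, which follows from $\eqref{nsqdc}_1$ times $K'(\n)r^{N-1}$ and $K'(\n)\n-K(\n)=\n^\ga-1$. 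Integrating in time gives the bounds on $\n|u|^2$ and $K(\n)$ and on the dissipation $\int_0^T\!\int (\n^\alpha |u|^2 r^{-2}+\n^\alpha|\p_r u|^2)r^{N-1}$. The constant depends only on the initial energy, hence is independent of $T$.

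\emph{BD entropy.} Define $w=u+\n^{-1}\p_r\n^\alpha$, which satisfies $\n w_t+\n u w_r+(\n^\ga)_r=0$ as in \eqref{bjyxsdfc1}. Multiplying by $w r^{N-1}$ and following \eqref{bj21}--\eqref{bj24} gives $\sup_t\int \n|w|^2 r^{N-1}\le C$ and $\int_0^T\!\int|\p_r\n^{\frac{\ga+\alpha-1}{2}}|^2r^{N-1}\le C$. Then \eqref{bj23} yields $\sup_t \int|\p_r\n^{\alpha-\frac12}|^2r^{N-1}\le C$. Here $C$ depends only on the initial quantities $\int\n_0|u_0|^2$, $\int K(\n_0)$ and $\int|\p_r\n_0^{\alpha-\frac12}|^2$, all of which are finite by \eqref{th42}.

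\emph{The $|\n^{\alpha-\frac12}-1|^2$ term.} Repeat the proof of Lemma \ref{cptl1}, but on $(0,\infty)$ directly, so no reflection is needed. On $\{\n<2\}$, use $|\n^{\alpha-\frac12}-1|\le C|\n-1|$ together with \eqref{cpt11}. On $\{\n\ge2\}$, use the finite measure bound \eqref{cpt12a}. For $N=3$, also use the Sobolev bound $\|\n^{\alpha-\frac12}-1\|_{L^6}\le C\|\na\n^{\alpha-\frac12}\|_{L^2}$. For $N=2$, use Gagliardo--Nirenberg applied to $(\n^{\alpha-\frac12}-2^{\alpha-\frac12})_+$, as in \eqref{cpt14}.

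\emph{Main point to check.} The only step needing care is the legitimacy of the boundary terms at $r=0$ and at infinity. The computations themselves are identical to the $\ep$-problem, and are in fact simpler since the $\ep$-terms vanish.
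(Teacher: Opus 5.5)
Your proposal is correct and is essentially the paper's own argument. The paper obtains Lemma \ref{ltbl1} simply by repeating Lemmas \ref{bjl1}, \ref{bjl2} and \ref{cptl1} for the exact system (with $\ep=0$ and no inner boundary), exactly as you outline; the one small imprecision is that for $N=2$ the weight $r^{N-2}\equiv 1$ does not vanish at the origin, so there it is $u(0,t)=0$ alone that removes the boundary terms at $r=0$.
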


Arguing as in Lemma \ref{cptl2}, we can establish the following upper bound for the density away from the origin and the corresponding $r$-weighted estimates.
\begin{lemma}\la{ltbl2}
There exist positive constants $\hat{R}_0 \ge 2$, $\hat{c}_1$, and $\hat{c}_2$ independent of $T$ such that
\be\la{ltb02}\ba
0 < \hat{c}_1 \le \n(r,t) \le \hat{c}_2 \quad \textnormal{ for all } (r,t) \in [\hat{R}_0,\infty) \times (0,T).
\ea\ee
For any $2 \le p \le 6$, there exists a positive constant $C$ independent of $T$ such that
\be\la{ltb020}\ba
\left( \int_0^{\hat{R}_0} \chi_{(\n \le \frac{\hat{c}_1}{2})}(r,t) r^{N-1} dr \right)^{\frac{2}{p}}
\le C \int_0^{\hat{R}_0} |\p_r \n^{\frac{\ga+\alpha-1}{2}}(r,t)|^{2} r^{N-1} dr.
\ea\ee
Moreover, if $N=2$, for any $\xi>0$, there exists a positive constant $C$ independent of $T$ such that
\be\la{ltb02a}\ba
\sup_{0 \le t \le T} \| \n r^{\xi} \|_{L^\infty(0,\hat{R}_0)} \le C.
\ea\ee
If $N=3$, there exists a positive constant $C$ independent of $T$ such that
\be\la{ltb02b}\ba
\sup_{0 \le t \le T} \| \n^{2\alpha-1} r \|_{L^\infty(0,\hat{R}_0)} \le C,
\ea\ee
and for any $t \in [0,T]$,
\be\la{ltb02c}\ba
\int_0^{\hat{R}_0} \chi_{(\n > 2\hat{c}_2)} \n^{\ga+\alpha-1}(r,t) dr
\le C \int_0^{\hat{R}_0} |\p_r \n^{\frac{\ga+\alpha-1}{2}}(r,t)|^2 r^2 dr.
\ea\ee
\end{lemma}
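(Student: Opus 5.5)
The plan is to repeat the proof of Lemma \ref{cptl2} verbatim for the classical solution $(\n,u)$ on $(0,\infty)$. The reflected extension across $r=\ep$ is now unnecessary, since $\n$ is already defined down to $r=0$ as a smooth radial function on $\mathbb{R}^N$. The only inputs are the uniform bounds of Lemma \ref{ltbl1}, namely
\[
\sup_t \int_0^\infty \left( |\p_r \n^{\alpha-\frac12}|^2 + |\n^{\alpha-\frac12}-1|^2 \right) r^{N-1}dr \le C
\quad\text{and}\quad
\int_0^T\!\!\int_0^\infty |\p_r \n^{\frac{\ga+\alpha-1}{2}}|^2 r^{N-1}dr\,dt \le C .
\]
For $N=2$, I also need the $L^p(r\,dr)$ bounds for $\n^{\alpha-\frac12}-1$, $p<\infty$. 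These I would obtain exactly as in Lemma \ref{cptl1}, splitting into $\n<2$ and $\n\ge2$ and applying the two-dimensional Gagliardo--Nirenberg inequality to $(\n^{\alpha-\frac12}-2^{\alpha-\frac12})_+$. For $N=3$, the $L^6(r^2dr)$ bound follows directly from Sobolev and the BD bound.

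\emph{Far-field bound.} For $r\ge1$, apply \eqref{ywsi1} to $(\n^{\alpha-\frac12}-1)^2r^{N-1}$ and use Cauchy--Schwarz as in \eqref{cpt21}. This gives $(\n^{\alpha-\frac12}-1)^2\le C_0r^{1-N}$. The decay at infinity needed for \eqref{ywsi1} comes from $\n-1\in H^3$ on each finite time interval. Choosing $\hat R_0=2+(4C_0)^{1/(N-1)}$ yields $|\n^{\alpha-\frac12}-1|\le\frac12$ on $[\hat R_0,\infty)$, and this gives \eqref{ltb02} with $\hat c_1,\hat c_2$ depending only on $\alpha$.

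\emph{Bound \eqref{ltb020}.} Set $H=(\hat c_1^{\frac{\ga+\alpha-1}{2}}-\n^{\frac{\ga+\alpha-1}{2}})_+$. It is a radial $H^1$ function on $B_{\hat R_0}$ vanishing on $\p B_{\hat R_0}$, so the Sobolev--Poincar\'e inequality bounds $\|H\|_{L^p(B_{\hat R_0})}^2$, $2\le p\le6$, by $\|\na H\|_{L^2}^2$. On $\{\n\le\hat c_1/2\}$ the function $H$ is bounded below by a positive constant, which gives \eqref{ltb020}.

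\emph{Weighted $L^\infty$ bounds.} For $N=2$ I follow \eqref{cpt25}--\eqref{cpt28}. Apply \eqref{ywsi2} to $\n^{\alpha-\frac12}r^\zeta$ on $(0,\hat R_0)$ with $0<\zeta\le2$, and use H\"older together with the $L^{4/\zeta}(r\,dr)$ bound. This controls $\|\n^{\alpha-\frac12}r^\zeta\|_{L^\infty}$. Larger $\zeta$ is trivial, and taking $\zeta=(\alpha-\frac12)\xi$ gives \eqref{ltb02a}. For $N=3$, apply Lemma \ref{hdi} to $g=\n^{\alpha-\frac12}$ on $(\delta,\hat R_0)$, where the constant is independent of $\delta$, and then let $\delta\to0$. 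This gives $\|\n^{\alpha-\frac12}r^{1/2}\|_{L^\infty}^2\le C$, which is \eqref{ltb02b}.

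\emph{Bound \eqref{ltb02c}.} Set $\Pi=(\n^{\frac{\ga+\alpha-1}{2}}-\hat c_2^{\frac{\ga+\alpha-1}{2}})_+$, which satisfies $\Pi(\hat R_0)=0$. Integrating $\int_0^{\hat R_0}\Pi^2dr$ by parts against $r$ leaves the boundary term at $0$ equal to $0\cdot\Pi^2(0)$, which vanishes; this is even simpler than \eqref{cpt2110}. Young's inequality then gives $\int\Pi^2dr\le4\int|\p_r\n^{\frac{\ga+\alpha-1}{2}}|^2r^2dr$, and on $\{\n>2\hat c_2\}$ we have $\Pi\gtrsim\n^{\frac{\ga+\alpha-1}{2}}$.

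The only real point of care is that all constants must be independent of $T$. They depend only on the quantities in Lemma \ref{ltbl1}, which are $T$-independent, so this holds. I therefore expect no genuine obstacle. The most delicate step is the $N=2$ estimate, because it relies on the $L^p$ bounds for every $p<\infty$ from the Cauchy problem analogue of Lemma \ref{cptl1}, and that analogue must be verified rather than assumed.
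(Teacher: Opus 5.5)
Your proposal is correct and follows the paper's own route: the paper proves this lemma only by saying ``arguing as in Lemma \ref{cptl2},'' with the $N=2$ $L^p$ bounds supplied by the Lemma \ref{cptl1}-type argument you describe. Your adaptations are exactly what that repetition requires: dropping the reflection, noting that the boundary term at $r=0$ vanishes, and applying Lemma \ref{hdi} on $(\delta,\hat R_0)$ before letting $\delta\to0$.
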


Using Lemmas \ref{ltbl1} and \ref{ltbl2} and adapting the arguments in Lemma \ref{tl4}, we can obtain the following estimates.
\begin{lemma}\la{ltbl3}
Let $k$ satisfy
\be\la{ltb030}\ba
N < k < \mathcal{P}_N(\alpha), \quad k \le 4, \quad \alpha-1+\frac{1}{k} > 0.
\ea\ee
There exists a positive constant $\eta_1 \in (0,1]$ depending only on $\alpha$, $\mu$, and the initial data such that if $\alpha$ and $\ga$ satisfy \eqref{th41}, then there exists a positive constant $C$ independent of $T$ such that
\be\la{ltb03}\ba
& \sup_{0\le t \le T} \int_0^\infty \left( \n |u|^{k} + |\p_r \n^{ \alpha-1+\frac{1}{k} }|^{k} \right) r^{N-1} dr \\
& + \int_0^T \int_0^\infty \left( \n^\alpha \frac{|u|^{k}}{r^2} + \n^\alpha |u|^{k-2} |\p_r u|^2 \right) r^{N-1} dr dt
\le C.
\ea\ee
\end{lemma}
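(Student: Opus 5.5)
We follow the proof of Lemma \ref{tl4} essentially verbatim, now for the classical solution $(\n,u)$ on $(0,\infty)$ with $\ep=0$. The BD structure is exact, so the effective velocity is $w\triangleq u+\n^{-1}\p_r\n^\alpha$, and it satisfies $\n w_t+\n u w_r+(\n^\ga)_r=0$.

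\emph{Step 1 (regularized $k$-energy).} We multiply the momentum equation by $u(1+|u|^2)^{\frac{k-2}{2}}r^{N-1}$, integrate over $(0,\infty)$, and add the potential-energy identity for $K(\n)$. For the cross terms $(1-\alpha)(N-1)s_u\n^\alpha u(1+|u|^2)^{\frac{k-2}{2}}\p_r u\, r^{N-2}$ we use Young's inequality exactly as in (\ref{t45})--(\ref{t46}). This works because $2<k<\mathcal{P}_N(\alpha)$ gives $M_{N,k}(\alpha)<1$. We split the pressure remainder with the cutoff $\eta$ (now equal to $1$ beyond $2\hat R_0$) into $J_1,J_2,J_3$. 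Since $k\le4$, (\ref{t415}) holds, and we control these terms using Lemma \ref{ltbl2} together with the estimate (\ref{cpt49}) for $u$ at large $r$. This yields the analogue of (\ref{cpt416}),
\[
\sup_t\int_0^\infty \n|u|^k r^{N-1}dr+\int_0^T\!\!\int_0^\infty\Big(\n^\alpha\frac{|u|^k}{r^2}+\n^\alpha|u|^{k-2}|\p_r u|^2\Big)r^{N-1}dr\,dt\le C+C\int_0^T\!\!\int_0^{\hat R_0}\n^{2\ga-\alpha}A(u)r^{N-1}dr\,dt .
\]
The initial term is finite because $\n_0\ge\underline{\n_0}$ and $\mathbf u_0\in H^3$, so $\n_0|u_0|^k$ and $|\p_r\n_0^{\alpha-1+1/k}|^k$ are integrable.

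\emph{Step 2 (the low/high density remainder).} We treat $\int\n^{2\ga-\alpha}A(u)$ by the same three cases as in Lemma \ref{tl4}. If $\ga<\alpha$, we split at $\hat c_1/2$ and use (\ref{ltb020}); this needs $\frac1k\ge\frac{\alpha+1}{2}-\ga$, which holds for $k$ close to $N$ since $\alpha\le1<\ga$ makes the right side negative. If $N=2$ and $\ga\ge\alpha$, we use (\ref{ltb02a}). If $N=3$ and $\ga\ge\alpha$, we split at $2\hat c_2$ and use (\ref{ltb02b}), (\ref{ltb02c}), and the Gagliardo--Nirenberg bound (\ref{4x8})--(\ref{4x10}) on $\Pi_2=(\n^\kappa-(\hat c_2+2)^\kappa)_+$ with $\kappa=\alpha-1+\frac1k$. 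This produces $C\int_0^T Y^{1+\tilde b}\int_0^{\hat R_0}|\p_r\n^{\frac{\ga+\alpha-1}{2}}|^2r^{N-1}dr\,dt$, where $Y=\int\n(|u|^k+|w|^k)r^{N-1}dr$.

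\emph{Step 3 (effective velocity and closure).} Multiplying the $w$-equation by $|w|^{k-2}wr^{N-1}$ and subtracting the $u$-identity gives the monotone sign term (\ref{cpt55}) plus viscous terms controlled by the dissipation. Hence $\sup_t\int\n|w|^k$ is bounded by the data plus quantities already estimated. For $N=2$ we have $\tilde b=0$, and the linear Gr\"onwall inequality closes via $\int_0^T\int|\p_r\n^{\frac{\ga+\alpha-1}{2}}|^2\le C$ from Lemma \ref{ltbl1}. For $N=3$ we get $\mathbf C_1+\mathbf C_2\int_0^TY^{1+b}g\,dt$ with $\int_0^T g\le\mathbf C_3$, and $b=\frac{(k-1)(\ga-6\alpha+3)_+}{(3\alpha-2)(k-2)}$. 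We define $\eta_1$ as in (\ref{cpt5112}), so that $\ga\le6\alpha-3+\eta_1$ gives $b\mathbf C_1^b\mathbf C_2\mathbf C_3<1$, and Lemma \ref{gnti} yields a $T$-independent bound on $Y$. Finally, (\ref{4x3}) with $\ep=0$ gives $|\p_r\n^{\alpha-1+1/k}|^k\le C\n(|u|^k+|w|^k)$, which completes (\ref{ltb03}).

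The main obstacle is Step 2 in the case $N=3$, $\ga\ge\alpha$: obtaining the superlinear power $1+b$ with constants independent of $T$. This is exactly what forces the smallness $\eta_1$ and requires care that $\mathbf C_1,\mathbf C_2,\mathbf C_3$ depend only on $\alpha$ and the data. There is also a bookkeeping point. The statement allows any admissible $k$ in (\ref{ltb030}), while Step 2 needs $\frac1k\ge\frac{\alpha+1}{2}-\ga$ when $\ga<\alpha$, and $\eta_1$ depends on $k$. Because $\alpha\le1<\ga$, the first condition is automatic, so we may fix one $k$ (the case of interest being $k$ near $N$) and let $\eta_1$ depend on it.
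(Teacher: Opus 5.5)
Your proposal is correct and is essentially the paper's proof, which establishes Lemma \ref{ltbl3} by adapting Lemma \ref{tl4} with $\ep=0$, with Lemmas \ref{ltbl1}--\ref{ltbl2} replacing the approximate-system estimates and $\eta_1$ chosen via Lemma \ref{gnti} exactly as you describe. One minor point: under \eqref{th41} you have $\alpha\le1<\ga$, so your case $\ga<\alpha$ is vacuous; for $N=2$ the remainder is then bounded outright, and only the $N=3$, $\ga\ge\alpha$ case needs the superlinear Gr\"onwall argument.
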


We next prove the time-uniform upper bound for the density.
\begin{lemma}\la{ltbl4}
There exists a positive constant $C$ independent of $T$ such that
\be\la{ltb04}\ba
\sup_{0\le t \le T} \| \n \|_{L^\infty(0,\infty)} \le C.
\ea\ee
\end{lemma}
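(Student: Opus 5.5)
The plan mirrors the proof of Lemma \ref{tl5}, now on $(0,\infty)$ for the classical solution with no inner boundary. Fix $k$ as in Lemma \ref{ltbl3}, so that $N<k\le 4$, $k<\mathcal{P}_N(\alpha)$ and $\kappa\triangleq\alpha-1+\frac{1}{k}>0$. Such a $k$ exists under \eqref{th41}, by Remark \ref{rk1}. Lemma \ref{ltbl3} then gives the time-uniform bound
\be\nonumber
\sup_{0\le t\le T}\int_0^\infty |\p_r \n^{\kappa}|^{k} r^{N-1} dr \le C .
\ee
By Lemma \ref{ltbl2}, the density is bounded above by $\hat{c}_2$ on $[\hat{R}_0,\infty)$, uniformly in $T$. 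So only the interior region $(0,\hat{R}_0)$ needs work.

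For $r\in(0,\hat{R}_0)$ I will apply the fundamental theorem of calculus between $r$ and $\hat{R}_0$:
\be\nonumber
\n^{\kappa}(r,t)\le \n^{\kappa}(\hat{R}_0,t)+\int_0^{\hat{R}_0}|\p_s \n^{\kappa}(s,t)|\,ds .
\ee
The first term is at most $\hat{c}_2^{\kappa}$. For the integral, I will use H\"older's inequality with the weight $s^{N-1}$:
\be\nonumber
\int_0^{\hat{R}_0}|\p_s\n^\kappa|\,ds\le \Big(\int_0^{\hat{R}_0}|\p_s\n^\kappa|^k s^{N-1}ds\Big)^{\frac1k}\Big(\int_0^{\hat{R}_0}s^{-\frac{N-1}{k-1}}ds\Big)^{\frac{k-1}{k}} .
\ee
The last factor is finite exactly because $\frac{N-1}{k-1}<1$, which is equivalent to $k>N$.

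This gives $\n^\kappa\le C$ on $(0,\hat{R}_0)$ with $C$ independent of $T$. Since $\kappa>0$, it follows that $\n\le C^{1/\kappa}$ there. Combined with Lemma \ref{ltbl2}, this yields \eqref{ltb04}. One point to check is that $\n(\cdot,t)$ is continuous up to $r=0$ for the classical solution, so that the pointwise bound extends to the closed interval. Because the solution is classical with positive density on $[0,T]$, this is automatic.

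The only substantive input is Lemma \ref{ltbl3}; given that lemma, the argument above is routine. Lemma \ref{ltbl3} requires coupling the $L^k$ estimates for $u$ and $w$ and applying the Gr\"onwall-type Lemma \ref{gnti} under the smallness condition defining $\eta_1$. That is the real obstacle, and it is already taken care of there.
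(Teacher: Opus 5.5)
Your proposal is correct and follows the paper's proof: the time-uniform bound on $\int_0^\infty |\partial_r \rho^{\alpha-1+\frac{1}{k}}|^k r^{N-1}\,dr$ from Lemma \ref{ltbl3}, then the fundamental theorem of calculus between $r$ and $\hat{R}_0$ using the outer bound of Lemma \ref{ltbl2}, then weighted H\"older's inequality, where $k>N$ makes $\int_0^{\hat{R}_0} r^{-\frac{N-1}{k-1}}\,dr$ finite. Your remark about continuity at $r=0$ is not needed, because the pointwise bound already holds for every $r\in(0,\hat{R}_0)$.
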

\begin{proof}
Choosing $k>N$ satisfying (\ref{ltb030}), we obtain from (\ref{ltb03}) that
\be\la{ltb41}\ba
\sup_{0\le t \le T} \int_0^\infty |\p_r \n^{ \alpha-1+\frac{1}{k} }|^{k} r^{N-1} dr \le C.
\ea\ee
For $\hat{R}_0$ as in Lemma \ref{ltbl2}, using (\ref{ltb02}), (\ref{ltb030}), (\ref{ltb41}), H\"older's inequality, and the fundamental theorem of calculus, we derive
\be\la{ltb42}\ba
\n^{\alpha-1+\frac{1}{k}}(r)
& \le \n^{\alpha-1+\frac{1}{k}}(\hat{R}_0) - \int_r^{\hat{R}_0} \p_s \n^{\alpha-1+\frac{1}{k}}(s) ds \\
& \le C + \int_0^{\hat{R}_0} |\p_r \n^{\alpha-1+\frac{1}{k}}(r)| dr \\
& \le C + C \left( \int_0^{\hat{R}_0} |\p_r \n^{\alpha-1+\frac{1}{k}}(r)|^{k} r^{N-1} dr \right)^{\frac{1}{k}}
\left( \int_0^{\hat{R}_0} r^{ -\frac{N-1}{k-1} } dr \right)^{\frac{k-1}{k}} \\
& \le C,
\ea\ee
which gives
\be\la{ltb43}\ba
\| \n \|_{L^\infty(0,\hat{R}_0)} \le C.
\ea\ee
Combining (\ref{ltb43}) and (\ref{ltb02}), we arrive at (\ref{ltb04}) and complete the proof of Lemma \ref{ltbl4}.
\end{proof}

Using the time-uniform upper bound for the density and following the arguments as in \cite{CZZ1,CZZ2}, we can conclude that the problem (\ref{ns}), (\ref{i1}), (\ref{cz}), (\ref{nxxs}) with the far-field conditions (\ref{qkjbjtj}) has a unique global spherically symmetric classical solution $(\n,\mathbf{u})$ on $\om \times (0,\infty)$ satisfying (\ref{th44}).

Furthermore, using (\ref{ltb01}), (\ref{ltb03}), and (\ref{ltb04}), and arguing as in the proof of Theorem \ref{th3}, we can obtain the following time-uniform lower bound for the density.
\begin{lemma}\la{ltbl40}
There exists a positive constant $C$ independent of $T$ such that
\be\la{ltb040}\ba
\sup_{0\le t \le T} \| \n^{-1} \|_{L^\infty(0,\infty)} \le C.
\ea\ee
\end{lemma}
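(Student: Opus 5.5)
The plan is to split the time axis at a threshold $T_0$ that does not depend on $T$. On $[T_0,\infty)$ the lower bound will come from the vacuum-vanishing argument in the proof of Theorem \ref{th3}. On $[0,T_0]$ it will come from the finite-time Lagrangian lower bound of Section 5 (Lemma \ref{1ee3}), whose constant depends only on $T_0$ and the data. Since $\alpha\le1$ in \eqref{th41}, we are in the case $\theta=\alpha$, and no $\ep$-regularization is needed for the classical solution.

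\emph{Large times.} Since the local solution has been extended to a global classical solution on $\om\times(0,\infty)$, the bounds \eqref{ltb01}, \eqref{ltb03} with $k>N$, and \eqref{ltb04} hold on $(0,\infty)$ with constants independent of time.
\begin{itemize}
\item Fix $b>\max\{1,\alpha,\frac{\ga+\alpha-1}{2}\}$. Using the upper bound \eqref{ltb04}, derive the analogue of (\ref{vv2}): $\sup_t\big(\|\na\n^b\|_{L^2}+\|\na\n^{b-\frac12}\|_{L^2}+\|\na\n^b\|_{L^k}\big)\le C$ and $\int_0^\infty\|\na\n^b\|_{L^2}^2dt\le C$. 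Also $\sup_t\|\n^b-1\|_{L^N}\le C$, which follows from $\n-1\in L^2\cap L^\infty$.
\item Set $f(t)=\|\n^b-1\|_{L^4}^4$. Gagliardo--Nirenberg gives $f\in L^1(0,\infty)$, as in (\ref{vv3}).
\item Differentiate $f$ using the continuity equation, as in (\ref{vv4}), to get $|f'(t)|\le C$. Hence $f\in W^{1,\infty}(0,\infty)$.
\item Lemma \ref{zkxsyl} then gives $f(t)\to0$. Gagliardo--Nirenberg interpolation between $L^4$ and $\|\na\n^b\|_{L^k}$ with $k>N$, as in (\ref{vv7}), gives $\|\n^b-1\|_{L^\infty}\to0$.
\item Choose $T_0$ with $\|\n^b-1\|_{L^\infty}\le\frac12$ for $t\ge T_0$. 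This yields $\n\ge 2^{-1/b}$ there.
\end{itemize}
$T_0$ is determined by the global solution and the time-independent bounds, not by the arbitrary horizon $T$.

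\emph{Bounded times.} On $[0,T_0]$ I would repeat the argument of Lemma \ref{1ee3}, following \cite{CZZ1,CZZ2}, with $\ep=0$ and $\theta=\alpha$.
\begin{itemize}
\item For some $2<p<\mathcal{P}_N(\alpha)$, obtain $\sup_{[0,T_0]}\int|\p_r\n^{\alpha-1+\frac1p}|^pr^{N-1}dr\le C(T_0)$. This follows from the $L^p$ estimates of $\n|u|^p$ and $\n|w|^p$ as in Lemma \ref{bjl4}.
\item The viscous dissipation there is now $\n^\alpha$ itself rather than $\ep\n^\theta$. The Gronwall argument closes using \eqref{ltb04} and $\ga>1\ge\alpha$, with constants depending on $T_0$.
\item In Lagrangian coordinates $y=\int_0^r\n s^{N-1}ds$, write $y_0(t)=\int_0^{2\hat R_0}\n s^{N-1}ds$. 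By \eqref{ltb02} and \eqref{ltb04}, $y_0(t)$ is bounded above and below. Then the averaging inequality (\ref{ywsi2}) gives
\[
\n^{-1}\le C+C\int_0^{2\hat R_0}\n^{-2}|\p_r\n|\,dr\le C+C(T_0)\|\n^{-1}\|_{L^\infty(0,2\hat R_0)}^{\alpha+\frac1p}.
\]
\item The main obstacle is the closure of this last inequality, which requires $\alpha+\frac1p<1$. Absorption by Young's inequality then gives the bound on $[0,T_0]$, and \eqref{ltb02} covers $r\ge\hat R_0$. If $\alpha$ is so close to $1$ that no admissible $p<\mathcal{P}_N(\alpha)$ with $\alpha+\frac1p<1$ exists, I would fall back on the known finite-time positivity of classical solutions in \cite{CZZ1,CZZ2}. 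Their constants depend only on $T_0$, $\underline{\n_0}$ and the norms in \eqref{th42}.
\end{itemize}

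Combining the two regimes gives \eqref{ltb040} with $C$ independent of $T$.
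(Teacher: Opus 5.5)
Your proposal is correct and matches the paper's argument. The paper's one-line proof reruns the vacuum-vanishing argument of Theorem \ref{th3} with \eqref{ltb01}, \eqref{ltb03}, \eqref{ltb04} for $t\ge T_0$, and on $[0,T_0]$ it implicitly relies on the positivity of the global classical solution already obtained from \cite{CZZ1,CZZ2}, which is your fallback. The Lagrangian detour is unnecessary, and you misdescribe where it fails: an admissible $p$ exists for every $\alpha<1$ when $N=2$ and for $\alpha$ just below $1$ when $N=3$, but not at $\alpha=1$, nor for $N=3$ with $\alpha$ just above $7-2\sqrt{10}$, where $\psi(\alpha)<\frac{1}{1-\alpha}$. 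The fallback covers all cases at once, e.g.\ via $\rho(X(t),t)=\rho_0(x)\exp\bigl(-\int_0^t \div\mathbf{u}(X(s),s)\,ds\bigr)$ along particle paths, with $\div\mathbf{u}\in L^1(0,T_0;L^\infty)$ by \eqref{th44}.
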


\begin{lemma}\la{ltbl5}
There exists a positive constant $C$ independent of $T$ such that
\be\la{ltb05}\ba
& \sup_{0 \le t \le T} \left( \| \na \mathbf{u} \|^2_{L^2(\mathbb{R}^N)} + \| \n_t \|^2_{L^2(\mathbb{R}^N)} \right) \\
& \quad + \int_0^T \left( \| \mathbf{u}_t \|^2_{L^2(\mathbb{R}^N)} + \| \na^2 \mathbf{u} \|^2_{L^2(\mathbb{R}^N)} + \| \n_t \|^2_{L^2(\mathbb{R}^N)} \right) dt \le C.
\ea\ee
\end{lemma}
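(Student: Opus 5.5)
We prove Lemma \ref{ltbl5} by the standard first-order energy method for spherically symmetric classical solutions. At this stage the density is already known to satisfy the two-sided bound $C^{-1}\le\n\le C$ uniformly in $T$, by Lemmas \ref{ltbl4} and \ref{ltbl40}. The system is then uniformly parabolic in $u$, and we work directly with the radial form (\ref{nsqdc}).

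\textbf{Step 1: control of $\n_t$ and of $\p_r\n$.} Write the continuity equation as
\be\nonumber
\n_t=-u\p_r\n-\n\,\div\mathbf{u}.
\ee
Since $\n$ is bounded above and below, Lemma \ref{ltbl1} gives
\be\nonumber
\sup_{0\le t\le T}\|\na\n\|_{L^2}\le C,\qquad \int_0^T\|\na\n\|_{L^2}^2\,dt\le C.
\ee
The first bound comes from $\p_r\n^{\alpha-\frac12}$; the second comes from the dissipation term in $\p_r\n^{\frac{\ga+\alpha-1}{2}}$. Lemma \ref{ltbl3} adds $\sup_t\|\na\n\|_{L^k}\le C$ for some $k>N$. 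For $u$ itself, the one-dimensional argument (\ref{cpt49}) extended down to $r=0$, together with the weighted energy dissipation, gives $\|u\|_{L^\infty}\le C\|\na\mathbf{u}\|_{L^2}$. Combining these,
\be\nonumber
\|\n_t\|_{L^2}\le C\|\na\mathbf{u}\|_{L^2}+C\|u\,\p_r\n\|_{L^2}\le C\|\na\mathbf{u}\|_{L^2}.
\ee
This gives both the $L^\infty_tL^2$ and the $L^2_tL^2$ bounds for $\n_t$, once $\na\mathbf{u}$ is controlled in the same norms. The $L^2_tL^2$ part already follows from (\ref{ltb01}).

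\textbf{Step 2: the main $H^1$ estimate.} Multiply the momentum equation by $\mathbf{u}_t$ and integrate. With $\mu=\n^\alpha$ and $\lam=(\alpha-1)\n^\alpha$ in the radial form, the viscous term equals
\be\nonumber
\frac{d}{dt}\frac12\int\left(\n^\alpha|\na\mathbf{u}|^2+(\alpha-1)\n^\alpha(\div\mathbf{u})^2\right)dx
\ee
minus commutator terms of the form $\int(\n^\alpha)_t|\na\mathbf{u}|^2$, with a similar $\div$ term. In the radial setting this quadratic form is equivalent to $\int\n^\alpha(|\p_ru|^2+|u|^2r^{-2})\,dx$, because $\alpha>\frac{N-1}{N}$ (compare (\ref{bj13})). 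The remaining terms are handled as follows.
\ben
\item The commutator term is bounded by $C\int|\n_t||\na\mathbf{u}|^2$.
\item The convective term is $\int\n\,\mathbf{u}\cdot\na\mathbf{u}\cdot\mathbf{u}_t$.
\item The pressure term $\int\na P\cdot\mathbf{u}_t$ is written as $\frac{d}{dt}\int(P-1)\div\mathbf{u}$ minus $\int P_t\div\mathbf{u}$, and $\|P_t\|_{L^2}\le C\|\n_t\|_{L^2}$.
\een
For the commutator term, $\int|\n_t||\na\mathbf{u}|^2\le\|\n_t\|_{L^2}\|\na\mathbf{u}\|_{L^4}^2$. We use elliptic regularity for the radial Lamé-type operator with variable coefficient $\n^\alpha$, which is bounded above and below and has $\na\n\in L^k$ with $k>N$. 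This yields
\be\nonumber
\|\na^2\mathbf{u}\|_{L^2}\le C\left(\|\sqrt{\n}\mathbf{u}_t\|_{L^2}+\|\n\mathbf{u}\cdot\na\mathbf{u}\|_{L^2}+\|\na\n\|_{L^2}+\|\na\n\|_{L^k}\|\na\mathbf{u}\|_{L^{2k/(k-2)}}\right).
\ee
Combining this with the Gagliardo--Nirenberg inequality, every cubic or quartic term is bounded by
\be\nonumber
\tfrac12\|\sqrt{\n}\mathbf{u}_t\|_{L^2}^2+C\left(1+\|\na\mathbf{u}\|_{L^2}^2\right)\|\na\mathbf{u}\|_{L^2}^2+C\|\na\n\|_{L^2}^2.
\ee

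\textbf{Step 3: closing the estimate, and the main obstacle.} The differential inequality from Step 2 is closed by Gr\"onwall's inequality. The coefficient $\|\na\mathbf{u}\|_{L^2}^2$ lies in $L^1(0,\infty)$ by (\ref{ltb01}), and so does $\|\na\n\|_{L^2}^2$. The pressure boundary term $\int(P-1)\div\mathbf{u}$ is absorbed using $\|P-1\|_{L^2}\le C$. The outcome is uniform in $T$, and the elliptic estimate then gives $\int_0^T\|\na^2\mathbf{u}\|_{L^2}^2\,dt\le C$.

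The main obstacle is the elliptic estimate, because of the coordinate singularity at $r=0$. The approach is to work with $\mathbf{u}$ on $\mathbb{R}^N$ and write the operator as $\na(\alpha\n^\alpha\div\mathbf{u})-\n^\alpha\,\mathrm{curl}\,\mathrm{curl}\,\mathbf{u}$ plus lower-order terms. Radial symmetry gives $\mathrm{curl}\,\mathbf{u}=0$, so
\be\nonumber
\alpha\na(\n^\alpha\div\mathbf{u})=\n\dot{\mathbf{u}}+\na P+\text{(terms involving }\na\n^\alpha\cdot\na\mathbf{u}\text{)}.
\ee
This gives an $L^2$ bound on $\na(\n^\alpha\div\mathbf{u})$, hence on $\na\div\mathbf{u}=\Delta\mathbf{u}$, and $\|\na^2\mathbf{u}\|_{L^2}\le C\|\Delta\mathbf{u}\|_{L^2}$. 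The lower-order terms are absorbed using $\na\n\in L^k$ with $k>N$.
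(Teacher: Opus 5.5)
Your scheme closes for $N=2$, but for $N=3$ there is a genuine gap.

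\textbf{A smaller error.} The bound $\|u\|_{L^\infty}\le C\|\na\mathbf{u}\|_{L^2}$ is false in three dimensions, even for radial fields. Take $u=r^{-\beta}$ near the origin with $0<\beta<\frac12$, smoothly cut off; then $\int(|\p_r u|^2+2u^2r^{-2})r^2\,dr<\infty$ while $u$ is unbounded. Your estimate $\|\n_t\|_{L^2}\le C\|\na\mathbf{u}\|_{L^2}$ still holds, via $\|\mathbf{u}\|_{L^6}\|\na\n\|_{L^3}$ as in (\ref{ltb520}).

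\textbf{The real gap.} You claim every nonlinear term is bounded by $\frac12\|\sqrt{\n}\mathbf{u}_t\|_{L^2}^2+C(1+\|\na\mathbf{u}\|_{L^2}^2)\|\na\mathbf{u}\|_{L^2}^2+C\|\na\n\|_{L^2}^2$. In 3D your commutator estimate gives
\[
\|\n_t\|_{L^2}\|\na\mathbf{u}\|_{L^4}^2\le C\|\na\mathbf{u}\|_{L^2}^{3/2}\|\na^2\mathbf{u}\|_{L^2}^{3/2}.
\]
After inserting $\|\na^2\mathbf{u}\|_{L^2}\lesssim\|\mathbf{u}_t\|_{L^2}+\cdots$ and Young's inequality, you are left with $C\|\na\mathbf{u}\|_{L^2}^6$. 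The convective term, once $\|\mathbf{u}\|_{L^\infty}$ is replaced by $\|\mathbf{u}\|_{L^6}$, gives $\|\mathbf{u}\|_{L^6}\|\na\mathbf{u}\|_{L^3}\|\mathbf{u}_t\|_{L^2}$ and the same power. Your Gr\"onwall coefficient is then $\|\na\mathbf{u}\|_{L^2}^4$, whose time integral is not controlled by (\ref{ltb01}). You effectively have $Y'\lesssim Y^3$, which gives no bound uniform in $T$. This 3D supercriticality is the real obstacle. The elliptic estimate you single out is routine here.

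\textbf{The missing ingredient} is the time-uniform bound $\sup_t\|\mathbf{u}\|_{L^q}\le C$ for some $q\in(N,4]$. It follows from Lemma \ref{ltbl3} together with the lower density bound; see (\ref{ltb51}).

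\textbf{How the paper closes it.} The paper pairs this bound with a structural device: it divides the momentum equation by $\n^\alpha$.
\begin{itemize}
\item The left side becomes the constant-coefficient operator $\Delta\mathbf{u}+(\alpha-1)\na\div\mathbf{u}$.
\item Testing with $\mathbf{u}_t$ yields $-\frac12\frac{d}{dt}\bigl(\|\na\mathbf{u}\|_{L^2}^2+(\alpha-1)\|\div\mathbf{u}\|_{L^2}^2\bigr)$, with no commutator at all.
\item The remaining terms are $\|\mathbf{u}\|_{L^q}\|\na\mathbf{u}\|_{L^{2q/(q-2)}}\|\mathbf{u}_t\|_{L^2}$, $\|\na\n\|_{L^q}\|\na\mathbf{u}\|_{L^{2q/(q-2)}}\|\mathbf{u}_t\|_{L^2}$ and $\|\na\n\|_{L^2}\|\mathbf{u}_t\|_{L^2}$.
\item Gagliardo--Nirenberg together with (\ref{ltb57}) turns these into $\frac12\int\n^{1-\alpha}|\mathbf{u}_t|^2dx+C\|\na\n\|_{L^2}^2+C\|\na\mathbf{u}\|_{L^2}^2$.
\item This is linear, so it integrates directly in time without Gr\"onwall.
\end{itemize}

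\textbf{Rescuing your route.} If you prefer to keep $\n^\alpha$ in the energy, use the same $L^q$ bounds on the commutator. In 3D, for instance,
\[
\|\na\mathbf{u}\|_{L^3}^3\le C\|\mathbf{u}\|_{L^q}\|\na\mathbf{u}\|_{L^2}^{1-3/q}\|\na^2\mathbf{u}\|_{L^2}^{1+3/q},
\]
and similarly $\|\mathbf{u}\|_{L^q}\|\na\n\|_{L^q}\|\na\mathbf{u}\|_{L^{2q/(q-2)}}^2$ for the other piece. Both are then absorbed into linear terms. As written, however, the three-dimensional case does not go through.
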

\begin{proof}
First, by (\ref{ltb01}), (\ref{ltb03}), (\ref{ltb04}), and (\ref{ltb040}), we obtain that, for some $q \in (N,4]$,
\be\la{ltb51}\ba
& \sup_{0 \le t \le T} \left( \| \n - 1 \|_{L^2(\mathbb{R}^N) \cap L^\infty(\mathbb{R}^N)} + \| \n^{-1} \|_{L^\infty(\mathbb{R}^N)} + \| \na \n \|_{L^2(\mathbb{R}^N)} + \| \na \n \|_{L^q(\mathbb{R}^N)} \right) \\
& \quad + \sup_{0 \le t \le T} \left( \| \mathbf{u} \|_{L^2(\mathbb{R}^N)} + \| \mathbf{u} \|_{L^q(\mathbb{R}^N)} \right) + \int_0^T \| \na \n \|^2_{L^2(\mathbb{R}^N)} dt
\le C.
\ea\ee
Noticing that
\be\la{ltb52}\ba
|\na \mathbf{u}|^2 = |\p_r u|^2 + (N-1) \frac{1}{r^2} u^2,
\ea\ee
we deduce from (\ref{ltb01}) that
\be\la{ltb53}\ba
\int_0^T \|\na \mathbf{u}\|^2_{L^2(\mathbb{R}^N)} dt \le C.
\ea\ee
Since $\n>0$ and in the radially symmetric setting $\mathbb{D} \mathbf{u} = \na \mathbf{u}$, we get from $(\ref{ns})_2$ that $\mathbf{u}$ satisfies the following system
\be\la{ltb54}\ba
\begin{cases}
\Delta \mathbf{u} + (\alpha - 1) \na \div \mathbf{u}
= \n^{-\alpha} \left( \n \mathbf{u}_t + \n \mathbf{u} \cdot \na \mathbf{u} + \na P - \na \mu(\n) \cdot \na \mathbf{u} - \na \lambda(\n) \div \mathbf{u} \right), \\
\mathbf{u} \to 0 \ \text{ as } \ |x| \to \infty.
\end{cases}
\ea\ee
The standard $L^p$ estimate of elliptic equations (see \cite{NS}) shows that for any integer $l \ge 0$ and any $p \in (1,\infty)$,
\be\nonumber\ba
\| \na^2 \mathbf{u} \|_{W^{l,p}(\mathbb{R}^N)} \le C \| \n^{-\alpha} \left( \n \mathbf{u}_t + \n \mathbf{u} \cdot \na \mathbf{u} + \na P - \na \mu(\n) \cdot \na \mathbf{u} - \na \lambda(\n) \div \mathbf{u} \right) \|_{W^{l,p}(\mathbb{R}^N)}.
\ea\ee
In particular, using (\ref{gn11}), (\ref{ltb51}), and Young's inequality, we arrive at
\be\nonumber\ba
& \| \na^2 \mathbf{u} \|_{L^2(\mathbb{R}^N)} \\
& \le C \| \n^{-\alpha} \left( \n \mathbf{u}_t + \n \mathbf{u} \cdot \na \mathbf{u} + \na P - \na \mu(\n) \cdot \na \mathbf{u} - \na \lambda(\n) \div \mathbf{u} \right) \|_{L^2(\mathbb{R}^N)} \\
& \le C \left( \| \mathbf{u}_t \|_{L^2(\mathbb{R}^N)} + \| |\mathbf{u}| |\na \mathbf{u}| \|_{L^2(\mathbb{R}^N)} + \| \na \n \|_{L^2(\mathbb{R}^N)} + \| |\na \n| |\na \mathbf{u}| \|_{L^2(\mathbb{R}^N)} \right) \\
& \le C \left( \| \mathbf{u}_t \|_{L^2(\mathbb{R}^N)} + \| \na \n \|_{L^2(\mathbb{R}^N)} + \| \mathbf{u} \|_{L^q(\mathbb{R}^N)} \| \na \mathbf{u} \|_{L^\frac{2q}{q-2}(\mathbb{R}^N)} + \| \na \n \|_{L^q(\mathbb{R}^N)} \| \na \mathbf{u} \|_{L^\frac{2q}{q-2}(\mathbb{R}^N)} \right) \\
& \le C \left( \| \mathbf{u}_t \|_{L^2(\mathbb{R}^N)} + \| \na \n \|_{L^2(\mathbb{R}^N)}
+ \| \na \mathbf{u} \|_{L^2(\mathbb{R}^N)}^{\frac{q-N}{q}} \| \na^2 \mathbf{u} \|_{L^2(\mathbb{R}^N)}^{\frac{N}{q}} \right) \\
& \le \frac{1}{2} \| \na^2 \mathbf{u} \|_{L^2(\mathbb{R}^N)}
+ C \left( \| \mathbf{u}_t \|_{L^2(\mathbb{R}^N)} + \| \na \n \|_{L^2(\mathbb{R}^N)} + \| \na \mathbf{u} \|_{L^2(\mathbb{R}^N)} \right),
\ea\ee
which gives
\be\la{ltb57}\ba
\| \na^2 \mathbf{u} \|_{L^2(\mathbb{R}^N)}
\le C \left( \| \mathbf{u}_t \|_{L^2(\mathbb{R}^N)} + \| \na \n \|_{L^2(\mathbb{R}^N)} + \| \na \mathbf{u} \|_{L^2(\mathbb{R}^N)} \right).
\ea\ee
Multiplying $(\ref{ltb54})_1$ by $2 \mathbf{u}_t$, integrating by parts over $\mathbb{R}^N$, and using (\ref{ltb51}), (\ref{ltb57}), (\ref{gn11}), and Young's inequality, we derive
\be\la{ltb58}\ba
& \frac{d}{dt} \left( \| \na \mathbf{u} \|_{L^2(\mathbb{R}^N)}^2 + (\alpha-1) \| \div \mathbf{u} \|^2_{L^2(\mathbb{R}^N)} \right)
+ 2 \int_{\mathbb{R}^N} \n^{1-\alpha} |\mathbf{u}_t|^2 dx \\
& \le C \int_{\mathbb{R}^N} \left( |\mathbf{u}| |\na \mathbf{u}| + |\na \n| + |\na \n| |\na \mathbf{u}| \right) |\mathbf{u}_t| dx \\
& \le C \left( \| \mathbf{u} \|_{L^q(\mathbb{R}^N)} \| \na \mathbf{u} \|_{L^\frac{2q}{q-2}(\mathbb{R}^N)} + \| \na \n \|_{L^2(\mathbb{R}^N)}
+ \| \na \n \|_{L^q(\mathbb{R}^N)} \| \na \mathbf{u} \|_{L^\frac{2q}{q-2}(\mathbb{R}^N)} \right) \| \mathbf{u}_t \|_{L^2(\mathbb{R}^N)} \\
& \le C \left( \| \na \n \|_{L^2(\mathbb{R}^N)} + \| \na \mathbf{u} \|_{L^2(\mathbb{R}^N)}^{\frac{q-N}{q}} \| \na^2 \mathbf{u} \|_{L^2(\mathbb{R}^N)}^{\frac{N}{q}} \right) \| \mathbf{u}_t \|_{L^2(\mathbb{R}^N)} \\
& \le C \left( \| \na \n \|_{L^2(\mathbb{R}^N)} + \| \na \mathbf{u} \|_{L^2(\mathbb{R}^N)} + \| \na \mathbf{u} \|_{L^2(\mathbb{R}^N)}^{\frac{q-N}{q}} \| \mathbf{u}_t \|_{L^2(\mathbb{R}^N)}^{\frac{N}{q}} \right) \| \mathbf{u}_t \|_{L^2(\mathbb{R}^N)} \\
& \le \int_{\mathbb{R}^N} \n^{1-\alpha} |\mathbf{u}_t|^2 dx + C \| \na \n \|_{L^2(\mathbb{R}^N)}^2
+ C \| \na \mathbf{u} \|_{L^2(\mathbb{R}^N)}^2,
\ea\ee
which gives
\be\la{ltb59}\ba
& \frac{d}{dt} \left( \| \na \mathbf{u} \|_{L^2(\mathbb{R}^N)}^2 + (\alpha-1) \| \div \mathbf{u} \|^2_{L^2(\mathbb{R}^N)} \right)
+ \int_{\mathbb{R}^N} \n^{1-\alpha} |\mathbf{u}_t|^2 dx \\
& \le C \| \na \n \|_{L^2(\mathbb{R}^N)}^2 + C \| \na \mathbf{u} \|_{L^2(\mathbb{R}^N)}^2.
\ea\ee
Integrating (\ref{ltb59}) over $(0,T)$ and using (\ref{ltb51}) and (\ref{ltb53}), one obtains
\be\la{ltb510}\ba
\sup_{0 \le t \le T} \left( \| \na \mathbf{u} \|_{L^2(\mathbb{R}^N)}^2 + (\alpha-1) \| \div \mathbf{u} \|_{L^2(\mathbb{R}^N)}^2 \right)
+ \int_0^T\| \mathbf{u}_t \|_{L^2(\mathbb{R}^N)}^2 dt \le C.
\ea\ee
For any vector-valued function $\mathbf{v}$, we have
\be\la{ltb511}\ba
(\div \mathbf{v})^2 \le N |\na \mathbf{v}|^2,
\ea\ee
which together with the fact that $\frac{N-1}{N} < \alpha \le 1$ yields
\be\la{ltb512}\ba
|\na \mathbf{v}|^2 + (\alpha-1) (\div \mathbf{v})^2
\ge (\alpha N - N + 1) |\na \mathbf{v}|^2 \triangleq N_{\alpha} |\na \mathbf{v}|^2,
\ea\ee
where $N_{\alpha} = \alpha N - N + 1 > 0$.
Thus, 
\be\la{ltb513}\ba
|\na \mathbf{u}|^2 + (\alpha-1) (\div \mathbf{u})^2
\ge N_{\alpha} |\na \mathbf{u}|^2.
\ea\ee
Combining (\ref{ltb510}), (\ref{ltb513}), and (\ref{ltb57}) leads to
\be\la{ltb514}\ba
\sup_{0 \le t \le T} \| \na \mathbf{u} \|_{L^2(\mathbb{R}^N)}^2
+ \int_0^T \left( \| \mathbf{u}_t \|_{L^2(\mathbb{R}^N)}^2 + \| \na^2 \mathbf{u} \|^2_{L^2(\mathbb{R}^N)} \right) dt \le C.
\ea\ee
In addition, by $(\ref{ns})_1$, (\ref{ltb51}), (\ref{ltb514}), and H\"older's inequality, we have
\be\la{ltb515}\ba
\| \n_t \|_{L^2(\mathbb{R}^N)}
& \le \| \n \div \mathbf{u} \|_{L^2(\mathbb{R}^N)} + \| \mathbf{u} \cdot \na \n \|_{L^2(\mathbb{R}^N)} \\
& \le C \| \na \mathbf{u} \|_{L^2(\mathbb{R}^N)} + \| \mathbf{u} \cdot \na \n \|_{L^2(\mathbb{R}^N)}.
\ea\ee
If $N=2$, integration by parts together with (\ref{ltb52}) shows for any $r \in (0,\infty)$,
\be\la{ltb516}\ba
(u(r))^2 & = 2 \int_0^r u (\p_s u) ds \le 2 \int_0^\infty |u| |\p_r u| dr \\
& \le \int_0^\infty \left( \frac{|u|^2}{r^2} + |\p_r u|^2 \right) r dr
\le C \| \na \mathbf{u} \|^2_{L^2(\mathbb{R}^2)},
\ea\ee
which yields
\be\la{ltb517}\ba
\| \mathbf{u} \|^2_{L^\infty(\mathbb{R}^2)} \le C \| \na \mathbf{u} \|^2_{L^2(\mathbb{R}^2)}.
\ea\ee
Thus, from (\ref{ltb51}) and (\ref{ltb517}), we conclude that
\be\la{ltb518}\ba
\| \mathbf{u} \cdot \na \n \|_{L^2(\mathbb{R}^2)} \le \| \mathbf{u} \|_{L^\infty(\mathbb{R}^2)} \| \na \n \|_{L^2(\mathbb{R}^2)} \le C \| \na \mathbf{u} \|_{L^2(\mathbb{R}^2)}.
\ea\ee
If $N=3$, using the Sobolev inequality, we obtain
\be\la{ltb519}\ba
\| \mathbf{u} \|^2_{L^6(\mathbb{R}^3)} \le C \| \na \mathbf{u} \|^2_{L^2(\mathbb{R}^3)},
\ea\ee
which together with (\ref{ltb51}) and H\"older's inequality implies
\be\la{ltb520}\ba
\| \mathbf{u} \cdot \na \n \|_{L^2(\mathbb{R}^3)} \le \| \mathbf{u} \|_{L^6(\mathbb{R}^3)} \| \na \n \|_{L^3(\mathbb{R}^3)} \le C \| \na \mathbf{u} \|_{L^2(\mathbb{R}^3)}.
\ea\ee
It follows from (\ref{ltb515}), (\ref{ltb518}), and (\ref{ltb520}) that
\be\la{ltb521}\ba
\| \n_t \|_{L^2(\mathbb{R}^N)} \le C \| \na \mathbf{u} \|_{L^2(\mathbb{R}^N)},
\ea\ee
which along with (\ref{ltb53}) and (\ref{ltb514}) leads to
\be\la{ltb522}\ba
\sup_{0 \le t \le T} \| \n_t \|^2_{L^2(\mathbb{R}^N)} + \int_0^T \| \n_t \|^2_{L^2(\mathbb{R}^N)} dt
\le C + C \int_0^T \| \na \mathbf{u} \|^2_{L^2(\mathbb{R}^N)} dt \le C.
\ea\ee
This, combined with (\ref{ltb514}), gives (\ref{ltb05}) and completes the proof of Lemma \ref{ltbl5}.
\end{proof}

\begin{lemma}\la{ltbl6}
There exists a positive constant $C$ independent of $T$ such that
\be\la{ltb06}\ba
\sup_{0 \le t \le T} \left( \| \mathbf{u}_t \|_{L^2(\mathbb{R}^N)}^2 + \| \na^2 \mathbf{u} \|^2_{L^2(\mathbb{R}^N)} \right)
+ \int_0^T \| \na \mathbf{u}_t \|_{L^2(\mathbb{R}^N)}^2 dt \le C.
\ea\ee
\end{lemma}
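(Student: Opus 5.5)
The plan is the standard $\mathbf{u}_t$-estimate. Differentiate the momentum equation in time, test against $\mathbf{u}_t$, and close the resulting inequality using Lemma \ref{ltbl5}, the uniform bounds (\ref{ltb51}), and the elliptic estimate (\ref{ltb57}). The $\na^2\mathbf{u}$ bound then follows from (\ref{ltb57}) once $\sup_t\|\mathbf{u}_t\|_{L^2}$ is controlled.

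\textbf{Time-differentiated equation.} Since $\n$ is bounded above and below uniformly in time, write $(\ref{ns})_2$ as
\be\nonumber
\n\mathbf{u}_t+\n\mathbf{u}\cdot\na\mathbf{u}+\na P=\div(\n^\alpha\na\mathbf{u})+(\alpha-1)\na(\n^\alpha\div\mathbf{u}),
\ee
using $\mathbb{D}\mathbf{u}=\na\mathbf{u}$ in radial symmetry. Differentiate in $t$, multiply by $\mathbf{u}_t$, and integrate, using $\n_t=-\div(\n\mathbf{u})$ in the $\n_t|\mathbf{u}_t|^2$ term. This gives
\be\nonumber
\frac12\frac{d}{dt}\int\n|\mathbf{u}_t|^2dx+\int\n^\alpha\big(|\na\mathbf{u}_t|^2+(\alpha-1)(\div\mathbf{u}_t)^2\big)dx=\sum_i I_i .
\ee
By (\ref{ltb512}) together with the lower bound for $\n$, the dissipation is bounded below by $c\|\na\mathbf{u}_t\|_{L^2}^2$.

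\textbf{Error terms.} The terms $I_i$ are of four types.
\begin{itemize}
\item[(i)] $\int\n|\mathbf{u}||\mathbf{u}_t||\na\mathbf{u}_t|$ and $\int\n|\mathbf{u}_t|^2|\na\mathbf{u}|$.
\item[(ii)] $\int|\n_t||\mathbf{u}||\na\mathbf{u}||\mathbf{u}_t|$ and $\int\n|\mathbf{u}_t||\na\mathbf{u}||\mathbf{u}_t|$.
\item[(iii)] The pressure term $\int P_t\div\mathbf{u}_t$.
\item[(iv)] The viscous commutators $\int|(\n^\alpha)_t||\na\mathbf{u}||\na\mathbf{u}_t|$.
\end{itemize}
Bound them with the Gagliardo--Nirenberg inequality (Lemma \ref{gn1}), $\|\mathbf{u}\|_{L^q}\le C$, $\|\na\n\|_{L^q}\le C$, and the following pointwise controls:
\begin{itemize}
\item $\|\mathbf{u}\|_{L^\infty}\le C\|\na\mathbf{u}\|_{L^2}$ in 2D by (\ref{ltb517});
\item $\|\mathbf{u}\|_{L^\infty}\le C\|\na\mathbf{u}\|_{L^2}^{1/2}\|\na^2\mathbf{u}\|_{L^2}^{1/2}$ in 3D;
\item $\|\mathbf{u}_t\|_{L^4}\le C\|\mathbf{u}_t\|_{L^2}^{1-N/4}\|\na\mathbf{u}_t\|_{L^2}^{N/4}$.
\end{itemize}
The pressure term and $(\n^\alpha)_t=-\mathbf{u}\cdot\na\n^\alpha-\alpha\n^\alpha\div\mathbf{u}$ are handled via $\|\n_t\|_{L^2}\le C\|\na\mathbf{u}\|_{L^2}$ from (\ref{ltb521}), together with $\|\na\mathbf{u}\|_{L^\infty}$-free estimates. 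For the worst piece, $\int|\mathbf{u}||\na\n||\na\mathbf{u}||\na\mathbf{u}_t|$, use $\|\na\n\|_{L^q}\|\na\mathbf{u}\|_{L^{2q/(q-2)}}\|\mathbf{u}\|_{L^\infty}$ with the interpolation from the proof of Lemma \ref{ltbl5}, then replace $\|\na^2\mathbf{u}\|_{L^2}$ by (\ref{ltb57}). After Young's inequality the target is
\be\nonumber
\frac{d}{dt}\int\n|\mathbf{u}_t|^2+c\|\na\mathbf{u}_t\|_{L^2}^2\le C\big(1+\|\mathbf{u}_t\|_{L^2}^2\big)\big(\|\mathbf{u}_t\|_{L^2}^2+\|\na\n\|_{L^2}^2+\|\na\mathbf{u}\|_{L^2}^2\big)+C\big(\|\na\mathbf{u}\|_{L^2}^2+\|\na^2\mathbf{u}\|_{L^2}^2+\|\n_t\|_{L^2}^2\big).
\ee
This inequality is integrable in time thanks to (\ref{ltb05}) and (\ref{ltb51}).

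\textbf{Closing and main obstacle.} Gr\"onwall's inequality gives a $T$-independent bound, because the Gr\"onwall factor $\|\mathbf{u}_t\|_{L^2}^2+\dots$ lies in $L^1(0,\infty)$. The initial value $\|\sqrt{\n}\mathbf{u}_t(0)\|_{L^2}$ is finite, via the equation at $t=0$, since $\n_0-1,\mathbf{u}_0\in H^3$. Then (\ref{ltb57}) yields $\sup_t\|\na^2\mathbf{u}\|_{L^2}$.

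The main obstacle is that every nonlinear term must be absorbed with a coefficient that is integrable in time, not merely bounded, so that no $T$-dependence enters. Terms where $\|\mathbf{u}\|_{L^\infty}$ or $\|\na\mathbf{u}\|_{L^4}$ appear with too high a power are the danger. I would first try to keep the structure quadratic, with $\|\mathbf{u}_t\|_{L^2}^2\times L^1_t$ coefficients, using the $L^q$ bounds with $q>N$ from Lemma \ref{ltbl3}. If that fails in 3D, I would fall back on estimating the material derivative $\dot{\mathbf{u}}=\mathbf{u}_t+\mathbf{u}\cdot\na\mathbf{u}$ instead of $\mathbf{u}_t$, in the spirit of Hoff.
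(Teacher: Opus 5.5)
Your proposal is correct and follows the paper's proof essentially step by step. You time-differentiate the momentum equation and test with $\mathbf{u}_t$, control the error terms via Gagliardo--Nirenberg, (\ref{ltb51}), (\ref{ltb521}) and (\ref{ltb57}) so that the Gr\"onwall coefficient is integrable on $(0,\infty)$ by (\ref{ltb05}), and then recover $\sup_t\|\na^2\mathbf{u}\|_{L^2}$ from (\ref{ltb57}). The paper differs only cosmetically: it keeps $\int\n_t|\mathbf{u}_t|^2$ (bounded by $\|\n_t\|_{L^2}\|\mathbf{u}_t\|_{L^4}^2$) and treats $(\n^\alpha)_t$ through $\|\n_t\|_{L^q}\le C+C\|\mathbf{u}_t\|_{L^2}$ from (\ref{ltb63}), where you integrate by parts and use $L^\infty$ bounds on $\mathbf{u}$; the Hoff-type fallback is not needed.
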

\begin{proof}
First, differentiating $(\ref{ns})_2$ with respect to $t$ gives
\be\la{ltb61}\ba
& \n \mathbf{u}_{tt} + \n_t \mathbf{u}_t + \n \mathbf{u} \cdot \na \mathbf{u}_t
- \div(\n^\alpha \na \mathbf{u}_t)
- (\alpha-1) \na(\n^\alpha \div \mathbf{u}_t) \\
& = - \n_t \mathbf{u} \cdot \na \mathbf{u} - \n \mathbf{u}_t \cdot \na \mathbf{u}
+ \div( (\n^\alpha)_t \na \mathbf{u}) + (\alpha-1) \na((\n^\alpha)_t \div \mathbf{u}) - \na P_t.
\ea\ee
Multiplying (\ref{ltb61}) by $\mathbf{u}_t$, integrating over $\mathbb{R}^N$, and using (\ref{ltb05}), (\ref{ltb51}), (\ref{ltb57}), (\ref{gn11}), and Young's inequality, we obtain
\be\la{ltb62}\ba
& \frac{1}{2} \frac{d}{dt} \int_{\mathbb{R}^N} \n |\mathbf{u}_t|^2 dx + \int_{\mathbb{R}^N} \n^\alpha ( |\na \mathbf{u}_t|^2 + (\alpha-1) (\div \mathbf{u}_t)^2 ) dx \\
& = - \int_{\mathbb{R}^N} \n_t |\mathbf{u}_t|^2 dx
- \int_{\mathbb{R}^N} \left( \n_t \mathbf{u} \cdot \na \mathbf{u} \cdot \mathbf{u}_t
+ \n \mathbf{u}_t \cdot \na \mathbf{u} \cdot \mathbf{u}_t \right) dx \\
& \quad - \int_{\mathbb{R}^N} (\n^\alpha)_t \na \mathbf{u} : \na \mathbf{u}_t dx
- (\alpha-1) \int_{\mathbb{R}^N} (\n^\alpha)_t \div \mathbf{u} \div \mathbf{u}_t dx
+ \int_{\mathbb{R}^N} P_t \div \mathbf{u}_t dx \\
& \le C \| \n_t \|_{L^2(\mathbb{R}^N)} \| \mathbf{u}_t \|_{L^4(\mathbb{R}^N)}^2
+ C \| \na \mathbf{u} \|_{L^2(\mathbb{R}^N)} \| \mathbf{u}_t \|_{L^4(\mathbb{R}^N)}^2 \\
& \quad + C \| \n_t \|_{L^q(\mathbb{R}^N)} \| \na \mathbf{u} \|_{L^{\frac{2q}{q-2}}(\mathbb{R}^N)} \| \na \mathbf{u}_t \|_{L^2(\mathbb{R}^N)}
+ C \| \n_t \|_{L^2(\mathbb{R}^N)} \| \na \mathbf{u}_t \|_{L^2(\mathbb{R}^N)} \\
& \quad + C \| \n_t \|_{L^2(\mathbb{R}^N)} \| \mathbf{u} \|_{L^6(\mathbb{R}^N)} \| \na \mathbf{u} \|_{L^6(\mathbb{R}^N)} \| \mathbf{u}_t \|_{L^6(\mathbb{R}^N)} \\
& \le C \| \mathbf{u}_t \|_{L^2(\mathbb{R}^N)}^{\frac{4-N}{2}} \| \na \mathbf{u}_t \|_{L^2(\mathbb{R}^N)}^{\frac{N}{2}}
+ C (1 + \| \mathbf{u}_t \|_{L^2(\mathbb{R}^N)}) (\| \na \mathbf{u} \|_{L^2(\mathbb{R}^N)} + \| \na^2 \mathbf{u} \|_{L^2(\mathbb{R}^N)}) \| \na \mathbf{u}_t \|_{L^2(\mathbb{R}^N)} \\
& \quad + C \left( \| \na \mathbf{u} \|_{L^2(\mathbb{R}^N)} + \| \na^2 \mathbf{u} \|_{L^2(\mathbb{R}^N)} + \| \n_t \|_{L^2(\mathbb{R}^N)} \right) \left( \| \mathbf{u}_t \|_{L^2(\mathbb{R}^N)} + \| \na \mathbf{u}_t \|_{L^2(\mathbb{R}^N)} \right) \\
& \le \frac{N_\alpha}{2} \int_{\mathbb{R}^N} \n^\alpha |\na \mathbf{u}_t|^2 dx
+ C \left( \| \mathbf{u}_t \|^2_{L^2(\mathbb{R}^N)} + \| \na \mathbf{u} \|^2_{L^2(\mathbb{R}^N)} + \| \na^2 \mathbf{u} \|^2_{L^2(\mathbb{R}^N)} + \| \n_t \|^2_{L^2(\mathbb{R}^N)} \right) \\
& \quad + C \| \sqrt{\n} \mathbf{u}_t \|^2_{L^2(\mathbb{R}^N)} \left(\| \na \mathbf{u} \|^2_{L^2(\mathbb{R}^N)} + \| \na^2 \mathbf{u} \|^2_{L^2(\mathbb{R}^N)} \right),
\ea\ee
where we have used the following estimate
\be\la{ltb63}\ba
\| \n_t \|_{L^q(\mathbb{R}^N)}
& \le \left( \| \n \div \mathbf{u} \|_{L^q(\mathbb{R}^N)} + \| \mathbf{u} \cdot \na \n \|_{L^q(\mathbb{R}^N)} \right) \\
& \le C \left( \| \na \mathbf{u} \|_{H^1(\mathbb{R}^N)} + \| \mathbf{u} \|_{H^2(\mathbb{R}^N)} \| \na \n \|_{L^q(\mathbb{R}^N)} \right) \\
& \le C + C \| \mathbf{u}_t \|_{L^2(\mathbb{R}^N)},
\ea\ee
owing to $(\ref{ns})_1$, (\ref{ltb05}), (\ref{ltb51}), and (\ref{ltb57}).

Applying Gr\"onwall's inequality to (\ref{ltb62}) and using (\ref{ltb05}), (\ref{ltb51}), and (\ref{ltb512}), we arrive at
\be\la{ltb64}\ba
\sup_{0 \le t \le T} \| \mathbf{u}_t \|_{L^2(\mathbb{R}^N)}^2
+ \int_0^T \| \na \mathbf{u}_t \|_{L^2(\mathbb{R}^N)}^2 dt \le C,
\ea\ee
which together with (\ref{ltb63}), (\ref{ltb05}), and (\ref{ltb57}) yields
\be\la{ltb65}\ba
\sup_{0 \le t \le T} \left( \| \na^2 \mathbf{u} \|^2_{L^2(\mathbb{R}^N)} + \| \n_t \|_{L^q(\mathbb{R}^N)} \right) \le C.
\ea\ee
From (\ref{ltb65}) and (\ref{ltb64}), we get (\ref{ltb06}) and complete the proof of Lemma \ref{ltbl6}.
\end{proof}

\noindent\textbf{Proof of (\ref{th45}).}
First, using (\ref{ltb01}), (\ref{ltb03}), and (\ref{ltb04}), and arguing as in the proof of Theorem \ref{th3}, we conclude that
\be\la{ltbp11}\ba
\| \n - 1 \|_{L^4(\mathbb{R}^N)} \to 0 \ \text{ as } t \to \infty.
\ea\ee
This, together with (\ref{ltb51}) and H\"older's inequality, implies for any $s \in (2,\infty)$,
\be\la{ltbp12}\ba
\| \n - 1 \|_{L^s(\mathbb{R}^N)} \to 0 \ \text{ as } t \to \infty.
\ea\ee
It follows from (\ref{ltb53}) and (\ref{ltb06}) that
\be\la{ltbp13}\ba
\int_0^\infty \| \na \mathbf{u} \|_{L^2(\mathbb{R}^N)}^2 dt \le C,
\ea\ee
and
\be\la{ltbp14}\ba
\int_0^\infty \left| \frac{d}{dt} \int_{\mathbb{R}^N} |\na \mathbf{u}|^2 dx \right| dt
& \le C \int_0^\infty \int_{\mathbb{R}^N} |\na \mathbf{u}| |\na \mathbf{u}_t| dx dt \\
& \le C \int_0^\infty \int_{\mathbb{R}^N} \left( |\na \mathbf{u}|^2 + |\na \mathbf{u}_t|^2 \right) dx dt \le C.
\ea\ee
Thus,
\be\la{ltbp15}\ba
\| \na \mathbf{u} \|_{L^2(\mathbb{R}^N)} \to 0 \ \text{ as } t \to \infty,
\ea\ee
which along with (\ref{ltb521}) gives
\be\la{ltbp15a}\ba
\| \n_t \|_{L^2(\mathbb{R}^N)} \to 0 \ \text{ as } t \to \infty.
\ea\ee
Recalling (\ref{ltb62}), we have
\be\la{ltbp16}\ba
& \frac{1}{2} \frac{d}{dt} \int_{\mathbb{R}^N} \n |\mathbf{u}_t|^2 dx
+ \frac{1}{2} \int_{\mathbb{R}^N} \n^\alpha ( |\na \mathbf{u}_t|^2 + (\alpha-1) (\div \mathbf{u}_t)^2 ) dx \\
& \le C \left( \| \mathbf{u}_t \|^2_{L^2(\mathbb{R}^N)} + \| \na \mathbf{u} \|^2_{L^2(\mathbb{R}^N)} + \| \na^2 \mathbf{u} \|^2_{L^2(\mathbb{R}^N)} + \| \n_t \|^2_{L^2(\mathbb{R}^N)} \right) \\
& \quad + C \| \sqrt{\n} \mathbf{u}_t \|^2_{L^2(\mathbb{R}^N)} (\| \na \mathbf{u} \|^2_{L^2(\mathbb{R}^N)} + \| \na^2 \mathbf{u} \|^2_{L^2(\mathbb{R}^N)}) \\
& \le C \left( \| \mathbf{u}_t \|^2_{L^2(\mathbb{R}^N)} + \| \na \mathbf{u} \|^2_{L^2(\mathbb{R}^N)} + \| \na^2 \mathbf{u} \|^2_{L^2(\mathbb{R}^N)} + \| \n_t \|^2_{L^2(\mathbb{R}^N)} \right),
\ea\ee
where in the second inequality we have used (\ref{ltb51}) and (\ref{ltb06}).

From (\ref{ltbp16}), (\ref{ltb05}), (\ref{ltb06}), and (\ref{ltb53}), we deduce that
\be\la{ltbp17}\ba
& \int_0^\infty \left| \frac{d}{dt} \int_{\mathbb{R}^N} \n |\mathbf{u}_t|^2 dx \right| dt \\
& \le C \int_0^\infty \left( \| \na \mathbf{u}_t \|^2_{L^2(\mathbb{R}^N)} + \| \mathbf{u}_t \|^2_{L^2(\mathbb{R}^N)} + \| \na \mathbf{u} \|^2_{L^2(\mathbb{R}^N)} + \| \na^2 \mathbf{u} \|^2_{L^2(\mathbb{R}^N)} + \| \n_t \|^2_{L^2(\mathbb{R}^N)} \right) dt \\
& \le C.
\ea\ee
On the other hand, by (\ref{ltb05}) and (\ref{ltb51}), we arrive at
\be\la{ltbp18}\ba
\int_0^\infty \int_{\mathbb{R}^N} \n |\mathbf{u}_t|^2 dx dt
\le C \int_0^\infty \int_{\mathbb{R}^N} |\mathbf{u}_t|^2 dx dt \le C,
\ea\ee
which together with (\ref{ltbp17}) shows
\be\la{ltbp19}\ba
\| \sqrt{\n} \mathbf{u}_t \|_{L^2(\mathbb{R}^N)} \to 0 \ \text{ as } t \to \infty.
\ea\ee
Consequently, we obtain from (\ref{ltb51}) and (\ref{ltbp19}) that
\be\la{ltbp110}\ba
\| \mathbf{u}_t \|_{L^2(\mathbb{R}^N)} \to 0 \ \text{ as } t \to \infty.
\ea\ee
Applying $\na$ to $(\ref{ns})_1$ yields
\be\la{ltbp111}\ba
& \na \n_t + \na \n \div \mathbf{u} + \n \na \div \mathbf{u} + \na \mathbf{u}^j \p_j \n + \mathbf{u}^j \p_j \na \n = 0.
\ea\ee
Multiplying (\ref{ltbp111}) by $2 \na \n$, integrating by parts over $\mathbb{R}^N$, and using (\ref{ltb05}), (\ref{ltb51}), and (\ref{ltb06}), we derive
\be\la{ltbp112}\ba
\frac{d}{dt} \int_{\mathbb{R}^N} |\na \n|^2 dx
& = - \int_{\mathbb{R}^N} \left( |\nabla \n|^2 \div \mathbf{u} + 2 \n \na \n \cdot \na \div \mathbf{u} + 2 \na \n \cdot \na \mathbf{u}^j \p_j \n \right) dx \\
& \le C \int_{\mathbb{R}^N} \left( |\nabla \n|^2 |\na \mathbf{u}| + |\na \n| |\na^2 \mathbf{u}| \right) dx \\
& \le C \int_{\mathbb{R}^N} \left( |\nabla \n|^q + |\na \mathbf{u}|^{\frac{q}{q-2}} + |\na \n|^2 + |\na^2 \mathbf{u}|^2 \right) dx \\
& \le C + C \| \na \mathbf{u} \|_{H^1(\mathbb{R}^N)}^{\frac{q}{q-2}} \le C.
\ea\ee
This implies that
\be\la{ltbp113}\ba
\sup_{0 \le t <\infty} \left| \frac{d}{dt} \| \na \n \|_{L^{2}(\mathbb{R}^N)}^{2} \right| \le C,
\ea\ee
which together with (\ref{ltb51}), H\"older's inequality, and Lemma \ref{zkxsyl} yields, for any $2 < q_0 < q$,
\be\la{ltbp114}\ba
\| \na \n \|_{L^2(\mathbb{R}^N)} + \| \na \n \|_{L^{q_0}(\mathbb{R}^N)} \to 0 \ \text{ as } t \to \infty.
\ea\ee
Finally, it follows from (\ref{ltb57}), (\ref{ltbp15}), (\ref{ltbp110}), and (\ref{ltbp114}) that
\be\la{ltbp115}\ba
\| \na^2 \mathbf{u} \|_{L^2(\mathbb{R}^N)} \to 0 \ \text{ as } t \to \infty.
\ea\ee
Combining (\ref{ltbp12}), (\ref{ltbp15}), (\ref{ltbp15a}), (\ref{ltbp110}), (\ref{ltbp114}), and (\ref{ltbp115}), we obtain (\ref{th45}) and complete the proof of Theorem \ref{th4} for $\om = \mathbb{R}^N$.

\subsection{Proof of Theorem \ref{th4} on bounded domains}

In this subsection, we assume that $\om=B_R$ and prove (\ref{th46}) and (\ref{th46a}).

\noindent\textbf{Proof of (\ref{th46}) and (\ref{th46a}).}
First, arguing as in the whole space case, we can obtain for some $q \in (N,4]$,
\be\la{ltbp21}\ba
& \sup_{0 \le t \le T} \left( \| \n \|_{L^\infty(B_R)} + \| \n^{-1} \|_{L^\infty(B_R)} + \| \n \|_{W^{1,q}(B_R)} + \| \n_t \|_{L^2(B_R)} + \| \mathbf{u}_t \|_{L^2(B_R)} + \| \mathbf{u} \|_{H^2(B_R)} \right) \\
& \quad + \int_0^T \left( \| \na \n \|^2_{L^2(B_R)} + \| \n_t \|^2_{L^2(B_R)} + \| \mathbf{u} \|^2_{H^2(B_R)} + \| \mathbf{u}_t \|_{H^1(B_R)}^2 \right) dt \le C.
\ea\ee
Define
\be\nonumber\ba
\ol{\n} \triangleq \frac{1}{|B_R|} \int_{B_R} \n dx,
\ea\ee
and
\be\la{ltbp22}\ba
H(\n,\ol{\n}) = \n \int^\n_{\ol{\n}} \frac{P(s)-P(\ol{\n})}{s^2} ds.
\ea\ee
By the continuity equation $(\ref{ns})_1$, we have
\be\la{ltbp23}\ba
\int_{B_R} \n dx = \int_{B_R}\n_0 dx.
\ea\ee
From (\ref{ltbp21}) and (\ref{ltbp22}), we conclude that there exist positive constants $\hat{C}_1$ and $\hat{C}_2$ independent of $T$ such that
\be\la{ltbp24}\ba
\hat{C}_1 (\n-\ol{\n})^2 \le H(\n,\ol{\n})  \le \hat{C}_2 (\n^\ga -\ol{\n}^\ga)( \n-\ol{\n}).
\ea\ee
The definition of $H(\n,\ol{\n})$ together with the continuity equation $(\ref{ns})_1$ implies
\be\la{ltbp25}\ba
\p_t( H(\n,\ol{\n}) ) + \div( H(\n,\ol{\n}) \mathbf{u} ) + (P(\n)-P(\ol{\n})) \div\mathbf{u} = 0.
\ea\ee
Integrating (\ref{ltbp25}) over $B_R$, and using integration by parts and the boundary condition $\mathbf{u} = 0$ on $\p B_R$, we arrive at
\be\la{ltbp26}\ba
\frac{d}{dt} \int_{B_R} H(\n,\ol{\n}) dx = \int_{B_R} \na P \cdot \mathbf{u} dx.
\ea\ee
Multiplying $(\ref{ns})_2$ by $\mathbf{u}$, integrating by parts over $B_R$, and using (\ref{ltbp26}), we get
\be\la{ltbp27}\ba
\frac{d}{dt} \int_{B_R} \left( \frac{1}{2} \n |\mathbf{u}|^2 + H(\n,\ol{\n}) \right) dx
+ \int_{B_R} \n^\alpha \left( |\na \mathbf{u}|^2 + (\alpha-1) (\div \mathbf{u})^2 \right) dx = 0.
\ea\ee
Set
\be\la{ltbp28}\ba
\mathbf{w} \triangleq \mathbf{u} + \n^{-1} \na \n^\alpha.
\ea\ee
From $(\ref{ns})_1$ and $(\ref{ns})_2$, we deduce that $\mathbf{w}$ satisfies
\be\la{ltbp29}\ba
\n(\mathbf{w}_t + \mathbf{u} \cdot \na \mathbf{w}) + \na P = 0.
\ea\ee
Multiplying (\ref{ltbp29}) by $\mathbf{w}$, integrating by parts over $B_R$, and using (\ref{ltbp26}), we derive
\be\la{ltbp210}\ba
\frac{d}{dt} \int_{B_R} \frac{1}{2} \n |\mathbf{w}|^2 dx
& = - \int_{B_R} \na P \cdot \mathbf{w} dx
= - \int_{B_R} \na P \cdot \mathbf{u} dx
- \int_{B_R} \n^{-1} \na P \cdot \na \n^{\alpha} dx \\
& = - \frac{d}{dt} \int_{B_R} H(\n,\ol{\n}) dx - \alpha \ga \int_{B_R} \n^{\ga+\alpha-3} |\na \n|^2 dx,
\ea\ee
which yields
\be\la{ltbp211}\ba
\frac{d}{dt} \int_{B_R} \left( \frac{1}{2} \n |\mathbf{w}|^2 + H(\n,\ol{\n}) \right) dx
+ \alpha \ga \int_{B_R} \n^{\ga+\alpha-3} |\na \n|^2 dx = 0.
\ea\ee
Adding (\ref{ltbp27}) and (\ref{ltbp211}) leads to
\be\la{ltbp212}\ba
\frac{d}{dt} E_1(t) + E_2(t) = 0,
\ea\ee
where
\be\la{ltbp213}\ba
E_1(t) \triangleq \int_{B_R} \left( \frac{1}{2} \n |\mathbf{u}|^2 + \frac{1}{2} \n |\mathbf{w}|^2 + 2 H(\n,\ol{\n}) \right) dx,
\ea\ee
and
\be\la{ltbp214}\ba
E_2(t) \triangleq \int_{B_R} \left( \n^\alpha \left( |\na \mathbf{u}|^2 + (\alpha-1) (\div \mathbf{u})^2 \right) + \alpha \ga \n^{\ga+\alpha-3} |\na \n|^2 \right) dx.
\ea\ee

On the other hand, multiplying $(\ref{ns})_2$ by $\mathcal{B}[\n-\ol{\n} ]$, integrating over $B_R$, and using $(\ref{ns})_1$, (\ref{ltbp21}), (\ref{ltbp23}), Poincar\'e's inequality, Young's inequality, and Lemma \ref{iod}, we obtain
\be\nonumber\ba
& \int_{B_R} (\n^\ga - \ol{\n}^\ga) (\n-\ol{\n}) dx \\
& = \frac{d}{dt} \int_{B_R} \rho \mathbf{u} \cdot \mathcal{B}[\n-\ol{\n}] dx
- \int_{B_R} \rho \mathbf{u} \cdot \mathcal{B}[(\n-\ol{\n})_t] dx
- \int_{B_R} \rho \mathbf{u} \cdot \nabla \mathcal{B}[\n-\ol{\n}] \cdot \mathbf{u} dx \\
& \quad + \int_{B_R} \n^\alpha \p_i \mathbf{u} \cdot \p_i \mathcal{B}[\n-\ol{\n}] dx
+ (\alpha-1) \int_{B_R} \n^\alpha \div \mathbf{u} (\n-\ol{\n}) dx \\
& \le \frac{d}{dt} \int_{B_R} \rho \mathbf{u} \cdot \mathcal{B}[\n-\ol{\n}] dx
+ \int_{B_R} \rho \mathbf{u} \cdot \mathcal{B}[\div (\n \mathbf{u})] dx
+ C \| \mathbf{u} \|^2_{L^4(B_R)} \| \na \mathcal{B}[\n-\ol{\n}] \|_{L^2(B_R)} \\
& \quad + C \| \na \mathbf{u} \|_{L^2(B_R)} \| \na \mathcal{B}[\n-\ol{\n}] \|_{L^2(B_R)}
+ C \| \div \mathbf{u} \|_{L^2(B_R)} \| \n - \ol{\n} \|_{L^2(B_R)} \\
& \le \frac{d}{dt} \int_{B_R} \rho \mathbf{u} \cdot \mathcal{B}[\n-\ol{\n}] dx
+ C \| \n  \mathbf{u} \|^2_{L^2(B_R)} + C \left( \| \na \mathbf{u} \|^2_{L^2(B_R)} + \| \na \mathbf{u} \|_{L^2(B_R)} \right) \| \n-\ol{\n} \|_{L^2(B_R)} \\
& \leq \frac{d}{dt} \int_{B_R} \rho \mathbf{u} \cdot \mathcal{B}[\n-\ol{\n}] dx
+ \frac{1}{2} \int_{B_R} (\n^\ga - \ol{\n}^\ga) (\n-\ol{\n}) dx + C \| \na \mathbf{u} \|^2_{L^2(B_R)},
\ea\ee
which together with (\ref{ltb513}) and (\ref{ltbp214}) yields
\be\la{ltbp216}\ba
\int_{B_R} (\n^\ga - \ol{\n}^\ga) (\n-\ol{\n}) dx
\le 2 \frac{d}{dt} \int_{B_R} \rho \mathbf{u} \cdot \mathcal{B}[\n-\ol{\n}] dx + \hat{C}_3 E_2(t).
\ea\ee
Moreover, by (\ref{ltbp24}) and H\"older's inequality, we have
\be\la{ltbp217}\ba
\left| \int_{B_R} \rho \mathbf{u} \cdot \mathcal{B}[\n-\ol{\n}] dx \right|
& \le C \| \sqrt{\n} \mathbf{u} \|_{L^2(B_R)} \| \n-\ol{\n} \|_{L^2(B_R)} \\
& \le \hat{C}_4 \int_{B_R} \left( \frac{1}{2} \rho |\mathbf{u}|^2 + 2 H(\n,\ol{\n}) \right) dx.
\ea\ee
Set
\be\la{ltbp218}\ba
\hat{\si} \triangleq \min\left\{ \frac{1}{4 \hat{C}_4}, \frac{1}{2 \hat{C}_3} \right\}.
\ea\ee
Multiplying (\ref{ltbp216}) by $\hat{\si}$ and adding the resulting inequality to (\ref{ltbp212}), we arrive at
\be\la{ltbp219}\ba
\frac{d}{dt} \tilde{E}_1(t) + \frac{1}{2} E_2(t) + \hat{\si} \int_{B_R} (\n^\ga - \ol{\n}^\ga) (\n-\ol{\n}) dx
\le 0,
\ea\ee
where
\be\la{ltbp220}\ba
\tilde{E}_1(t) \triangleq \int_{B_R} \left( \frac{1}{2} \n |\mathbf{u}|^2 + \frac{1}{2} \n |\mathbf{w}|^2 + 2 H(\n,\ol{\n}) \right) dx
- 2 \hat{\si} \int_{B_R} \rho \mathbf{u} \cdot \mathcal{B}[\n-\ol{\n}] dx.
\ea\ee
From (\ref{ltbp213}), (\ref{ltbp217}), (\ref{ltbp218}), and (\ref{ltbp220}), we deduce that
\be\la{ltbp221}\ba
\frac{1}{2} E_1(t) \le \tilde{E}_1(t) \le 2 E_1(t).
\ea\ee
Moreover, using (\ref{ltb513}), (\ref{ltbp21}), (\ref{ltbp24}), (\ref{ltbp28}), and Poincar\'e's inequality, we obtain
\be\la{ltbp222}\ba
\int_{B_R} \left( |\na \mathbf{u}|^2 + |\na \n|^2 \right) dx
\le \hat{C}_5 E_2(t),
\ea\ee
and
\be\la{ltbp223}\ba
\tilde{E}_1(t)
& \le 2 \int_{B_R} \left( \frac{1}{2} \n |\mathbf{u}|^2 + \frac{1}{2} \n |\mathbf{w}|^2 + 2 H(\n,\ol{\n}) \right) dx \\
& \le C \int_{B_R} \n \left( |\mathbf{u}|^2 + \n^{-2} |\na \n^\alpha|^2 \right) dx
+ 4 \hat{C}_2 \int_{B_R} (\n^\ga -\ol{\n}^\ga)( \n-\ol{\n}) dx \\
& \le C \int_{B_R} \left( |\na \mathbf{u}|^2 + |\na \n|^2 \right) dx
+ 4 \hat{C}_2 \int_{B_R} (\n^\ga -\ol{\n}^\ga)( \n-\ol{\n}) dx \\
& \le \hat{C}_6 \left( \frac{1}{4} E_2(t) + \hat{\si} \int_{B_R} (\n^\ga -\ol{\n}^\ga)( \n-\ol{\n}) dx \right).
\ea\ee
Set
\be\la{ltbp224}\ba
\hat{\delta} \triangleq \min\left\{ \frac{1}{4 \hat{C}_5}, \frac{1}{\hat{C}_6} \right\}.
\ea\ee
It follows from (\ref{ltbp219}), (\ref{ltbp222}), (\ref{ltbp223}), and (\ref{ltbp224}) that
\be\la{ltbp225}\ba
\frac{d}{dt} \tilde{E}_1(t) + \hat{\de} \tilde{E}_1(t) + \hat{\de} \left( \| \na \mathbf{u} \|_{L^2(B_R)}^2 + \| \na \n \|_{L^2(B_R)}^2 \right)
\le 0,
\ea\ee
which gives
\be\la{ltbp226}\ba
\frac{d}{dt} \left( e^{\hat{\de} t} \tilde{E}_1(t) \right)
+ \hat{\de} e^{\hat{\de} t} \left( \| \na \mathbf{u} \|_{L^2(B_R)}^2 + \| \na \n \|_{L^2(B_R)}^2 \right)
\le 0.
\ea\ee
Integrating (\ref{ltbp226}) over $(0,T)$ and using (\ref{ltbp21}), (\ref{ltbp28}), (\ref{ltbp213}), and (\ref{ltbp221}) lead to
\be\la{ltbp227}\ba
\sup_{0 \le t \le T} \left( e^{\hat{\de} t} \left( \| \mathbf{u} \|^2_{L^2(B_R)} + \| \na \n \|^2_{L^2(B_R)} \right) \right)
+ \int_0^T e^{\hat{\de} t} \left( \| \na \mathbf{u} \|_{L^2(B_R)}^2 + \| \na \n \|_{L^2(B_R)}^2 \right) dt \le C.
\ea\ee
This, together with (\ref{ltbp21}), (\ref{ltbp23}), and H\"older's and Poincar\'e's inequalities, gives, for any $2<q_0<q$,
\be\la{ltbbp227a}\ba
\sup_{0 \le t \le T} e^{\tilde{\de} t} \| \n - \ol{\n_0} \|_{W^{1,q_0}(B_R)} \le C,
\ea\ee
where $\tilde{\de}>0$ depends on $q_0$ but is independent of $T$.

Arguing as in the whole space case, we have
\be\la{ltbp228}\ba
\| \n_t \|_{L^2(B_R)} \le C \| \na \mathbf{u} \|_{L^2(B_R)},
\ea\ee
and
\be\la{ltbp229}\ba
\| \na^2 \mathbf{u} \|_{L^2(B_R)}
\le C \left( \| \mathbf{u}_t \|_{L^2(B_R)} + \| \na \n \|_{L^2(B_R)} + \| \na \mathbf{u} \|_{L^2(B_R)} \right).
\ea\ee
In addition, by an argument similar to that in Lemma \ref{ltbl5}, we arrive at
\be\la{ltbp230}\ba
& \frac{d}{dt} \left( \| \na \mathbf{u} \|_{L^2(B_R)}^2 + (\alpha-1) \| \div \mathbf{u} \|^2_{L^2(B_R)} \right)
+ \int_{B_R} \n^{1-\alpha} |\mathbf{u}_t|^2 dx \\
& \le C \| \na \n \|_{L^2(B_R)}^2 + C \| \na \mathbf{u} \|_{L^2(B_R)}^2.
\ea\ee
Multiplying (\ref{ltbp230}) by $e^{ \hat{\de} t }$, integrating over $(0,T)$, and applying (\ref{ltbp227}), (\ref{ltb512}), (\ref{ltbp21}), (\ref{ltbp228}), and (\ref{ltbp229}), we get
\be\la{ltbp231}\ba
\sup_{0 \le t \le T} \left( e^{\hat{\de} t} \left( \| \na \mathbf{u} \|^2_{L^2(B_R)} + \| \n_t \|^2_{L^2(B_R)} \right) \right)
+ \int_0^T e^{\hat{\de} t} \left( \| \mathbf{u}_t \|_{L^2(B_R)}^2 + \| \na^2 \mathbf{u} \|_{L^2(B_R)}^2 \right) dt \le C.
\ea\ee
Proceeding as in Lemma \ref{ltbl6} and using (\ref{ltbp21}), (\ref{ltbp228}), and (\ref{ltbp229}), we derive
\be\la{ltbp232}\ba
& \frac{1}{2} \frac{d}{dt} \int_{B_R} \n |\mathbf{u}_t|^2 dx
+ \frac{1}{2} \int_{B_R} \n^\alpha ( |\na \mathbf{u}_t|^2 + (\alpha-1) (\div \mathbf{u}_t)^2 ) dx \\
& \le C \left( \| \mathbf{u}_t \|^2_{L^2(B_R)} + \| \na \mathbf{u} \|^2_{L^2(B_R)} + \| \na \n \|^2_{L^2(B_R)} \right).
\ea\ee
Multiplying (\ref{ltbp232}) by $e^{ \hat{\de} t }$, integrating over $(0,T)$, and using (\ref{ltb512}), (\ref{ltbp21}), (\ref{ltbp227}), (\ref{ltbp228}), (\ref{ltbp229}), and (\ref{ltbp231}), we obtain
\be\la{ltbp233}\ba
\sup_{0 \le t \le T} \left( e^{\hat{\de} t} \left( \| \mathbf{u}_t \|^2_{L^2(B_R)} + \| \na^2 \mathbf{u} \|^2_{L^2(B_R)} \right) \right)
+ \int_0^T e^{\hat{\de} t} \| \na \mathbf{u}_t \|_{L^2(B_R)}^2 dt \le C.
\ea\ee
Combining (\ref{ltbp227}), (\ref{ltbp231}), (\ref{ltbp233}), and Poincar\'e's inequality yields (\ref{th46}) and completes the proof of Theorem \ref{th4}.

\section{Appendix: Construction of the smooth approximate initial data}

In this section, we give a detailed construction of the smooth approximate initial data.

\begin{proposition}\la{caid}
Let $N=2$ or $3$.
Assume that
\be\la{caid01}\ba
\alpha>\frac{N-1}{N}, \quad N < s_1 \le p_1, \quad \alpha-1+\frac{1}{s_1}>0.
\ea\ee
Suppose that the spherically symmetric functions $(\n_0,\mathbf{m}_0)$ satisfy
\be\la{caid02}\ba
\begin{cases}
0 \le \n_0 \in L^\infty(\mathbb{R}^N), \quad \n_0 \not\equiv 0, \quad \n_0 - 1 \in L^2(\mathbb{R}^N), \quad \na \n_0^{\alpha-\frac{1}{2}} \in L^2(\mathbb{R}^N), \\
\n_0^{-1}|\mathbf{m}_0|^2 \in L^1(\mathbb{R}^N), \quad \mathbf{m}_0 = 0 \textnormal{ a.e. on } \om_0, \\
\na \n_0^{\alpha-1+\frac{1}{s_1}} \in L^{s_1}(\mathbb{R}^N), \quad \n_0^{-p_1+1}|\mathbf{m}_0|^{p_1} \in L^1(\mathbb{R}^N),
\end{cases}
\ea\ee
where
\be\nonumber\ba
\om_0 \triangleq \{x \in \mathbb{R}^N \mid \n_0(x)=0 \},
\ea\ee
denotes the vacuum set of $\n_0$.
We use the convention that
\be\nonumber\ba
\n_0^{-1}|\mathbf{m}_0|^2 = \n_0^{-p_1+1}|\mathbf{m}_0|^{p_1} = 0 \quad \textnormal{ a.e. on } \om_0.
\ea\ee
Then, for every sufficiently small $\ep \in (0,1)$, there exist spherically symmetric functions
\be\nonumber\ba
\n_{0,\ep} \in C^\infty(\mathbb{R}^N), \quad \mathbf{m}_{0,\ep} \in C_c^\infty(\mathbb{R}^N;\mathbb{R}^N),
\ea\ee
such that, with
\be\nonumber\ba
\mathbf{u}_{0,\ep} \triangleq \n_{0,\ep}^{-1} \mathbf{m}_{0,\ep},
\ea\ee
the following properties hold:

1.
\be\la{caid03}\ba
(\n_{0,\ep},\mathbf{u}_{0,\ep})(x) \to (1,0) \ \textnormal{ as } |x| \to \infty.
\ea\ee

2.
As $\ep \to 0$,
\be\la{caid04}\ba
\begin{cases}
\n_{0,\ep} - 1 \to \n_0 - 1 \textnormal{ in } L^2(\mathbb{R}^N) \cap L^\infty(\mathbb{R}^N), \\
\na \n_{0,\ep}^{\alpha-\frac{1}{2}} \to \na \n_0^{\alpha-\frac{1}{2}} \textnormal{ in } L^2(\mathbb{R}^N), \quad
\na \n_{0,\ep}^{\alpha-1+\frac{1}{s_1}} \to \na \n_{0}^{\alpha-1+\frac{1}{s_1}} \textnormal{ in } L^{s_1}(\mathbb{R}^N), \\
\mathbf{m}_{0,\ep} \to \mathbf{m}_{0} \textnormal{ in } L^2(\mathbb{R}^N), \quad
\n_{0,\ep}^{-1}|\mathbf{m}_{0,\ep}|^2 \to \n_{0}^{-1}|\mathbf{m}_{0}|^2 \textnormal{ in } L^1(\mathbb{R}^N), \\
\n_{0,\ep}^{-p_1+1}|\mathbf{m}_{0,\ep}|^{p_1} \to \n_0^{-p_1+1}|\mathbf{m}_0|^{p_1} \textnormal{ in } L^1(\mathbb{R}^N).
\end{cases}
\ea\ee

3.
\be\la{caid05}\ba
\mathbf{u}_{0,\ep} = 0 \textnormal{ in } \ol{B_\ep}.
\ea\ee

4. For $\theta \in (\frac{N-1}{N},1]$ as in \eqref{rgylt1},
\be\la{caid06}\ba
\n_{0,\ep} \ge
\begin{cases}
\ep \quad & \textnormal{ if } \alpha \le 1, \\
\ep^{\frac{1}{\alpha-\theta}} \quad & \textnormal{ if } \alpha>1.
\end{cases}
\ea\ee
\end{proposition}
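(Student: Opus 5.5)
We construct $(\n_{0,\ep},\mathbf{m}_{0,\ep})$ in three stages: lift the density off vacuum, mollify, then cut off the momentum near the origin and at infinity. Everything is done on radial profiles, so spherical symmetry is automatic.

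\textbf{Density.} Let $\de_\ep$ be the lower threshold in (\ref{caid06}): $\de_\ep=2\ep$ if $\alpha\le1$ and $\de_\ep=2\ep^{\frac{1}{\alpha-\theta}}$ if $\alpha>1$. Let $j_{\ep'}$ be a standard radial mollifier with $\ep'=\ep'(\ep)\to0$, to be chosen. We mollify the power $g_0\triangleq\n_0^{\alpha-\frac12}$ rather than $\n_0$ itself, because $\na g_0\in L^2$ is the quantity we must keep. Set
\[
\n_{0,\ep}\triangleq\big((g_0*j_{\ep'})^{\frac{1}{\alpha-\frac12}}\vee\de_\ep\big),
\]
and then smooth the maximum with a smooth convex function agreeing with $\max$ away from the diagonal. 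This gives $\n_{0,\ep}\in C^\infty$, the bound (\ref{caid06}), and $\n_{0,\ep}\to1$ at infinity, since $\n_0-1\in L^2$ together with $\na g_0\in L^2$ forces $g_0\to1$ radially, as in Lemma \ref{cptl2}.

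The convergences in (\ref{caid04}) for the density go as follows. Convergence in $L^2$ is standard. Uniform convergence uses that $g_0$ is continuous away from $0$ (one-dimensional Sobolev embedding, Lemma \ref{ywsi}). Near $0$ we use $\na\n_0^{\alpha-1+\frac1{s_1}}\in L^{s_1}$ with $s_1>N$: by Morrey's inequality $\n_0^{\alpha-1+\frac1{s_1}}$ is Hölder continuous on $\mathbb{R}^N$, so $\n_0$ is uniformly continuous and mollification converges in $L^\infty$. For the gradient terms, mollification converges in $L^2$ for $\na g_0$. Converting to $\na\n_{0,\ep}^{\alpha-1+\frac1{s_1}}$ uses the chain rule plus dominated convergence; the truncation at level $\de_\ep$ only removes gradient on the shrinking set $\{\n_0*\cdots<\de_\ep\}$, where the truncated gradient vanishes. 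Alternatively, we mollify $\n_0^{\alpha-1+\frac1{s_1}}$ and $g_0$ separately and check compatibility. The simplest route is to mollify $h_0\triangleq\n_0^{\kappa}$ with $\kappa=\min\{\alpha-\frac12,\alpha-1+\frac1{s_1}\}$ and recover both gradients by the chain rule, using the $L^\infty$ bound on $\n_0$. Since $\n_{0,\ep}\ge\de_\ep$, all negative powers are harmless at fixed $\ep$.

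\textbf{Momentum.} Write $\mathbf{m}_0=\n_0\mathbf{u}_0$, where $\mathbf{u}_0$ is defined on $\{\n_0>0\}$ and set to $0$ elsewhere. Truncate $\mathbf{u}_0$ by $T_L$ with $L=L(\ep)\to\infty$, and multiply by radial cutoffs $\chi_\ep(r)$ vanishing on $[0,2\ep]$ and beyond $1/\ep$. Mollify the resulting bounded, compactly supported field $\mathbf{v}_\ep$ at scale $\ep''\ll\ep$, keeping the support off $\ol{B_\ep}$. Then define $\mathbf{m}_{0,\ep}\triangleq\n_{0,\ep}(\mathbf{v}_\ep*j_{\ep''})$, which lies in $C_c^\infty$ and gives (\ref{caid05}) and (\ref{caid03}).

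The convergences $\n_{0,\ep}^{-1}|\mathbf{m}_{0,\ep}|^2=\n_{0,\ep}|\mathbf{u}_{0,\ep}|^2\to\n_0|\mathbf{u}_0|^2$ and the analogous $p_1$ statement $\n_{0,\ep}|\mathbf{u}_{0,\ep}|^{p_1}\to\n_0|\mathbf{u}_0|^{p_1}$ in $L^1$ follow from the generalized dominated convergence theorem. The ingredients are: $\n_{0,\ep}\to\n_0$ uniformly; $\mathbf{u}_{0,\ep}\to\mathbf{u}_0$ a.e. on $\{\n_0>0\}$, after choosing $\ep''$ depending on $L$ and $\ep$ so that mollification error is small in $L^{p_1}$ on the bounded support; the domination $|T_L\mathbf{u}_0|\le|\mathbf{u}_0|$; and, on $\{\n_0=0\}$, the factor $\n_{0,\ep}\le\|\n_{0,\ep}-\n_0\|_\infty\to0$ multiplying $|\mathbf{u}_{0,\ep}|^{p}\le L^{p}$, which is harmless provided $L(\ep)^{p_1}\|\n_{0,\ep}-\n_0\|_{L^\infty}|{\rm supp}|\to0$. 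The convergence $\mathbf{m}_{0,\ep}\to\mathbf{m}_0$ in $L^2$ then follows from $|\mathbf{m}|^2\le\|\n\|_\infty\n|\mathbf{u}|^2$ together with the $L^1$ convergence just established.

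\textbf{Main obstacle.} The delicate point is the diagonal choice of $\ep',\ep'',L$ against $\de_\ep$, in particular controlling the lift $\n_{0,\ep}\ge\de_\ep$ on the vacuum set. There the lifted density contributes $\de_\ep L^{p_1}\times$ (measure of support) to $\n_{0,\ep}|\mathbf{u}_{0,\ep}|^{p_1}$, and the support volume is of order $\ep^{-N}$. We would first choose $L=\log(1/\ep)$, fix the support cutoff at $R_\ep=\log(1/\ep)$ instead of $1/\ep$, and then take $\ep',\ep''$ small enough that all mollification errors are below $\ep$. With these choices the vacuum contribution is $O(\ep\,\mathrm{polylog})\to0$, using that $\de_\ep$ is a positive power of $\ep$ in both cases.
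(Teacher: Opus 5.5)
Your argument works in outline. The density part essentially matches the paper's; the momentum part takes a different route.

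\textbf{Density.} Your final choice $\kappa=\min\{\alpha-\frac12,\alpha-1+\frac1{s_1}\}$ equals $\beta=\alpha-1+\frac1{s_1}$ because $s_1>2$, and the paper also mollifies $H_0=\rho_0^\beta$. Your first option, mollifying $\rho_0^{\alpha-\frac12}$ and composing, should be dropped. The outer power has exponent $\beta/(\alpha-\frac12)<1$, so it gives no $L^{s_1}$ control of $\nabla\rho_{0,\varepsilon}^\beta$ near vacuum.

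The paper lifts affinely, $\rho_{0,\varepsilon}=((1-\sigma(\varepsilon))\tilde H_{0,\varepsilon}+\sigma(\varepsilon))^{1/\beta}$, so that $\nabla\rho_{0,\varepsilon}^\beta=(1-\sigma(\varepsilon))\nabla\tilde H_{0,\varepsilon}$ exactly. Your smoothed maximum instead needs the extra observation that the gradient discarded on $\{h_0*j_{\varepsilon'}\lesssim\delta_\varepsilon^\kappa\}$ is negligible. This holds because these sets shrink to $\{H_0=0\}$ by uniform convergence, and $\nabla H_0=0$ a.e. there.

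You also skipped one step. ``Chain rule plus the $L^\infty$ bound'' does not give $\nabla\rho_0^\beta\in L^2(\mathbb{R}^N)$. You need that for the $L^2$ convergence of $\nabla\rho_{0,\varepsilon}^{\alpha-\frac12}=c\,\rho_{0,\varepsilon}^{\frac12-\frac1{s_1}}\nabla\rho_{0,\varepsilon}^\beta$. On $\{\rho_0<\frac12\}$ you must use that this set has finite measure (from $\rho_0-1\in L^2$) together with $\nabla\rho_0^\beta\in L^{s_1}$, exactly as the paper does.

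\textbf{Momentum.} The paper never truncates the velocity. It works with $\mathbf{G}_0=\rho_0^{-1+\frac1{p_1}}\mathbf{m}_0=\rho_0^{1/p_1}\mathbf{u}_0$, checks $\mathbf{G}_0\in L^2\cap L^{p_1}$, cuts off and mollifies it, and sets $\mathbf{m}_{0,\varepsilon}=\rho_{0,\varepsilon}^{1-\frac1{p_1}}\mathbf{G}_{0,\varepsilon}$. Then
\[
\rho_{0,\varepsilon}^{-p_1+1}|\mathbf{m}_{0,\varepsilon}|^{p_1}=|\mathbf{G}_{0,\varepsilon}|^{p_1},\qquad
\rho_{0,\varepsilon}^{-1}|\mathbf{m}_{0,\varepsilon}|^2=\rho_{0,\varepsilon}^{1-\frac2{p_1}}|\mathbf{G}_{0,\varepsilon}|^2
\]
hold identically. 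All momentum convergences therefore follow from $L^2\cap L^{p_1}$ convergence of $\mathbf{G}_{0,\varepsilon}$ plus uniform convergence of the density. The lift and the size of the velocity never interact.

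Your route (truncate $\mathbf{u}_0$ at $L(\varepsilon)$, mollify, multiply by $\rho_{0,\varepsilon}$) also works. However, it goes through the diagonal condition $L(\varepsilon)^{p_1}\|\rho_{0,\varepsilon}-\rho_0\|_{L^\infty}\to0$. That condition is satisfiable only because the lift is a power of $\varepsilon$ and the mollification scales are free. The weighted variable $\mathbf{G}_0$ is precisely what makes your ``main obstacle'' disappear.

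Two remarks within your own scheme:
\begin{itemize}
\item The support-volume worry is unnecessary. On $\{\rho_0\ge\frac12\}$ you have $|T_L\mathbf{u}_0|^{p_1}\le 2\rho_0|\mathbf{u}_0|^{p_1}$, and $\{\rho_0<\frac12\}$ has finite measure, so a cutoff at radius $1/\varepsilon$ is fine.
\item Deducing $\mathbf{m}_{0,\varepsilon}\to\mathbf{m}_0$ in $L^2$ from the $L^1$ convergence of $\rho_{0,\varepsilon}|\mathbf{u}_{0,\varepsilon}|^2$ also requires a.e. convergence of $\mathbf{m}_{0,\varepsilon}$ (generalized dominated convergence). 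Your ingredients supply this along subsequences: on the vacuum set, $|\mathbf{m}_{0,\varepsilon}|\le L(\varepsilon)\|\rho_{0,\varepsilon}-\rho_0\|_{L^\infty}$.
\end{itemize}
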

\begin{proof}
We divide the proof into four steps.

\noindent\textit{Step 1. Construction of the approximate density.}

First, define
\be\nonumber\ba
\beta \triangleq \alpha-1+\frac{1}{s_1}, \quad H_0 \triangleq \n_0^\beta.
\ea\ee
Since $\n_0 \in L^\infty(\mathbb{R}^N)$ and $\beta>0$, we have
\be\la{caid1}\ba
|H_0 - 1| = |\n_0^\beta - 1| \le C |\n_0 - 1|,
\ea\ee
which together with (\ref{caid02}) yields
\be\la{caid2}\ba
H_0 - 1 \in L^2(\mathbb{R}^N) \cap L^\infty(\mathbb{R}^N).
\ea\ee
From (\ref{caid02}), (\ref{caid2}), and $s_1>2$, we conclude that
\be\la{caid3}\ba
H_0 - 1 \in W^{1,s_1}(\mathbb{R}^N).
\ea\ee
We next prove that
\be\la{caid4}\ba
\na H_0 \in L^2(\mathbb{R}^N).
\ea\ee
Set
\be\la{caid5}\ba
A_1 \triangleq \left\{ x \in \mathbb{R}^N: \n_0(x) \ge \frac{1}{2} \right\}, \quad A_2 \triangleq \left\{ x \in \mathbb{R}^N: \n_0(x) < \frac{1}{2} \right\}.
\ea\ee
A direct calculation shows
\be\la{caid6}\ba
\na H_0 = \na \n_0^{\alpha-1+\frac{1}{s_1}} = \frac{2(\alpha-1+\frac{1}{s_1})}{2\alpha-1} \n_0^{-\frac{1}{2}+\frac{1}{s_1}} \na \n_0^{\alpha-\frac{1}{2}} \quad \text{ in } A_1,
\ea\ee
which together with (\ref{caid02}) gives
\be\la{caid7}\ba
\| \na H_0 \|_{L^2(A_1)} \le C \| \na \n_0^{\alpha-\frac{1}{2}} \|_{L^2(A_1)} \le C.
\ea\ee
On the other hand, by (\ref{caid02}), we have
\be\la{caid8}\ba
|A_2| = \int_{\mathbb{R}^N} \chi_{(\n_0 < \frac{1}{2})} dx
= \int_{\mathbb{R}^N} \chi_{(1-\n_0 > \frac{1}{2})} dx
\le 4 \int_{\mathbb{R}^N} |\n_0 -1|^2 dx \le C.
\ea\ee
Combining (\ref{caid3}) and (\ref{caid8}), and using H\"older's inequality and $s_1>2$, we arrive at
\be\la{caid9}\ba
\| \na H_0 \|_{L^2(A_2)} \le |A_2|^{\frac{1}{2}-\frac{1}{s_1}} \| \na H_0 \|_{L^{s_1}(A_2)} \le C.
\ea\ee
It follows from (\ref{caid2}), (\ref{caid3}), (\ref{caid7}), and (\ref{caid9}) that
\be\la{caid10}\ba
H_0 - 1 \in H^1(\mathbb{R}^N) \cap W^{1,s_1}(\mathbb{R}^N).
\ea\ee
Since $s_1>N$, the Sobolev embedding theorem implies that $H_0$ has a bounded continuous representative.
Moreover, $H_0 - 1 \in L^{s_1}(\mathbb{R}^N)$, and hence
\be\la{caid11}\ba
H_0(x) \to 1 \text{ as } |x| \to \infty.
\ea\ee

Let $\eta$ be a standard non-negative spherically symmetric mollifier satisfying
\be\la{caid12}\ba
\eta(x) \in C_c^\infty(\mathbb{R}^N), \quad \int_{\mathbb{R}^N} \eta(x) dx = 1.
\ea\ee
Define
\be\la{caid13}\ba
\eta_\ep(x) \triangleq \ep^{-N} \eta\left(\frac{x}{\ep}\right),
\ea\ee
and
\be\la{caid13a}\ba
\tilde{H}_{0,\ep} \triangleq \eta_\ep * (H_0-1) + 1 = \eta_\ep * H_0.
\ea\ee
By the standard properties of mollification (see \cite{EL}),
\be\la{caid14}\ba
\| \tilde{H}_{0,\ep} - H_0 \|_{H^1(\mathbb{R}^N)} + \| \tilde{H}_{0,\ep} - H_0 \|_{W^{1,s_1}(\mathbb{R}^N)}
+ \| \tilde{H}_{0,\ep} - H_0 \|_{L^\infty(\mathbb{R}^N)} \to 0 \text{ as } \ep \to 0.
\ea\ee
In addition, it is straightforward to verify that
\be\la{caid15}\ba
\tilde{H}_{0,\ep} \ge 0, \quad \tilde{H}_{0,\ep}(x) \to 1 \text{ as } |x| \to \infty.
\ea\ee
Set
\be\la{caid16}\ba
\n_{0,\ep} \triangleq \left( ( 1 - \si(\ep) ) \tilde{H}_{0,\ep} + \si(\ep) \right)^{\frac{1}{\beta}},
\ea\ee
where
\be\la{caid17}\ba
\si(\ep) \triangleq
\begin{cases}
\ep^\beta \quad & \textnormal{ if } \alpha \le 1, \\
\ep^{\frac{\beta}{\alpha-\theta}} \quad & \textnormal{ if } \alpha>1,
\end{cases}
\ea\ee
with $\theta$ as in (\ref{rgylt1}).

The construction implies that $\n_{0,\ep} \in C^\infty(\mathbb{R}^N)$ is spherically symmetric and
\be\la{caid18}\ba
\n_{0,\ep} \ge (\si(\ep))^{\frac{1}{\beta}} =
\begin{cases}
\ep \quad & \textnormal{ if } \alpha \le 1, \\
\ep^{\frac{1}{\alpha-\theta}} \quad & \textnormal{ if } \alpha>1.
\end{cases}
\ea\ee

\noindent\textit{Step 2. Strong convergence of the approximate density.}

Since
\be\la{caid19}\ba
\si(\ep) \to 0 \quad \text{ as } \ep \to 0,
\ea\ee
it follows from (\ref{caid14}), (\ref{caid16}), and (\ref{caid19}) that
\be\la{caid20}\ba
\na \n_{0,\ep}^{\beta} = (1-\si(\ep)) \na \tilde{H}_{0,\ep}
\to \na H_0 = \na \n_{0}^{\beta} \ \text{ in } L^{2}(\mathbb{R}^N) \cap L^{s_1}(\mathbb{R}^N),
\ea\ee
and
\be\la{caid21}\ba
\n_{0,\ep} \to \n_0 \ \text{ in } L^\infty(\mathbb{R}^N).
\ea\ee

We next prove the $L^2$ convergence.
Let $A_1$ and $A_2$ be the sets defined in (\ref{caid5}).
On the one hand, using (\ref{caid8}) and (\ref{caid21}), we obtain
\be\la{caid22}\ba
\| \n_{0,\ep} - \n_0 \|_{L^2(A_2)}
\le |A_2|^{\frac{1}{2}} \| \n_{0,\ep} - \n_0 \|_{L^\infty(\mathbb{R}^N)} \to 0.
\ea\ee
From (\ref{caid21}), we conclude that for sufficiently small $\ep>0$,
\be\la{caid23}\ba
\frac{1}{4} \le \n_{0,\ep} \le \| \n_0 \|_{L^\infty(\mathbb{R}^N)} + 1 \quad \text{ on } A_1,
\ea\ee
which together with $\beta>0$ yields
\be\la{caid24}\ba
|\n_{0,\ep} - \n_0| \le C |\n_{0,\ep}^\beta - \n_0^\beta|.
\ea\ee
In view of (\ref{caid14}), (\ref{caid19}), and (\ref{caid24}), we derive
\be\la{caid25}\ba
\|\n_{0,\ep} - \n_0\|_{L^2(A_1)}
& \le C \|\n_{0,\ep}^\beta - \n_0^\beta\|_{L^2(A_1)}
= C \| ( 1 - \si(\ep) ) \tilde{H}_{0,\ep} + \si(\ep) - H_0\|_{L^2(A_1)} \\
& = C \| - \si(\ep) (\tilde{H}_{0,\ep}-1) + (\tilde{H}_{0,\ep} - H_0) \|_{L^2(A_1)} \\
& \le C \si(\ep) \| \tilde{H}_{0,\ep}-1 \|_{L^2(A_1)} + C \| \tilde{H}_{0,\ep} - H_0 \|_{L^2(A_1)} \to 0.
\ea\ee
Combining (\ref{caid22}) and (\ref{caid25}) leads to
\be\la{caid26}\ba
\n_{0,\ep} - 1 \to \n_0 - 1 \ \text{ in } L^2(\mathbb{R}^N).
\ea\ee
Since $s_1>2$, a direct calculation gives
\be\la{caid27}\ba
\na \n_{0,\ep}^{\alpha-\frac{1}{2}} = \frac{2\alpha-1}{2(\alpha-1+\frac{1}{s_1})} \n_{0,\ep}^{\frac{1}{2}-\frac{1}{s_1}} \na \n_{0,\ep}^{\alpha-1+\frac{1}{s_1}},
\ea\ee
and
\be\la{caid28}\ba
\na \n_{0}^{\alpha-\frac{1}{2}} = \frac{2\alpha-1}{2(\alpha-1+\frac{1}{s_1})} \n_{0}^{\frac{1}{2}-\frac{1}{s_1}} \na \n_{0}^{\alpha-1+\frac{1}{s_1}}.
\ea\ee
It follows from (\ref{caid27}), (\ref{caid28}), and H\"older's inequality that
\be\la{caid29}\ba
\| \na \n_{0,\ep}^{\alpha-\frac{1}{2}} - \na \n_{0}^{\alpha-\frac{1}{2}} \|_{L^2(\mathbb{R}^N)}
& \le C \| \n_{0,\ep}^{\frac{1}{2}-\frac{1}{s_1}} \|_{L^\infty(\mathbb{R}^N)}
\| \na \n_{0,\ep}^{\alpha-1+\frac{1}{s_1}} - \na \n_{0}^{\alpha-1+\frac{1}{s_1}} \|_{L^2(\mathbb{R}^N)} \\
& \quad + C \| \n_{0,\ep}^{\frac{1}{2}-\frac{1}{s_1}} - \n_{0}^{\frac{1}{2}-\frac{1}{s_1}} \|_{L^\infty(\mathbb{R}^N)}
\|
 \na \n_{0}^{\alpha-1+\frac{1}{s_1}} \|_{L^2(\mathbb{R}^N)}.
\ea\ee
By (\ref{caid20}) and (\ref{caid21}), the right-hand side of (\ref{caid29}) tends to zero.
Hence,
\be\la{caid30}\ba
\na \n_{0,\ep}^{\alpha-\frac{1}{2}} \to \na \n_{0}^{\alpha-\frac{1}{2}} \ \text{ in } L^2(\mathbb{R}^N).
\ea\ee

\noindent\textit{Step 3. Construction of the approximate momentum.}

Define
\be\la{caid31}\ba
\mathbf{G}_0(x) \triangleq
\begin{cases}
\n_0^{-1+\frac{1}{p_1}}(x) \mathbf{m}_0(x) \quad & \text{ if } \n_0(x)>0, \\
0 \quad & \text{ if } \n_0(x)=0,
\end{cases}
\ea\ee
which together with (\ref{caid02}) yields
\be\la{caid32}\ba
|\mathbf{G}_0|^{p_1} = \n_0^{-p_1+1} |\mathbf{m}_0|^{p_1} \in L^1(\mathbb{R}^N).
\ea\ee
Let $A_1$ and $A_2$ be the sets defined in (\ref{caid5}).
Using (\ref{caid02}), we obtain
\be\la{caid32a}\ba
\int_{A_1} |\mathbf{G}_0|^2 dx = \int_{A_1} \n_0^{-2+\frac{2}{p_1}} |\mathbf{m}_0|^2 dx \le C \int_{A_1} \n_0^{-1} |\mathbf{m}_0|^2 dx \le C.
\ea\ee
From (\ref{caid32}) and (\ref{caid8}), we deduce that
\be\la{caid32b}\ba
\int_{A_2} |\mathbf{G}_0|^2 dx \le |A_2|^{1-\frac{2}{p_1}} \left( \int_{A_2} |\mathbf{G}_0|^{p_1} dx \right)^{\frac{2}{p_1}} \le C.
\ea\ee
By (\ref{caid32}), (\ref{caid32a}), and (\ref{caid32b}), we have
\be\la{caid33}\ba
\mathbf{G}_0 \in L^2(\mathbb{R}^N) \cap L^{p_1}(\mathbb{R}^N).
\ea\ee
Choose smooth cutoff functions $\kappa_1,\kappa_2 \in C^\infty([0,\infty))$ satisfying
\be\la{caid34}\ba
0 \le \kappa_1(z) \le 1, \quad \kappa_1(z) =
\begin{cases}
0 \quad & \text{ if } z \le 2, \\
1 \quad & \text{ if } z \ge 3,
\end{cases}
\ea\ee
and
\be\la{caid35}\ba
0 \le \kappa_2(z) \le 1, \quad \kappa_2(z) =
\begin{cases}
1 \quad & \text{ if } z \le 1, \\
0 \quad & \text{ if } z \ge 2.
\end{cases}
\ea\ee
Set
\be\la{caid36}\ba
\tilde{\mathbf{G}}_{0,\ep}(x) \triangleq \kappa_1\left(\frac{|x|}{\ep}\right) \kappa_2\left(\ep |x|\right) \mathbf{G}_0(x).
\ea\ee
Then
\be\la{caid37}\ba
\operatorname{supp} \tilde{\mathbf{G}}_{0,\ep} \subset \left\{ x \in \mathbb{R}^N \mid 2\ep \le |x| \le \frac{2}{\ep} \right\}.
\ea\ee
Since
\be\la{caid37a}\ba
\kappa_1\left(\frac{|x|}{\ep}\right) \kappa_2\left(\ep |x|\right) \to 1 \text{ as } \ep \to 0, \quad \text{ for } x \ne 0,
\ea\ee
the dominated convergence theorem and (\ref{caid33}) imply
\be\la{caid38}\ba
\tilde{\mathbf{G}}_{0,\ep} \to \mathbf{G}_0  \text{ in } L^2(\mathbb{R}^N) \cap L^{p_1}(\mathbb{R}^N).
\ea\ee
Let $\eta_\ep$ be as in (\ref{caid13}).
Choose $0<\tau_\ep<\frac{\ep}{4}$ such that
\be\la{caid39}\ba
\| \eta_{\tau_\ep} * \tilde{\mathbf{G}}_{0,\ep} - \tilde{\mathbf{G}}_{0,\ep} \|_{L^2(\mathbb{R}^N) \cap L^{p_1}(\mathbb{R}^N)} \le \ep.
\ea\ee
Define
\be\la{caid40}\ba
\mathbf{G}_{0,\ep} \triangleq \eta_{\tau_\ep} * \tilde{\mathbf{G}}_{0,\ep}.
\ea\ee
Since $\eta$ and $\tilde{\mathbf{G}}_{0,\ep}$ are spherically symmetric, $\mathbf{G}_{0,\ep}$ is also spherically symmetric.
Moreover, from (\ref{caid37}), (\ref{caid38}), and (\ref{caid39}), we deduce that
\be\la{caid41}\ba
\mathbf{G}_{0,\ep} \in C_c^\infty(\mathbb{R}^N;\mathbb{R}^N), \quad \mathbf{G}_{0,\ep} = 0 \text{ in } \ol{B_\ep}, \quad \mathbf{G}_{0,\ep} \to \mathbf{G}_0  \text{ in } L^2(\mathbb{R}^N) \cap L^{p_1}(\mathbb{R}^N).
\ea\ee
Set
\be\la{caid42}\ba
\mathbf{m}_{0,\ep} \triangleq \n_{0,\ep}^{1-\frac{1}{p_1}} \mathbf{G}_{0,\ep}, \quad \mathbf{u}_{0,\ep} \triangleq \n_{0,\ep}^{-\frac{1}{p_1}} \mathbf{G}_{0,\ep}.
\ea\ee

\noindent\textit{Step 4. Strong convergence of the approximate momentum.}

By (\ref{caid32}), (\ref{caid41}) and (\ref{caid42}), one has
\be\la{caid43}\ba
\n_{0,\ep}^{-p_1+1} |\mathbf{m}_{0,\ep}|^{p_1} = |\mathbf{G}_{0,\ep}|^{p_1} \to |\mathbf{G}_{0}|^{p_1} = \n_{0}^{-p_1+1} |\mathbf{m}_{0}|^{p_1} \text{ in } L^1(\mathbb{R}^N).
\ea\ee
Since $p_1>2$, it follows from H\"older's inequality that
\be\la{caid44}\ba
& \| \n_{0,\ep}^{-1} |\mathbf{m}_{0,\ep}|^{2} - \n_{0}^{-1} |\mathbf{m}_{0}|^{2} \|_{L^1(\mathbb{R}^N)} \\
& = \| \n_{0,\ep}^{1-\frac{2}{p_1}} |\mathbf{G}_{0,\ep}|^{2} - \n_{0}^{1-\frac{2}{p_1}} |\mathbf{G}_{0}|^{2} \|_{L^1(\mathbb{R}^N)} \\
& \le \| \n_{0,\ep}^{1-\frac{2}{p_1}} \|_{L^\infty(\mathbb{R}^N)}
\| |\mathbf{G}_{0,\ep}|^{2} - |\mathbf{G}_{0}|^{2} \|_{L^1(\mathbb{R}^N)}
+ \| \n_{0,\ep}^{1-\frac{2}{p_1}} - \n_{0}^{1-\frac{2}{p_1}} \|_{L^\infty(\mathbb{R}^N)}
\| \mathbf{G}_{0} \|^2_{L^2(\mathbb{R}^N)},
\ea\ee
which together with (\ref{caid41}) and (\ref{caid21}) gives
\be\la{caid45}\ba
\n_{0,\ep}^{-1} |\mathbf{m}_{0,\ep}|^{2} \to \n_{0}^{-1} |\mathbf{m}_{0}|^{2} \text{ in } L^1(\mathbb{R}^N).
\ea\ee
Similarly,
\be\la{caid46}\ba
& \| \mathbf{m}_{0,\ep} - \mathbf{m}_{0} \|_{L^2(\mathbb{R}^N)} \\
& = \| \n_{0,\ep}^{1-\frac{1}{p_1}} \mathbf{G}_{0,\ep} - \n_{0}^{1-\frac{1}{p_1}} \mathbf{G}_{0} \|_{L^2(\mathbb{R}^N)} \\
& \le \| \n_{0,\ep}^{1-\frac{1}{p_1}} \|_{L^\infty(\mathbb{R}^N)}
\| \mathbf{G}_{0,\ep} - \mathbf{G}_{0} \|_{L^2(\mathbb{R}^N)}
+ \| \n_{0,\ep}^{1-\frac{1}{p_1}} - \n_{0}^{1-\frac{1}{p_1}} \|_{L^\infty(\mathbb{R}^N)}
\| \mathbf{G}_{0} \|_{L^2(\mathbb{R}^N)},
\ea\ee
which along with (\ref{caid41}) and (\ref{caid21}) implies
\be\la{caid47}\ba
\mathbf{m}_{0,\ep} \to \mathbf{m}_{0} \text{ in } L^2(\mathbb{R}^N).
\ea\ee
Finally, we conclude from (\ref{caid15}), (\ref{caid16}), (\ref{caid19}), (\ref{caid41}), and (\ref{caid42}) that
\be\nonumber\ba
(\n_{0,\ep},\mathbf{u}_{0,\ep})(x) \to (1,0) \ \textnormal{ as } |x| \to \infty.
\ea\ee
This proves all the properties stated in Proposition \ref{caid}.
The proof of Proposition \ref{caid} is complete.
\end{proof}

\section*{Acknowledgments}
The research of Z. Liang was  partially  supported by the NNSFC of China grant 12531010 and the Sichuan Science and Technology Program grant 2025ZYD0157.

\bigskip

\noindent\textbf{Data availability.} No data was used for the research described in the article.

\bigskip

\noindent\textbf{Conflict of interest.} The authors declare that they have no conflict of interest.

\begin {thebibliography} {99}

\bibitem{AF}
R.~A. Adams and J.~J.~F. Fournier,
Sobolev spaces,
Elsevier/Academic Press, Amsterdam, 2003.

\bibitem{BD1} D. Bresch and B. Desjardins,
Sur un mod\`ele de Saint-Venant visqueux et sa limite quasi-g\'eostrophique,
C. R. Math. Acad. Sci. Paris {\bf 335} (2002), no.~12, 1079--1084.

\bibitem{BD2} D. Bresch and B. Desjardins,
Existence of global weak solutions for a 2D viscous shallow water equations and convergence to the quasi-geostrophic model,
Comm. Math. Phys. {\bf 238} (2003), no.~1-2, 211--223.

\bibitem{BDL} D. Bresch, B. Desjardins and C.-K. Lin, On some compressible fluid models: Korteweg, lubrication, and shallow water systems,
Comm. Partial Differential Equations {\bf 28} (2003), no.~3-4, 843--868.

\bibitem{BVY} D. Bresch, A.~F. Vasseur and C. Yu,
Global existence of entropy-weak solutions to the compressible Navier-Stokes equations with non-linear density dependent viscosities,
J. Eur. Math. Soc. (JEMS) {\bf 24} (2022), no.~5, 1791--1837.

\bibitem{CL} G.~C. Cai and J. Li,
Existence and exponential growth of global classical solutions to the compressible Navier-Stokes equations with slip boundary conditions in 3D bounded domains,
Indiana Univ. Math. J. {\bf 72} (2023), no.~6, 2491--2546.

\bibitem{CLZ} Y. Cao, H. Li and S. Zhu,
Global spherically symmetric solutions to degenerate compressible Navier-Stokes equations with large data and far field vacuum,
Calc. Var. Partial Differential Equations {\bf 63} (2024), no.~9, Paper No. 230, 46 pp.

\bibitem{CC} S. Chapman and T.~G. Cowling,
The mathematical theory of non-uniform gases: An account of the kinetic theory of viscosity, thermal conduction, and diffusion in gases,
Cambridge Univ. Press, New York, 1960.

\bibitem{CZZ1} G.-Q.~G. Chen, J. Zhang and S. Zhu,
Global Regular Solutions of the Multidimensional Degenerate Compressible Navier-Stokes Equations with Large Initial Data of Spherical Symmetry, arXiv:2512.18545.

\bibitem{CZZ2} G.-Q.~G. Chen, J. Zhang and S. Zhu,
Global Regular Solutions of the Compressible Navier-Stokes Equations with Nonlinear Density-Dependent Viscosities and Large Initial Data of Spherical Symmetry, arXiv:2605.17121.

\bibitem{CCK} Y. Cho, H.~J. Choe and H. Kim,
Unique solvability of the initial boundary value problems for compressible viscous fluids,
J. Math. Pures Appl. (9) {\bf 83} (2004), no.~2, 243--275.

\bibitem{CK} Y. Cho and H. Kim,
On classical solutions of the compressible Navier-Stokes equations with nonnegative initial densities,
Manuscripta Math. {\bf 120} (2006), no.~1, 91--129.

\bibitem{CK2} H.~J. Choe and H. Kim,
Strong solutions of the Navier-Stokes equations for isentropic compressible fluids,
J. Differential Equations {\bf 190} (2003), no.~2, 504--523.

\bibitem{EL} L.~C. Evans,
Partial differential equations, second edition,
Graduate Studies in Mathematics, 19, Amer. Math. Soc., Providence, RI, 2010.

\bibitem{FLL} X. Fan, J. X. Li and J. Li,
Global existence of strong and weak solutions to 2D compressible Navier-Stokes system in bounded domains with large data and vacuum,
Arch. Ration. Mech. Anal. {\bf 245} (2022), no.~1, 239--278.

\bibitem{FNP} E. Feireisl, A. Novotn\'y{} and H. Petzeltov\'a,
On the existence of globally defined weak solutions to the Navier-Stokes equations,
J. Math. Fluid Mech. {\bf 3} (2001), no.~4, 358--392.

\bibitem{GJX} Z.~H. Guo, Q. Jiu and Z. Xin,
Spherically symmetric isentropic compressible flows with density-dependent viscosity coefficients,
SIAM J. Math. Anal. {\bf 39} (2008), no.~5, 1402--1427.

\bibitem{GLX}
Z.~H. Guo, H.~L. Li and Z. Xin,
Lagrange structure and dynamics for solutions to the spherically symmetric compressible Navier-Stokes equations,
Comm. Math. Phys. {\bf 309} (2012), no.~2, 371--412.

\bibitem{H4} D. Hoff,
Global existence for 1D, compressible, isentropic Navier-Stokes equations with large initial data,
Trans. Amer. Math. Soc. {\bf 303} (1987), no.~1, 169--181.

\bibitem{H1} D. Hoff,
Global solutions of the Navier-Stokes equations for multidimensional compressible flow with discontinuous initial data,
J. Differential Equations {\bf 120} (1995), no.~1, 215--254.

\bibitem{H2} D. Hoff,
Strong convergence to global solutions for multidimensional flows of compressible, viscous fluids with polytropic equations of state and discontinuous initial data,
Arch. Rational Mech. Anal. {\bf 132} (1995), no.~1, 1--14.

\bibitem{H3} D. Hoff,
Compressible flow in a half-space with Navier boundary conditions,
J. Math. Fluid Mech. {\bf 7} (2005), no.~3, 315--338.

\bibitem{HL2} X.-D. Huang and J. Li,
Existence and blowup behavior of global strong solutions to the two-dimensional barotropic compressible Navier-Stokes system with vacuum and large initial data,
J. Math. Pures Appl. (9) {\bf 106} (2016), no.~1, 123--154.

\bibitem{HL3} X.-D. Huang and J. Li,
Global well-posedness of classical solutions to the Cauchy problem of two-dimensional barotropic compressible Navier-Stokes system with vacuum and large initial data,
SIAM J. Math. Anal. {\bf 54} (2022), no.~3, 3192--3214.

\bibitem{HLX2} X.-D. Huang, J. Li and Z. Xin,
Global well-posedness of classical solutions with large oscillations and vacuum to the three-dimensional isentropic compressible Navier-Stokes equations,
Comm. Pure Appl. Math. {\bf 65} (2012), no.~4, 549--585.

\bibitem{HMZ} X.-D. Huang, W. Meng and X. Zhang,
On global classical and weak solutions with arbitrary large initial data to the multi-dimensional viscous Saint-Venant system and compressible Navier-Stokes equations subject to the BD entropy condition under spherical symmetry, arXiv:2512.15029.

\bibitem{HSYY} X. Huang et al.,
Global large strong solutions to the radially symmetric compressible Navier-Stokes equations in 2D solid balls,
J. Differential Equations {\bf 396} (2024), 393--429.

\bibitem{JR} S. Jiang and R. Racke, {\it Evolution equations in thermoelasticity}, Chapman \& Hall/CRC Monographs and Surveys in Pure and Applied Mathematics, 112, Chapman \& Hall/CRC, Boca Raton, FL, 2000.

\bibitem{JZ} S. Jiang and P. Zhang,
On spherically symmetric solutions of the compressible isentropic Navier-Stokes equations, Comm. Math. Phys. {\bf 215} (2001), no.~3, 559--581.

\bibitem{JWX1} Q. Jiu, Y. Wang and Z. Xin,
Global well-posedness of 2D compressible Navier-Stokes equations with large data and vacuum,
J. Math. Fluid Mech. {\bf 16} (2014), no.~3, 483--521.

\bibitem{JWX2} Q. Jiu, Y. Wang and Z. Xin,
Global classical solution to two-dimensional compressible Navier-Stokes equations with large data in $\mathbb{R}^2$,
Phys. D {\bf 376/377} (2018), 180--194.

\bibitem{JX} Q. Jiu and Z. Xin,
The Cauchy problem for 1D compressible flows with density-dependent viscosity coefficients, Kinet. Relat. Models {\bf 1} (2008), no.~2, 313--330.

\bibitem{Ka} Y.~I. Kanel',
A model system of equations for the one-dimensional motion of a gas, Differencial' nye Uravnenija {\bf 4} (1968), 721--734.

\bibitem{KN} S. Kawashima and T. Nishida,
Global solutions to the initial value problem for the equations of one-dimensional motion of viscous polytropic gases,
J. Math. Kyoto Univ. {\bf 21} (1981), no.~4, 825--837.

\bibitem{KS} A.~V. Kazhikhov and V.~V. Shelukhin,
Unique global solution with respect to time of initial-boundary value problems for one-dimensional equations of a viscous gas,
Prikl. Mat. Meh. {\bf 41} (1977), no.~2J. Appl. Math. Mech. {\bf 41} (1977), no.~2.

\bibitem{Lei} Q. Lei,
Global Well-Posedness of Classical Solutions to the Multi-Dimensional Degenerate Compressible Navier-Stokes Equations with Large Spherically Symmetric Initial Data, arXiv:2604.18306.

\bibitem{LLX} H.~L. Li, J. Li and Z. Xin,
Vanishing of vacuum states and blow-up phenomena of the compressible Navier-Stokes equations, Comm. Math. Phys. {\bf 281} (2008), no.~2, 401--444.

\bibitem{LLL} J. Li, Z. Liang,
On local classical solutions to the Cauchy problem of the two-dimensional barotropic compressible Navier-Stokes equations with vacuum,
J. Math. Pures Appl. (9) {\bf 102} (2014), no.~4, 640--671.

\bibitem{LX1} J. Li and Z. Xin,
Global Existence of Weak Solutions to the Barotropic Compressible Navier-Stokes Flows with Degenerate Viscosities, arXiv:1504.06826.

\bibitem{LX2} J. Li and Z. Xin,
Global well-posedness and large time asymptotic behavior of classical solutions to the compressible Navier-Stokes equations with vacuum,
Ann. PDE {\bf 5} (2019), no.~1, Paper No. 7, 37 pp.

\bibitem{LPZ}
Y. Li, R.~H. Pan and S. Zhu,
On classical solutions for viscous polytropic fluids with degenerate viscosities and vacuum, Arch. Ration. Mech. Anal. {\bf 234} (2019), no.~3, 1281--1334.

\bibitem{L2}  P.L. Lions,
Mathematical Topics in Fluid Mechanics. Vol. 2: Compressible Models,
Oxford University Press, New York, 1998.

\bibitem{MN1} A. Matsumura, T. Nishida,
The initial value problem for the equations of motion of viscous and heat-conductive gases,
J. Math. Kyoto Univ. {\bf 20}(1) (1980), 67--104.

\bibitem{MV} A. Mellet and A.~F. Vasseur,
On the barotropic compressible Navier-Stokes equations,
Comm. Partial Differential Equations {\bf 32} (2007), no.~1-3, 431--452.

\bibitem{N} J. Nash,
Le probl\`{e}me de Cauchy pour les \'{e}quations diff\'{e}rentielles d'un fluide g\'{e}n\'{e}ral,
Bull. Soc. Math. France {\bf 90} (1962), 487--497 (French).

\bibitem{NI} L. Nirenberg,
On elliptic partial differential equations,
Ann. Scuola Norm. Sup. Pisa Cl. Sci. (3) {\bf 13} (1959), 115--162.

\bibitem{NS} A. Novotn\'y{} and I. Stra\v skraba,
Introduction to the mathematical theory of compressible flow,
Oxford Lecture Series in Mathematics and its Applications, 27, Oxford Univ. Press, Oxford, 2004.

\bibitem{SS} R. Salvi and I. Stra\v skraba,
Global existence for viscous compressible fluids and their behavior as $t\to\infty$,
J. Fac. Sci. Univ. Tokyo Sect. IA Math. {\bf 40} (1993), no.~1, 17--51.

\bibitem{S1} D. Serre,
Solutions faibles globales des \'equations de Navier-Stokes pour un fluide compressible,
C. R. Acad. Sci. Paris S\'er. I Math. {\bf 303} (1986), no.~13, 639--642.

\bibitem{S2} D. Serre,
Sur l'\'equation monodimensionnelle d'un fluide visqueux, compressible et conducteur de chaleur,
C. R. Acad. Sci. Paris S\'er. I Math. {\bf 303} (1986), no.~14, 703--706.

\bibitem{S} J. Serrin,
On the uniqueness of compressible fluid motions,
Arch. Rational Mech. Anal. {\bf 3} (1959), 271--288 (1959).

\bibitem{LXY} T.-P. Liu, Z. Xin and T. Yang,
Vacuum states for compressible flow,
Discrete Contin. Dynam. Systems {\bf 4} (1998), no.~1, 1--32.

\bibitem{VK} V.~A. Vaigant and A.~V. Kazhikhov,
On existence of global solutions to the two-dimensional Navier–Stokes equations for a compressible viscous fluid,
Sib. Math. J. 36 (6) (1995) 1283–1316.

\bibitem{VY} A.~F. Vasseur and C. Yu,
Existence of global weak solutions for 3D degenerate compressible Navier-Stokes equations,
Invent. Math. {\bf 206} (2016), no.~3, 935--974.

\bibitem{ZZ}
P. Zhang and J.~N. Zhao,
The existence of local solutions for the compressible Navier-Stokes equations with the density-dependent viscosities, Commun. Math. Sci. {\bf 12} (2014), no.~7, 1277--1302.

\bibitem{Z} X. Zhang,
Spherically symmetric strong solution of compressible flow with large data and density-dependent viscosities, J. Math. Anal. Appl. {\bf 549} (2025), no.~2, Paper No. 129488, 29 pp.

\end {thebibliography}

\end{document}